\providecommand{\U}[1]{\protect\rule{.1in}{.1in}}
\newtheorem{theorem}{Theorem}
\newtheorem{corollary}[theorem]{Corollary}
\newtheorem{definition}[theorem]{Definition}
\newtheorem{lemma}[theorem]{Lemma}
\newtheorem{notation}[theorem]{Notation}
\newtheorem{proposition}[theorem]{Proposition}
\newtheorem{remark}[theorem]{Remark}
\newenvironment{proof}[1][Proof]{\textbf{#1.} }{\ \rule{0.5em}{0.5em}}
\begin{document}

\title{Differential Geometry of Microlinear Fr\"{o}licher Spaces IV-2}
\author{Hirokazu Nishimura\\Institute of Mathematics\\University of Tsukuba\\Tsukuba, Ibaraki 305-8571\\Japan}
\maketitle

\begin{abstract}
This paper is the sequel to our previous paper (Differetial Geometry of
Microlinear Fr\"{o}licher spaces IV-1), where three approaches to jet bundles
are presented and compared. The first objective in this paper is to give the
affine bundle theorem for the second and third approaches to jet bundles. The
second objective is to deal with the three approaches to jet bundles in the
context where coordinates are available. In this context all the three
approaches are shown to be equivalent.

\end{abstract}

\section{\label{s0}Introduction}

The principal objectives in this second part of the paper are firstly to deal
with the affine bundle theorem in the second and third approaches to jet
bundles and secondly to treat the three approaches to jet bundles in
\cite{nishimura} within the context where coordinates are available, namely,
$E=\mathbb{R}^{p+q}$, $M=\mathbb{R}^{p}$, and $\pi$ is the canonical
projection. \S \ref{s8} is devoted to the affine bundle theorem. We let
$i$\ ($j$, resp.) range over the natural numbers between $1$\ and
$p$\ (between $1 $ and $q$, resp.), including the endpoints. It is shown that,
within this traditional context, the three approaches are essentially
equivalent. The traditional coordinate approach to jet bundles is given a
noble description after the manner of \cite{kock1} in \S \ref{s3}, where the
affine bundle theorem is established on these lines. Our three approaches are
related to this traditional approach in \S \ref{s4}, \S \ref{s5} and
\S \ref{s6} in order. In particular, the affine bundles in the second and
third approaches are shown to be isomorphic to that in the traditional approach.

\section{\label{s1}Previous Results}

We collect a few results of our previous paper \cite{nishimura}\ to be quoted
in this paper.

\begin{definition}
\label{d4.1}Let $n$ be a natural number. A $D^{n}$-pseudotangential
\textit{over }the bundle $\pi:E\rightarrow M$ \textit{at} $x\in E$ is a
mapping $\nabla_{x}:\left(  M\otimes\mathcal{W}_{D^{n}}\right)  _{\pi
(x)}\rightarrow\left(  E\otimes\mathcal{W}_{D^{n}}\right)  _{x}\ $abiding by
the following conditions:

\begin{enumerate}
\item We have
\[
\left(  \pi\otimes\mathrm{id}_{\mathcal{W}_{D^{n}}}\right)  \left(  \nabla
_{x}(\gamma)\right)  =\gamma
\]
for any $\gamma\in\left(  M\otimes\mathcal{W}_{D^{n}}\right)  _{\pi(x)}$.

\item We have
\[
\nabla_{x}(\alpha\underset{i}{\cdot}\gamma)=\alpha\underset{i}{\cdot}%
\nabla_{x}(\gamma)\quad\quad(1\leq i\leq n)
\]
for any $\gamma\in\left(  M\otimes\mathcal{W}_{D^{n}}\right)  _{\pi(x)}$ and
any $\alpha\in\mathbb{R}$.

\item The diagram
\[%
\begin{array}
[c]{ccc}%
\left(  M\otimes\mathcal{W}_{D^{n}}\right)  _{\pi\left(  x\right)  } &
\rightarrow & \left(  M\otimes\mathcal{W}_{D^{n}}\right)  _{\pi\left(
x\right)  }\otimes\mathcal{W}_{D_{m}}\\
\nabla_{x}\downarrow &  & \downarrow\nabla_{x}\otimes\mathrm{id}%
_{\mathcal{W}_{D_{m}}}\\
\left(  E\otimes\mathcal{W}_{D^{n}}\right)  _{x} & \rightarrow & \left(
E\otimes\mathcal{W}_{D^{n}}\right)  _{x}\otimes\mathcal{W}_{D_{m}}%
\end{array}
\]
is commutative, where $m$\ is an arbitrary natural number, the upper
horizontal arrow is
\[
\mathrm{id}_{M}\otimes\mathcal{W}_{\left(  d_{1},...,d_{n},e\right)  \in
D^{n}\times D_{m}\mapsto\left(  d_{1},...,d_{i-1},ed_{i},d_{i+1}%
,...d_{n}\right)  \in D^{n}}\text{,}%
\]
and the lower horizontal arrow is
\[
\mathrm{id}_{E}\otimes\mathcal{W}_{\left(  d_{1},...,d_{n},e\right)  \in
D^{n}\times D_{m}\mapsto\left(  d_{1},...,d_{i-1},ed_{i},d_{i+1}%
,...d_{n}\right)  \in D^{n}}\text{.}%
\]

\item We have
\[
\nabla_{x}(\gamma^{\sigma})=(\nabla_{x}(\gamma))^{\sigma}%
\]
for any $\gamma\in\left(  M\otimes\mathcal{W}_{D^{n}}\right)  _{\pi(x)}$ and
for any $\sigma\in\mathbf{S}_{n}$.
\end{enumerate}
\end{definition}

\begin{definition}
\label{d4.2}The notion of a $D^{n}$-tangential \textit{over} the bundle
$\pi:E\rightarrow M$ \textit{at} $x$ is defined by induction on $n$. The
notion of a $D$-tangential \textit{over} the bundle $\pi:E\rightarrow M$
\textit{at} $x$ shall be identical with that of a $D$-pseudotangential
\textit{over} the bundle $\pi:E\rightarrow M$ \textit{at} $x$ . Now we proceed
inductively. A $D^{n+1}$-pseudotangential
\[
\nabla_{x}:\left(  M\otimes\mathcal{W}_{D^{n+1}}\right)  _{\pi(x)}%
\rightarrow\left(  E\otimes\mathcal{W}_{D^{n+1}}\right)  _{x}%
\]
$\ $ over the bundle $\pi:E\rightarrow M$ at $x\in E$ is called a $D^{n+1}%
$-tangential \textit{over} the bundle $\pi:E\rightarrow M$ \textit{at} $x$ if
it acquiesces in the following two conditions:

\begin{enumerate}
\item $\widehat{\pi}_{n+1,n}(\nabla_{x})$ is a $D^{n}$-tangential
\textit{over} the bundle $\pi:E\rightarrow M$ \textit{at} $x$.

\item For any $\gamma\in\left(  M\otimes\mathcal{W}_{D^{n}}\right)  _{\pi(x)}%
$, we have
\begin{align*}
&  \nabla_{x}\left(  \left(  \mathrm{id}_{M}\otimes\mathcal{W}_{\left(
d_{1},...,d_{n},d_{n+1}\right)  \in D^{n+1}\mapsto\left(  d_{1},...,d_{n}%
d_{n+1}\right)  \in D^{n}}\right)  \left(  \gamma\right)  \right) \\
&  =\left(  \mathrm{id}_{E}\otimes\mathcal{W}_{\left(  d_{1},...,d_{n}%
,d_{n+1}\right)  \in D^{n+1}\mapsto\left(  d_{1},...,d_{n}d_{n+1}\right)  \in
D^{n+1}}\right)  \left(  \left(  \widehat{\pi}_{n+1,n}(\nabla_{x})\right)
(\gamma)\right)
\end{align*}

\end{enumerate}
\end{definition}

\begin{proposition}
\label{t4.1.4}Let $m,n$ be natural numbers with $m\leq n$. Let $k_{1}%
,...,k_{m}$ be positive integers with $k_{1}+...+k_{m}=n$. For any $\nabla
_{x}\in\mathbb{J}^{D^{n}}(\pi)$, any $\gamma\in\left(  M\otimes\mathcal{W}%
_{D^{m}}\right)  _{\pi(x)}$ and any $\sigma\in\mathbf{S}_{n}$, we have
\begin{align*}
&  \nabla_{x}\left(  \left(  \mathrm{id}_{M}\otimes\mathcal{W}_{(d_{1}%
,...,d_{n})\in D^{n}\mapsto\left(  d_{\sigma(1)}...d_{\sigma(k_{1})}%
,d_{\sigma(k_{1}+1)}...d_{\sigma(k_{1}+k_{2})},...,d_{\sigma(k_{1}%
+...+k_{m-1}+1)}...d_{\sigma(n)}\right)  }\right)  \left(  \gamma\right)
\right) \\
&  =\left(  \mathrm{id}_{E}\otimes\mathcal{W}_{(d_{1},...,d_{n})\in
D^{n}\mapsto\left(  d_{\sigma(1)}...d_{\sigma(k_{1})},d_{\sigma(k_{1}%
+1)}...d_{\sigma(k_{1}+k_{2})},...,d_{\sigma(k_{1}+...+k_{m-1}+1)}%
...d_{\sigma(n)}\right)  }\right)  \left(  \left(  \pi_{n,m}(\nabla
_{x})\right)  (\gamma)\right)
\end{align*}

\end{proposition}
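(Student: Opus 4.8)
The plan is to induct on $n$, peeling off one two-slot contraction at each step; the permutation $\sigma$ I would absorb once and for all at the start by the $\mathbf{S}_{n}$-equivariance of Definition \ref{d4.1}(4). Write $\mu^{(k_{1},\dots,k_{m})}\colon D^{n}\rightarrow D^{m}$ for the map in the statement with $\sigma$ the identity, and $p_{\sigma}\colon D^{n}\rightarrow D^{n}$ for the coordinate permutation $(d_{1},\dots,d_{n})\mapsto(d_{\sigma(1)},\dots,d_{\sigma(n)})$. A one-line check shows that the map displayed in the proposition is $\mu^{(k_{1},\dots,k_{m})}\circ p_{\sigma}$, so, $\mathcal{W}_{(-)}$ being contravariant, the corresponding arrow is $(\mathrm{id}\otimes\mathcal{W}_{p_{\sigma}})\circ(\mathrm{id}\otimes\mathcal{W}_{\mu^{(k_{1},\dots,k_{m})}})$. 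Using Definition \ref{d4.1}(4) to pull the factor $\mathrm{id}_{M}\otimes\mathcal{W}_{p_{\sigma}}$ out through $\nabla_{x}$, both sides of the asserted identity acquire the common outer factor $\mathrm{id}_{E}\otimes\mathcal{W}_{p_{\sigma}}$, so the general case follows from the case $\sigma=\mathrm{id}$ applied to the same $\gamma$.

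Next I would isolate the elementary step. For $1\leq\ell\leq n-1$ let $\mu_{\ell}\colon D^{n}\rightarrow D^{n-1}$ be $(d_{1},\dots,d_{n})\mapsto(d_{1},\dots,d_{\ell-1},d_{\ell}d_{\ell+1},d_{\ell+2},\dots,d_{n})$. I claim $\nabla_{x}\circ(\mathrm{id}_{M}\otimes\mathcal{W}_{\mu_{\ell}})=(\mathrm{id}_{E}\otimes\mathcal{W}_{\mu_{\ell}})\circ\widehat{\pi}_{n,n-1}(\nabla_{x})$. For $\ell=n-1$ this is verbatim Definition \ref{d4.2}(2). For general $\ell$, conjugate by the coordinate permutations of $D^{n}$, resp.\ $D^{n-1}$, that carry $\{\ell,\ell+1\}$ to $\{n-1,n\}$ and invoke Definition \ref{d4.1}(4) for $\nabla_{x}$ at level $n$ and for $\widehat{\pi}_{n,n-1}(\nabla_{x})$ at level $n-1$ --- the latter being a $D^{n-1}$-tangential by Definition \ref{d4.2}(1) --- exactly as in the previous paragraph.

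Now the induction, with $\sigma=\mathrm{id}$. If $m=n$ then every $k_{i}=1$, $\mu^{(k_{1},\dots,k_{m})}$ is the identity and $\pi_{n,n}(\nabla_{x})=\nabla_{x}$, so there is nothing to prove (this settles $n=1$). If $m<n$ then some $k_{i}\geq 2$; set $\ell=k_{1}+\dots+k_{i-1}+1$, so slots $\ell,\ell+1$ lie in the $i$-th block, and one checks the block-wise factorization $\mu^{(k_{1},\dots,k_{m})}=\psi\circ\mu_{\ell}$ with $\psi:=\mu^{(k_{1},\dots,k_{i}-1,\dots,k_{m})}\colon D^{n-1}\rightarrow D^{m}$ (here $k_{i}-1\geq 1$). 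Given $\gamma\in(M\otimes\mathcal{W}_{D^{m}})_{\pi(x)}$, put $\delta=(\mathrm{id}_{M}\otimes\mathcal{W}_{\psi})(\gamma)\in(M\otimes\mathcal{W}_{D^{n-1}})_{\pi(x)}$; then $(\mathrm{id}_{M}\otimes\mathcal{W}_{\mu^{(k_{1},\dots,k_{m})}})(\gamma)=(\mathrm{id}_{M}\otimes\mathcal{W}_{\mu_{\ell}})(\delta)$. The elementary step applied to $\delta$ gives $\nabla_{x}((\mathrm{id}_{M}\otimes\mathcal{W}_{\mu_{\ell}})(\delta))=(\mathrm{id}_{E}\otimes\mathcal{W}_{\mu_{\ell}})((\widehat{\pi}_{n,n-1}(\nabla_{x}))(\delta))$; the induction hypothesis applied to $\widehat{\pi}_{n,n-1}(\nabla_{x})\in\mathbb{J}^{D^{n-1}}(\pi)$ with the partition $(k_{1},\dots,k_{i}-1,\dots,k_{m})$ of $n-1$ (and $\sigma=\mathrm{id}$) rewrites $(\widehat{\pi}_{n,n-1}(\nabla_{x}))(\delta)$ as $(\mathrm{id}_{E}\otimes\mathcal{W}_{\psi})((\pi_{n-1,m}(\widehat{\pi}_{n,n-1}(\nabla_{x})))(\gamma))$; and $\pi_{n-1,m}\circ\widehat{\pi}_{n,n-1}=\pi_{n,m}$ by the definition of $\pi_{n,m}$ as an iterated projection. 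Since $\mathcal{W}_{\mu_{\ell}}\circ\mathcal{W}_{\psi}=\mathcal{W}_{\psi\circ\mu_{\ell}}=\mathcal{W}_{\mu^{(k_{1},\dots,k_{m})}}$, assembling these equalities yields the claim.

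I do not expect a genuine obstacle: the mathematical content is already in Definition \ref{d4.2}(2), and everything else is bookkeeping --- keeping the contravariance of $\mathcal{W}_{(-)}$ straight, verifying the purely combinatorial factorization $\mu^{(k_{1},\dots,k_{m})}=\psi\circ\mu_{\ell}$, and using that $\pi_{n,m}$ telescopes as $\widehat{\pi}_{m+1,m}\circ\cdots\circ\widehat{\pi}_{n,n-1}$. The only point needing a little attention is the reindexing in the elementary step for $\ell\neq n-1$, which is where Definition \ref{d4.1}(4) does the real work.
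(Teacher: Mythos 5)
Your argument is correct: reducing to $\sigma=\mathrm{id}$ by the $\mathbf{S}_{n}$-equivariance in Definition \ref{d4.1}, transporting the defining contraction of Definition \ref{d4.2} from the last two slots to arbitrary adjacent slots by conjugation with coordinate permutations, and then inducting via the factorization $\mu^{(k_{1},\dots,k_{m})}=\psi\circ\mu_{\ell}$ together with the telescoping $\pi_{n,m}=\pi_{n-1,m}\circ\widehat{\pi}_{n,n-1}$ is exactly the intended route. The present paper only quotes Proposition \ref{t4.1.4} from \cite{nishimura} without reproducing the proof, but your argument is essentially the one given there; the only cosmetic remark is that for the conjugation step the pseudotangential condition (4) of Definition \ref{d4.1} already suffices, tangentiality of $\widehat{\pi}_{n,n-1}(\nabla_{x})$ being needed only when you invoke the induction hypothesis.
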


\begin{proposition}
\label{t7.1.4}The diagram
\[%
\begin{array}
[c]{ccc}%
\mathbb{\hat{J}}_{x}^{D^{n+1}}(\pi) & \underrightarrow{\widehat{\psi}_{n+1}} &
\mathbb{\hat{J}}_{x}^{D_{n+1}}(\pi)\\%
\begin{array}
[c]{cc}%
\widehat{\mathbf{\pi}}_{n+1,n} & \downarrow
\end{array}
&  &
\begin{array}
[c]{cc}%
\downarrow & \widehat{\mathbf{\pi}}_{n+1,n}%
\end{array}
\\
\mathbb{\hat{J}}_{x}^{D^{n}}(\pi) & \overrightarrow{\widehat{\psi}_{n}} &
\mathbb{\hat{J}}_{x}^{D_{n}}(\pi)
\end{array}
\]
commutes.
\end{proposition}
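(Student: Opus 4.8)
The plan is to verify commutativity by a direct pointwise computation: I peel each of the four arrows down to a composite of the shape ``carry a member of $(M\otimes\mathcal{W}_{\bullet})_{\pi(x)}$ forward along some $\mathrm{id}_{M}\otimes\mathcal{W}_{g}$, apply $\nabla_{x}$, and carry the result back along some $\mathrm{id}_{E}\otimes\mathcal{W}_{h}$'', and then compare the two routes around the square. Recall that for each $k$ the arrow $\widehat{\psi}_{k}:\mathbb{\hat{J}}_{x}^{D^{k}}(\pi)\to\mathbb{\hat{J}}_{x}^{D_{k}}(\pi)$ is the descent of a $D^{k}$-pseudotangential $\nabla_{x}$ along the addition map $\mathrm{add}_{k}:D^{k}\to D_{k}$, $(d_{1},\dots,d_{k})\mapsto d_{1}+\dots+d_{k}$: the induced homomorphism $\mathcal{W}_{\mathrm{add}_{k}}$ is injective with image the $\mathbf{S}_{k}$-invariant part of $\mathcal{W}_{D^{k}}$, and $\nabla_{x}$ is $\mathbf{S}_{k}$-equivariant by condition 4 of Definition \ref{d4.1}, so for every $\gamma\in(M\otimes\mathcal{W}_{D_{k}})_{\pi(x)}$ the element $\nabla_{x}((\mathrm{id}_{M}\otimes\mathcal{W}_{\mathrm{add}_{k}})(\gamma))$ is $\mathbf{S}_{k}$-invariant, hence equals $\mathrm{id}_{E}\otimes\mathcal{W}_{\mathrm{add}_{k}}$ of a unique member of $(E\otimes\mathcal{W}_{D_{k}})_{x}$, which is $(\widehat{\psi}_{k}(\nabla_{x}))(\gamma)$. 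I also recall that the two vertical projections $\widehat{\mathbf{\pi}}_{n+1,n}$ are those induced by the canonical monomorphisms $D^{n}\hookrightarrow D^{n+1}$ and $D_{n}\hookrightarrow D_{n+1}$, on the $D^{\bullet}$-side together with the variable-merging map of Definition \ref{d4.2}(2).

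Fixing $\nabla_{x}\in\mathbb{\hat{J}}_{x}^{D^{n+1}}(\pi)$ and $\gamma\in(M\otimes\mathcal{W}_{D_{n}})_{\pi(x)}$, I would then write out $(\widehat{\mathbf{\pi}}_{n+1,n}(\widehat{\psi}_{n+1}(\nabla_{x})))(\gamma)$ and $(\widehat{\psi}_{n}(\widehat{\mathbf{\pi}}_{n+1,n}(\nabla_{x})))(\gamma)$ side by side. Once unwound, each is $\nabla_{x}$ evaluated at $\gamma$ transported through a chain of $\mathrm{id}_{M}\otimes\mathcal{W}_{(-)}$'s and transported back, and the two chains coincide by the single naturality square of infinitesimal objects
\[
\mathrm{add}_{n+1}\circ[\,(d_{1},\dots,d_{n})\mapsto(d_{1},\dots,d_{n},0)\,]=[\,D_{n}\hookrightarrow D_{n+1}\,]\circ\mathrm{add}_{n},
\]
both sides sending $(d_{1},\dots,d_{n})$ to $d_{1}+\dots+d_{n}$. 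The uniqueness clause recalled above then forces the two descended members of $(E\otimes\mathcal{W}_{D_{n}})_{x}$ to agree, their common image under $\mathrm{id}_{E}\otimes\mathcal{W}_{\mathrm{add}_{n}}$ in $(E\otimes\mathcal{W}_{D^{n}})_{x}$ having been identified.

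The step I expect to be the genuine obstacle is keeping the extra $(n+1)$-st coordinate honest. On the $D^{\bullet}$-side $\widehat{\mathbf{\pi}}_{n+1,n}$ is not plain restriction along $D^{n}\hookrightarrow D^{n+1}$; it also involves the map $(d_{1},\dots,d_{n+1})\mapsto(d_{1},\dots,d_{n-1},d_{n}d_{n+1})$ of Definition \ref{d4.2}(2), so one has to show that pushing $\gamma$ first through $\mathrm{add}_{n+1}$ and then through this merging map coincides, after $\mathbf{S}_{n+1}$-symmetrization, with pushing $\gamma$ through $\mathrm{add}_{n}$. This is exactly the identity supplied by Proposition \ref{t4.1.4} — used with its $n$ replaced by $n+1$, with $m=n$, and with $k_{1}=\dots=k_{n-1}=1$, $k_{n}=2$ — so I would invoke Proposition \ref{t4.1.4} rather than recompute. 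The second delicate point is lining up the two symmetrizations: the inclusion $\mathbf{S}_{n}\hookrightarrow\mathbf{S}_{n+1}$ fixing the last letter intertwines the $\mathbf{S}_{n}$-action on $\mathcal{W}_{D^{n}}$ with the $\mathbf{S}_{n+1}$-action on $\mathcal{W}_{D^{n+1}}$ along $D^{n}\hookrightarrow D^{n+1}$, which is what matches the two ``unique member'' arguments. With these two compatibilities in hand, commutativity of the square is immediate.
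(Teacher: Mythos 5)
Proposition \ref{t7.1.4} is quoted in the ``Previous Results'' section from \cite{nishimura} and is not proved anywhere in this paper, so there is no in-paper argument to compare yours against; what can be judged is whether your reconstruction is consistent with the definitions the paper does record and whether it would close. Your general frame is sound: the characterization of $\widehat{\psi}_{k}$ as descent along $+_{D^{k}\rightarrow D_{k}}$, with uniqueness coming from injectivity of $\mathrm{id}_{E}\otimes\mathcal{W}_{+_{D^{k}\rightarrow D_{k}}}$ and from $\mathbf{S}_{k}$-equivariance of $\nabla_{x}$, is exactly the property invoked ``by the very definition of $\psi_{n+1}$'' in the proof of Theorem \ref{t8.4.10}, and reducing the square to compatibilities among the structural maps of $D^{n}$, $D^{n+1}$, $D_{n}$, $D_{n+1}$ is surely the intended route.

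The gap lies precisely in the step you single out as the crux. The merging map $(d_{1},\dots,d_{n+1})\mapsto(d_{1},\dots,d_{n-1},d_{n}d_{n+1})$ is \emph{not} part of the definition of $\widehat{\pi}_{n+1,n}$: in Definition \ref{d4.2} it occurs in the second condition that singles out tangentials among pseudotangentials, a condition that is only meaningful because $\widehat{\pi}_{n+1,n}$ is already defined on all pseudotangentials (were it built into the projection, that condition would be vacuous). The $D^{\bullet}$-side projection is the face/degeneracy recipe: pull the input back along $(d_{1},\dots,d_{n+1})\mapsto(d_{1},\dots,d_{n})$, apply $\nabla_{x}$, restrict along $(d_{1},\dots,d_{n})\mapsto(d_{1},\dots,d_{n},0)$. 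Consequently your appeal to Proposition \ref{t4.1.4} misfires twice: it addresses a step that does not occur, and it is stated only for $\nabla_{x}\in\mathbb{J}^{D^{n}}(\pi)$, i.e.\ for tangentials, whereas Proposition \ref{t7.1.4} concerns the hatted sets $\mathbb{\hat{J}}$ of pseudotangentials, so at best you would obtain the unhatted square. Moreover the single identity $+_{D^{n+1}}\circ(\text{insert }0)=i_{D_{n}\rightarrow D_{n+1}}\circ+_{D^{n}}$ is necessary but not sufficient: with the correct projections the two routes present $\nabla_{x}$ with \emph{different} arguments in $(M\otimes\mathcal{W}_{D^{n+1}})_{\pi(x)}$ (schematically $\gamma(d_{1}+\dots+d_{n+1})$ versus $\gamma(d_{1}+\dots+d_{n})$), and to see that these yield the same result after setting $d_{n+1}=0$ one must move $\nabla_{x}$ past the degenerate $(n+1)$-st direction, e.g.\ via condition 2 of Definition \ref{d4.1} with $\alpha=0$ in the $(n+1)$-st slot (or via condition 3); this step, together with whatever well-definedness the $D_{\bullet}$-side projection of \cite{nishimura} requires, is missing from your sketch.
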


\section{\label{s8}The Affine Bundle Theorem}

\subsection{\label{s8.2}The Theorem in the Second Approach}

\subsubsection{\label{s8.2.1}Affine Bundles}

\begin{lemma}
\label{t8.2.1.1}The diagram
\[%
\begin{array}
[c]{ccc}%
D\left\{  n\right\}  _{n-1} & \underrightarrow{i_{D\left\{  n\right\}
_{n-1}\rightarrow D^{n}}} & D^{n}\\
i_{D\left\{  n\right\}  _{n-1}\rightarrow D^{n}}\downarrow &  & \downarrow
\Psi_{D^{n}}\\
D^{n} & \underrightarrow{\Phi_{D^{n}}} & D^{n}\oplus D
\end{array}
\]
is a quasi-colimit diagram, where $i_{D\left\{  n\right\}  _{n-1}\rightarrow
D^{n}}$ is the canonical injection of $D\left\{  n\right\}  _{n-1}$ into
$D^{n} $, and
\begin{align*}
\Phi_{D^{n}}(d_{1},...,d_{n})  &  =(d_{1},...,d_{n},0)\\
\Psi_{D^{n}}(d_{1},...,d_{n})  &  =(d_{1},...,d_{n},d_{1}...d_{n})\text{.}%
\end{align*}

\end{lemma}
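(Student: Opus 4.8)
The plan is to verify that the square satisfies the universal property of a quasi-colimit in the sense of synthetic differential geometry: given any Weil algebra (or any microlinear space) $X$ together with a pair of maps $f,g\colon D^{n}\to X$ that agree on $D\{n\}_{n-1}$ (i.e. $f\circ i = g\circ i$ where $i = i_{D\{n\}_{n-1}\to D^{n}}$), I must produce a unique $h\colon D^{n}\oplus D\to X$ with $h\circ\Phi_{D^{n}} = f$ and $h\circ\Psi_{D^{n}} = g$. By the standard reduction, it suffices to check this at the level of the representing Weil algebras, i.e. to show that the corresponding diagram of Weil algebras
\[
\begin{array}
[c]{ccc}
W_{D^{n}\oplus D} & \rightarrow & W_{D^{n}}\\
\downarrow &  & \downarrow\\
W_{D^{n}} & \rightarrow & W_{D\{n\}_{n-1}}
\end{array}
\]
is a pullback (equivalently, presents $W_{D^{n}\oplus D}$ as the appropriate limit), so the whole problem is reduced to a finite-dimensional linear-algebra computation over $\mathbb{R}$.

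First I would write down the three Weil algebras explicitly. We have $W_{D^{n}} = \mathbb{R}[x_{1},\dots,x_{n}]/(x_{i}^{2})$, with basis the squarefree monomials $x_{S} = \prod_{i\in S}x_{i}$ over subsets $S\subseteq\{1,\dots,n\}$. The algebra $W_{D\{n\}_{n-1}}$ is the quotient of $W_{D^{n}}$ by the ideal generated by the top monomial $x_{1}\cdots x_{n}$, so it has as basis the $x_{S}$ with $S\neq\{1,\dots,n\}$ and the relation $x_{1}\cdots x_{n}=0$. Finally $W_{D^{n}\oplus D}$ corresponds to $\{(d_{1},\dots,d_{n},e): d_{i}\in D, e\in D, e\cdot(\text{nothing extra})\}$ — more precisely $D^{n}\oplus D$ is the subobject of $D^{n}\times D$ cut out by the equations $d_{i}e = 0$ for all $i$, so $W_{D^{n}\oplus D} = \mathbb{R}[x_{1},\dots,x_{n},y]/(x_{i}^{2},y^{2},x_{i}y)$, with basis $\{x_{S}: S\subseteq\{1,\dots,n\}\}\cup\{y\}$. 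The two projection-type maps are: $\Phi_{D^{n}}^{*}$ sends $y\mapsto 0$, $x_{i}\mapsto x_{i}$; and $\Psi_{D^{n}}^{*}$ sends $y\mapsto x_{1}\cdots x_{n}$, $x_{i}\mapsto x_{i}$.

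Next I would check the pullback property directly on bases. An element of the fibre product $W_{D^{n}}\times_{W_{D\{n\}_{n-1}}}W_{D^{n}}$ is a pair $(a,b)$ of elements of $W_{D^{n}}$ having the same image in $W_{D\{n\}_{n-1}}$, i.e. agreeing in every coefficient except possibly that of $x_{1}\cdots x_{n}$. Write $a = a' + \lambda\, x_{1}\cdots x_{n}$ and $b = a' + \mu\, x_{1}\cdots x_{n}$ with $a'$ the common part. I would then exhibit the inverse map: send $(a,b)$ to the element $a' + \mu\,x_{1}\cdots x_{n} + (\lambda-\mu)\,y$ of $W_{D^{n}\oplus D}$, and verify that composing with $\Phi_{D^{n}}^{*}$ gives back $a' + \lambda\,x_{1}\cdots x_{n} = a$ (since $y\mapsto 0$) and composing with $\Psi_{D^{n}}^{*}$ gives $a' + \mu\,x_{1}\cdots x_{n} + (\lambda-\mu)\,x_{1}\cdots x_{n} = a' + \lambda\,x_{1}\cdots x_{n}$ — wait, that yields $a$, not $b$; so I would instead take the correct combination $a' + \lambda\,x_{1}\cdots x_{n} + (\mu - \lambda)\,y$ and re-check, getting $a$ under $\Phi_{D^{n}}^{*}$ and $b = a' + \mu\,x_{1}\cdots x_{n}$ under $\Psi_{D^{n}}^{*}$. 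A dimension count ($2^{n}+1$ on the left, $2\cdot 2^{n} - (2^{n}-1) = 2^{n}+1$ on the right) confirms this correspondence is a bijection, and it is clearly $\mathbb{R}$-algebra-linear, so the square of Weil algebras is a pullback. Dualizing back gives the quasi-colimit in the topos, which is exactly the assertion of the lemma.

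The step I expect to be the main obstacle is bookkeeping, not conceptual: pinning down the precise presentation of $D^{n}\oplus D$ (i.e. that the defining relations are exactly $x_{i}^{2}=0$, $y^{2}=0$, and $x_{i}y=0$, so that the map $\Psi_{D^{n}}^{*}\colon y\mapsto x_{1}\cdots x_{n}$ is well-defined because $x_{i}\cdot(x_{1}\cdots x_{n}) = 0$ already holds in $W_{D^{n}}$), and then getting the signs and the choice of the "$y$-component'' in the inverse map exactly right so that the two triangles commute simultaneously. Once the presentations are fixed, the verification is a short finite computation. I would also remark that quasi-colimits, unlike genuine colimits, only require the universal property to hold after applying the functor $\mathrm{Hom}(-,X)$ for microlinear $X$, which is automatic once the Weil-algebra square is a pullback — this is the standard mechanism behind all such diagrams in the Kock–Lawvere setting referenced as \cite{kock1}.
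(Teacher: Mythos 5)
Your proposal is correct and is exactly the verification the paper has in mind: the lemma is stated without proof (the paper defers to the Lavendhomme-style treatment of the one-dimensional case), and the intended argument is precisely your reduction to showing that the algebra square is a pullback, i.e. $W_{D^{n}\oplus D}\cong W_{D^{n}}\times_{W_{D\{n\}_{n-1}}}W_{D^{n}}$ with $W_{D^{n}\oplus D}=\mathbb{R}[x_{1},\dots,x_{n},y]/(x_{i}^{2},y^{2},x_{i}y)$, $\Phi^{*}(y)=0$, $\Psi^{*}(y)=x_{1}\cdots x_{n}$. Your corrected inverse $a'+\lambda\,x_{1}\cdots x_{n}+(\mu-\lambda)\,y$ together with the dimension count $2^{n}+1$ settles it, so the proof is complete as written.
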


This implies directly that

\begin{proposition}
\label{t8.2.1.2}Given $\gamma_{+},\gamma_{-}\in$ $M\otimes\mathcal{W}_{D^{n}}$
with
\[
\left(  \mathrm{id}_{M}\otimes\mathcal{W}_{i_{D\left\{  n\right\}
_{n-1}\rightarrow D^{n}}}\right)  \left(  \gamma_{+}\right)  =\left(
\mathrm{id}_{M}\otimes\mathcal{W}_{i_{D\left\{  n\right\}  _{n-1}\rightarrow
D^{n}}}\right)  \left(  \gamma_{-}\right)  \text{,}%
\]
there exists unique $\gamma\in M\otimes\mathcal{W}_{D^{n}\oplus D}$ with
\begin{align*}
\left(  \mathrm{id}_{M}\otimes\mathcal{W}_{\Psi_{D^{n}}}\right)  \left(
\gamma\right)   &  =\gamma_{+}\text{ and}\\
\left(  \mathrm{id}_{M}\otimes\mathcal{W}_{\Phi_{D^{n}}}\right)  \left(
\gamma\right)   &  =\gamma_{-}%
\end{align*}

\end{proposition}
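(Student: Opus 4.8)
The idea is to transport the quasi-colimit diagram of Lemma~\ref{t8.2.1.1} through the Weil-algebra/Fr\"{o}licher-space machinery and then apply it to $M$. The plan is as follows. First I would recall that applying the contravariant functor $\mathcal{W}_{(-)}$ to the quasi-colimit square of Lemma~\ref{t8.2.1.1} turns it into a limit (pullback) square of Weil algebras: the maps $\Phi_{D^{n}}$ and $\Psi_{D^{n}}$ induce $\mathcal{W}_{\Phi_{D^{n}}},\mathcal{W}_{\Psi_{D^{n}}}\colon\mathcal{W}_{D^{n}\oplus D}\to\mathcal{W}_{D^{n}}$, the two copies of $i_{D\{n\}_{n-1}\to D^{n}}$ induce the common arrow $\mathcal{W}_{i_{D\{n\}_{n-1}\to D^{n}}}\colon\mathcal{W}_{D^{n}}\to\mathcal{W}_{D\{n\}_{n-1}}$, and ``quasi-colimit'' means precisely that $\mathcal{W}_{D^{n}\oplus D}$ is the fibered product of the two copies of $\mathcal{W}_{D^{n}}$ over $\mathcal{W}_{D\{n\}_{n-1}}$ in the relevant category. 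Then tensoring with $M$ (more precisely, applying the functor $M\otimes(-)$ to Weil algebras, which preserves the finite limits that matter here because $M$ is microlinear) yields that
\[
\begin{array}
[c]{ccc}
M\otimes\mathcal{W}_{D^{n}\oplus D} & \overset{\mathrm{id}_{M}\otimes\mathcal{W}_{\Phi_{D^{n}}}}{\longrightarrow} & M\otimes\mathcal{W}_{D^{n}}\\
\mathrm{id}_{M}\otimes\mathcal{W}_{\Psi_{D^{n}}}\downarrow &  & \downarrow\mathrm{id}_{M}\otimes\mathcal{W}_{i_{D\{n\}_{n-1}\to D^{n}}}\\
M\otimes\mathcal{W}_{D^{n}} & \underset{\mathrm{id}_{M}\otimes\mathcal{W}_{i_{D\{n\}_{n-1}\to D^{n}}}}{\longrightarrow} & M\otimes\mathcal{W}_{D\{n\}_{n-1}}
\end{array}
\]
is a pullback square.

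With that square in hand the proposition is just the universal property read off explicitly. Given $\gamma_{+},\gamma_{-}\in M\otimes\mathcal{W}_{D^{n}}$ satisfying the stated compatibility condition
\[
\left(\mathrm{id}_{M}\otimes\mathcal{W}_{i_{D\{n\}_{n-1}\to D^{n}}}\right)(\gamma_{+})=\left(\mathrm{id}_{M}\otimes\mathcal{W}_{i_{D\{n\}_{n-1}\to D^{n}}}\right)(\gamma_{-}),
\]
they form a cone over the cospan, so by the pullback property there is a unique $\gamma\in M\otimes\mathcal{W}_{D^{n}\oplus D}$ with $\left(\mathrm{id}_{M}\otimes\mathcal{W}_{\Psi_{D^{n}}}\right)(\gamma)=\gamma_{+}$ and $\left(\mathrm{id}_{M}\otimes\mathcal{W}_{\Phi_{D^{n}}}\right)(\gamma)=\gamma_{-}$. (One must be mildly careful about which leg of the square is labelled with $\Psi$ and which with $\Phi$; matching the two legs $i_{D\{n\}_{n-1}\to D^{n}}$ in Lemma~\ref{t8.2.1.1} to $\Psi_{D^{n}}$ and $\Phi_{D^{n}}$ respectively is what produces exactly the two displayed equations, and one checks $\Phi_{D^{n}}\circ i_{D\{n\}_{n-1}\to D^{n}}=\Psi_{D^{n}}\circ i_{D\{n\}_{n-1}\to D^{n}}$ as maps $D\{n\}_{n-1}\to D^{n}\oplus D$, which holds since $d_{1}\cdots d_{n}=0$ on $D\{n\}_{n-1}$.)

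The only real content beyond bookkeeping is the passage from the quasi-colimit of small objects to the pullback after applying $M\otimes\mathcal{W}_{(-)}$, i.e.\ the fact that $M$ ``perceives'' the quasi-colimit diagram of Lemma~\ref{t8.2.1.1}; this is exactly the microlinearity of $M$ (the Fr\"{o}licher space $M=\mathbb{R}^{p}$ is microlinear, and microlinear spaces are by definition those that turn such quasi-colimit diagrams of Weil spectra into limit diagrams). I therefore expect this step to be the main obstacle only in the sense of needing to cite the correct preservation principle precisely; once it is invoked, the statement ``This implies directly that\ldots'' in the paper is literally justified, and no computation remains. Hence:

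\begin{proof}
By Lemma~\ref{t8.2.1.1} the square there is a quasi-colimit diagram. Since $M$ is microlinear, applying $M\otimes\mathcal{W}_{(-)}$ carries it to a limit (pullback) diagram, namely the square displayed above. Now $\gamma_{+},\gamma_{-}$ together with the hypothesis
\[
\left(\mathrm{id}_{M}\otimes\mathcal{W}_{i_{D\{n\}_{n-1}\to D^{n}}}\right)(\gamma_{+})=\left(\mathrm{id}_{M}\otimes\mathcal{W}_{i_{D\{n\}_{n-1}\to D^{n}}}\right)(\gamma_{-})
\]
constitute a cone over the cospan obtained by deleting the top-left vertex, so by the universal property of the pullback there is a unique $\gamma\in M\otimes\mathcal{W}_{D^{n}\oplus D}$ with $\left(\mathrm{id}_{M}\otimes\mathcal{W}_{\Psi_{D^{n}}}\right)(\gamma)=\gamma_{+}$ and $\left(\mathrm{id}_{M}\otimes\mathcal{W}_{\Phi_{D^{n}}}\right)(\gamma)=\gamma_{-}$, as required.
\end{proof}
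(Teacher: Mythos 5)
Your proposal is correct and matches the paper's (implicit) argument exactly: the paper offers no separate proof, asserting only that the proposition follows directly from the quasi-colimit diagram of Lemma \ref{t8.2.1.1}, which is precisely your invocation of microlinearity to turn that diagram into a pullback and then read off the universal property. The only cosmetic remark is that in \S\ref{s8} the space $M$ is a general microlinear Fr\"{o}licher space (not yet $\mathbb{R}^{p}$), so the correct citation is simply the standing microlinearity hypothesis rather than microlinearity of $\mathbb{R}^{p}$.
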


\begin{notation}
Under the same notation as in the above proposition, we denote
\[
\left(  \mathrm{id}_{M}\otimes\mathcal{W}_{\Xi_{D^{n}}}\right)  \left(
\gamma\right)
\]
by $\gamma_{+}\dot{-}\gamma_{-}$, where $\Xi_{D^{n}}:D\rightarrow D^{n}\oplus
D$ is the mapping
\[
d\in D\mapsto\left(  0,...,0,d\right)  \in D^{n}\oplus D
\]

\end{notation}

From the very definition of $\dot{-}$, we have

\begin{proposition}
\label{t8.2.1.3}Let $F$ be a mapping of $M$\ into $M^{\prime}$. Given
$\gamma_{+},\gamma_{-}\in M\otimes\mathcal{W}_{D^{n}}$ with
\[
\left(  \mathrm{id}_{M}\otimes\mathcal{W}_{i_{D\left\{  n\right\}
_{n-1}\rightarrow D^{n}}}\right)  \left(  \gamma_{+}\right)  =\left(
\mathrm{id}_{M}\otimes\mathcal{W}_{i_{D\left\{  n\right\}  _{n-1}\rightarrow
D^{n}}}\right)  \left(  \gamma_{-}\right)  \text{,}%
\]
we have
\[
\left(  \mathrm{id}_{M^{\prime}}\otimes\mathcal{W}_{i_{D\left\{  n\right\}
_{n-1}\rightarrow D^{n}}}\right)  \left(  \left(  F\otimes\mathrm{id}%
_{\mathcal{W}_{D^{n}}}\right)  \left(  \gamma_{+}\right)  \right)  =\left(
\mathrm{id}_{M^{\prime}}\otimes\mathcal{W}_{i_{D\left\{  n\right\}
_{n-1}\rightarrow D^{n}}}\right)  \left(  \left(  F\otimes\mathrm{id}%
_{\mathcal{W}_{D^{n}}}\right)  \left(  \gamma_{-}\right)  \right)
\]
and
\begin{align*}
&  \left(  F\otimes\mathrm{id}_{\mathcal{W}_{D}}\right)  \left(  \gamma
_{+}\dot{-}\gamma_{-}\right) \\
&  =\left(  F\otimes\mathrm{id}_{\mathcal{W}_{D^{n}}}\right)  \left(
\gamma_{+}\right)  \dot{-}\left(  F\otimes\mathrm{id}_{\mathcal{W}_{D^{n}}%
}\right)  \left(  \gamma_{-}\right)
\end{align*}

\end{proposition}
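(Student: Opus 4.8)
The plan is to exploit the uniqueness clause of Proposition \ref{t8.2.1.2} together with the bifunctoriality of the construction $(-)\otimes\mathcal{W}_{(-)}$; the operative principle is that a morphism of the form $F\otimes\mathrm{id}_{\mathcal{W}_{\mathbf{W}}}$, acting in the ``space'' slot, commutes with every morphism of the form $\mathrm{id}\otimes\mathcal{W}_{\phi}$, acting in the ``Weil algebra'' slot. Essentially the entire proof is the careful application of this commutation in four guises.

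First I would dispatch the preliminary assertion. Since
$\left(\mathrm{id}_{M^{\prime}}\otimes\mathcal{W}_{i_{D\left\{n\right\}_{n-1}\rightarrow D^{n}}}\right)\circ\left(F\otimes\mathrm{id}_{\mathcal{W}_{D^{n}}}\right)=\left(F\otimes\mathrm{id}_{\mathcal{W}_{D\left\{n\right\}_{n-1}}}\right)\circ\left(\mathrm{id}_{M}\otimes\mathcal{W}_{i_{D\left\{n\right\}_{n-1}\rightarrow D^{n}}}\right)$,
and $\gamma_{+},\gamma_{-}$ have, by hypothesis, the same image under $\mathrm{id}_{M}\otimes\mathcal{W}_{i_{D\left\{n\right\}_{n-1}\rightarrow D^{n}}}$, applying $F\otimes\mathrm{id}$ to that common value yields the first claimed equality at once. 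In particular $\gamma_{+}\dot{-}\gamma_{-}$ is meaningful for the pair $\bigl(\left(F\otimes\mathrm{id}_{\mathcal{W}_{D^{n}}}\right)(\gamma_{+}),\left(F\otimes\mathrm{id}_{\mathcal{W}_{D^{n}}}\right)(\gamma_{-})\bigr)$ as well.

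For the main equation, let $\gamma\in M\otimes\mathcal{W}_{D^{n}\oplus D}$ be the unique element furnished by Proposition \ref{t8.2.1.2}, so that $\left(\mathrm{id}_{M}\otimes\mathcal{W}_{\Psi_{D^{n}}}\right)(\gamma)=\gamma_{+}$, $\left(\mathrm{id}_{M}\otimes\mathcal{W}_{\Phi_{D^{n}}}\right)(\gamma)=\gamma_{-}$, and, by the very definition of $\dot{-}$, $\gamma_{+}\dot{-}\gamma_{-}=\left(\mathrm{id}_{M}\otimes\mathcal{W}_{\Xi_{D^{n}}}\right)(\gamma)$. Put $\gamma^{\prime}:=\left(F\otimes\mathrm{id}_{\mathcal{W}_{D^{n}\oplus D}}\right)(\gamma)\in M^{\prime}\otimes\mathcal{W}_{D^{n}\oplus D}$. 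Commuting $F\otimes\mathrm{id}$ past $\mathrm{id}\otimes\mathcal{W}_{\Psi_{D^{n}}}$ and past $\mathrm{id}\otimes\mathcal{W}_{\Phi_{D^{n}}}$ gives $\left(\mathrm{id}_{M^{\prime}}\otimes\mathcal{W}_{\Psi_{D^{n}}}\right)(\gamma^{\prime})=\left(F\otimes\mathrm{id}_{\mathcal{W}_{D^{n}}}\right)(\gamma_{+})$ and $\left(\mathrm{id}_{M^{\prime}}\otimes\mathcal{W}_{\Phi_{D^{n}}}\right)(\gamma^{\prime})=\left(F\otimes\mathrm{id}_{\mathcal{W}_{D^{n}}}\right)(\gamma_{-})$. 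Thus $\gamma^{\prime}$ is precisely the element which Proposition \ref{t8.2.1.2}, now applied over $M^{\prime}$, assigns to the pair of images; by the uniqueness in that proposition this is forced, so $\left(F\otimes\mathrm{id}_{\mathcal{W}_{D^{n}}}\right)(\gamma_{+})\dot{-}\left(F\otimes\mathrm{id}_{\mathcal{W}_{D^{n}}}\right)(\gamma_{-})=\left(\mathrm{id}_{M^{\prime}}\otimes\mathcal{W}_{\Xi_{D^{n}}}\right)(\gamma^{\prime})$.

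Finally, commuting $F\otimes\mathrm{id}$ past $\mathrm{id}\otimes\mathcal{W}_{\Xi_{D^{n}}}$ one last time, $\left(\mathrm{id}_{M^{\prime}}\otimes\mathcal{W}_{\Xi_{D^{n}}}\right)(\gamma^{\prime})=\left(F\otimes\mathrm{id}_{\mathcal{W}_{D}}\right)\bigl(\left(\mathrm{id}_{M}\otimes\mathcal{W}_{\Xi_{D^{n}}}\right)(\gamma)\bigr)=\left(F\otimes\mathrm{id}_{\mathcal{W}_{D}}\right)(\gamma_{+}\dot{-}\gamma_{-})$, which is the desired identity. I do not expect a genuine obstacle: the argument is pure naturality bookkeeping, and the only thing demanding care is keeping track of which tensor factor each arrow touches, so that each ``commute $F\otimes\mathrm{id}$ past $\mathrm{id}\otimes\mathcal{W}_{\phi}$'' step is legitimate. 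The uniqueness half of Proposition \ref{t8.2.1.2} is what promotes these commutations from ``$\gamma^{\prime}$ is a witness'' to ``$\gamma^{\prime}$ is the witness,'' which is exactly what the stated equality requires.
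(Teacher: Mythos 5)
Your argument is correct and is exactly the expansion of what the paper leaves implicit: the paper states this proposition with only the remark that it holds ``from the very definition of $\dot{-}$,'' and that definition rests on the uniqueness clause of Proposition \ref{t8.2.1.2}, which is precisely the naturality-plus-uniqueness bookkeeping you carry out (transporting the unique witness $\gamma$ by $F\otimes\mathrm{id}_{\mathcal{W}_{D^{n}\oplus D}}$ and commuting it past $\mathcal{W}_{\Psi_{D^{n}}}$, $\mathcal{W}_{\Phi_{D^{n}}}$, $\mathcal{W}_{\Xi_{D^{n}}}$). No gap; the only tacit ingredient, that Proposition \ref{t8.2.1.2} applies over $M^{\prime}$ as well, is available since $M^{\prime}$ is likewise microlinear.
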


\begin{lemma}
\label{t8.2.1.4}The diagram
\[
\
\begin{array}
[c]{ccc}%
1 & \underrightarrow{i_{1\rightarrow D}} & D\\
i_{1\rightarrow D^{n}}\downarrow &  & \downarrow\Xi_{D^{n}}\\
D^{n} & \underrightarrow{\Phi_{D^{n}}} & D^{n}\oplus D
\end{array}
\]
is a quasi-colimit diagram, where $i_{1\rightarrow D^{n}}$ is the canonical
injection of $1$\ into $D^{n}$ and $i_{1\rightarrow D}$ is the canonical
injection of $1$\ into $D$.
\end{lemma}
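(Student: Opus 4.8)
The plan is to strip the statement down to a fact about Weil algebras. Recall that a pushout-shaped square of infinitesimal objects is a quasi-colimit diagram exactly when the square obtained from it by the (contravariant) passage $\mathcal{W}_{(-)}$ to Weil algebras is a limit, i.e.\ a pullback, diagram of Weil algebras; and since every Weil algebra occurring here is finite-dimensional and the forgetful functor to $\mathbb{R}$-vector spaces creates limits, it is enough to check that the natural comparison map into the fibre product is a bijection, for it is then automatically an isomorphism of $\mathbb{R}$-algebras. So, applying $\mathcal{W}_{(-)}$ to the square in the statement, I must show that
\[
\mathcal{W}_{D^{n}\oplus D}\longrightarrow\mathcal{W}_{D}\times_{\mathbb{R}}\mathcal{W}_{D^{n}},\qquad\varphi\longmapsto\bigl(\mathcal{W}_{\Xi_{D^{n}}}(\varphi),\,\mathcal{W}_{\Phi_{D^{n}}}(\varphi)\bigr),
\]
is bijective, where the two structure maps $\mathcal{W}_{D}\to\mathbb{R}$ and $\mathcal{W}_{D^{n}}\to\mathbb{R}$ defining the fibre product are $\mathcal{W}_{i_{1\rightarrow D}}$ and $\mathcal{W}_{i_{1\rightarrow D^{n}}}$, i.e.\ the two augmentations.

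Next I would pass to coordinates. Writing $\mathcal{W}_{D}=\mathbb{R}[y]/(y^{2})$, $\mathcal{W}_{D^{n}}=\mathbb{R}[x_{1},\dots,x_{n}]/(x_{1}^{2},\dots,x_{n}^{2})$ and $\mathcal{W}_{D^{n}\oplus D}=\mathbb{R}[x_{1},\dots,x_{n},z]/(x_{1}^{2},\dots,x_{n}^{2},z^{2},x_{1}z,\dots,x_{n}z)$, the algebra maps induced by $\Phi_{D^{n}}$ and $\Xi_{D^{n}}$ are $x_{i}\mapsto x_{i}$, $z\mapsto 0$ and $x_{i}\mapsto 0$, $z\mapsto y$ respectively. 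Because $z$ is square-zero and annihilates each $x_{i}$, every element of $\mathcal{W}_{D^{n}\oplus D}$ is uniquely of the form $a+f+bz$ with $a,b\in\mathbb{R}$ and $f$ in the augmentation ideal of $\mathcal{W}_{D^{n}}$, and its image under the comparison map is $(a+by,\,a+f)$; conversely a general element of the fibre product is exactly a pair consisting of an element of $\mathcal{W}_{D}$ and one of $\mathcal{W}_{D^{n}}$ with equal constant term, hence of this shape. Thus $(a,b,f)$ is read off from the image, the comparison map is bijective, and the lemma follows. If one prefers to avoid coordinates, the same content can be recorded synthetically: any tangent vector $\delta\in M\otimes\mathcal{W}_{D}$ and any $D^{n}$-microcube $\gamma\in M\otimes\mathcal{W}_{D^{n}}$ sharing the same base point in $M$ glue uniquely, along $\Phi_{D^{n}}$ and $\Xi_{D^{n}}$, to an element of $M\otimes\mathcal{W}_{D^{n}\oplus D}$ — but this is the identical piece of bookkeeping.

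I do not anticipate a genuine obstacle. The only points that want care are the handling of variance when moving between infinitesimal objects and their Weil algebras, and making sure the presentation of $\mathcal{W}_{D^{n}\oplus D}$ used above is the one fixed earlier (and implicitly used in Lemma~\ref{t8.2.1.1}), so that under it $\mathcal{W}_{\Phi_{D^{n}}}$ and $\mathcal{W}_{\Xi_{D^{n}}}$ really are the two projections out of the fibre product. It is also worth remarking in passing on the degenerate case $n=1$: there $D^{1}\oplus D$ is the standard $D(2)$ and the statement reduces to the classical fact that $D(2)$ is the quasi-colimit (pushout) of $D\leftarrow 1\rightarrow D$, so nothing exceptional happens. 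Everything else is routine finite-dimensional linear algebra.
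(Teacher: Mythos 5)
Your proof is correct. The paper itself offers no argument for this lemma (as with Lemma \ref{t8.2.1.1}, it is stated as a known quasi-colimit, the $n$-dimensional analogue of the facts used in \S\S 3.4 of \cite{la}), and your verification is exactly the standard one: unwind ``quasi-colimit'' to the assertion that the corresponding square of Weil algebras is a pullback, present $\mathcal{W}_{D^{n}\oplus D}$ as $\mathbb{R}[x_{1},\dots,x_{n},z]/(x_{i}^{2},z^{2},x_{i}z)$, and check that $a+f+bz\mapsto(a+by,\,a+f)$ is a bijection onto the fibre product of the two augmentations. The only point needing the care you already flagged is that $D^{n}\oplus D$ is the object $\{(d_{1},\dots,d_{n},e)\in D^{n}\times D: d_{i}e=0\}$, which is what makes $\Psi_{D^{n}}$ and $\Xi_{D^{n}}$ land where they should; with that presentation your computation goes through.
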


This implies directly that

\begin{proposition}
\label{t8.2.1.5}Given $t\in M\otimes\mathcal{W}_{D}\ $and $\gamma\in
M\otimes\mathcal{W}_{D^{n}}\ $with
\[
\left(  \mathrm{id}_{M}\otimes\mathcal{W}_{i_{1\rightarrow D}}\right)  \left(
t\right)  =\left(  \mathrm{id}_{M}\otimes\mathcal{W}_{i_{1\rightarrow D^{n}}%
}\right)  \left(  \gamma\right)  \text{,}%
\]
there exists unique $\gamma^{\prime}\in M\otimes\mathcal{W}_{D^{n}\oplus D}%
\ $with
\begin{align*}
\left(  \mathrm{id}_{M}\otimes\mathcal{W}_{\Xi_{D^{n}}}\right)  \left(
\gamma^{\prime}\right)   &  =t\text{ and}\\
\left(  \mathrm{id}_{M}\otimes\mathcal{W}_{\Phi_{D^{n}}}\right)  \left(
\gamma^{\prime}\right)   &  =\gamma\text{.}%
\end{align*}

\end{proposition}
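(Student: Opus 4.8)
The plan is to recognise the statement as nothing more than the elementwise transcription of the quasi-colimit square of Lemma \ref{t8.2.1.4} after it has been fed through the Weil functor, exactly as Proposition \ref{t8.2.1.2} was read off from Lemma \ref{t8.2.1.1}. So the first move is to invoke microlinearity: since $M$ is a microlinear Fr\"olicher space, the functor $M\otimes\mathcal{W}_{(-)}$ carries every quasi-colimit diagram of infinitesimal objects to a limit diagram of Fr\"olicher spaces, and Lemma \ref{t8.2.1.4} supplies precisely such a quasi-colimit diagram. That single fact, together with the lemma, is all the input needed.

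Concretely, I would apply $M\otimes\mathcal{W}_{(-)}$ to the square of Lemma \ref{t8.2.1.4}. Because this functor is contravariant on the infinitesimal objects, the pushout-shaped square with initial corner $1$ and terminal corner $D^{n}\oplus D$ is turned into the pullback-shaped square
\[
\begin{array}{ccc}
M\otimes\mathcal{W}_{D^{n}\oplus D} & \longrightarrow & M\otimes\mathcal{W}_{D}\\
\downarrow & & \downarrow\\
M\otimes\mathcal{W}_{D^{n}} & \longrightarrow & M\otimes\mathcal{W}_{1}
\end{array}
\]
in which the top and left arrows are $\mathrm{id}_{M}\otimes\mathcal{W}_{\Xi_{D^{n}}}$ and $\mathrm{id}_{M}\otimes\mathcal{W}_{\Phi_{D^{n}}}$, and the right and bottom arrows are $\mathrm{id}_{M}\otimes\mathcal{W}_{i_{1\rightarrow D}}$ and $\mathrm{id}_{M}\otimes\mathcal{W}_{i_{1\rightarrow D^{n}}}$. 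Microlinearity of $M$ says this square is a pullback, and the canonical identification $M\otimes\mathcal{W}_{1}\cong M$ turns the right and bottom arrows into the two maps named in the hypothesis. Now the hypothesis $\left(\mathrm{id}_{M}\otimes\mathcal{W}_{i_{1\rightarrow D}}\right)(t)=\left(\mathrm{id}_{M}\otimes\mathcal{W}_{i_{1\rightarrow D^{n}}}\right)(\gamma)$ is exactly the assertion that $(t,\gamma)$ is a point of the pullback, so the universal property yields a unique $\gamma^{\prime}\in M\otimes\mathcal{W}_{D^{n}\oplus D}$ with $\left(\mathrm{id}_{M}\otimes\mathcal{W}_{\Xi_{D^{n}}}\right)(\gamma^{\prime})=t$ and $\left(\mathrm{id}_{M}\otimes\mathcal{W}_{\Phi_{D^{n}}}\right)(\gamma^{\prime})=\gamma$, which is the claim.

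There is essentially no computation to grind through here; the proof is a one-line consequence once the preceding lemma is in hand. The only thing demanding genuine care — and the closest thing to an obstacle — is the bookkeeping: confirming that the four arrows produced by the Weil functor are literally $\mathrm{id}_{M}\otimes\mathcal{W}_{\Xi_{D^{n}}}$, $\mathrm{id}_{M}\otimes\mathcal{W}_{\Phi_{D^{n}}}$, $\mathrm{id}_{M}\otimes\mathcal{W}_{i_{1\rightarrow D}}$ and $\mathrm{id}_{M}\otimes\mathcal{W}_{i_{1\rightarrow D^{n}}}$, checking the orientation of the square after dualising the quasi-colimit, and noting that the identification $M\otimes\mathcal{W}_{1}\cong M$ is harmless. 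Once these identifications are matched up, the statement follows directly, in complete parallel with the passage from Lemma \ref{t8.2.1.1} to Proposition \ref{t8.2.1.2}.
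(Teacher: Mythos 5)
Your argument is exactly the paper's: the paper derives Proposition \ref{t8.2.1.5} from Lemma \ref{t8.2.1.4} with the single phrase ``this implies directly that,'' the implicit content being precisely what you spell out, namely that microlinearity turns the quasi-colimit square into a pullback whose universal property gives the unique $\gamma^{\prime}$. Your proposal is correct and matches the intended proof, merely making the bookkeeping explicit.
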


\begin{notation}
Under the same notation as in the above proposition, we denote
\[
\left(  \mathrm{id}_{M}\otimes\mathcal{W}_{\Psi_{D^{n}}}\right)  \left(
\gamma^{\prime}\right)
\]
by $t\dot{+}\gamma$, where $\Psi_{D^{n}}$ is as in Lemma \ref{t8.2.1.1}
\end{notation}

From the very definition of $\dot{+}$,$\ $we have

\begin{proposition}
\label{t8.2.1.6}Let $F$ be a mapping of $M$\ into $M^{\prime}$. Given $t\in
M\otimes\mathcal{W}_{D}\ $and $\gamma\in M\otimes\mathcal{W}_{D^{n}}\ $with
\[
\left(  \mathrm{id}_{M}\otimes\mathcal{W}_{i_{1\rightarrow D}}\right)  \left(
t\right)  =\left(  \mathrm{id}_{M}\otimes\mathcal{W}_{i_{1\rightarrow D^{n}}%
}\right)  \left(  \gamma\right)  \text{,}%
\]
we have
\[
\left(  \mathrm{id}_{M^{\prime}}\otimes\mathcal{W}_{i_{1\rightarrow D}%
}\right)  \left(  \left(  F\otimes\mathrm{id}_{\mathcal{W}_{D}}\right)
\left(  t\right)  \right)  =\left(  \mathrm{id}_{M^{\prime}}\otimes
\mathcal{W}_{i_{1\rightarrow D^{n}}}\right)  \left(  \left(  F\otimes
\mathrm{id}_{\mathcal{W}_{D^{n}}}\right)  \left(  \gamma\right)  \right)
\]
and
\[
\left(  F\otimes\mathrm{id}_{\mathcal{W}_{D^{n}}}\right)  \left(  t\dot
{+}\gamma\right)  =\left(  F\otimes\mathrm{id}_{\mathcal{W}_{D}}\right)
\left(  t\right)  \dot{+}\left(  F\otimes\mathrm{id}_{\mathcal{W}_{D^{n}}%
}\right)  \left(  \gamma\right)  \text{.}%
\]

\end{proposition}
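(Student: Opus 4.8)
The plan is to read this off as a naturality statement: the operation $\dot{+}$ is manufactured entirely out of the Weil-prolongation machinery of Proposition \ref{t8.2.1.5}, and push-forward along $F$ — that is, $F\otimes\mathrm{id}_{\mathcal{W}_{(-)}}$ — commutes with every structure map of the form $\mathrm{id}\otimes\mathcal{W}_{g}$ induced by a map $g$ between small objects, since, by the bifunctoriality of $\otimes$, both composites coincide with $F\otimes\mathcal{W}_{g}$. No genuinely new computation should be required, only careful bookkeeping — this is the same pattern as the already-recorded companion statement Proposition \ref{t8.2.1.3} for $\dot{-}$.

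First I would settle the compatibility assertion (the first displayed equality). Applying $F$ to the hypothesis $(\mathrm{id}_{M}\otimes\mathcal{W}_{i_{1\rightarrow D}})(t)=(\mathrm{id}_{M}\otimes\mathcal{W}_{i_{1\rightarrow D^{n}}})(\gamma)$ — here I identify $M\otimes\mathcal{W}_{1}$ with $M$, so that $F=F\otimes\mathrm{id}_{\mathcal{W}_{1}}$ — and then using $(F\otimes\mathrm{id}_{\mathcal{W}_{1}})\circ(\mathrm{id}_{M}\otimes\mathcal{W}_{i_{1\rightarrow D}})=(\mathrm{id}_{M^{\prime}}\otimes\mathcal{W}_{i_{1\rightarrow D}})\circ(F\otimes\mathrm{id}_{\mathcal{W}_{D}})$ on the left, together with the analogous identity with $D^{n}$ in place of $D$ on the right, gives exactly the claimed equality. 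As a by-product this shows that $(F\otimes\mathrm{id}_{\mathcal{W}_{D}})(t)$ and $(F\otimes\mathrm{id}_{\mathcal{W}_{D^{n}}})(\gamma)$ satisfy the hypothesis of Proposition \ref{t8.2.1.5} over $M^{\prime}$, so that the right-hand side of the second displayed equality is well defined.

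For the main equality, let $\gamma^{\prime}\in M\otimes\mathcal{W}_{D^{n}\oplus D}$ be the unique element supplied by Proposition \ref{t8.2.1.5}, so that $(\mathrm{id}_{M}\otimes\mathcal{W}_{\Xi_{D^{n}}})(\gamma^{\prime})=t$, $(\mathrm{id}_{M}\otimes\mathcal{W}_{\Phi_{D^{n}}})(\gamma^{\prime})=\gamma$, and $t\dot{+}\gamma=(\mathrm{id}_{M}\otimes\mathcal{W}_{\Psi_{D^{n}}})(\gamma^{\prime})$. Put $\gamma^{\prime\prime}:=(F\otimes\mathrm{id}_{\mathcal{W}_{D^{n}\oplus D}})(\gamma^{\prime})\in M^{\prime}\otimes\mathcal{W}_{D^{n}\oplus D}$. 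Pushing the first two identities for $\gamma^{\prime}$ forward along $F$ and commuting $F\otimes\mathrm{id}$ past $\mathcal{W}_{\Xi_{D^{n}}}$ and past $\mathcal{W}_{\Phi_{D^{n}}}$ yields $(\mathrm{id}_{M^{\prime}}\otimes\mathcal{W}_{\Xi_{D^{n}}})(\gamma^{\prime\prime})=(F\otimes\mathrm{id}_{\mathcal{W}_{D}})(t)$ and $(\mathrm{id}_{M^{\prime}}\otimes\mathcal{W}_{\Phi_{D^{n}}})(\gamma^{\prime\prime})=(F\otimes\mathrm{id}_{\mathcal{W}_{D^{n}}})(\gamma)$. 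By the uniqueness clause of Proposition \ref{t8.2.1.5}, applied over $M^{\prime}$ to the data already shown to be compatible, $\gamma^{\prime\prime}$ is precisely the element defining $(F\otimes\mathrm{id}_{\mathcal{W}_{D}})(t)\dot{+}(F\otimes\mathrm{id}_{\mathcal{W}_{D^{n}}})(\gamma)$; that is, this sum equals $(\mathrm{id}_{M^{\prime}}\otimes\mathcal{W}_{\Psi_{D^{n}}})(\gamma^{\prime\prime})$. Commuting $F\otimes\mathrm{id}$ past $\mathcal{W}_{\Psi_{D^{n}}}$ once more rewrites the right-hand side as $(F\otimes\mathrm{id}_{\mathcal{W}_{D^{n}}})\left((\mathrm{id}_{M}\otimes\mathcal{W}_{\Psi_{D^{n}}})(\gamma^{\prime})\right)=(F\otimes\mathrm{id}_{\mathcal{W}_{D^{n}}})(t\dot{+}\gamma)$, which is the desired identity.

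The only point needing any attention — and hence the step I would flag as the main (if minor) obstacle — is the legitimate use of uniqueness in Proposition \ref{t8.2.1.5}: one has to verify the compatibility hypothesis for the pushed-forward data before invoking it (done in the second paragraph above), and then check that $\gamma^{\prime\prime}$ satisfies both defining equations so that uniqueness forces the identification. Beyond that, every step is simply an instance of the bifunctoriality of $\otimes$, exactly as in the proof of the companion Proposition \ref{t8.2.1.3}.
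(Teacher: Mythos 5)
Your argument is correct and is exactly what the paper intends: Proposition \ref{t8.2.1.6} is stated there with no written proof beyond the remark that it follows ``from the very definition of $\dot{+}$,'' i.e.\ from the uniqueness clause of Proposition \ref{t8.2.1.5} together with the naturality of $F\otimes\mathrm{id}$ against the structure maps $\mathrm{id}\otimes\mathcal{W}_{\Xi_{D^{n}}}$, $\mathrm{id}\otimes\mathcal{W}_{\Phi_{D^{n}}}$, $\mathrm{id}\otimes\mathcal{W}_{\Psi_{D^{n}}}$, which is precisely the bookkeeping you carry out. Your explicit verification of the compatibility hypothesis before invoking uniqueness is the right (and only) point of care, so nothing is missing.
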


We can proceed as in \S \S 3.4 of \cite{la} to get

\begin{theorem}
\label{t8.2.1.7}The canonical projection $\mathrm{id}_{M}\otimes
\mathcal{W}_{i_{D\left\{  n\right\}  _{n-1}\rightarrow D^{n}}}:M\otimes
\mathcal{W}_{D^{n}}\mathcal{\rightarrow}M\otimes\mathcal{W}_{D\left\{
n\right\}  _{n-1}}\ $is an affine bundle over the vector bundle $\left(
M\otimes\mathcal{W}_{D}\right)  \underset{M}{\times}\left(  M\otimes
\mathcal{W}_{D\left\{  n\right\}  _{n-1}}\right)  \rightarrow M\otimes
\mathcal{W}_{D\left\{  n\right\}  _{n-1}}$.
\end{theorem}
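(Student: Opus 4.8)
The plan is to verify the two standard axioms of an affine bundle: first, that the fibers of the projection carry a transitive and free action of the fibers of the indicated vector bundle, and second, that this action is locally trivial in an appropriate sense. The operations $\dot{+}$ and $\dot{-}$ introduced above are precisely the candidates for the action and its "difference" operation, so the strategy is to show that they fit together correctly over each point of $M\otimes\mathcal{W}_{D\{n\}_{n-1}}$.

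First I would fix $\xi\in M\otimes\mathcal{W}_{D\{n\}_{n-1}}$ lying over $x\in M$, and consider the fiber of $\mathrm{id}_M\otimes\mathcal{W}_{i_{D\{n\}_{n-1}\to D^n}}$ over $\xi$, call it $F_\xi$. Given $\gamma\in F_\xi$ and a tangent vector $t\in M\otimes\mathcal{W}_D$ over $x$ (the compatibility condition of Proposition \ref{t8.2.1.5} being automatic since $1$ is terminal and everything sits over $x$), Proposition \ref{t8.2.1.5} produces $t\dot{+}\gamma$, and one checks via the quasi-colimit property of Lemma \ref{t8.2.1.1} that $t\dot{+}\gamma$ again lies in $F_\xi$: indeed $\Psi_{D^n}$ and $\Phi_{D^n}$ agree after restriction along $i_{D\{n\}_{n-1}\to D^n}$, so applying $\mathrm{id}_M\otimes\mathcal{W}_{i_{D\{n\}_{n-1}\to D^n}}$ to $t\dot{+}\gamma$ recovers $\xi$. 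Freeness and transitivity of this action then reduce to Proposition \ref{t8.2.1.2} together with Lemma \ref{t8.2.1.4}: given $\gamma_+,\gamma_-\in F_\xi$, Proposition \ref{t8.2.1.2} yields a unique $\gamma$ whose images under $\Psi_{D^n}$ and $\Phi_{D^n}$ are $\gamma_+$ and $\gamma_-$, and $\gamma_+\dot{-}\gamma_-$ is the unique tangent vector $t$ with $t\dot{+}\gamma_-=\gamma_+$; the uniqueness clauses in both quasi-colimit lemmas give that this $t$ is the only solution, so the action is simply transitive with difference map $\dot{-}$. The fact that the vector bundle in question is the fibered product $(M\otimes\mathcal{W}_D)\times_M(M\otimes\mathcal{W}_{D\{n\}_{n-1}})$ rather than $M\otimes\mathcal{W}_D$ alone is just the bookkeeping that the base point $\xi$ must be carried along.

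Next I would address additivity, i.e. that $(s+t)\dot{+}\gamma = s\dot{+}(t\dot{+}\gamma)$ and $0\dot{+}\gamma=\gamma$, where $+$ is the fiberwise addition of tangent vectors. This is where one genuinely has to "proceed as in \S\S3.4 of \cite{la}": the addition of tangent vectors is itself defined via a quasi-colimit (the standard one presenting $D(2)$ or $D\vee D$), and one must check that the two ways of assembling the relevant Weil algebra elements — adding first and then applying $\dot{+}$, versus iterating $\dot{+}$ — produce the same element of $M\otimes\mathcal{W}_{D^n}$. The clean way is to exhibit a single larger quasi-colimit diagram through which both composites factor, invoke its uniqueness clause, and reduce the identity to a routine equality of polynomial maps between the small objects $D^n$, $D^n\oplus D$, $D^n\oplus D\oplus D$, and the corresponding $D\{\cdot\}$-type objects. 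I expect this diagram-chasing — keeping track of which copy of $D$ is being acted on and verifying the compatibility of $\Psi_{D^n}$, $\Phi_{D^n}$, $\Xi_{D^n}$ under iteration — to be the main obstacle; it is conceptually straightforward but combinatorially the most delicate part.

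Finally, local triviality: in the Fr\"olicher-space setting the "trivialization" is global here, since Proposition \ref{t8.2.1.5} and Proposition \ref{t8.2.1.2} give, for the whole space at once, a bijection $M\otimes\mathcal{W}_{D^n}\cong (M\otimes\mathcal{W}_D)\times_M(M\otimes\mathcal{W}_{D\{n\}_{n-1}})$ once a section is chosen locally on $M$ (and on $\mathbb{R}^p$ such a section exists globally by the coordinate description, though the theorem is stated before coordinates are imposed). I would check that the maps $\dot{+}$ and $\dot{-}$ are morphisms of Fr\"olicher spaces — immediate because they are built from $\mathrm{id}_M\otimes\mathcal{W}_{(-)}$ applied to fixed maps between Weil algebras, and Propositions \ref{t8.2.1.3} and \ref{t8.2.1.6} record exactly the naturality needed for this to be smooth in all arguments. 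Assembling these pieces gives the affine bundle structure, completing the proof.
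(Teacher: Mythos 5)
Your proposal follows essentially the same route as the paper, which offers no written-out argument but simply says to proceed as in \S\S 3.4 of Lavendhomme: you use the quasi-colimit diagrams of Lemmas \ref{t8.2.1.1} and \ref{t8.2.1.4} to define $\dot{-}$ and $\dot{+}$, derive simple transitivity of the fiberwise action from the uniqueness clauses, and invoke the naturality statements (Propositions \ref{t8.2.1.3} and \ref{t8.2.1.6}) for smoothness, with the compatibility with the vector-space operations on $\left(M\otimes\mathcal{W}_{D}\right)_{x}$ handled by the same Lavendhomme-style diagram chase that the paper records afterwards as Propositions \ref{t8.2.1.8}--\ref{t8.2.1.10}. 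The only caveat is that the additivity/scalar-compatibility step is left as a sketch in your write-up, but the strategy you indicate (a larger quasi-colimit through which both composites factor) is exactly the intended one, so the outline is correct.
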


We have the following $n$-dimensional$\ $counterparts of Propositions 5, 6 and
7 in \S \S 3.4 of \cite{la}.

\begin{proposition}
\label{t8.2.1.8}For any $\alpha\in\mathbb{R}$, any $\gamma_{+},\gamma
_{-},\gamma\in M\otimes\mathcal{W}_{D^{n}}\ $and any $t\in M\otimes
\mathcal{W}_{D}\ $with
\[
\left(  \mathrm{id}_{M}\otimes\mathcal{W}_{i_{D\left\{  n\right\}
_{n-1}\rightarrow D^{n}}}\right)  \left(  \gamma_{+}\right)  =\left(
\mathrm{id}_{M}\otimes\mathcal{W}_{i_{D\left\{  n\right\}  _{n-1}\rightarrow
D^{n}}}\right)  \left(  \gamma_{-}\right)
\]
and
\[
\left(  \mathrm{id}_{M}\otimes\mathcal{W}_{i_{1\rightarrow D}}\right)  \left(
t\right)  =\left(  \mathrm{id}_{M}\otimes\mathcal{W}_{i_{1\rightarrow D^{n}}%
}\right)  \left(  \gamma\right)  \text{,}%
\]
we have
\begin{align*}
\alpha(\gamma_{+}\dot{-}\gamma_{-})  &  =(\alpha\underset{i}{\cdot}\gamma
_{+})\dot{-}(\alpha\underset{i}{\cdot}\gamma_{-})\\
\alpha\underset{i}{\cdot}(t\dot{+}\gamma)  &  =\alpha t\dot{+}\alpha
\underset{i}{\cdot}\gamma
\end{align*}

\end{proposition}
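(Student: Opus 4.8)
The plan is to express all four operations occurring in the statement --- the scaling $\alpha\,(\cdot)$ in the vector bundle of Theorem \ref{t8.2.1.7}, the scaling $\alpha\underset{i}{\cdot}(\cdot)$, and the two operations $\dot{+}$ and $\dot{-}$ --- through the (contravariant) assignment $f\mapsto\mathrm{id}_{M}\otimes\mathcal{W}_{f}$ applied to maps of the infinitesimal objects $D$, $D^{n}$ and $D^{n}\oplus D$, and then to read off the two identities from the uniqueness clauses of Propositions \ref{t8.2.1.2} and \ref{t8.2.1.5}. Concretely, write $s^{\alpha}\colon D\rightarrow D$ for $d\mapsto\alpha d$ and $s_{i}^{\alpha}\colon D^{n}\rightarrow D^{n}$ for $(d_{1},\dots,d_{n})\mapsto(d_{1},\dots,\alpha d_{i},\dots,d_{n})$, so that $\alpha t=(\mathrm{id}_{M}\otimes\mathcal{W}_{s^{\alpha}})(t)$ for $t\in M\otimes\mathcal{W}_{D}$ and $\alpha\underset{i}{\cdot}\gamma=(\mathrm{id}_{M}\otimes\mathcal{W}_{s_{i}^{\alpha}})(\gamma)$ for $\gamma\in M\otimes\mathcal{W}_{D^{n}}$.

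The device that drives the proof is the endomorphism
\[
\sigma_{i}^{\alpha}\colon D^{n}\oplus D\rightarrow D^{n}\oplus D,\qquad(d_{1},\dots,d_{n},e)\mapsto(d_{1},\dots,\alpha d_{i},\dots,d_{n},\alpha e),
\]
which is well defined on the quasi-colimit $D^{n}\oplus D$ of Lemma \ref{t8.2.1.1} since it simply rescales each of the monomial relations cutting it out. A direct computation against the explicit formulas for $\Phi_{D^{n}}$, $\Psi_{D^{n}}$ and $\Xi_{D^{n}}$, using $d_{1}\cdots(\alpha d_{i})\cdots d_{n}=\alpha\,d_{1}\cdots d_{n}$, yields the three compatibilities
\[
\sigma_{i}^{\alpha}\circ\Psi_{D^{n}}=\Psi_{D^{n}}\circ s_{i}^{\alpha},\qquad\sigma_{i}^{\alpha}\circ\Phi_{D^{n}}=\Phi_{D^{n}}\circ s_{i}^{\alpha},\qquad\sigma_{i}^{\alpha}\circ\Xi_{D^{n}}=\Xi_{D^{n}}\circ s^{\alpha}.
\]

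For the first identity, let $\gamma\in M\otimes\mathcal{W}_{D^{n}\oplus D}$ be the element supplied by Proposition \ref{t8.2.1.2} for the pair $\gamma_{+},\gamma_{-}$, so that $(\mathrm{id}_{M}\otimes\mathcal{W}_{\Psi_{D^{n}}})(\gamma)=\gamma_{+}$, $(\mathrm{id}_{M}\otimes\mathcal{W}_{\Phi_{D^{n}}})(\gamma)=\gamma_{-}$ and $\gamma_{+}\dot{-}\gamma_{-}=(\mathrm{id}_{M}\otimes\mathcal{W}_{\Xi_{D^{n}}})(\gamma)$. Applying $\mathrm{id}_{M}\otimes\mathcal{W}_{(-)}$ to the first two compatibilities and using $\mathcal{W}_{g}\circ\mathcal{W}_{f}=\mathcal{W}_{f\circ g}$ shows that $\widetilde{\gamma}:=(\mathrm{id}_{M}\otimes\mathcal{W}_{\sigma_{i}^{\alpha}})(\gamma)$ satisfies $(\mathrm{id}_{M}\otimes\mathcal{W}_{\Psi_{D^{n}}})(\widetilde{\gamma})=\alpha\underset{i}{\cdot}\gamma_{+}$ and $(\mathrm{id}_{M}\otimes\mathcal{W}_{\Phi_{D^{n}}})(\widetilde{\gamma})=\alpha\underset{i}{\cdot}\gamma_{-}$; in particular $\alpha\underset{i}{\cdot}\gamma_{+}$ and $\alpha\underset{i}{\cdot}\gamma_{-}$ agree over $D\{n\}_{n-1}$ (so the displayed left-hand side makes sense), and by the uniqueness in Proposition \ref{t8.2.1.2} the element associated to $\alpha\underset{i}{\cdot}\gamma_{+},\alpha\underset{i}{\cdot}\gamma_{-}$ is precisely $\widetilde{\gamma}$. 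Using the third compatibility,
\[
(\alpha\underset{i}{\cdot}\gamma_{+})\dot{-}(\alpha\underset{i}{\cdot}\gamma_{-})=(\mathrm{id}_{M}\otimes\mathcal{W}_{\Xi_{D^{n}}})(\widetilde{\gamma})=(\mathrm{id}_{M}\otimes\mathcal{W}_{s^{\alpha}})\bigl((\mathrm{id}_{M}\otimes\mathcal{W}_{\Xi_{D^{n}}})(\gamma)\bigr)=\alpha(\gamma_{+}\dot{-}\gamma_{-}).
\]

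The second identity is obtained identically from Proposition \ref{t8.2.1.5}: if $\gamma'\in M\otimes\mathcal{W}_{D^{n}\oplus D}$ is the element it assigns to $t,\gamma$, then $(\mathrm{id}_{M}\otimes\mathcal{W}_{\sigma_{i}^{\alpha}})(\gamma')$ is --- by the third and second compatibilities and uniqueness --- the element assigned to $\alpha t,\alpha\underset{i}{\cdot}\gamma$, and applying $\mathrm{id}_{M}\otimes\mathcal{W}_{\Psi_{D^{n}}}$ together with the first compatibility gives $\alpha t\dot{+}\alpha\underset{i}{\cdot}\gamma=\alpha\underset{i}{\cdot}(t\dot{+}\gamma)$. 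The only delicate points will be keeping the contravariance of $\mathcal{W}_{(-)}$ straight and verifying that $\sigma_{i}^{\alpha}$ genuinely descends to $D^{n}\oplus D$; once those are settled, the three compatibility identities --- and hence the proposition --- follow immediately.
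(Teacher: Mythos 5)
Your argument is correct and is essentially the proof the paper intends: Proposition \ref{t8.2.1.8} is stated there without proof, as the $n$-dimensional counterpart of Propositions 5--7 in \S\S 3.4 of \cite{la}, and the standard argument is precisely yours --- transport the unique element of $M\otimes\mathcal{W}_{D^{n}\oplus D}$ furnished by Proposition \ref{t8.2.1.2} (resp.\ Proposition \ref{t8.2.1.5}) along the rescaling endomorphism $\sigma_{i}^{\alpha}$ and read off both identities from the uniqueness clauses. The one step you defer, that $\sigma_{i}^{\alpha}$ descends to $D^{n}\oplus D$, is indeed routine, since that object is carved out of $D^{n}\times D$ by monomial equations, which are stable under coordinatewise scaling.
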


\begin{proposition}
\label{t8.2.1.9}The diagrams
\[%
\begin{array}
[c]{ccc}%
\left(  M\otimes\mathcal{W}_{D^{n}}\right)  \underset{M\otimes\mathcal{W}%
_{D\left\{  n\right\}  _{n-1}}}{\times}\left(  M\otimes\mathcal{W}_{D^{n}%
}\right)  & \rightarrow & M\otimes\mathcal{W}_{D}\\
\downarrow_{i} &  & \downarrow\\
\left(  \left(  M\otimes\mathcal{W}_{D^{n}}\right)  \underset{M\otimes
\mathcal{W}_{D\left\{  n\right\}  _{n-1}}}{\times}\left(  M\otimes
\mathcal{W}_{D^{n}}\right)  \right)  \otimes\mathcal{W}_{D_{m}} & \rightarrow
& \left(  M\otimes\mathcal{W}_{D}\right)  \otimes\mathcal{W}_{D_{m}}%
\end{array}
\left(  1\leq i\leq n\right)
\]
\[%
\begin{array}
[c]{ccc}%
\left(  M\otimes\mathcal{W}_{D}\right)  \underset{M}{\times}\left(
M\otimes\mathcal{W}_{D^{n}}\right)  & \rightarrow & M\otimes\mathcal{W}%
_{D^{n}}\\
\downarrow_{i} &  & \downarrow_{i}\\
\left(  \left(  M\otimes\mathcal{W}_{D}\right)  \underset{M}{\times}\left(
M\otimes\mathcal{W}_{D^{n}}\right)  \right)  \otimes\mathcal{W}_{D_{m}} &
\rightarrow & \left(  M\otimes\mathcal{W}_{D^{n}}\right)  \otimes
\mathcal{W}_{D_{m}}%
\end{array}
\left(  1\leq i\leq n\right)
\]
are commutative, where

\begin{enumerate}
\item In the former diagram, the lower horizontal arrow represents
\begin{align*}
&  \left(  \left(  \gamma_{+},\gamma_{-}\right)  \in\left(  M\otimes
\mathcal{W}_{D^{n}}\right)  \underset{M\otimes\mathcal{W}_{D\left\{
n\right\}  _{n-1}}}{\times}\left(  M\otimes\mathcal{W}_{D^{n}}\right)
\mapsto(\gamma_{+}\dot{-}\gamma_{-})\in M\otimes\mathcal{W}_{D}\right) \\
&  \otimes\mathrm{id}_{\mathcal{W}_{D_{m}}}\text{,}%
\end{align*}
the upper horizontal arrow represents
\[
\left(  \gamma_{+},\gamma_{-}\right)  \in\left(  M\otimes\mathcal{W}_{D^{n}%
}\right)  \underset{M\otimes\mathcal{W}_{D\left\{  n\right\}  _{n-1}}}{\times
}\left(  M\otimes\mathcal{W}_{D^{n}}\right)  \mapsto(\gamma_{+}\dot{-}%
\gamma_{-})\in M\otimes\mathcal{W}_{D}\text{,}%
\]
the left vertical arrow represents the composition of mappings
\begin{align*}
&  \left(  M\otimes\mathcal{W}_{D^{n}}\right)  \underset{M\otimes
\mathcal{W}_{D\left\{  n\right\}  _{n-1}}}{\times}\left(  M\otimes
\mathcal{W}_{D^{n}}\right) \\
&  \underline{\left(  \mathrm{id}_{M}\otimes\mathcal{W}_{\left(
d_{1},...,d_{n},e\right)  \in D^{n}\times D_{m}\mapsto\left(  d_{1}%
,...,ed_{i},...,d_{n}\right)  \in D^{n}}\right)  \times}\\
&  \underrightarrow{\left(  \mathrm{id}_{M}\otimes\mathcal{W}_{\left(
d_{1},...,d_{n},e\right)  \in D^{n}\times D_{m}\mapsto\left(  d_{1}%
,...,ed_{i},...,d_{n}\right)  \in D^{n}}\right)  }\\
&  \left(  M\otimes\mathcal{W}_{D^{n}\times D_{m}}\right)  \underset
{M\otimes\mathcal{W}_{D\left\{  n\right\}  _{n-1}\times D_{m}}}{\times}\left(
M\otimes\mathcal{W}_{D^{n}\times D_{m}}\right) \\
&  =\left(  \left(  M\otimes\mathcal{W}_{D^{n}}\right)  \otimes\mathcal{W}%
_{D_{m}}\right)  \underset{\left(  M\otimes\mathcal{W}_{D\left\{  n\right\}
_{n-1}}\right)  \otimes\mathcal{W}_{D_{m}}}{\times}\left(  \left(
M\otimes\mathcal{W}_{D^{n}}\right)  \otimes\mathcal{W}_{D_{m}}\right) \\
&  =\left(  \left(  M\otimes\mathcal{W}_{D^{n}}\right)  \underset
{M\otimes\mathcal{W}_{D\left\{  n\right\}  _{n-1}}}{\times}\left(
M\otimes\mathcal{W}_{D^{n}}\right)  \right)  \otimes\mathcal{W}_{D_{m}%
}\text{,}%
\end{align*}
and the right vertical arrow represents the composition of mappings
\[
M\otimes\mathcal{W}_{D}\underrightarrow{\mathrm{id}_{M}\otimes\mathcal{W}%
_{\left(  d,e\right)  \in D\times D_{m}\mapsto de\in D}}M\otimes
\mathcal{W}_{D\times D_{m}}=\left(  M\otimes\mathcal{W}_{D}\right)
\otimes\mathcal{W}_{D_{m}}\text{;}%
\]

\item In the latter diagram, the lower horizontal arrow represents
\begin{align*}
&  \left(  \left(  t,\gamma\right)  \in\left(  M\otimes\mathcal{W}_{D}\right)
\underset{M}{\times}\left(  M\otimes\mathcal{W}_{D^{n}}\right)  \mapsto
t\dot{+}\gamma\in M\otimes\mathcal{W}_{D^{n}}\right) \\
&  \otimes\mathrm{id}_{\mathcal{W}_{D_{m}}}\text{,}%
\end{align*}
the upper horizontal arrow represents
\[
\left(  t,\gamma\right)  \in\left(  M\otimes\mathcal{W}_{D}\right)
\underset{M}{\times}\left(  M\otimes\mathcal{W}_{D^{n}}\right)  \mapsto
t\dot{+}\gamma\in M\otimes\mathcal{W}_{D^{n}}\text{,}%
\]
the left vertical arrow represents the composition of mappings
\begin{align*}
&  \left(  M\otimes\mathcal{W}_{D}\right)  \underset{M}{\times}\left(
M\otimes\mathcal{W}_{D^{n}}\right) \\
&  \underrightarrow{\left(  \mathrm{id}_{M}\otimes\mathcal{W}_{\left(
d,e\right)  \in D\times D_{m}\mapsto ed\in D}\right)  \times\left(
\mathrm{id}_{M}\otimes\mathcal{W}_{\left(  d_{1},...,d_{n},e\right)  \in
D^{n}\times D_{m}\mapsto\left(  d_{1},...,ed_{i},...,d_{n}\right)  \in D^{n}%
}\right)  }\\
&  \left(  M\otimes\mathcal{W}_{D\times D_{m}}\right)  \underset
{M\otimes\mathcal{W}_{D\left\{  n\right\}  _{n-1}\times D_{m}}}{\times}\left(
M\otimes\mathcal{W}_{D^{n}\times D_{m}}\right) \\
&  =\left(  \left(  M\otimes\mathcal{W}_{D}\right)  \otimes\mathcal{W}_{D_{m}%
}\right)  \underset{\left(  M\otimes\mathcal{W}_{D\left\{  n\right\}  _{n-1}%
}\right)  \otimes\mathcal{W}_{D_{m}}}{\times}\left(  \left(  M\otimes
\mathcal{W}_{D^{n}}\right)  \otimes\mathcal{W}_{D_{m}}\right) \\
&  =\left(  \left(  M\otimes\mathcal{W}_{D}\right)  \underset{M\otimes
\mathcal{W}_{D\left\{  n\right\}  _{n-1}}}{\times}\left(  M\otimes
\mathcal{W}_{D^{n}}\right)  \right)  \otimes\mathcal{W}_{D_{m}}\text{,}%
\end{align*}
and the right vertical arrow represents the composition of mappings
\begin{align*}
&  M\otimes\mathcal{W}_{D^{n}}\underrightarrow{\mathrm{id}_{M}\otimes
\mathcal{W}_{\left(  d_{1},...,d_{n},e\right)  \in D^{n}\times D_{m}%
\mapsto\left(  d_{1},...,ed_{i},...,d_{n}\right)  \in D^{n}}}M\otimes
\mathcal{W}_{D^{n}\times D_{m}}\\
&  =\left(  M\otimes\mathcal{W}_{D^{n}}\right)  \otimes\mathcal{W}_{D_{m}%
}\text{.}%
\end{align*}

\end{enumerate}
\end{proposition}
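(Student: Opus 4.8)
The plan is to obtain both commutativities from the functoriality of the Weil prolongation $\mathrm{id}_{M}\otimes\mathcal{W}_{(-)}$ together with the uniqueness clauses of Propositions \ref{t8.2.1.2} and \ref{t8.2.1.5}, along the lines of \S \S 3.4 of \cite{la}. I abbreviate $\mathrm{id}_{M}\otimes\mathcal{W}_{f}$ to $\mathcal{W}_{f}$ and use freely the functoriality law $\mathcal{W}_{g}\circ\mathcal{W}_{h}=\mathcal{W}_{h\circ g}$. Write $\mu_{i}\colon D^{n}\times D_{m}\to D^{n}$ for $(d_{1},\dots,d_{n},e)\mapsto(d_{1},\dots,ed_{i},\dots,d_{n})$ and $\lambda\colon D\times D_{m}\to D$ for $(d,e)\mapsto ed$; these underlie the vertical arrows of the two diagrams. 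A preliminary remark: since the quasi-colimit diagrams of Lemmas \ref{t8.2.1.1} and \ref{t8.2.1.4} remain quasi-colimit diagrams after taking the product with $D_{m}$, the lower horizontal arrows of the two diagrams --- the Weil prolongations of $\dot{-}$ and $\dot{+}$ --- are characterised by the very same universal properties as $\dot{-}$ and $\dot{+}$, now with $D^{n}$, $D\{n\}_{n-1}$, $D^{n}\oplus D$ and the maps $\Psi_{D^{n}},\Phi_{D^{n}},\Xi_{D^{n}}$ replaced throughout by their products with $\mathrm{id}_{D_{m}}$.

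All the work is done by the single map
\[
\rho\colon(D^{n}\oplus D)\times D_{m}\longrightarrow D^{n}\oplus D,\qquad\rho(d_{1},\dots,d_{n},\delta,e)=(d_{1},\dots,ed_{i},\dots,d_{n},e\delta),
\]
which is well defined since $(ed_{i})(e\delta)=e^{2}d_{i}\delta=0$ and $(e\delta)^{2}=0$. A short computation of composites gives the three identities
\[
\rho\circ(\Psi_{D^{n}}\times\mathrm{id}_{D_{m}})=\Psi_{D^{n}}\circ\mu_{i},\qquad\rho\circ(\Phi_{D^{n}}\times\mathrm{id}_{D_{m}})=\Phi_{D^{n}}\circ\mu_{i},\qquad\rho\circ(\Xi_{D^{n}}\times\mathrm{id}_{D_{m}})=\Xi_{D^{n}}\circ\lambda,
\]
the only subtle point being the equality $d_{1}\cdots(ed_{i})\cdots d_{n}=e\,(d_{1}\cdots d_{n})$ needed for the first identity.

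For the first diagram, take $(\gamma_{+},\gamma_{-})$ in the top-left corner and let $\gamma\in M\otimes\mathcal{W}_{D^{n}\oplus D}$ be the unique element with $\mathcal{W}_{\Psi_{D^{n}}}(\gamma)=\gamma_{+}$ and $\mathcal{W}_{\Phi_{D^{n}}}(\gamma)=\gamma_{-}$ supplied by Proposition \ref{t8.2.1.2}, so that $\gamma_{+}\dot{-}\gamma_{-}=\mathcal{W}_{\Xi_{D^{n}}}(\gamma)$. Put $\bar{\gamma}:=\mathcal{W}_{\rho}(\gamma)$. Applying $\mathcal{W}_{\Psi_{D^{n}}\times\mathrm{id}_{D_{m}}}$ and $\mathcal{W}_{\Phi_{D^{n}}\times\mathrm{id}_{D_{m}}}$ to $\bar{\gamma}$ and using functoriality and the first two identities for $\rho$ shows that $\bar{\gamma}$ is precisely the element characterising $\mathcal{W}_{\mu_{i}}(\gamma_{+})\,\dot{-}\,\mathcal{W}_{\mu_{i}}(\gamma_{-})$ under the prolonged universal property; here one also checks that $(\mathcal{W}_{\mu_{i}}(\gamma_{+}),\mathcal{W}_{\mu_{i}}(\gamma_{-}))$ really lies in the stated fibre product, because multiplying a coordinate by $e$ carries $D\{n\}_{n-1}$ into $D\{n\}_{n-1}$. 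Consequently the down-then-right composite of the diagram equals $\mathcal{W}_{\Xi_{D^{n}}\times\mathrm{id}_{D_{m}}}(\bar{\gamma})=\mathcal{W}_{\rho\circ(\Xi_{D^{n}}\times\mathrm{id}_{D_{m}})}(\gamma)=\mathcal{W}_{\Xi_{D^{n}}\circ\lambda}(\gamma)=\mathcal{W}_{\lambda}(\gamma_{+}\dot{-}\gamma_{-})$, which is exactly the right-then-down composite. The second diagram is handled identically, with $\gamma$ replaced by the unique $\gamma'\in M\otimes\mathcal{W}_{D^{n}\oplus D}$ of Proposition \ref{t8.2.1.5} (so $\mathcal{W}_{\Xi_{D^{n}}}(\gamma')=t$, $\mathcal{W}_{\Phi_{D^{n}}}(\gamma')=\gamma$, $t\dot{+}\gamma=\mathcal{W}_{\Psi_{D^{n}}}(\gamma')$) and with $\mathcal{W}_{\rho}(\gamma')$ in the role of $\bar{\gamma}$: the same three identities identify $\mathcal{W}_{\rho}(\gamma')$ as the element characterising $\mathcal{W}_{\lambda}(t)\,\dot{+}\,\mathcal{W}_{\mu_{i}}(\gamma)$, and then $\mathcal{W}_{\lambda}(t)\,\dot{+}\,\mathcal{W}_{\mu_{i}}(\gamma)=\mathcal{W}_{\Psi_{D^{n}}\times\mathrm{id}_{D_{m}}}(\mathcal{W}_{\rho}(\gamma'))=\mathcal{W}_{\mu_{i}}(t\dot{+}\gamma)$, which is what is wanted.

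The verification of the three composite identities for $\rho$ and of the fibre-product compatibilities is purely routine. The one point I would be careful about is the preliminary remark of the first paragraph --- that prolonging by $(-)\otimes\mathcal{W}_{D_{m}}$ leaves intact the universal properties of Propositions \ref{t8.2.1.2} and \ref{t8.2.1.5} --- which rests on the stability of quasi-colimit diagrams of Weil algebras under $\times\,D_{m}$ and is the analogue of the corresponding step in \S \S 3.4 of \cite{la}.
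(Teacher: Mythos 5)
Your argument is correct: the paper offers no proof of Proposition \ref{t8.2.1.9}, merely listing it as the $n$-dimensional counterpart of Propositions 5--7 in \S\S 3.4 of \cite{la}, and your single map $\rho\colon(D^{n}\oplus D)\times D_{m}\rightarrow D^{n}\oplus D$, combined with the stability of the quasi-colimit diagrams of Lemmas \ref{t8.2.1.1} and \ref{t8.2.1.4} under product with $D_{m}$ and the uniqueness clauses of Propositions \ref{t8.2.1.2} and \ref{t8.2.1.5}, is exactly the Lavendhomme-style verification the paper has in mind. Nothing further is needed.
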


\begin{proposition}
\label{t8.2.1.10}For any $\sigma\in\mathbf{S}_{n}$, any $\gamma_{+},\gamma
_{-},\gamma\in M\otimes\mathcal{W}_{D^{n}}\ $and any $t\in M\otimes
\mathcal{W}_{D}\ $with
\[
\left(  \mathrm{id}_{M}\otimes\mathcal{W}_{i_{D\left\{  n\right\}
_{n-1}\rightarrow D^{n}}}\right)  \left(  \gamma_{+}\right)  =\left(
\mathrm{id}_{M}\otimes\mathcal{W}_{i_{D\left\{  n\right\}  _{n-1}\rightarrow
D^{n}}}\right)  \left(  \gamma_{-}\right)
\]
and
\[
\left(  \mathrm{id}_{M}\otimes\mathcal{W}_{i_{1\rightarrow D}}\right)  \left(
t\right)  =\left(  \mathrm{id}_{M}\otimes\mathcal{W}_{i_{1\rightarrow D^{n}}%
}\right)  \left(  \gamma\right)  \text{,}%
\]
we have
\begin{align*}
\left(  \gamma_{+}\right)  ^{\sigma}\dot{-}\left(  \gamma_{-}\right)
^{\sigma}  &  =\gamma_{+}\dot{-}\gamma_{-}\\
\left(  t\dot{+}\gamma\right)  ^{\sigma}  &  =t\dot{+}\gamma^{\sigma}\text{.}%
\end{align*}

\end{proposition}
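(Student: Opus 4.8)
The plan is to reduce both identities to the functoriality of $X\mapsto\mathcal{W}_{X}$ combined with the uniqueness assertions in Propositions \ref{t8.2.1.2} and \ref{t8.2.1.5}. Write $\tilde{\sigma}:D^{n}\rightarrow D^{n}$ for the coordinate permutation inducing the operation $\gamma\mapsto\gamma^{\sigma}$, so that $\gamma^{\sigma}=(\mathrm{id}_{M}\otimes\mathcal{W}_{\tilde{\sigma}})(\gamma)$, and let $\hat{\sigma}:=\tilde{\sigma}\oplus\mathrm{id}_{D}:D^{n}\oplus D\rightarrow D^{n}\oplus D$ be the extension permuting the first $n$ slots and fixing the last. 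First I would note that $\tilde{\sigma}$ restricts to a self-map of $D\{n\}_{n-1}$ commuting with the inclusion $i_{D\{n\}_{n-1}\rightarrow D^{n}}$, since the defining relation of $D\{n\}_{n-1}$ is symmetric in $d_{1},\dots,d_{n}$, and that $\tilde{\sigma}$ fixes the base point $1\hookrightarrow D^{n}$; functoriality of $\mathcal{W}$ then shows that the compatibility hypotheses needed to form $(\gamma_{+})^{\sigma}\dot{-}(\gamma_{-})^{\sigma}$ and $t\dot{+}\gamma^{\sigma}$ follow at once from those for $\gamma_{+}\dot{-}\gamma_{-}$ and $t\dot{+}\gamma$.

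The heart of the argument is the triple of identities of maps of infinitesimal objects
\begin{align*}
\Psi_{D^{n}}\circ\tilde{\sigma} &= \hat{\sigma}\circ\Psi_{D^{n}}, &
\Phi_{D^{n}}\circ\tilde{\sigma} &= \hat{\sigma}\circ\Phi_{D^{n}}, &
\hat{\sigma}\circ\Xi_{D^{n}} &= \Xi_{D^{n}},
\end{align*}
each read off immediately from the formulas for $\Psi_{D^{n}},\Phi_{D^{n}},\Xi_{D^{n}}$; the only point worth isolating is that the last coordinate $d_{1}\cdots d_{n}$ of $\Psi_{D^{n}}$ is left unchanged by $\tilde{\sigma}$ because the product is a symmetric function. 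Granting these, the first identity of the Proposition is proved as follows. Let $\gamma\in M\otimes\mathcal{W}_{D^{n}\oplus D}$ be the unique element furnished by Proposition \ref{t8.2.1.2} for the pair $\gamma_{+},\gamma_{-}$, and put $\gamma^{\ast}:=(\mathrm{id}_{M}\otimes\mathcal{W}_{\hat{\sigma}})(\gamma)$. Applying $\mathrm{id}_{M}\otimes\mathcal{W}_{\Psi_{D^{n}}}$ and $\mathrm{id}_{M}\otimes\mathcal{W}_{\Phi_{D^{n}}}$ to $\gamma^{\ast}$ and invoking the first two displayed identities together with contravariant functoriality exhibits $\gamma^{\ast}$ as the element assigned by Proposition \ref{t8.2.1.2} to the pair $(\gamma_{+})^{\sigma},(\gamma_{-})^{\sigma}$; hence $(\gamma_{+})^{\sigma}\dot{-}(\gamma_{-})^{\sigma}=(\mathrm{id}_{M}\otimes\mathcal{W}_{\Xi_{D^{n}}})(\gamma^{\ast})$, which by the third displayed identity equals $(\mathrm{id}_{M}\otimes\mathcal{W}_{\Xi_{D^{n}}})(\gamma)=\gamma_{+}\dot{-}\gamma_{-}$. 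The second identity is obtained verbatim with the roles of $\Psi_{D^{n}}$, $\Phi_{D^{n}}$, $\Xi_{D^{n}}$ permuted: start from the unique $\gamma'$ of Proposition \ref{t8.2.1.5} for $t,\gamma$, transport it by $\mathrm{id}_{M}\otimes\mathcal{W}_{\hat{\sigma}}$, recognise the result via the displayed identities as the element associated with $t,\gamma^{\sigma}$, and read off $t\dot{+}\gamma^{\sigma}$ by applying $\mathrm{id}_{M}\otimes\mathcal{W}_{\Psi_{D^{n}}}$.

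There is no real obstacle here: the content is entirely the equivariance of the quasi-colimit data of Lemmas \ref{t8.2.1.1} and \ref{t8.2.1.4} under $\hat{\sigma}$ — equivalently, the symmetry of $D\{n\}_{n-1}$ and of the product map $(d_{1},\dots,d_{n})\mapsto d_{1}\cdots d_{n}$ — while everything else is routine functoriality of $\mathcal{W}$. The one thing demanding attention is orientation: since $\mathrm{id}_{M}\otimes\mathcal{W}_{(-)}$ is contravariant in the infinitesimal object, each composite appearing above must be read in the order opposite to that of the underlying maps before the displayed identities are substituted; with that convention in force the two computations are purely mechanical.
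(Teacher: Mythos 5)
Your argument is correct, and it is essentially the intended one: the paper itself offers no proof of Proposition \ref{t8.2.1.10}, merely labelling it an $n$-dimensional counterpart of Propositions 5--7 in \S\S 3.4 of \cite{la}, and what you have written is precisely the natural generalization of Lavendhomme's argument — transport the unique element $\gamma$ (resp.\ $\gamma'$) of Proposition \ref{t8.2.1.2} (resp.\ \ref{t8.2.1.5}) along $\hat{\sigma}=\tilde{\sigma}\oplus\mathrm{id}_{D}$ and invoke uniqueness, using that $\tilde{\sigma}$ preserves $D\{n\}_{n-1}$, fixes the base point, and that $\Psi_{D^{n}}$, $\Phi_{D^{n}}$, $\Xi_{D^{n}}$ are $\hat{\sigma}$-equivariant (the last coordinate $d_{1}\cdots d_{n}$ being symmetric). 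Your handling of the contravariance of $\mathrm{id}_{M}\otimes\mathcal{W}_{(-)}$ is also correct, and the choice of convention for $\gamma^{\sigma}$ (whether $\tilde{\sigma}$ permutes by $\sigma$ or $\sigma^{-1}$) is immaterial to the argument.
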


\begin{proposition}
\label{t8.2.1.11}For $\gamma_{+},\gamma_{-}\in M\otimes\mathcal{W}_{D^{n}}%
\ $with
\[
\left(  \mathrm{id}_{M}\otimes\mathcal{W}_{i_{D\left\{  n\right\}
_{n-1}\rightarrow D^{n}}}\right)  \left(  \gamma_{+}\right)  =\left(
\mathrm{id}_{M}\otimes\mathcal{W}_{i_{D\left\{  n\right\}  _{n-1}\rightarrow
D^{n}}}\right)  \left(  \gamma_{-}\right)  \text{,}%
\]
we have
\begin{align*}
&  \left(  \mathrm{id}_{M}\otimes\mathcal{W}_{\left(  d_{1},...,d_{n}\right)
\in D^{n}\mapsto d_{1}...d_{n}\in D}\right)  \left(  \gamma_{+}\dot{-}%
\gamma_{-}\right) \\
&  =(...(\gamma_{+}\underset{1}{-}\gamma_{-})\underset{2}{-}\mathbf{s}%
_{1}\circ\mathbf{d}_{1}(\gamma_{+}))\underset{3}{-}\mathbf{s}_{1}^{2}%
\circ\mathbf{d}_{1}^{2}(\gamma_{+}))...\underset{n}{-}\mathbf{s}_{1}%
^{n-1}\circ\mathbf{d}_{1}^{n-1}(\gamma_{+}))
\end{align*}

\end{proposition}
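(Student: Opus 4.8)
The plan is to unwind the definition of $\dot{-}$ through the quasi-colimit decomposition in Lemma~\ref{t8.2.1.1} and then peel off one Weil-algebra factor at a time. Recall that $\gamma_{+}\dot{-}\gamma_{-}=\left(\mathrm{id}_{M}\otimes\mathcal{W}_{\Xi_{D^{n}}}\right)(\gamma)$, where $\gamma\in M\otimes\mathcal{W}_{D^{n}\oplus D}$ is the unique element with $\left(\mathrm{id}_{M}\otimes\mathcal{W}_{\Psi_{D^{n}}}\right)(\gamma)=\gamma_{+}$ and $\left(\mathrm{id}_{M}\otimes\mathcal{W}_{\Phi_{D^{n}}}\right)(\gamma)=\gamma_{-}$. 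Composing $\Xi_{D^{n}}$ with the multiplication map $D\to D$ appearing in the statement, i.e. the identity, I note that the left-hand side $\left(\mathrm{id}_{M}\otimes\mathcal{W}_{(d_{1},\dots,d_{n})\mapsto d_{1}\cdots d_{n}}\right)(\gamma_{+}\dot{-}\gamma_{-})$ is $\left(\mathrm{id}_{M}\otimes\mathcal{W}_{\mu}\right)(\gamma)$, where $\mu:D\to D^{n}\oplus D$ sends $e\mapsto(e,\dots,e,\text{? })$; I must chase through exactly how $\Psi_{D^{n}}$ and $\Phi_{D^{n}}$ interact to identify that last coordinate (it should be $0$ since $\Xi_{D^{n}}$ lands in the $D$-summand while the product $d_{1}\cdots d_{n}$ dies on that summand).

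Next I would set up the induction on $n$. For $n=1$ the right-hand side reduces to $\gamma_{+}\underset{1}{-}\gamma_{-}$, and the claim is essentially the definition of $\dot{-}$ in dimension one together with the identification $D^{1}=D$; this is the base case. For the inductive step, the strategy is to apply Proposition~\ref{t8.2.1.3} with a suitable map $F$ — or rather to use the naturality of $\dot{-}$ together with the degeneracy/face operators $\mathbf{s}_{1}$, $\mathbf{d}_{1}$ — to relate $\left(\mathrm{id}_{M}\otimes\mathcal{W}_{(d_{1},\dots,d_{n})\mapsto d_{1}\cdots d_{n}}\right)(\gamma_{+}\dot{-}\gamma_{-})$ to the corresponding expression one dimension lower applied to $\mathbf{d}_{1}(\gamma_{+})$ and $\mathbf{d}_{1}(\gamma_{-})$, and then correct by the term $\mathbf{s}_{1}^{n-1}\circ\mathbf{d}_{1}^{n-1}(\gamma_{+})$. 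Concretely, I expect to factor the product map $d_{1}\cdots d_{n}$ as first multiplying $d_{1}\cdots d_{n-1}$ (landing in $D$, using the $(n-1)$-fold case) and then multiplying by $d_{n}$, and to use Proposition~\ref{t8.2.1.8} and Proposition~\ref{t8.2.1.10} to move the $\alpha\underset{i}{\cdot}$ and $\sigma$-actions around, together with the compatibility between $\dot{+}$, $\dot{-}$ and the binary operations $\underset{i}{-}$ coming from the affine bundle structure of Theorem~\ref{t8.2.1.7}.

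The key technical device will be to recognize $\mathbf{s}_{1}^{k}\circ\mathbf{d}_{1}^{k}(\gamma_{+})$ as the "correction" that measures the failure of $\gamma_{+}$ to be degenerate in the first $k$ coordinates, so that subtracting it in the $(k+1)$st slot makes the running difference land, at each stage, in the subobject where the quasi-colimit of Lemma~\ref{t8.2.1.1} (now for dimension $k+1$) can be applied. I would verify at each stage that the hypothesis $\left(\mathrm{id}_{M}\otimes\mathcal{W}_{i_{D\{k+1\}_{k}\rightarrow D^{k+1}}}\right)$ agreement needed to form the next $\underset{k+1}{-}$ is met — this is where the induction hypothesis on the lower-dimensional $\dot{-}$ gets used, via Proposition~\ref{t8.2.1.2}.

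The main obstacle I anticipate is bookkeeping: correctly tracking which coordinate each $\underset{i}{-}$ and each $\mathbf{s}_{1}\circ\mathbf{d}_{1}$ acts on as the dimension drops, and checking that the iterated correction terms telescope exactly as written. In particular the asymmetry of the formula — the correction terms involve only $\gamma_{+}$ and not $\gamma_{-}$ — will need a clean explanation, presumably because after the first subtraction the two arguments already agree on $D\{n\}_{n-1}$, so all further corrections can be taken from either argument and $\gamma_{+}$ is chosen by convention. I would double-check this by expanding the $n=2$ case fully before trusting the general telescoping.
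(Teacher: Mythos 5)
The paper states Proposition \ref{t8.2.1.11} without proof (it is offered as an $n$-dimensional counterpart of \S\S 3.4 of \cite{la}), so your attempt has to be judged on its own; as it stands it has two concrete gaps. First, the identification of the left-hand side is garbled. Since $\gamma_{+}\dot{-}\gamma_{-}=\left(\mathrm{id}_{M}\otimes\mathcal{W}_{\Xi_{D^{n}}}\right)(\gamma)$ for the amalgam $\gamma\in M\otimes\mathcal{W}_{D^{n}\oplus D}$ of Proposition \ref{t8.2.1.2}, the left-hand side is $\left(\mathrm{id}_{M}\otimes\mathcal{W}_{\chi_{n}}\right)(\gamma)$ where $\chi_{n}:D^{n}\rightarrow D^{n}\oplus D$ is the composite of the product map $D^{n}\rightarrow D$ with $\Xi_{D^{n}}$, i.e. $\chi_{n}(d_{1},...,d_{n})=(0,...,0,d_{1}...d_{n})$: the first $n$ entries are $0$ and the last entry is the product. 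Your map $\mu$ has the wrong domain ($D$ rather than $D^{n}$) and the guessed coordinates are exactly backwards, so the object you would be chasing through the quasi-colimit is not the one in the statement.

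Second, and more seriously, the announced inductive step cannot work. You propose to reduce to the statement in dimension $n-1$ applied to $\mathbf{d}_{1}(\gamma_{+})$ and $\mathbf{d}_{1}(\gamma_{-})$ and then correct by $\mathbf{s}_{1}^{n-1}\circ\mathbf{d}_{1}^{n-1}(\gamma_{+})$. But the hypothesis of agreement on $D\{n\}_{n-1}$ contains agreement on every face $d_{i}=0$, so $\mathbf{d}_{1}(\gamma_{+})=\mathbf{d}_{1}(\gamma_{-})$: their lower-dimensional strong difference is the zero tangent vector and retains no information, whereas $\gamma_{+}\dot{-}\gamma_{-}$ is governed precisely by the top-degree discrepancy, which every face operator kills. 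An induction that descends through faces of $\gamma_{\pm}$ therefore cannot produce the formula. What does work is an induction at fixed $n$ on the number of subtractions already performed: writing $\beta_{1}=\gamma_{+}\underset{1}{-}\gamma_{-}$ and $\beta_{k+1}=\beta_{k}\underset{k+1}{-}\mathbf{s}_{1}^{k}\circ\mathbf{d}_{1}^{k}(\gamma_{+})$, one shows $\beta_{k}=\left(\mathrm{id}_{M}\otimes\mathcal{W}_{\chi_{k}}\right)(\gamma)$ with $\chi_{k}(d_{1},...,d_{n})=(0,...,0,d_{k+1},...,d_{n},d_{1}...d_{n})$, using at each step that $\mathbf{s}_{1}^{k}\circ\mathbf{d}_{1}^{k}(\gamma_{+})$ is itself induced from $\gamma$ by the map $(d_{1},...,d_{n})\mapsto(0,...,0,d_{k+1},...,d_{n},0)$ together with the uniqueness clause of the quasi-colimit defining $\underset{k+1}{-}$; the case $k=n$ is the assertion. (Alternatively, in the coordinatized setting of \S\ref{s3} this is a short Taylor-coefficient check.) Your base case $n=1$ and your remark on the apparent asymmetry are fine — indeed $\mathbf{s}_{1}^{k}\circ\mathbf{d}_{1}^{k}(\gamma_{+})=\mathbf{s}_{1}^{k}\circ\mathbf{d}_{1}^{k}(\gamma_{-})$ already by hypothesis, not only after the first subtraction — but the telescoping mechanism relating the iterated $\underset{i}{-}$ to the single amalgam $\gamma$ is the heart of the proof, and it is missing from the proposal.
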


\subsubsection{\label{s8.2.2}Symmetric Forms}

\begin{definition}
\label{d8.2.2.1}A \textit{symmetric} $D^{n}$-\textit{form}$\mathit{\ }%
$\textit{at} $x\in E$ is a mapping $\omega_{x}:\left(  M\otimes\mathcal{W}%
_{D^{n}}\right)  _{\pi(x)}\rightarrow\left(  E\otimes\mathcal{W}_{D}\right)
_{x}^{\perp}\ $subject to the following conditions:

\begin{enumerate}
\item We have
\[
\omega_{x}(\alpha\underset{i}{\cdot}\gamma)=\alpha\omega_{x}(\gamma)\quad
\quad(1\leq i\leq n)
\]
for any $\gamma\in\left(  M\otimes\mathcal{W}_{D^{n}}\right)  _{\pi(x)}$ and
any $\alpha\in\mathbb{R}$.

\item The diagram
\[%
\begin{array}
[c]{ccc}%
\left(  M\otimes\mathcal{W}_{D^{n}}\right)  _{\pi(x)} & \rightarrow & \left(
M\otimes\mathcal{W}_{D^{n}}\right)  _{\pi(x)}\otimes\mathcal{W}_{D_{m}}\\
\omega_{x}\downarrow &  & \downarrow\omega_{x}\otimes\mathrm{id}%
_{\mathcal{W}_{D_{m}}}\\
\left(  E\otimes\mathcal{W}_{D}\right)  _{x} & \overrightarrow{\mathrm{id}%
_{E}\otimes\mathcal{W}_{\times_{D\times D_{m}\rightarrow D}}} & \left(
E\otimes\mathcal{W}_{D}\right)  _{x}\otimes\mathcal{W}_{D_{m}}%
\end{array}
\;(1\leq i\leq n)
\]
is commutative, where the upper horizontal arrow is
\[
\mathrm{id}_{M}\otimes\mathcal{W}_{\left(  d_{1},...,d_{n},e\right)  \in
D^{n}\times D_{m}\mapsto\left(  d_{1},...,d_{i-1},ed_{i},d_{i+1}%
,...d_{n}\right)  \in D^{n}}\text{.}%
\]

\item We have
\[
\omega_{x}\left(  \gamma^{\sigma}\right)  =\omega_{x}\left(  \gamma\right)
\]
for any $\gamma\in\left(  M\otimes\mathcal{W}_{D^{n}}\right)  _{\pi(x)}$ and
any $\sigma\in\mathbf{S}_{n}$.

\item We have
\[
\omega_{x}\left(  \left(  \mathrm{id}_{M}\otimes\mathcal{W}_{(d_{1}%
,...,d_{n})\in D^{n}\longmapsto(d_{1},...,d_{n-2},d_{n-1}d_{n})\in D^{n-1}%
}\right)  \left(  \gamma\right)  \right)  =0
\]
for any $\gamma\in\left(  M\otimes\mathcal{W}_{D^{n-1}}\right)  _{\pi(x)}$ .
\end{enumerate}
\end{definition}

\begin{notation}
We denote by $\mathbb{S}_{x}^{D^{n}}(\pi)\ $the totality of symmetric $D^{n}
$-\textit{forms}$\ $at $x\in E$. We denote by $\mathbb{S}^{D^{n}}(\pi
)\ $the\ set-theoretic union of $\mathbb{S}_{x}^{D^{n}}(\pi)$'s$\ $for all
$x\in E$. The canonical projection $\mathbb{S}^{D^{n}}(\pi)\rightarrow E$ is
obviously a vector bundle.
\end{notation}

\begin{proposition}
\label{t8.2.2.1}Let $\omega\in\mathbb{S}_{x}^{D^{n+1}}(\pi)$. Then we have
\[
\omega(\mathbf{s}_{i}(\gamma))=0\quad\quad(1\leq i\leq n+1)
\]
for any $\gamma\in\left(  M\otimes\mathcal{W}_{D^{n}}\right)  _{\pi(x)}$.
\end{proposition}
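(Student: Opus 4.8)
The plan is to show that a symmetric $D^{n+1}$-form annihilates every degenerate simplex of the form $\mathbf{s}_i(\gamma)$ by reducing the general index $i$ to the already-available degeneracy built into condition (4) of Definition \ref{d8.2.2.1}, which is phrased only in terms of the last two slots $(d_{n},d_{n+1})\mapsto d_{n}d_{n+1}$. First I would recall precisely what $\mathbf{s}_i$ means here: it is the map on germs induced by the Weil-algebra morphism coming from the degeneracy $D^{n+1}\to D^{n}$ that duplicates the $i$-th coordinate, i.e. $(d_1,\dots,d_{n+1})\mapsto(d_1,\dots,d_{i-1},d_i d_{i+1},d_{i+2},\dots,d_{n+1})$ applied contravariantly — so $\mathbf{s}_i(\gamma) = (\mathrm{id}_M\otimes\mathcal{W}_{\delta_i})(\gamma)$ for $\gamma\in(M\otimes\mathcal{W}_{D^n})_{\pi(x)}$, where $\delta_i$ is that coordinate-merging map. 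Condition (4) of the definition is exactly the statement $\omega_x(\mathbf{s}_{n}(\gamma))=0$ in this notation (merging slots $n-1,n$ in the domain $D^{n}$, i.e. slots $n,n+1$ after reindexing), so the case $i=n+1$ (or $i=n$, depending on the indexing convention) is immediate.

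The key step is then to move an arbitrary degeneracy $\mathbf{s}_i$ into the position covered by condition (4) using the symmetry built into condition (3). Concretely, I would pick a permutation $\sigma\in\mathbf{S}_{n+1}$ that carries the pair of merged slots $\{i,i+1\}$ to the pair $\{n,n+1\}$; on germs over $D^n$ this requires the compatible permutation $\tau\in\mathbf{S}_n$ on the source. The combinatorial identity to verify is that $\delta_i$ and $\delta_{n}\circ\tau$ (equivalently $\sigma^{-1}\circ\delta_{n}\circ\tau$, matching up the $D^{n+1}$ side) differ by precomposition with $\tau$ and postcomposition with $\sigma$, i.e. as maps $D^{n+1}\to D^{n}$ we have $\sigma^{-1}$-twisted equality $\delta_i = \delta_{n}\circ(\text{permutation})$ up to a permutation of $D^{n+1}$. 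Passing to $\mathcal{W}$ and applying $\mathrm{id}_M\otimes(-)$ turns this into $\mathbf{s}_i(\gamma) = (\mathbf{s}_{n}(\gamma^{\tau}))^{\sigma}$ for suitable $\tau,\sigma$. Then
\[
\omega_x(\mathbf{s}_i(\gamma)) = \omega_x\bigl((\mathbf{s}_{n}(\gamma^{\tau}))^{\sigma}\bigr) = \omega_x\bigl(\mathbf{s}_{n}(\gamma^{\tau})\bigr) = 0,
\]
where the middle equality is condition (3) and the last is condition (4) applied to $\gamma^{\tau}\in(M\otimes\mathcal{W}_{D^{n}})_{\pi(x)}$. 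Running over all $i$ with $1\le i\le n+1$ finishes the proof.

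The main obstacle I expect is bookkeeping: getting the permutations $\tau\in\mathbf{S}_{n}$ and $\sigma\in\mathbf{S}_{n+1}$ to match up so that the composite $D^{n+1}\xrightarrow{\sigma}D^{n+1}\xrightarrow{\delta_{n}}\cdots$ — wait, the degeneracy lowers dimension, so one must be careful that $\sigma$ acts on $D^{n+1}$ \emph{before} $\delta_i$ merges and $\tau$ acts on $D^{n}$ \emph{after}, and that these are genuinely inverse-compatible, since merging slots $i,i+1$ and then permuting the resulting $n$ coordinates is the same as permuting the $n+1$ coordinates (carrying $\{i,i+1\}$ to $\{n,n+1\}$) and then merging the last two — this is a clean but slightly fiddly identity of set maps between finite sets $D^{n+1}$ and $D^{n}$. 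Once that identity is pinned down, everything else is a formal application of the functor $\mathcal{W}_{(-)}$, the tensor with $\mathrm{id}_M$, and restriction to the fiber over $\pi(x)$, together with conditions (3) and (4); there is no analytic content. Alternatively, if one prefers to avoid explicit permutations, one can argue directly from condition (4) together with the multilinearity and symmetry by noting that $\mathbf{s}_i(\gamma)$ and $\mathbf{s}_{n}(\gamma)$ lie in the same $\mathbf{S}_{n+1}$-orbit, which is the route sketched above.
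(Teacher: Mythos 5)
Your argument rests on a misreading of the operator $\mathbf{s}_{i}$. In this series of papers $\mathbf{s}_{i}:\left(M\otimes\mathcal{W}_{D^{n}}\right)_{\pi(x)}\rightarrow\left(M\otimes\mathcal{W}_{D^{n+1}}\right)_{\pi(x)}$ is the simplicial degeneracy induced by the projection $(d_{1},\dots,d_{n+1})\in D^{n+1}\mapsto(d_{1},\dots,d_{i-1},d_{i+1},\dots,d_{n+1})\in D^{n}$, i.e.\ it inserts a dummy $i$-th coordinate (this is how it is used in Proposition \ref{t8.2.1.11}, where $\mathbf{s}_{1}^{k}\circ\mathbf{d}_{1}^{k}(\gamma_{+})$ is a cube constant in the directions that have been collapsed); it is \emph{not} the coordinate-merging map $(d_{1},\dots,d_{n+1})\mapsto(d_{1},\dots,d_{i}d_{i+1},\dots,d_{n+1})$. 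Consequently the fourth condition in Definition \ref{d8.2.2.1} is not ``the case $i=n$ of the proposition'': it asserts vanishing on elements pulled back along the multiplication $D^{n+1}\rightarrow D^{n}$, and an element of the form $\mathbf{s}_{i}(\gamma)$ is in general neither of that form nor in the $\mathbf{S}_{n+1}$-orbit of one (a merged pullback depends on its two merged coordinates only through their product, and if it were in addition constant in one of them, setting that coordinate to $0$ would force it to be constant in the other as well; a general $\mathbf{s}_{i}(\gamma)$ fits neither pattern). So your reduction via conditions (3) and (4) proves a true but different statement --- that $\omega$ annihilates elements obtained by merging any two adjacent slots, not just the last two --- while the base case you call ``immediate'' is exactly what is missing for the proposition as stated.

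The intended argument is different and much shorter, using condition (1) rather than (3) and (4): since $\mathbf{s}_{i}(\gamma)$ does not depend on its $i$-th coordinate, it is invariant under the scaling $\alpha\underset{i}{\cdot}$, so homogeneity gives $\omega(\mathbf{s}_{i}(\gamma))=\omega(\alpha\underset{i}{\cdot}\mathbf{s}_{i}(\gamma))=\alpha\,\omega(\mathbf{s}_{i}(\gamma))$ for every $\alpha\in\mathbb{R}$, and taking $\alpha=0$ yields $\omega(\mathbf{s}_{i}(\gamma))=0$. If you want to salvage your route, you would first have to prove this scaling step anyway, since no condition of Definition \ref{d8.2.2.1} refers directly to the degeneracies $\mathbf{s}_{i}$.
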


\begin{proof}
For any $\alpha\in\mathbb{R}$, we have
\[
\,\omega(\mathbf{s}_{i}(\gamma))=\omega(\alpha\underset{i}{\cdot}%
\mathbf{s}_{i}(\gamma))=\alpha\omega(\mathbf{s}_{i}(\gamma))
\]
Letting $\alpha=0$, we have the desired conclusion.
\end{proof}

\subsubsection{\label{s8.2.3}The Theorem}

The following proposition will be used in the proof of Proposition 3.6.

\begin{proposition}
\label{t8.2.3.1}Let $\nabla_{x}\in\mathbb{J}_{x}^{D^{n}}(\pi)$, $t\in\left(
M\otimes\mathcal{W}_{D}\right)  _{\pi\left(  x\right)  }$ and $\gamma
,\gamma_{+},\gamma_{-}\in\left(  M\otimes\mathcal{W}_{D^{n}}\right)
_{\pi\left(  x\right)  }$ with
\[
\left(  \mathrm{id}_{M}\otimes\mathcal{W}_{i_{D\left\{  n\right\}
_{n-1}\rightarrow D^{n}}}\right)  \left(  \gamma_{+}\right)  =\left(
\mathrm{id}_{M}\otimes\mathcal{W}_{i_{D\left\{  n\right\}  _{n-1}\rightarrow
D^{n}}}\right)  \left(  \gamma_{-}\right)  \text{.}%
\]
Then we have
\begin{align*}
\nabla_{x}(\gamma_{+})\dot{-}\nabla_{x}(\gamma_{-})  &  =\left(
\underline{\pi}_{n,1}(\nabla_{x})\right)  (\gamma_{+}\dot{-}\gamma_{-})\\
\left(  \pi_{n,1}(\nabla_{x})\right)  (t)\dot{+}\nabla_{x}(\gamma)  &
=\nabla_{x}(t\dot{+}\gamma)
\end{align*}

\end{proposition}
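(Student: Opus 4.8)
The plan is to deduce both identities from the universal (quasi-colimit) properties established in Lemmas \ref{t8.2.1.1} and \ref{t8.2.1.4}, using the compatibility of $\nabla_x$ with reparametrizations of the Weil algebras, which is exactly what Proposition \ref{t4.1.4} provides. The key observation is that $\dot{-}$ and $\dot{+}$ are defined by first producing a unique element $\gamma\in M\otimes\mathcal{W}_{D^n\oplus D}$ via a quasi-colimit and then restricting along one of the structure maps $\Xi_{D^n}$, $\Phi_{D^n}$, $\Psi_{D^n}$. So the natural strategy is: apply $\nabla_x$ not to the pieces but to the ``amalgamated'' element in $M\otimes\mathcal{W}_{D^n\oplus D}$, and chase the resulting commutative square.

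Concretely, for the first identity I would start from $\gamma_+,\gamma_-$ agreeing on $D\{n\}_{n-1}$ and let $\gamma\in M\otimes\mathcal{W}_{D^n\oplus D}$ be the unique element from Proposition \ref{t8.2.1.2}, so that $\gamma$ pulls back to $\gamma_+$ along $\Psi_{D^n}$, to $\gamma_-$ along $\Phi_{D^n}$, and $\gamma_+\dot{-}\gamma_-$ is its pullback along $\Xi_{D^n}$. The point is that $D^n\oplus D$ fits into the quasi-colimit, and the map $\pi_{n,1}(\nabla_x)$ together with $\nabla_x$ itself respects all three structure maps $\Psi_{D^n},\Phi_{D^n},\Xi_{D^n}$ because each of these, read through $\mathcal{W}$, is (a composite of) a monomial substitution of the type covered by Proposition \ref{t4.1.4} — in particular $\Psi_{D^n}$ encodes the multiplication map $(d_1,\dots,d_n)\mapsto d_1\cdots d_n$ tucked into the last coordinate. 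Applying $\nabla_x\otimes\mathrm{id}_{\mathcal{W}_D}$ (suitably: really $\nabla_x$ at the $D^n\oplus D$ level, which exists since $\nabla_x$ is natural in the Weil algebra by the microlinearity/functoriality built into Definition \ref{d4.1}) to $\gamma$ and then restricting along each of the three maps yields $\nabla_x(\gamma_+)$, $\nabla_x(\gamma_-)$, and the left-hand side $\nabla_x(\gamma_+)\dot{-}\nabla_x(\gamma_-)$ on one side, while the coherence of $\pi_{n,1}$ (Proposition \ref{t4.1.4} with $m=1$) identifies the $\Xi_{D^n}$-restriction with $\bigl(\underline{\pi}_{n,1}(\nabla_x)\bigr)(\gamma_+\dot{-}\gamma_-)$. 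Uniqueness in the quasi-colimit then forces equality.

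For the second identity the argument is parallel but uses Lemma \ref{t8.2.1.4}: given $t$ and $\gamma$ agreeing over the point $1$, let $\gamma'\in M\otimes\mathcal{W}_{D^n\oplus D}$ be the unique element of Proposition \ref{t8.2.1.5} with $\Xi_{D^n}$-restriction $t$ and $\Phi_{D^n}$-restriction $\gamma$, so that $t\dot{+}\gamma$ is its $\Psi_{D^n}$-restriction. Push $\gamma'$ through $\nabla_x$ (again at the $D^n\oplus D$ level); restricting along $\Phi_{D^n}$ gives $\nabla_x(\gamma)$, restricting along $\Xi_{D^n}$ gives $\bigl(\pi_{n,1}(\nabla_x)\bigr)(t)$ by the coherence of the projection, and restricting along $\Psi_{D^n}$ gives $\nabla_x(t\dot{+}\gamma)$; matching these against the definition of $\dot{+}$ applied to $\bigl(\pi_{n,1}(\nabla_x)\bigr)(t)$ and $\nabla_x(\gamma)$ via Lemma \ref{t8.2.1.4} / Proposition \ref{t8.2.1.5} yields the claim. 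Throughout, one also needs that $\nabla_x$ lands in the correct fibres so that $\dot{-}$ and $\dot{+}$ are even defined on the image side — this follows from condition 1 of Definition \ref{d4.1} (the projection-preservation), which guarantees $\nabla_x(\gamma_+)$ and $\nabla_x(\gamma_-)$ agree over $D\{n\}_{n-1}$, and similarly the compatibility of $\pi_{n,1}(\nabla_x)(t)$ with $\nabla_x(\gamma)$ over the base point.

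The main obstacle I anticipate is bookkeeping rather than conceptual: one must verify carefully that $\nabla_x$, which is a priori only a map between the $D^n$-level amalgamations, \emph{extends} to (equivalently: is compatible with) the $D^n\oplus D$-level in the precise way needed, i.e.\ that the relevant square built from $\Psi_{D^n},\Phi_{D^n},\Xi_{D^n}$ commutes when $\nabla_x$ is applied. This is where Proposition \ref{t4.1.4} does the real work — every arrow appearing is of the monomial-substitution form it governs — but assembling the three restrictions into a single application of the uniqueness clause of Proposition \ref{t8.2.1.2} (resp.\ \ref{t8.2.1.5}) requires matching the combinatorics of $D^n\oplus D = D^n * 1$-style colimits with the coordinate expressions, and keeping straight which $\underset{i}{\cdot}$-structure (Definition \ref{d4.1}, condition 2) corresponds to the $D$-factor. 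Once that is set up, the commutativity of Proposition \ref{t7.1.4}-type diagrams makes the rest automatic.
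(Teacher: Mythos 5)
Your plan has a genuine gap at its decisive step, and it is essentially circular there. You propose to lift $\nabla_{x}$ to the $D^{n}\oplus D$ level (by amalgamating $\nabla_{x}(\gamma_{+})$ and $\nabla_{x}(\gamma_{-})$ via Proposition \ref{t8.2.1.2}) and then to read off the identity by restricting along $\Xi_{D^{n}}$, claiming that this restriction equals $\left(\underline{\pi}_{n,1}(\nabla_{x})\right)(\gamma_{+}\dot{-}\gamma_{-})$ ``by the coherence of $\pi_{n,1}$ (Proposition \ref{t4.1.4} with $m=1$)'' and that ``uniqueness in the quasi-colimit then forces equality.'' But the uniqueness clause of Proposition \ref{t8.2.1.2} only pins down the amalgamated element from its $\Psi_{D^{n}}$- and $\Phi_{D^{n}}$-restrictions; it says nothing about how its $\Xi_{D^{n}}$-restriction is computed from $\nabla_{x}$, and Proposition \ref{t4.1.4} does not apply: it governs substitutions $D^{n}\rightarrow D^{m}$ of grouped-monomial type, whereas $\Xi_{D^{n}},\Phi_{D^{n}},\Psi_{D^{n}}$ have target $D^{n}\oplus D$, and $\nabla_{x}$ carries no stated naturality with respect to such maps. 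The assertion that the $\Xi$-restriction of the lifted element is $\left(\underline{\pi}_{n,1}(\nabla_{x})\right)(\gamma_{+}\dot{-}\gamma_{-})$ \emph{is} the proposition to be proved, so your chase assumes what it must establish. A second, smaller error: you invoke condition 1 of Definition \ref{d4.1} to guarantee that $\nabla_{x}(\gamma_{+})$ and $\nabla_{x}(\gamma_{-})$ agree over $D\left\{n\right\}_{n-1}$, but that condition only controls the projection $\pi\otimes\mathrm{id}$ down to $M$; the needed agreement is compatibility of $\nabla_{x}$ with restriction along $i_{D\left\{n\right\}_{n-1}\rightarrow D^{n}}$, which is a separate fact (coming from the compatibilities with the face/degeneracy-type operators established in the earlier paper), not a consequence of projection-preservation.

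The paper's proof supplies exactly the computational content your chase omits: it first observes the two formulas are equivalent by elementary affine geometry, and then verifies the first one by applying $\mathrm{id}_{E}\otimes\mathcal{W}_{(d_{1},...,d_{n})\mapsto d_{1}\cdots d_{n}}$ to both sides, using Proposition \ref{t8.2.1.11} to rewrite each image as an iterated strong difference $(\cdots(\,\cdot\underset{1}{-}\cdot\,)\underset{2}{-}\mathbf{s}_{1}\circ\mathbf{d}_{1}(\cdot))\cdots$, the known commutation of $\nabla_{x}$ with these strong-difference and $\mathbf{s}_{1}\circ\mathbf{d}_{1}$ operations, and finally Proposition \ref{t4.1.4} (which does apply there, since $(d_{1},...,d_{n})\mapsto d_{1}\cdots d_{n}$ is of the covered form) to replace $\nabla_{x}$ composed with the multiplication substitution by $\pi_{n,1}(\nabla_{x})$. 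If you want to salvage your quasi-colimit approach, you would have to prove that the amalgamated lift of $\nabla_{x}$ is compatible with $\Xi_{D^{n}}$ in the stated sense, and any such proof will have to pass through an argument of the paper's kind rather than through Proposition \ref{t4.1.4} alone.
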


\begin{proof}
It is an easy exercise of affine geometry to show that the coveted two
formulas are equivalent. Here we deal only with the former in case of $n=2$,
leaving the general treatment safely to the reader. We have
\begin{align*}
&  \left(  \mathrm{id}_{E}\otimes\mathcal{W}_{(d_{1},d_{2})\in D^{2}\mapsto
d_{1}d_{2}\in D}\right)  (\nabla_{x}(\gamma_{+})\dot{-}\nabla_{x}(\gamma
_{-}))\\
&  =\left(  \nabla_{x}(\gamma_{+})\underset{1}{-}\nabla_{x}(\gamma
_{-})\right)  \underset{2}{-}\left(  \mathbf{s}_{1}\circ\mathbf{d}_{1}\right)
\left(  \nabla_{x}(\gamma_{+})\right) \\
&  \text{\lbrack By Proposition \ref{t8.2.1.11}]}\\
&  =\nabla_{x}((\gamma_{+}\underset{1}{-}\gamma_{-})\underset{2}{-}%
(\mathbf{s}_{1}\circ\mathbf{d}_{1})(\gamma_{+}))\\
&  =\nabla_{x}\left(  \left(  \mathrm{id}_{M}\otimes\mathcal{W}_{(d_{1}%
,d_{2})\in D^{2}\mapsto d_{1}d_{2}\in D}\right)  \left(  \gamma_{+}\dot
{-}\gamma_{-}\right)  \right) \\
&  \text{\lbrack By Proposition \ref{t8.2.1.11}]}\\
&  =\left(  \mathrm{id}_{E}\otimes\mathcal{W}_{(d_{1},d_{2})\in D^{2}\mapsto
d_{1}d_{2}\in D}\right)  \left(  \pi_{2,1}(\nabla_{x})(\gamma_{+}\dot{-}%
\gamma_{-})\right) \\
&  \text{\lbrack By Proposition \ref{t4.1.4}]}%
\end{align*}

\end{proof}

\begin{proposition}
\label{t8.2.3.2}Let $\nabla_{x}^{+},\nabla_{x}^{-}\in\mathbb{J}_{x}^{n+1}%
(\pi)$ with
\[
\pi_{n+1,n}(\nabla_{x}^{+})=\ \pi_{n+1,n}(\nabla_{x}^{-})\text{.}%
\]
Then the assignment $\gamma\in\left(  M\otimes\mathcal{W}_{D^{n+1}}\right)
_{\pi\left(  x\right)  }\longmapsto\nabla_{x}^{+}(\gamma)\dot{-}\nabla_{x}%
^{-}(\gamma)$ belongs to $\mathbb{S}_{x}^{D^{n+1}}(\pi)$.
\end{proposition}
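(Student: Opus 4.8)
The plan is to verify that the assignment $\omega_x:\gamma\longmapsto\nabla_x^+(\gamma)\dot{-}\nabla_x^-(\gamma)$ satisfies each of the four defining conditions of a symmetric $D^{n+1}$-form in Definition \ref{d8.2.2.1}. First, for the assignment to make sense, I must check that $(\mathrm{id}_M\otimes\mathcal{W}_{i_{D\{n+1\}_n\rightarrow D^{n+1}}})(\nabla_x^+(\gamma))=(\mathrm{id}_M\otimes\mathcal{W}_{i_{D\{n+1\}_n\rightarrow D^{n+1}}})(\nabla_x^-(\gamma))$, so that $\dot{-}$ is defined; this should follow because $i_{D\{n+1\}_n\rightarrow D^{n+1}}$ factors through the degeneracies already controlled by the hypothesis $\pi_{n+1,n}(\nabla_x^+)=\pi_{n+1,n}(\nabla_x^-)$ together with Proposition \ref{t4.1.4}, and I also need to see the value lands in $(E\otimes\mathcal{W}_D)_x^{\perp}$, which I expect to extract from condition 4 below. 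Also $\dot{-}$ produces an element of $(M\otimes\mathcal{W}_D)$ lying over $\pi(x)$, hence (after applying $\pi^*$ arguments) in the appropriate fiber.

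Second, conditions 1, 2 and 3 of Definition \ref{d8.2.2.1} should come out essentially formally from the corresponding properties of pseudotangentials (Definition \ref{d4.1}, conditions 2, 3, 4 respectively) combined with the compatibility of $\dot{-}$ with those operations. Concretely: condition 1 ($\omega_x(\alpha\underset{i}{\cdot}\gamma)=\alpha\omega_x(\gamma)$) follows from Definition \ref{d4.1}(2) applied to both $\nabla_x^+$ and $\nabla_x^-$ and then Proposition \ref{t8.2.1.8} (the identity $\alpha(\gamma_+\dot{-}\gamma_-)=(\alpha\underset{i}{\cdot}\gamma_+)\dot{-}(\alpha\underset{i}{\cdot}\gamma_-)$); condition 3 ($\omega_x(\gamma^\sigma)=\omega_x(\gamma)$) follows from Definition \ref{d4.1}(4) and Proposition \ref{t8.2.1.10} (the identity $(\gamma_+)^\sigma\dot{-}(\gamma_-)^\sigma=\gamma_+\dot{-}\gamma_-$); and condition 2 (the $\mathcal{W}_{D_m}$-naturality square, with the $\times_{D\times D_m\rightarrow D}$ along the bottom) follows from Definition \ref{d4.1}(3) applied to each of $\nabla_x^\pm$ together with Proposition \ref{t8.2.1.3} (naturality of $\dot{-}$ in the base map), taking the base map there to be the relevant Weil-algebra homomorphism; here I would follow the description of the vertical arrows given in Proposition \ref{t8.2.1.9}, whose first diagram is exactly tailored to this purpose.

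Third, condition 4 is the substantive one: I must show $\omega_x((\mathrm{id}_M\otimes\mathcal{W}_{(d_1,\dots,d_{n+1})\mapsto(d_1,\dots,d_{n-1},d_nd_{n+1})})(\gamma))=0$ for all $\gamma\in(M\otimes\mathcal{W}_{D^n})_{\pi(x)}$. Write $\delta$ for the degeneracy map $(d_1,\dots,d_{n+1})\in D^{n+1}\mapsto(d_1,\dots,d_{n-1},d_nd_{n+1})\in D^n$, so we are evaluating $\nabla_x^+(\delta_*\gamma)\dot{-}\nabla_x^-(\delta_*\gamma)$. The idea is that $\nabla_x^\pm$ applied to a degenerate element is, by Definition \ref{d4.2}(2) (the tangentiality condition), completely determined by $\widehat\pi_{n+1,n}(\nabla_x^\pm)=\pi_{n+1,n}(\nabla_x^\pm)$, and those two are equal by hypothesis; hence $\nabla_x^+(\delta_*\gamma)=\nabla_x^-(\delta_*\gamma)$. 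Then $\nabla_x^+(\delta_*\gamma)\dot{-}\nabla_x^-(\delta_*\gamma)=\nabla_x^-(\delta_*\gamma)\dot{-}\nabla_x^-(\delta_*\gamma)$, and the difference of an element with itself under $\dot{-}$ is zero (the unique $\gamma$ with $\Psi$-image and $\Phi$-image both equal to a given element is the constant-prolongation one, whose $\Xi$-image vanishes). I expect this last step — identifying $\nabla_x^+(\delta_*\gamma)$ with $\nabla_x^-(\delta_*\gamma)$ rigorously via Definition \ref{d4.2} — to be the main obstacle, since one must make sure the degeneracy $\delta$ in condition 4 of Definition \ref{d8.2.2.1} matches up with the multiplication map $(d_1,\dots,d_n,d_{n+1})\mapsto(d_1,\dots,d_nd_{n+1})$ appearing in Definition \ref{d4.2}(2) up to a permutation in $\mathbf{S}_{n+1}$, for which Proposition \ref{t4.1.4} (or \ref{t8.2.1.10}) is the needed bookkeeping tool. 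Once the four conditions are checked, the proof concludes by invoking the defining Notation for $\mathbb{S}_x^{D^{n+1}}(\pi)$, so that $\omega_x\in\mathbb{S}_x^{D^{n+1}}(\pi)$ as claimed.
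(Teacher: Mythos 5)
Your proposal is correct and takes essentially the same route as the paper: a condition-by-condition verification of Definition \ref{d8.2.2.1}, using conditions 2--4 of Definition \ref{d4.1} together with Propositions \ref{t8.2.1.8}, \ref{t8.2.1.9} and \ref{t8.2.1.10}, and, for the fourth condition, the second condition of Definition \ref{d4.2} combined with the hypothesis $\pi_{n+1,n}(\nabla_{x}^{+})=\pi_{n+1,n}(\nabla_{x}^{-})$ and the vanishing of a self-difference under $\dot{-}$. The only slip is your suggestion that membership in $\left(E\otimes\mathcal{W}_{D}\right)_{x}^{\perp}$ would be ``extracted from condition 4'': in the paper this is the first step, obtained by applying $\pi\otimes\mathrm{id}_{\mathcal{W}_{D}}$ and Proposition \ref{t8.2.1.3} (with $F=\pi$) together with the first condition of Definition \ref{d4.1}, giving $\gamma\dot{-}\gamma=0$ --- ingredients you already cite elsewhere in the proposal.
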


\begin{proof}
\begin{enumerate}
\item Since
\begin{align*}
&  \left(  \pi\otimes\mathrm{id}_{\mathcal{W}_{D}}\right)  \left(  \nabla
_{x}^{+}(\gamma)\dot{-}\nabla_{x}^{-}(\gamma)\right) \\
&  =\left(  \pi\otimes\mathrm{id}_{\mathcal{W}_{D^{n+1}}}\right)  \left(
\nabla_{x}^{+}(\gamma)\right)  \dot{-}\left(  \pi\otimes\mathrm{id}%
_{\mathcal{W}_{D^{n+1}}}\right)  \left(  \nabla_{x}^{-}(\gamma)\right) \\
&  \text{[By Proposition \ref{t8.2.1.3}]}\\
&  =0\text{,}%
\end{align*}
$\nabla_{x}^{+}(\gamma)\dot{-}\nabla_{x}^{-}(\gamma)$ belongs in $\left(
E\otimes\mathcal{W}_{D}\right)  _{x}^{\perp}$.

\item For any $\alpha\in\mathbb{R}$ and any natural number $i$ with
$1\leq1\leq n+1$, we have
\begin{align*}
&  \nabla_{x}^{+}(\alpha\underset{\dot{i}}{\cdot}\gamma)\dot{-}\nabla_{x}%
^{-}(\alpha\underset{\dot{i}}{\cdot}\gamma)\\
&  =\alpha\underset{i}{\cdot}\nabla_{x}^{+}(\gamma)\dot{-}\alpha\underset
{i}{\cdot}\nabla_{x}^{-}(\gamma)\\
&  =\alpha(\nabla_{x}^{+}(\gamma)\dot{-}\nabla_{x}^{-}(\gamma))\text{,}%
\end{align*}
which implies that the assignment
\[
\gamma\in\left(  M\otimes\mathcal{W}_{D^{n+1}}\right)  _{\pi\left(  x\right)
}\longmapsto\nabla_{x}^{+}(\gamma)\dot{-}\nabla_{x}^{-}(\gamma)\in\left(
E\otimes\mathcal{W}_{D}\right)  _{x}^{\perp}%
\]
abides by the first condition in Definition \ref{d8.2.2.1}.

\item To see that the assignment abides by the second condition in Definition
\ref{d8.2.2.1}, it suffices to note that the diagram
\begin{align*}
&
\begin{array}
[c]{ccc}%
\left(  M\otimes\mathcal{W}_{D^{n+1}}\right)  _{\pi\left(  x\right)  } &
\rightarrow_{i} & \left(  M\otimes\mathcal{W}_{D^{n+1}}\right)  _{\pi\left(
x\right)  }\otimes\mathcal{W}_{D_{m}}\\
\downarrow &  & \downarrow\\%
\begin{array}
[c]{c}%
\left(  E\otimes\mathcal{W}_{D^{n+1}}\right)  _{x}\underset{E\otimes
\mathcal{W}_{D\left\{  n+1\right\}  _{n}}}{\times}\\
\left(  E\otimes\mathcal{W}_{D^{n+1}}\right)  _{x}%
\end{array}
& \rightarrow_{i} &
\begin{array}
[c]{c}%
\left(  \left(  E\otimes\mathcal{W}_{D^{n+1}}\right)  _{x}\underset
{E\otimes\mathcal{W}_{D\left\{  n+1\right\}  _{n}}}{\times}\left(
E\otimes\mathcal{W}_{D^{n+1}}\right)  _{x}\right) \\
\otimes\mathcal{W}_{D_{m}}%
\end{array}
\\
\downarrow &  & \downarrow\\
\left(  E\otimes\mathcal{W}_{D}\right)  _{x} & \rightarrow & \left(
E\otimes\mathcal{W}_{D}\right)  _{x}\otimes\mathcal{W}_{D_{m}}%
\end{array}
\\
&  \left(  1\leq i\leq n+1)\right)
\end{align*}
is commutative, where the upper horizontal arrow is
\[
\mathrm{id}_{M}\otimes\mathcal{W}_{\left(  d_{1},...,d_{n+1},e\right)  \in
D^{n+1}\times D_{m}\mapsto\left(  d_{1},...,d_{i-1},ed_{i},d_{i+1}%
,...d_{n+1}\right)  \in D^{n+1}}\text{,}%
\]
the middle horizontal arrow is the mapping
\begin{align*}
&  \left(  E\otimes\mathcal{W}_{D^{n+1}}\right)  \underset{E\otimes
\mathcal{W}_{D\left\{  n+1\right\}  _{n}}}{\times}\left(  E\otimes
\mathcal{W}_{D^{n+1}}\right) \\
&  \underline{\left(  \mathrm{id}_{E}\otimes\mathcal{W}_{\left(
d_{1},...,d_{n+1},e\right)  \in D^{n+1}\times D_{m}\mapsto\left(
d_{1},...,d_{i-1},ed_{i},d_{i+1},...d_{n+1}\right)  \in D^{n+1}}\right)
\times}\\
&  \underrightarrow{\left(  \mathrm{id}_{E}\otimes\mathcal{W}_{\left(
d_{1},...,d_{n+1},e\right)  \in D^{n+1}\times D_{m}\mapsto\left(
d_{1},...,d_{i-1},ed_{i},d_{i+1},...d_{n+1}\right)  \in D^{n+1}}\right)  }\\
&  \left(  E\otimes\mathcal{W}_{D^{n+1}\times D_{m}}\right)  \underset
{E\otimes\mathcal{W}_{D\left\{  n+1\right\}  _{n}}}{\times}\left(
E\otimes\mathcal{W}_{D^{n+1}\times D_{m}}\right) \\
&  =\left(  \left(  E\otimes\mathcal{W}_{D^{n+1}}\right)  \underset
{E\otimes\mathcal{W}_{D\left\{  n+1\right\}  _{n}}}{\times}E\otimes
\mathcal{W}_{D^{n+1}}\right)  \otimes\mathcal{W}_{D_{m}}\text{,}%
\end{align*}
the lower horizontal arrow is
\[
\mathrm{id}_{E}\otimes\mathcal{W}_{\times_{D\times D_{m}\rightarrow D}%
}\text{,}%
\]
the upper left vertical arrow is
\begin{align*}
\gamma &  \in\left(  M\otimes\mathcal{W}_{D^{n+1}}\right)  _{\pi\left(
x\right)  }\mapsto\\
\left(  \nabla_{x}^{+}(\gamma),\nabla_{x}^{-}(\gamma)\right)   &  \in\left(
E\otimes\mathcal{W}_{D^{n+1}}\right)  _{x}\underset{E\otimes\mathcal{W}%
_{D\left\{  n+1\right\}  _{n}}}{\times}\left(  E\otimes\mathcal{W}_{D^{n+1}%
}\right)  _{x}\text{,}%
\end{align*}
the lower left vertical arrow is
\[
\left(  \gamma^{+},\gamma^{-}\right)  \in\left(  E\otimes\mathcal{W}_{D^{n+1}%
}\right)  _{x}\underset{E\otimes\mathcal{W}_{D\left\{  n+1\right\}  _{n}}%
}{\times}\left(  E\otimes\mathcal{W}_{D^{n+1}}\right)  _{x}\mapsto\gamma
^{+}\dot{-}\gamma^{-}\in\left(  E\otimes\mathcal{W}_{D}\right)  _{x}\text{,}%
\]
the upper right vertical arrow is obtained from the upper left vertical arrow
by multiplication of $\otimes\mathrm{id}_{\mathcal{W}_{D_{m}}}$ from the
right, and the lower right vertical arrow is obtained from the lower left
vertical arrow by multiplication of $\otimes\mathrm{id}_{\mathcal{W}_{D_{m}}}$
from the right. The upper square is commutative by the third condition in
Definition \ref{d4.1}, while the lower square is commutaive by Proposition
\ref{t8.2.1.9}, so that the outer square is also commutative, which is no
other than the second condition in Definition \ref{d8.2.2.1}.

\item For any $\sigma\in\mathbf{S}_{n+1}$, we have
\begin{align*}
&  \nabla_{x}^{+}(\gamma^{\sigma})\dot{-}\nabla_{x}^{-}(\gamma^{\sigma})\\
&  =\left(  \nabla_{x}^{+}(\gamma)\right)  ^{\sigma}\dot{-}\left(  \nabla
_{x}^{-}(\gamma)\right)  ^{\sigma}\\
&  =\nabla_{x}^{+}(\gamma)\dot{-}\nabla_{x}^{-}(\gamma)\text{,}%
\end{align*}
which implies that the assignment abides by the third condition in Definition
\ref{d8.2.2.1}.

\item It remains to show that the assignment abides by the fourth condition in
Definition \ref{d8.2.2.1}, which follows directly from the second condition in
Definition \ref{d4.2} and the assumption that $\underline{\hat{\pi}}%
_{n+1,n}(\nabla_{x}^{+})=\ \underline{\hat{\pi}}_{n+1,n}(\nabla_{x}^{-})$.
\end{enumerate}
\end{proof}

\begin{proposition}
\label{t8.2.3.3}Let $\nabla_{x}\in\mathbb{J}_{x}^{D^{n+1}}(\pi)$ and
$\omega\in\mathbb{S}_{x}^{D^{n+1}}(\pi)$. Then the assignment $\gamma
\in\left(  M\otimes\mathcal{W}_{D^{n+1}}\right)  _{\pi\left(  x\right)
}\longmapsto\omega(\gamma)\dot{+}\nabla_{x}(\gamma)$ belongs to $\mathbb{J}%
_{x}^{D^{n+1}}(\pi)$.
\end{proposition}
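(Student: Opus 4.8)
The strategy is exactly parallel to the proof of Proposition \ref{t8.2.3.2}: I must verify that the assignment $\nabla'_x : \gamma \longmapsto \omega(\gamma)\dot{+}\nabla_x(\gamma)$ satisfies all the defining conditions of a $D^{n+1}$-pseudotangential (Definition \ref{d4.1}) and then the two additional clauses making it a $D^{n+1}$-tangential (Definition \ref{d4.2}). The main tools will be Proposition \ref{t8.2.1.6} (functoriality of $\dot{+}$), Proposition \ref{t8.2.1.8} (compatibility of $\dot{+}$ with the scalar actions $\alpha\underset{i}{\cdot}$), Proposition \ref{t8.2.1.9} (the second diagram, compatibility with the $\mathcal{W}_{D_m}$-comonad structure), and Proposition \ref{t8.2.1.10} (compatibility with the $\mathbf{S}_{n+1}$-action), together with the fact that $\omega$ takes values in the ``vertical'' subspace $(E\otimes\mathcal{W}_D)_x^\perp$, so that adjoining it via $\dot{+}$ does not disturb the projection to $M$.

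I would organize the proof as five numbered items matching the four clauses of Definition \ref{d4.1} plus the two clauses of Definition \ref{d4.2}. First, for the projection condition (clause 1 of Definition \ref{d4.1}): applying $\pi\otimes\mathrm{id}$ to $\omega(\gamma)\dot{+}\nabla_x(\gamma)$ and using Proposition \ref{t8.2.1.6} with $F=\pi$, the result is $(\pi\otimes\mathrm{id})(\omega(\gamma))\dot{+}(\pi\otimes\mathrm{id})(\nabla_x(\gamma))$; the second term is $\gamma$ by hypothesis on $\nabla_x$, and since $\omega(\gamma)\in(E\otimes\mathcal{W}_D)_x^\perp$ its image under $\pi\otimes\mathrm{id}$ is the constant (degenerate) element, so $\dot{+}$-adding it returns $\gamma$. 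Second, the scalar-homogeneity condition (clause 2): from $\omega(\alpha\underset{i}{\cdot}\gamma)=\alpha\,\omega(\gamma)$, $\nabla_x(\alpha\underset{i}{\cdot}\gamma)=\alpha\underset{i}{\cdot}\nabla_x(\gamma)$, and the second identity of Proposition \ref{t8.2.1.8} (namely $\alpha\underset{i}{\cdot}(t\dot{+}\gamma)=\alpha t\dot{+}\alpha\underset{i}{\cdot}\gamma$) one gets $\omega(\alpha\underset{i}{\cdot}\gamma)\dot{+}\nabla_x(\alpha\underset{i}{\cdot}\gamma)=\alpha\underset{i}{\cdot}(\omega(\gamma)\dot{+}\nabla_x(\gamma))$. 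Third, the $\mathcal{W}_{D_m}$-compatibility square (clause 3): this is the analogue of item 3 in the proof of Proposition \ref{t8.2.3.2} — one splices together the defining square for $\nabla_x$ (clause 3 of Definition \ref{d4.1}), the defining square for $\omega$ (clause 2 of Definition \ref{d8.2.2.1}), and the commuting square of Proposition \ref{t8.2.1.9} (the second one, relating $\dot{+}$ to the degeneracy-type reindexing), obtaining the outer square which is precisely clause 3 for $\nabla'_x$. Fourth, the $\mathbf{S}_{n+1}$-equivariance (clause 4): from $\omega(\gamma^\sigma)=\omega(\gamma)$, $\nabla_x(\gamma^\sigma)=(\nabla_x(\gamma))^\sigma$, and the second identity $(t\dot{+}\gamma)^\sigma=t\dot{+}\gamma^\sigma$ of Proposition \ref{t8.2.1.10}, we get $\omega(\gamma^\sigma)\dot{+}\nabla_x(\gamma^\sigma)=(\omega(\gamma)\dot{+}\nabla_x(\gamma))^\sigma$.

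Finally, for the two clauses of Definition \ref{d4.2}: clause 1 asks that $\widehat{\pi}_{n+1,n}(\nabla'_x)$ be a $D^n$-tangential. The reindexing $\widehat{\pi}_{n+1,n}$ is induced by restriction along $i_{D^n\to D^{n+1}}$ (or the appropriate composite into $D\{n+1\}_n$); one must check that, because $\omega$ vanishes on elements pulled back along the multiplication map $(d_1,\dots,d_n)\mapsto(d_1,\dots,d_{n-1}d_n)$ and more to the point because $\omega(\gamma)\in(E\otimes\mathcal{W}_D)_x^\perp$, the $\dot{+}$-perturbation disappears under $\widehat{\pi}_{n+1,n}$, leaving $\widehat{\pi}_{n+1,n}(\nabla'_x)=\widehat{\pi}_{n+1,n}(\nabla_x)$, which is a $D^n$-tangential since $\nabla_x$ is. Clause 2 is the compatibility with the ``multiplication of the last two slots'' map $\mu:D^{n+1}\to D^n$, $(d_1,\dots,d_{n+1})\mapsto(d_1,\dots,d_nd_{n+1})$; here the crucial input is clause 4 of Definition \ref{d8.2.2.1}, which says $\omega$ kills elements of the form $(\mathrm{id}_M\otimes\mathcal{W}_{\mu})(\gamma)$, so that on such elements $\nabla'_x$ agrees with $\nabla_x$, and the identity for $\nabla_x$ transfers. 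The step I expect to be the genuine obstacle is clause 1 of Definition \ref{d4.2} — pinning down exactly why the ``vertical'' correction term $\omega(\gamma)\in(E\otimes\mathcal{W}_D)_x^\perp$ is annihilated by the projection $\widehat{\pi}_{n+1,n}$ (as opposed to merely $\pi_{n+1,n}$), since this requires understanding how $\dot{+}$ interacts with the restriction $M\otimes\mathcal{W}_{D^{n+1}}\to M\otimes\mathcal{W}_{D\{n+1\}_n}$ — and I would handle it by the same quasi-colimit bookkeeping (Lemmas \ref{t8.2.1.1} and \ref{t8.2.1.4}) that defines $\dot{+}$, tracing the element $\gamma'$ of Proposition \ref{t8.2.1.5} through the relevant Weil-algebra maps.
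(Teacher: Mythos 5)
Your items matching clauses 1--4 of Definition \ref{d4.1} and clause 2 of Definition \ref{d4.2} reproduce the paper's proof step for step: Proposition \ref{t8.2.1.6} with $F=\pi$ for the projection condition, the identity $\alpha\underset{i}{\cdot}(t\dot{+}\gamma)=\alpha t\dot{+}\alpha\underset{i}{\cdot}\gamma$ of Proposition \ref{t8.2.1.8} for homogeneity, the spliced commutative square (clause 3 of Definition \ref{d4.1}, clause 2 of Definition \ref{d8.2.2.1}, and the second diagram of Proposition \ref{t8.2.1.9}) for the $\mathcal{W}_{D_{m}}$-compatibility, $(t\dot{+}\gamma)^{\sigma}=t\dot{+}\gamma^{\sigma}$ from Proposition \ref{t8.2.1.10} for the $\mathbf{S}_{n+1}$-action, and clause 4 of Definition \ref{d8.2.2.1} combined with clause 2 of Definition \ref{d4.2} for the final clause. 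So far this is exactly the paper's argument.

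The one step you leave open, clause 1 of Definition \ref{d4.2}, is also the one place where your proposed mechanism is not the right one. Verticality of $\omega(\gamma)$, i.e.\ $\omega(\gamma)\in\left(E\otimes\mathcal{W}_{D}\right)_{x}^{\perp}$, only controls the projection to $M$ (your item 1); it does not by itself explain why the $\dot{+}$-perturbation is invisible to $\widehat{\pi}_{n+1,n}$, and the quasi-colimit bookkeeping through Lemmas \ref{t8.2.1.1} and \ref{t8.2.1.4} that you propose is not what the paper does. The paper's observation is Proposition \ref{t8.2.2.1}: any $\omega\in\mathbb{S}_{x}^{D^{n+1}}(\pi)$ annihilates the degenerate elements, $\omega(\mathbf{s}_{i}(\gamma))=0$ (put $\alpha=0$ in the homogeneity condition). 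Since $\widehat{\pi}_{n+1,n}(\omega\dot{+}\nabla_{x})$ is computed by evaluating $\omega\dot{+}\nabla_{x}$ on precisely such degenerate elements, the correction term is literally the zero vertical tangent there, and $\dot{+}$-adding zero changes nothing; hence $\widehat{\pi}_{n+1,n}(\omega\dot{+}\nabla_{x})=\widehat{\pi}_{n+1,n}(\nabla_{x})$, which is a $D^{n}$-tangential by hypothesis. So the gap you flagged is genuine as your write-up stands, but it closes in one line once Proposition \ref{t8.2.2.1} is invoked, with no further tracing through the defining quasi-colimits.
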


\begin{proof}
\begin{enumerate}
\item Since
\begin{align*}
&  \left(  \pi\otimes\mathrm{id}_{\mathcal{W}_{D^{n+1}}}\right)  \left(
\omega(\gamma)\dot{+}\nabla_{x}(\gamma)\right) \\
&  =\left(  \pi\otimes\mathrm{id}_{\mathcal{W}_{D}}\right)  \left(
\omega(\gamma)\right)  \dot{+}\left(  \pi\otimes\mathrm{id}_{\mathcal{W}%
_{D^{n+1}}}\right)  \left(  \nabla_{x}(\gamma)\right) \\
&  \text{[By Proposition \ref{t8.2.1.6}]}\\
&  =\gamma\text{,}%
\end{align*}
the assignment
\[
\gamma\in\left(  M\otimes\mathcal{W}_{D^{n+1}}\right)  _{\pi\left(  x\right)
}\longmapsto\omega(\gamma)\dot{+}\nabla_{x}(\gamma)
\]
stands to the first condition in Definition \ref{d4.1}.

\item For any $\alpha\in\mathbb{R}$ and any natural number $i$ with $1\leq
i\leq n+1$, we have
\begin{align*}
&  \omega(\alpha\underset{i}{\cdot}\gamma)\dot{+}\nabla_{x}(\alpha\underset
{i}{\cdot}\gamma)\\
&  =\alpha\omega(\gamma)\dot{+}\alpha\underset{i}{\cdot}\nabla_{x}(\gamma)\\
&  =\alpha\underset{i}{\cdot}(\omega(\gamma)\dot{+}\nabla_{x}(\gamma))\text{,}%
\end{align*}
so that the assignment stands to the second condition in Definition \ref{d4.1}.

\item To see that the assignment acquiesces in the third condition in
Definition \ref{d4.1}, it suffices to note that the diagram
\begin{align*}
&
\begin{array}
[c]{ccc}%
\left(  M\otimes\mathcal{W}_{D^{n+1}}\right)  _{\pi\left(  x\right)  } &
\rightarrow_{i} & \left(  M\otimes\mathcal{W}_{D^{n+1}}\right)  _{\pi\left(
x\right)  }\otimes\mathcal{W}_{D_{m}}\\
\downarrow &  & \downarrow\\
\left(  E\otimes\mathcal{W}_{D}\right)  _{x}\underset{E}{\times}\left(
E\otimes\mathcal{W}_{D^{n+1}}\right)  _{x} & \rightarrow_{i} & \left(  \left(
E\otimes\mathcal{W}_{D}\right)  \underset{E}{\times}\left(  E\otimes
\mathcal{W}_{D^{n+1}}\right)  _{x}\right)  \otimes\mathcal{W}_{D_{m}}\\
\downarrow &  & \downarrow\\
\left(  E\otimes\mathcal{W}_{D^{n+1}}\right)  _{x} & \rightarrow_{i} & \left(
E\otimes\mathcal{W}_{D^{n+1}}\right)  _{x}\otimes\mathcal{W}_{D_{m}}%
\end{array}
\\
&  \left(  1\leq i\leq n+1)\right)
\end{align*}
is commutative, where the upper horizontal arrow is
\[
\mathrm{id}_{M}\otimes\mathcal{W}_{\left(  \underset{i}{\cdot}\right)
_{D^{n+1}\times D_{m}}}\text{,}%
\]
the middle horizontal arrow is the composition of mappings
\begin{align*}
&  \left(  E\otimes\mathcal{W}_{D}\right)  \underset{M}{\times}\left(
E\otimes\mathcal{W}_{D^{n+1}}\right) \\
&  \underrightarrow{\left(  \mathrm{id}_{M}\otimes\mathcal{W}_{\times_{D\times
D_{m}\rightarrow D}}\right)  \times\left(  \mathrm{id}_{M}\otimes
\mathcal{W}_{\left(  \underset{i}{\cdot}\right)  _{D^{n+1}\times D_{m}}%
}\right)  }\\
&  \left(  E\otimes\mathcal{W}_{D\times D_{m}}\right)  \underset
{E\otimes\mathcal{W}_{D_{m}}}{\times}\left(  E\otimes\mathcal{W}%
_{D^{n+1}\times D_{m}}\right) \\
&  =\left(  \left(  E\otimes\mathcal{W}_{D}\right)  \underset{M}{\times
}\left(  E\otimes\mathcal{W}_{D^{n+1}}\right)  \right)  \otimes\mathcal{W}%
_{D_{m}}\text{,}%
\end{align*}
the lower horizontal arrow is
\[
\mathrm{id}_{E}\otimes\mathcal{W}_{\left(  \underset{i}{\cdot}\right)
_{D^{n+1}\times D_{m}}}\text{,}%
\]
the upper left vertical arrow is
\[
\gamma\in M\otimes\mathcal{W}_{D^{n+1}}\mapsto\left(  \omega_{x}%
(\gamma),\nabla_{x}(\gamma)\right)  \in\left(  E\otimes\mathcal{W}_{D}\right)
\underset{E}{\times}\left(  E\otimes\mathcal{W}_{D^{n+1}}\right)  \text{,}%
\]
he lower left vertical arrow is
\[
\left(  t,\gamma\right)  \in\left(  E\otimes\mathcal{W}_{D}\right)
\underset{E}{\times}\left(  E\otimes\mathcal{W}_{D^{n+1}}\right)  \mapsto
t\dot{+}\gamma\in E\otimes\mathcal{W}_{D^{n+1}}\text{,}%
\]
the upper right vertical arrow is the upper left vertical arrow multiplied by
$\otimes\mathrm{id}_{\mathcal{W}_{D_{m}}}$ from the right, and the lower right
vertical arrow is the lower left vertical arrow multiplied by $\otimes
\mathrm{id}_{\mathcal{W}_{D_{m}}}$ from the right. The upper square is
commutative by the third condition in Definition \ref{d4.1} and the second
condition in Definition \ref{d8.2.2.1}, while the lower square is commutaive
by Proposition \ref{t8.2.1.9}, so that the outer square is also commutative,
which is no other than the third condition in Definition \ref{d4.1}.

\item For any $\sigma\in\mathbf{S}_{n+1}$, we have
\begin{align*}
&  \,\omega(\gamma^{\sigma})\dot{+}\nabla_{x}(\gamma^{\sigma})\\
&  =\omega(\gamma)\dot{+}(\nabla_{x}(\gamma))^{\sigma}\\
&  =(\omega(\gamma)\dot{+}\nabla_{x}(\gamma))^{\sigma}\text{,}%
\end{align*}
so that the assignment stands to the fourth condition in Definition \ref{d4.1}.

\item That the assignment $\gamma\in\left(  M\otimes\mathcal{W}_{D^{n+1}%
}\right)  _{\pi\left(  x\right)  }\longmapsto\omega(\gamma)\dot{+}\nabla
_{x}(\gamma)$ stands to the first condition of Definition \ref{d4.2} follows
from the simple fact that the image of the assignment under $\underline
{\hat{\pi}}_{n+1,n}$ coincides with $\underline{\hat{\pi}}_{n+1,n}(\nabla
_{x})$, which is consequent upon Proposition \ref{t8.2.2.1}.

\item It remains to show that the assignment abides by the second condition in
Definition \ref{d4.2}, which follows directly from fourth condition in
Definition \ref{d8.2.2.1} and the second condition in Definition \ref{d4.2}.
\end{enumerate}
\end{proof}

Now we are in a position to give a definition.

\begin{definition}
\begin{enumerate}
\item For any $\nabla_{x}^{+},\nabla_{x}^{-}\in\mathbb{J}^{n+1}(\pi)$ with
\[
\pi_{n+1,n}(\nabla_{x}^{+})=\pi_{n+1,n}(\nabla_{x}^{-})\text{,}%
\]
we define $\nabla_{x}^{+}\dot{-}\nabla_{x}^{-}\in\mathbb{S}_{x}^{D_{n+1}}%
(\pi)$ to be
\[
(\nabla_{x}^{+}\dot{-}\nabla_{x}^{-})(\gamma)=\nabla_{x}^{+}(\gamma)\dot
{-}\nabla_{x}^{-}(\gamma)
\]
for any $\gamma\in\left(  M\otimes\mathcal{W}_{D^{n+1}}\right)  _{\pi\left(
x\right)  }$.

\item For any $\omega\in\mathbb{S}_{x}^{D^{n+1}}(\pi)\ $and any $\nabla_{x}%
\in\mathbb{J}^{n+1}(\pi)$, we define $\omega\dot{+}\nabla_{x}\in\mathbb{J}%
_{x}^{n+1}(\pi)$ to be
\[
(\omega\dot{+}\nabla_{x})(\gamma)=\omega(\gamma)\dot{+}\nabla_{x}(\gamma)
\]
for any $\gamma\in\left(  M\otimes\mathcal{W}_{D^{n+1}}\right)  _{\pi\left(
x\right)  }$.
\end{enumerate}
\end{definition}

With these two operations depicted in the above definition, it is easy to see that

\begin{theorem}
\label{t8.2.3.4}(cf. Theorem 6.2.9 of \cite{sa}). The bundle $\pi
_{n+1,n}:\mathbb{J}^{n+1}(\pi)\rightarrow\mathbb{J}^{n}(\pi)$ is an affine
bundle over the vector bundle $\mathbb{J}^{n}(\pi)\underset{E}{\times
}\mathbb{S}^{D^{n+1}}(\pi)\rightarrow\mathbb{J}^{n}(\pi)$.
\end{theorem}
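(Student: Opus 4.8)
The plan is to verify directly that the two operations introduced in the preceding definition, namely the "subtraction'' $\nabla_x^+\dot{-}\nabla_x^-$ and the "action'' $\omega\dot{+}\nabla_x$, equip the fibers of $\pi_{n+1,n}:\mathbb{J}^{n+1}(\pi)\rightarrow\mathbb{J}^{n}(\pi)$ with the structure of an affine space modeled on the fibers of the vector bundle $\mathbb{J}^{n}(\pi)\underset{E}{\times}\mathbb{S}^{D^{n+1}}(\pi)$. Propositions \ref{t8.2.3.2} and \ref{t8.2.3.3} already guarantee that these operations are well-defined between the stated sets, so what remains is to check the affine-space axioms fiberwise together with the requisite compatibility (smoothness) with the bundle projections.

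First I would fix $\nabla_x\in\mathbb{J}^{n}(\pi)$ and consider the fiber $\pi_{n+1,n}^{-1}(\nabla_x)$, and the corresponding fiber $\mathbb{S}_x^{D^{n+1}}(\pi)$ of the model vector bundle over the point $x\in E$ lying under $\nabla_x$. The axioms to verify are: (a) for $\nabla_x^+,\nabla_x^-$ in the fiber, $\omega\dot{+}\nabla_x^- = \nabla_x^+$ where $\omega=\nabla_x^+\dot{-}\nabla_x^-$ — this reduces pointwise, for each $\gamma$, to the identity $(\nabla_x^+(\gamma)\dot{-}\nabla_x^-(\gamma))\dot{+}\nabla_x^-(\gamma)=\nabla_x^+(\gamma)$ in the fiber $(E\otimes\mathcal{W}_{D^{n+1}})_x$, which is exactly the affine-space identity for the bundle of Theorem \ref{t8.2.1.7} (the $n+1$-dimensional analogue of \S3.4 of \cite{la}) applied with $M$ replaced by $E$; (b) $(\omega\dot{+}\nabla_x)\dot{-}\nabla_x=\omega$, again pointwise reducing to the companion identity in that same affine bundle; (c) additivity $((\omega_1+\omega_2)\dot{+}\nabla_x)=\omega_1\dot{+}(\omega_2\dot{+}\nabla_x)$ and compatibility with scalar multiplication, which reduce pointwise to the corresponding linearity statements of the affine bundle structure on $E\otimes\mathcal{W}_{D^{n+1}}\rightarrow E\otimes\mathcal{W}_{D\{n+1\}_{n}}$, keeping in mind that the vector-bundle operations on $\mathbb{S}^{D^{n+1}}(\pi)$ are defined pointwise in $\gamma$ and valued in $(E\otimes\mathcal{W}_D)_x^{\perp}$. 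Transitivity of the action and its freeness then follow formally from (a) and (b).

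Next I would check that the operation $\dot{+}$ is a morphism over $E$ in the appropriate sense — i.e., that the map $\mathbb{S}^{D^{n+1}}(\pi)\underset{E}{\times}\mathbb{J}^{n+1}(\pi)\rightarrow\mathbb{J}^{n+1}(\pi)$ and the difference map $\mathbb{J}^{n+1}(\pi)\underset{\mathbb{J}^{n}(\pi)}{\times}\mathbb{J}^{n+1}(\pi)\rightarrow\mathbb{S}^{D^{n+1}}(\pi)$ are Fr\"olicher-smooth and intertwine the relevant structure maps, using the microlinearity compatibility expressed through the $\mathcal{W}_{D_m}$-diagrams — this is the content of Propositions \ref{t8.2.1.8}, \ref{t8.2.1.9} and \ref{t8.2.1.10} transferred to the bundle level, exactly as in the cited passage of \cite{la} and paralleling Theorem 6.2.9 of \cite{sa}. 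Since each of these is asserted to follow "easily'' once the definition is in place, I would only indicate the pointwise reductions and the one diagram chase needed to see $\dot{+}$ respects the $\mathcal{W}_{D_m}$-actions, citing Proposition \ref{t8.2.1.9}.

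The main obstacle, to the extent there is one, is bookkeeping rather than conceptual: one must be careful that the affine structure of Theorem \ref{t8.2.1.7} is being used with $E$ in place of $M$ (legitimate since $E$ is itself a microlinear Fr\"olicher space), and that the jet conditions in Definitions \ref{d4.1} and \ref{d4.2} — in particular the constraint pinning down the image under $\widehat{\pi}_{n+1,n}$ and the symmetry condition — interact correctly with $\dot{-}$ and $\dot{+}$ so that the operations genuinely land in $\mathbb{S}_x^{D^{n+1}}(\pi)$ and $\mathbb{J}_x^{n+1}(\pi)$; but this interaction has already been discharged in Propositions \ref{t8.2.3.2} and \ref{t8.2.3.3}, so the theorem follows by assembling these pieces.
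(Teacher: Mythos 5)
Your proposal is correct and follows essentially the same route as the paper: the paper's own proof is the one-line observation that the theorem ``follows simply from Theorem \ref{t8.2.1.7}'', i.e.\ exactly the pointwise reduction you describe (applied with $E$ in place of $M$), with well-definedness of $\dot{-}$ and $\dot{+}$ on jets already secured by Propositions \ref{t8.2.3.2} and \ref{t8.2.3.3}. Your write-up merely makes explicit the fiberwise affine-space axioms and the $\mathcal{W}_{D_m}$-compatibility (via Proposition \ref{t8.2.1.9}) that the paper leaves implicit.
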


\begin{proof}
This follows simply from Theorem \ref{t8.2.1.7}.
\end{proof}

\subsection{\label{s8.3}The Theorem in the Third Approach}

\subsubsection{\label{s8.3.1}Affine Bundles}

Now we turn to another kind of affine bundles, for which we can proceed in the
same way as in Subsubsection \ref{s8.2.1}.

\begin{lemma}
\label{t8.3.1.1}The diagram
\[%
\begin{array}
[c]{ccc}%
D_{n} & \underrightarrow{i_{D_{n}\rightarrow D_{n+1}}} & D_{n+1}\\
i_{D_{n}\rightarrow D_{n+1}}\downarrow &  & \downarrow\Psi_{D_{n+1}}\\
D_{n+1} & \underrightarrow{\Phi_{D_{n+1}}} & D_{n+1}\oplus D
\end{array}
\]
is a quasi-colimit diagram, where $i:D_{n}\rightarrow D_{n+1}$ is the
canonical injection, $\Phi_{D_{n+1}}(d)=(d,0)$ and $\Psi_{D_{n+1}%
}(d)=(d,d^{n+1})$.
\end{lemma}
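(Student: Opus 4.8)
The plan is to dualize the statement through the contravariant correspondence between infinitesimal objects and Weil algebras, and then to reduce the quasi-colimit assertion to the purely algebraic fact that an associated square of Weil algebras is cartesian; this is precisely how one proceeds for Lemma~\ref{t8.2.1.1}. Concretely, $\mathcal{W}_{D_{n}}=\mathbb{R}[X]/(X^{n+1})$, $\mathcal{W}_{D_{n+1}}=\mathbb{R}[X]/(X^{n+2})$, and $\mathcal{W}_{D_{n+1}\oplus D}=\mathbb{R}[X,Y]/(X^{n+2},XY,Y^{2})$, i.e.\ $\mathcal{W}_{D_{n+1}}$ enlarged by one square-zero generator $Y$ that annihilates the maximal ideal. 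The homomorphisms dual to the maps appearing in the diagram are the canonical surjection $i^{\ast}\colon\mathbb{R}[X]/(X^{n+2})\twoheadrightarrow\mathbb{R}[X]/(X^{n+1})$, the homomorphism $\Phi_{D_{n+1}}^{\ast}$ with $X\mapsto X$ and $Y\mapsto 0$, and the homomorphism $\Psi_{D_{n+1}}^{\ast}$ with $X\mapsto X$ and $Y\mapsto X^{n+1}$; the latter two are well defined because $X\cdot X^{n+1}=X^{n+2}=0$ and $(X^{n+1})^{2}=X^{2n+2}=0$ in $\mathbb{R}[X]/(X^{n+2})$.

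First I would verify that the resulting square of Weil algebras commutes, which is immediate since both composites $\mathcal{W}_{D_{n+1}\oplus D}\to\mathcal{W}_{D_{n}}$ send $X\mapsto X$ and $Y\mapsto 0$ (the image $X^{n+1}$ of $Y$ under $\Psi_{D_{n+1}}^{\ast}$ already vanishes in $\mathbb{R}[X]/(X^{n+1})$). The substantive step is to show that the induced map
\[
\mathcal{W}_{D_{n+1}\oplus D}\longrightarrow \mathcal{W}_{D_{n+1}}\underset{\mathcal{W}_{D_{n}}}{\times}\mathcal{W}_{D_{n+1}}
\]
is an isomorphism. This is a brief linear-algebra check: the target is $\{(p,q):p\equiv q\ \mathrm{mod}\ X^{n+1}\}$, with $\mathbb{R}$-basis $\{(X^{k},X^{k}):0\le k\le n\}\cup\{(X^{n+1},0),(0,X^{n+1})\}$, while the source has basis $\{X^{k}:0\le k\le n+1\}\cup\{Y\}$; the map sends $X^{k}\mapsto(X^{k},X^{k})$ and $Y\mapsto(0,X^{n+1})$, and since $(X^{n+1},X^{n+1})-(0,X^{n+1})=(X^{n+1},0)$ lies in the image, it is surjective, hence an isomorphism because both sides have dimension $n+3$.

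Finally, from the fact that the square of Weil algebras is cartesian I would conclude that the original square of infinitesimal objects is a quasi-colimit diagram — equivalently, that it is carried to a pullback by $M\otimes(-)$ for every microlinear Fr\"{o}licher space $M$ — which is the same closing step used for Lemma~\ref{t8.2.1.1} and the companion results of \cite{la}. I expect the only real obstacle to be making sure the intended meaning of $\oplus$ is the one recorded above (so that $\mathcal{W}_{D_{n+1}\oplus D}$ is the algebra with the relation $XY=0$, rather than the tensor product $\mathcal{W}_{D_{n+1}}\otimes_{\mathbb{R}}\mathcal{W}_{D}$, for which the square would fail to be cartesian), together with checking that the degenerate low values of $n$ are subsumed by the same formulas; beyond that the argument is routine and parallel to Lemma~\ref{t8.2.1.1}.
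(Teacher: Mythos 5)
Your proposal is correct, and it is essentially the intended argument: the paper states Lemma \ref{t8.3.1.1} without proof, deferring (as with Lemma \ref{t8.2.1.1}) to the method of \S\S 3.4 of \cite{la}, which is exactly your dualization to Weil algebras, i.e.\ checking that $\mathcal{W}_{D_{n+1}\oplus D}=\mathbb{R}[X,Y]/(X^{n+2},XY,Y^{2})\rightarrow\mathcal{W}_{D_{n+1}}\times_{\mathcal{W}_{D_{n}}}\mathcal{W}_{D_{n+1}}$ is an isomorphism. Your dimension count and the identification of the meaning of $\oplus$ are both right, so nothing is missing.
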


This implies directly that

\begin{proposition}
\label{t8.3.1.2}Given $\gamma_{+},\gamma_{-}\in$ $M\otimes\mathcal{W}%
_{D_{n+1}}$ with
\[
\left(  \mathrm{id}_{M}\otimes\mathcal{W}_{i_{D_{n}\rightarrow D_{n+1}}%
}\right)  \left(  \gamma_{+}\right)  =\left(  \mathrm{id}_{M}\otimes
\mathcal{W}_{i_{D_{n}\rightarrow D_{n+1}}}\right)  \left(  \gamma_{-}\right)
\text{,}%
\]
there exists unique $\gamma\in M\otimes\mathcal{W}_{D_{n+1}\oplus D}$ with
\begin{align*}
\left(  \mathrm{id}_{M}\otimes\mathcal{W}_{\Psi_{D_{n+1}}}\right)  \left(
\gamma\right)   &  =\gamma_{+}\text{ and}\\
\left(  \mathrm{id}_{M}\otimes\mathcal{W}_{\Phi_{D_{n+1}}}\right)  \left(
\gamma\right)   &  =\gamma_{-}%
\end{align*}

\end{proposition}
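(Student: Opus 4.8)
The plan is to obtain Proposition \ref{t8.3.1.2} from Lemma \ref{t8.3.1.1} in exactly the way Proposition \ref{t8.2.1.2} is obtained from Lemma \ref{t8.2.1.1}, namely by transporting the quasi-colimit square through the functor $X\mapsto M\otimes\mathcal{W}_{X}$ and invoking the microlinearity of $M$. Recall that, for a microlinear Fr\"{o}licher space $M$, this functor carries quasi-colimit diagrams of infinitesimal objects to limit diagrams; in particular it turns the quasi-colimit square of Lemma \ref{t8.3.1.1} into a pullback square.

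First I would check that the square of Lemma \ref{t8.3.1.1} is a genuine cocone, i.e. that $\Psi_{D_{n+1}}\circ i_{D_{n}\rightarrow D_{n+1}}=\Phi_{D_{n+1}}\circ i_{D_{n}\rightarrow D_{n+1}}$: for $d\in D_{n}$ one has $d^{n+1}=0$, whence $\Psi_{D_{n+1}}(d)=(d,d^{n+1})=(d,0)=\Phi_{D_{n+1}}(d)$. Applying $M\otimes\mathcal{W}_{(-)}$ then yields the commuting square with vertices $M\otimes\mathcal{W}_{D_{n+1}\oplus D}$, two copies of $M\otimes\mathcal{W}_{D_{n+1}}$ and $M\otimes\mathcal{W}_{D_{n}}$, the two arrows out of $M\otimes\mathcal{W}_{D_{n+1}\oplus D}$ being $\mathrm{id}_{M}\otimes\mathcal{W}_{\Psi_{D_{n+1}}}$ and $\mathrm{id}_{M}\otimes\mathcal{W}_{\Phi_{D_{n+1}}}$, and both arrows into $M\otimes\mathcal{W}_{D_{n}}$ being $\mathrm{id}_{M}\otimes\mathcal{W}_{i_{D_{n}\rightarrow D_{n+1}}}$. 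By microlinearity this square is a pullback, and its universal property is precisely the assertion that any pair $\gamma_{+},\gamma_{-}\in M\otimes\mathcal{W}_{D_{n+1}}$ agreeing after restriction along $i_{D_{n}\rightarrow D_{n+1}}$ determines a unique $\gamma\in M\otimes\mathcal{W}_{D_{n+1}\oplus D}$ with $(\mathrm{id}_{M}\otimes\mathcal{W}_{\Psi_{D_{n+1}}})(\gamma)=\gamma_{+}$ and $(\mathrm{id}_{M}\otimes\mathcal{W}_{\Phi_{D_{n+1}}})(\gamma)=\gamma_{-}$.

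There is no genuine obstacle here beyond bookkeeping; the statement is \textquotedblleft directly implied\textquotedblright\ by the lemma. The only points deserving a line of verification are that $D_{n+1}\oplus D$ and its structure maps fall within the scope of the microlinearity results of \cite{nishimura}, and that the induced arrows $\mathrm{id}_{M}\otimes\mathcal{W}_{\Psi_{D_{n+1}}}$ and $\mathrm{id}_{M}\otimes\mathcal{W}_{\Phi_{D_{n+1}}}$ coincide with the maps named in the statement\,---\,both immediate from the definitions of $\Psi_{D_{n+1}}$ and $\Phi_{D_{n+1}}$. Uniqueness of $\gamma$ is part of the pullback property, so nothing further is required.
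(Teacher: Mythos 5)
Your argument is exactly the paper's: the paper offers no separate proof, stating only that Lemma \ref{t8.3.1.1} ``implies directly'' the proposition, which is precisely your route of applying $M\otimes\mathcal{W}_{(-)}$ to the quasi-colimit square and reading off the pullback (limit) property supplied by microlinearity. Your additional check that $\Psi_{D_{n+1}}\circ i_{D_{n}\rightarrow D_{n+1}}=\Phi_{D_{n+1}}\circ i_{D_{n}\rightarrow D_{n+1}}$ (since $d^{n+1}=0$ on $D_{n}$) is correct bookkeeping and consistent with the paper's intent.
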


\begin{notation}
Under the same notation as in the above proposition, we denote
\[
\left(  \mathrm{id}_{M}\otimes\mathcal{W}_{\Xi_{D_{n+1}}}\right)  \left(
\gamma\right)
\]
by $\gamma_{+}\dot{-}\gamma_{-}$, where $\Xi_{D_{n+1}}:D\rightarrow
D^{n+1}\oplus D$ is the mapping
\[
d\in D\mapsto\left(  0,...,0,d\right)  \in D^{n+1}\oplus D
\]

\end{notation}

From the very definition of $\dot{-}$, we have

\begin{proposition}
\label{t8.3.1.3}Let $\varphi$ be a mapping of $M$\ into $M^{\prime}$. Given
$\gamma_{+},\gamma_{-}\in M\otimes\mathcal{W}_{D_{n+1}}$ with
\[
\left(  \mathrm{id}_{M}\otimes\mathcal{W}_{i_{D_{n}\rightarrow D_{n+1}}%
}\right)  \left(  \gamma_{+}\right)  =\left(  \mathrm{id}_{M}\otimes
\mathcal{W}_{i_{D_{n}\rightarrow D_{n+1}}}\right)  \left(  \gamma_{-}\right)
\text{,}%
\]
we have
\begin{align*}
&  \left(  \mathrm{id}_{M^{\prime}}\otimes\mathcal{W}_{i_{D_{n}\rightarrow
D_{n+1}}}\right)  \left(  \left(  \varphi\otimes\mathrm{id}_{\mathcal{W}%
_{D_{n+1}}}\right)  \left(  \gamma_{+}\right)  \right) \\
&  =\left(  \mathrm{id}_{M^{\prime}}\otimes\mathcal{W}_{i_{D_{n}\rightarrow
D_{n+1}}}\right)  \left(  \left(  \varphi\otimes\mathrm{id}_{\mathcal{W}%
_{D_{n+1}}}\right)  \left(  \gamma_{-}\right)  \right)
\end{align*}
and
\begin{align*}
&  \left(  \varphi\otimes\mathrm{id}_{\mathcal{W}_{D}}\right)  \left(
\gamma_{+}\dot{-}\gamma_{-}\right) \\
&  =\left(  \varphi\otimes\mathrm{id}_{\mathcal{W}_{D_{n+1}}}\right)  \left(
\gamma_{+}\right)  \dot{-}\left(  \varphi\otimes\mathrm{id}_{\mathcal{W}%
_{D_{n+1}}}\right)  \left(  \gamma_{-}\right)
\end{align*}

\end{proposition}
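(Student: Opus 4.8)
The statement to prove is Proposition \ref{t8.3.1.3}, which asserts two things about the operation $\dot{-}$ defined via Lemma \ref{t8.3.1.1} and its associated notation: first, that a compatibility condition (equality after restriction along $i_{D_{n}\to D_{n+1}}$) is preserved under applying $\varphi\otimes\mathrm{id}$, and second, that $\varphi\otimes\mathrm{id}$ commutes with $\dot{-}$.

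This is the exact analogue of Proposition \ref{t8.2.1.3} with $D^{n}$ replaced by $D_{n+1}$, $D\{n\}_{n-1}$ replaced by $D_{n}$, and the maps $\Phi_{D^{n}},\Psi_{D^{n}},\Xi_{D^{n}}$ replaced by $\Phi_{D_{n+1}},\Psi_{D_{n+1}},\Xi_{D_{n+1}}$. Accordingly I plan to give the same kind of proof. Let me sketch it.

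My plan is as follows. For the first assertion, I would simply note that $i_{D_{n}\to D_{n+1}}$, $\Psi_{D_{n+1}}$, $\Phi_{D_{n+1}}$ are all maps between Weil spectra, so the functors $\mathrm{id}_{M}\otimes\mathcal{W}_{(-)}$ and $\varphi\otimes\mathrm{id}_{\mathcal{W}_{(-)}}$ commute with one another (naturality of the tensor construction): concretely, $(\mathrm{id}_{M'}\otimes\mathcal{W}_{i_{D_{n}\to D_{n+1}}})\circ(\varphi\otimes\mathrm{id}_{\mathcal{W}_{D_{n+1}}}) = (\varphi\otimes\mathrm{id}_{\mathcal{W}_{D_{n}}})\circ(\mathrm{id}_{M}\otimes\mathcal{W}_{i_{D_{n}\to D_{n+1}}})$ as maps $M\otimes\mathcal{W}_{D_{n+1}}\to M'\otimes\mathcal{W}_{D_{n}}$. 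Applying this identity to both $\gamma_{+}$ and $\gamma_{-}$ and using the hypothesis that their images under $\mathrm{id}_{M}\otimes\mathcal{W}_{i_{D_{n}\to D_{n+1}}}$ agree gives the claimed equality for the images of $(\varphi\otimes\mathrm{id}_{\mathcal{W}_{D_{n+1}}})(\gamma_{\pm})$. This guarantees that $(\varphi\otimes\mathrm{id}_{\mathcal{W}_{D_{n+1}}})(\gamma_{+})\dot{-}(\varphi\otimes\mathrm{id}_{\mathcal{W}_{D_{n+1}}})(\gamma_{-})$ is defined.

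For the second assertion, let $\gamma\in M\otimes\mathcal{W}_{D_{n+1}\oplus D}$ be the unique element supplied by Proposition \ref{t8.3.1.2} with $(\mathrm{id}_{M}\otimes\mathcal{W}_{\Psi_{D_{n+1}}})(\gamma)=\gamma_{+}$ and $(\mathrm{id}_{M}\otimes\mathcal{W}_{\Phi_{D_{n+1}}})(\gamma)=\gamma_{-}$, so that by definition $\gamma_{+}\dot{-}\gamma_{-}=(\mathrm{id}_{M}\otimes\mathcal{W}_{\Xi_{D_{n+1}}})(\gamma)$. I would then set $\gamma' := (\varphi\otimes\mathrm{id}_{\mathcal{W}_{D_{n+1}\oplus D}})(\gamma)$ and check, again using naturality of $\otimes$ in the first variable against the Weil-space maps $\Psi_{D_{n+1}}$ and $\Phi_{D_{n+1}}$, that $(\mathrm{id}_{M'}\otimes\mathcal{W}_{\Psi_{D_{n+1}}})(\gamma')=(\varphi\otimes\mathrm{id}_{\mathcal{W}_{D_{n+1}}})(\gamma_{+})$ and $(\mathrm{id}_{M'}\otimes\mathcal{W}_{\Phi_{D_{n+1}}})(\gamma')=(\varphi\otimes\mathrm{id}_{\mathcal{W}_{D_{n+1}}})(\gamma_{-})$. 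By the uniqueness clause of Proposition \ref{t8.3.1.2} (applied over $M'$), $\gamma'$ is the element whose image under $\mathrm{id}_{M'}\otimes\mathcal{W}_{\Xi_{D_{n+1}}}$ is by definition $(\varphi\otimes\mathrm{id}_{\mathcal{W}_{D_{n+1}}})(\gamma_{+})\dot{-}(\varphi\otimes\mathrm{id}_{\mathcal{W}_{D_{n+1}}})(\gamma_{-})$. Applying $\mathrm{id}\otimes\mathcal{W}_{\Xi_{D_{n+1}}}$ to the equation $\gamma'=(\varphi\otimes\mathrm{id})(\gamma)$ and using naturality once more yields $(\varphi\otimes\mathrm{id}_{\mathcal{W}_{D}})(\gamma_{+}\dot{-}\gamma_{-})$ on the left and the desired expression on the right.

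There is essentially no obstacle here: the entire argument is bookkeeping with the bifunctoriality of $\otimes$ (that $\varphi\otimes\mathrm{id}$ and $\mathrm{id}_{M}\otimes\mathcal{W}_{f}$ always commute) plus the existence-and-uniqueness content of Proposition \ref{t8.3.1.2}. The only point that requires the slightest care is making sure the compatibility hypothesis transports correctly so that $\dot{-}$ of the transported elements is actually defined — but that is precisely the content of the first assertion, which is why it is proved first and then used. Since the proof is a verbatim transcription of the proof of Proposition \ref{t8.2.1.3} in the new setting, I would simply write ``The proof is the same as that of Proposition \ref{t8.2.1.3}, with $D^{n}$, $D\{n\}_{n-1}$ replaced throughout by $D_{n+1}$, $D_{n}$.''

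\begin{proof}
The argument is identical to that of Proposition \ref{t8.2.1.3}, with $D^{n}$ and $D\left\{n\right\}_{n-1}$ replaced throughout by $D_{n+1}$ and $D_{n}$, and $\Phi_{D^{n}},\Psi_{D^{n}},\Xi_{D^{n}}$ replaced by $\Phi_{D_{n+1}},\Psi_{D_{n+1}},\Xi_{D_{n+1}}$. The first displayed identity follows from the commutativity of $\varphi\otimes\mathrm{id}$ with $\mathrm{id}_{M}\otimes\mathcal{W}_{i_{D_{n}\rightarrow D_{n+1}}}$ applied to $\gamma_{+}$ and $\gamma_{-}$, together with the hypothesis, and in particular it guarantees that $\left(\varphi\otimes\mathrm{id}_{\mathcal{W}_{D_{n+1}}}\right)\left(\gamma_{+}\right)\dot{-}\left(\varphi\otimes\mathrm{id}_{\mathcal{W}_{D_{n+1}}}\right)\left(\gamma_{-}\right)$ is defined. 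Let $\gamma\in M\otimes\mathcal{W}_{D_{n+1}\oplus D}$ be the unique element furnished by Proposition \ref{t8.3.1.2} with $\left(\mathrm{id}_{M}\otimes\mathcal{W}_{\Psi_{D_{n+1}}}\right)\left(\gamma\right)=\gamma_{+}$ and $\left(\mathrm{id}_{M}\otimes\mathcal{W}_{\Phi_{D_{n+1}}}\right)\left(\gamma\right)=\gamma_{-}$. Then $\left(\varphi\otimes\mathrm{id}_{\mathcal{W}_{D_{n+1}\oplus D}}\right)\left(\gamma\right)$ satisfies the two defining conditions of Proposition \ref{t8.3.1.2} over $M^{\prime}$ relative to $\left(\varphi\otimes\mathrm{id}_{\mathcal{W}_{D_{n+1}}}\right)\left(\gamma_{+}\right)$ and $\left(\varphi\otimes\mathrm{id}_{\mathcal{W}_{D_{n+1}}}\right)\left(\gamma_{-}\right)$, by the commutativity of $\varphi\otimes\mathrm{id}$ with $\mathrm{id}_{M}\otimes\mathcal{W}_{\Psi_{D_{n+1}}}$ and with $\mathrm{id}_{M}\otimes\mathcal{W}_{\Phi_{D_{n+1}}}$. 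Applying $\mathrm{id}\otimes\mathcal{W}_{\Xi_{D_{n+1}}}$ and using once more that $\varphi\otimes\mathrm{id}$ commutes with $\mathrm{id}_{M}\otimes\mathcal{W}_{\Xi_{D_{n+1}}}$, we obtain the second desired identity.
\end{proof}
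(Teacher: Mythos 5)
Your proof is correct and coincides with the paper's treatment: the paper offers no written proof, asserting only that the result follows ``from the very definition of $\dot{-}$'' (exactly as for Proposition \ref{t8.2.1.3}), and your argument simply spells out that definitional bookkeeping — bifunctoriality of $\otimes$ plus the uniqueness clause of Proposition \ref{t8.3.1.2} applied over $M^{\prime}$.
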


It is easy to see that

\begin{proposition}
\label{t8.3.1.4}

\begin{enumerate}
\item We have
\[
\alpha\gamma_{+}\dot{-}\alpha\gamma_{-}=\alpha^{n+1}(\gamma_{+}\dot{-}%
\gamma_{-})\text{ \ \ }(1\leq i\leq n+1)
\]
for any $\alpha\in\mathbb{R}$ and any $\gamma_{\pm}\in$ $M\otimes
\mathcal{W}_{D_{n+1}}$ with
\[
\left(  \mathrm{id}_{M}\otimes\mathcal{W}_{i_{D_{n}\rightarrow D_{n+1}}%
}\right)  \left(  \gamma_{+}\right)  =\left(  \mathrm{id}_{M}\otimes
\mathcal{W}_{i_{D_{n}\rightarrow D_{n+1}}}\right)  \left(  \gamma_{-}\right)
\text{.}%
\]

\item The diagram
\[%
\begin{array}
[c]{ccc}%
\left(  M\otimes\mathcal{W}_{D_{n+1}}\right)  \underset{M\otimes
\mathcal{W}_{D_{n}}}{\times}\left(  M\otimes\mathcal{W}_{D_{n+1}}\right)  &
\rightarrow &
\begin{array}
[c]{c}%
\left(  \left(  M\otimes\mathcal{W}_{D_{n+1}}\right)  \underset{M\otimes
\mathcal{W}_{D_{n}}}{\times}\left(  M\otimes\mathcal{W}_{D_{n+1}}\right)
\right) \\
\otimes\mathcal{W}_{D_{m}}%
\end{array}
\\
\downarrow &  & \downarrow\\
M\otimes\mathcal{W}_{D} & \rightarrow & M\otimes\mathcal{W}_{D\times D_{m}}%
\end{array}
\]
commutes, where the upper horizontal arrow is the composition of mappings
\begin{align*}
&  \left(  M\otimes\mathcal{W}_{D_{n+1}}\right)  \underset{M\otimes
\mathcal{W}_{D_{n}}}{\times}\left(  M\otimes\mathcal{W}_{D_{n+1}}\right) \\
&  \underrightarrow{\left(  \mathrm{id}_{M}\otimes\mathcal{W}_{\times
_{D_{n+1}\times D_{m}\rightarrow D_{n+1}}}\right)  \times\left(
\mathrm{id}_{M}\otimes\mathcal{W}_{\times_{D_{n+1}\times D_{m}\rightarrow
D_{n+1}}}\right)  }\\
&  \left(  M\otimes\mathcal{W}_{D_{n+1\times D_{m}}}\right)  \underset
{M\otimes\mathcal{W}_{D_{n\times D_{m}}}}{\times}\left(  M\otimes
\mathcal{W}_{D_{n+1\times D_{m}}}\right) \\
&  =\left(  \left(  M\otimes\mathcal{W}_{D_{n+1}}\right)  \underset
{M\otimes\mathcal{W}_{D_{n}}}{\times}\left(  M\otimes\mathcal{W}_{D_{n+1}%
}\right)  \right)  \otimes\mathcal{W}_{D_{m}}\text{,}%
\end{align*}
the lower horizontal arrow is
\[
\mathrm{id}_{M\otimes\mathcal{W}_{D}}\otimes\mathcal{W}_{d\in D_{m}\mapsto
d^{n}\in D_{m}}\text{,}%
\]
the left vertical arrow is
\[
\left(  \gamma_{+},\gamma_{-}\right)  \in\left(  M\otimes\mathcal{W}_{D_{n+1}%
}\right)  \underset{M\otimes\mathcal{W}_{D_{n}}}{\times}\left(  M\otimes
\mathcal{W}_{D_{n+1}}\right)  \mapsto\gamma_{+}\dot{-}\gamma_{-}\in
M\otimes\mathcal{W}_{D}\text{,}%
\]
and the right vertical arrow is
\begin{align*}
&  \left(  \left(  \gamma_{+},\gamma_{-}\right)  \in\left(  M\otimes
\mathcal{W}_{D_{n+1}}\right)  \underset{M\otimes\mathcal{W}_{D_{n}}}{\times
}\left(  M\otimes\mathcal{W}_{D_{n+1}}\right)  \mapsto\gamma_{+}\dot{-}%
\gamma_{-}\in M\otimes\mathcal{W}_{D}\right) \\
&  \otimes\mathrm{id}_{\mathcal{W}_{D_{m}}\text{.}}%
\end{align*}

\end{enumerate}
\end{proposition}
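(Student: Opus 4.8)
\textbf{Proof plan for Proposition \ref{t8.3.1.4}.}

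The plan is to deduce both statements directly from the quasi-colimit diagram in Lemma \ref{t8.3.1.1} together with the defining property of $\dot{-}$, exactly as the $D^{n}$-case (Proposition \ref{t8.2.1.8} and Proposition \ref{t8.2.1.9}) was deduced from Lemma \ref{t8.2.1.1}. For part (1), given $\gamma_{\pm}\in M\otimes\mathcal{W}_{D_{n+1}}$ agreeing after restriction along $i_{D_{n}\rightarrow D_{n+1}}$, Proposition \ref{t8.3.1.2} produces the unique $\gamma\in M\otimes\mathcal{W}_{D_{n+1}\oplus D}$ with $(\mathrm{id}_{M}\otimes\mathcal{W}_{\Psi_{D_{n+1}}})(\gamma)=\gamma_{+}$ and $(\mathrm{id}_{M}\otimes\mathcal{W}_{\Phi_{D_{n+1}}})(\gamma)=\gamma_{-}$. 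The first step is to observe that $\alpha\gamma_{\pm}$ still agree after restriction along $i_{D_{n}\rightarrow D_{n+1}}$ (scalar multiplication commutes with the functorial restriction maps), so $\alpha\gamma_{+}\dot{-}\alpha\gamma_{-}$ is defined. The second step is to write down a witness for the pair $(\alpha\gamma_{+},\alpha\gamma_{-})$: I claim it is $(\mathrm{id}_{M}\otimes\mathcal{W}_{\mu_{\alpha}})(\gamma)$, where $\mu_{\alpha}\colon D_{n+1}\oplus D\rightarrow D_{n+1}\oplus D$ is the map $(d,e)\mapsto(\alpha d,\alpha^{n+1}e)$; one checks $\Psi_{D_{n+1}}\circ(\alpha\cdot)=\mu_{\alpha}\circ\Psi_{D_{n+1}}$, using $\Psi_{D_{n+1}}(d)=(d,d^{n+1})$ so that $(\alpha d,(\alpha d)^{n+1})=(\alpha d,\alpha^{n+1}d^{n+1})$, and similarly $\Phi_{D_{n+1}}\circ(\alpha\cdot)=\mu_{\alpha}\circ\Phi_{D_{n+1}}$ trivially. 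By the uniqueness clause of Proposition \ref{t8.3.1.2}, this witness equals the $\gamma$ associated to $(\alpha\gamma_{+},\alpha\gamma_{-})$, so $\alpha\gamma_{+}\dot{-}\alpha\gamma_{-}=(\mathrm{id}_{M}\otimes\mathcal{W}_{\Xi_{D_{n+1}}})((\mathrm{id}_{M}\otimes\mathcal{W}_{\mu_{\alpha}})(\gamma))$. The final step is to note $\mu_{\alpha}\circ\Xi_{D_{n+1}}=(\alpha^{n+1}\cdot)\circ\Xi_{D_{n+1}}$ on $D$, which yields $\alpha\gamma_{+}\dot{-}\alpha\gamma_{-}=\alpha^{n+1}(\gamma_{+}\dot{-}\gamma_{-})$.

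For part (2), the strategy is the same naturality-of-Weil-algebra argument used in Proposition \ref{t8.2.1.9}: the two composites around the square are both computed by applying $\mathrm{id}_{M}\otimes\mathcal{W}_{(-)}$ to an appropriate map of infinitesimal objects, and they agree because the two maps of infinitesimal objects agree. Concretely, one traces a pair $(\gamma_{+},\gamma_{-})$ through both routes. Going down then right gives $(\gamma_{+}\dot{-}\gamma_{-})$ pulled back along $d\mapsto d^{n}$. Going right then down: after applying $\times_{D_{n+1}\times D_{m}\rightarrow D_{n+1}}$ to each factor one obtains elements of $M\otimes\mathcal{W}_{D_{n+1\times D_{m}}}$ agreeing over $M\otimes\mathcal{W}_{D_{n\times D_{m}}}$, to which the $\dot{-}$ associated with the base change $\otimes\mathcal{W}_{D_{m}}$ is applied. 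The key computation is that the composite $D\times D_{m}\to D_{n+1}\oplus D$ obtained by unwinding the right-then-down route equals $(d,e)\mapsto(0,d\cdot e^{n})$, and therefore that the resulting difference lies in $M\otimes\mathcal{W}_{D}$ reparametrized by $d\mapsto d^{n}$ — which is precisely the content of the lower arrow $\mathrm{id}_{M\otimes\mathcal{W}_{D}}\otimes\mathcal{W}_{d\mapsto d^{n}}$. Since $\dot{-}$ is, by construction, natural in the underlying Fr\"{o}licher space (Proposition \ref{t8.3.1.3}), and the multiplication map $\times_{D_{n+1}\times D_{m}\to D_{n+1}}$ intertwines the two quasi-colimit cocones of Lemma \ref{t8.3.1.1} over $D\times D_{m}$, the square commutes.

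The main obstacle I anticipate is bookkeeping in part (2) rather than any genuine difficulty: one must verify that the map $\times_{D_{n+1}\times D_{m}\to D_{n+1}}$ actually carries the quasi-colimit cocone $(i_{D_{n}\rightarrow D_{n+1}},\Phi_{D_{n+1}},\Psi_{D_{n+1}})$, base-changed along $D_{m}$, to the corresponding cocone for $D$, and in particular that the exponent mismatch ($d^{n+1}$ in $\Psi_{D_{n+1}}$ versus $d^{n}$ in the bottom arrow) comes out correctly — the discrepancy is exactly one factor of $e\in D_{m}$, which is where the $\times_{D_{n+1}\times D_{m}\to D_{n+1}}$ absorbs $(d,e)\mapsto de$ and the extra power is accounted for by $e^{n}$ surviving from $(de)^{n+1}=d^{n+1}e^{n+1}$ against $d\cdot(de)^{n}$. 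Once this identity of maps of infinitesimal objects is in hand, commutativity of the square is a formal consequence of the functoriality of $\mathcal{W}_{(-)}$ and of Proposition \ref{t8.3.1.3}, so I would present part (2) by exhibiting that single diagram of infinitesimal objects and invoking these two facts.
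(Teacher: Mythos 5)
Part (1) of your proposal is correct, and it is exactly the intended argument (the paper offers no proof, labelling the proposition ``easy to see'' as the $D_{n+1}$-analogue of Propositions \ref{t8.2.1.8} and \ref{t8.2.1.9}): the endomorphism $\mu_{\alpha}(d,e)=(\alpha d,\alpha^{n+1}e)$ of $D_{n+1}\oplus D$ intertwines $\Phi_{D_{n+1}},\Psi_{D_{n+1}}$ with scaling, the uniqueness clause of Proposition \ref{t8.3.1.2} identifies $\left(\mathrm{id}_{M}\otimes\mathcal{W}_{\mu_{\alpha}}\right)(\gamma)$ as the witness for $(\alpha\gamma_{+},\alpha\gamma_{-})$, and $\mu_{\alpha}\circ\Xi_{D_{n+1}}=\Xi_{D_{n+1}}\circ(\alpha^{n+1}\cdot)$ finishes it. (The stray ``$1\leq i\leq n+1$'' in the statement is vestigial and needs no attention.)

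In part (2), however, your key computation is wrong. Run your own uniqueness argument honestly: if $\widetilde{\gamma}\in M\otimes\mathcal{W}_{D_{n+1}\oplus D}$ is the witness for $(\gamma_{+},\gamma_{-})$, then the witness over $D_{m}$ for the pair obtained from the top arrow is $\widetilde{\gamma}$ precomposed with $\nu\colon\left(D_{n+1}\oplus D\right)\times D_{m}\rightarrow D_{n+1}\oplus D$, $\nu((d,e),\epsilon)=(d\epsilon,e\epsilon^{n+1})$; the second slot must scale by $\epsilon^{n+1}$ precisely because $\Psi_{D_{n+1}}(d\epsilon)=(d\epsilon,d^{n+1}\epsilon^{n+1})$. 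Restricting along $\Xi_{D_{n+1}}\times\mathrm{id}_{D_{m}}$ then yields the map $(d,\epsilon)\mapsto(0,d\epsilon^{n+1})$, not $(0,d\epsilon^{n})$ as you claim; no factor of $\epsilon$ can be ``absorbed'' by the multiplication $\times_{D_{n+1}\times D_{m}\rightarrow D_{n+1}}$, since the $D_{n+1}$-coordinate of $\Xi_{D_{n+1}}$ is $0$, and your reconciling identity compares $(de)^{n+1}=d^{n+1}e^{n+1}$ with $d\cdot(de)^{n}=d^{n+1}e^{n}$, which are simply not equal. The upshot is that the square commutes when the lower horizontal arrow is the one induced by $(d,e)\in D\times D_{m}\mapsto de^{n+1}\in D$ (equivalently, $e\mapsto e^{n+1}$ in the $D_{m}$-slot); the exponent $n$ printed in the proposition is a misprint for $n+1$, as consistency already forces: it must match the $\alpha^{n+1}$ of part (1), the second condition of Definition \ref{d8.3.2.1} read for $D_{n+1}$, and its use in Proposition \ref{t8.3.3.1}, and the case $n=0$ (difference of two tangent vectors with common base point) visibly fails with exponent $0$. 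So the correct move was to prove the square with $e^{n+1}$ and flag the typo, rather than bend the exponent bookkeeping to fit the printed $d^{n}$; as written, the central step of your part (2) would not survive being carried out in detail.
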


\begin{lemma}
\label{t8.3.1.5}The diagram
\[%
\begin{array}
[c]{ccc}%
1 & \underrightarrow{i_{1\rightarrow D}} & D\\
i_{1\rightarrow D_{n+1}}\downarrow &  & \downarrow\Xi_{D_{n+1}}\\
D_{n+1} & \underrightarrow{\Phi_{D_{n+1}}} & D_{n+1}\oplus D
\end{array}
\]
is a quasi-colimit diagram, where $i_{1\rightarrow D_{n+1}}\ $is the canonical injection.
\end{lemma}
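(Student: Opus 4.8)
The plan is to prove Lemma~\ref{t8.3.1.5} by establishing the quasi-colimit property directly from the infinitesimal nature of the objects involved, exactly paralleling the proof of Lemma~\ref{t8.2.1.4}. Recall that $D_{n+1}=\{d\in D_\infty\mid d^{n+2}=0\}$ (the set of $(n+1)$-jets of a single variable), $1$ is the terminal object, $\Phi_{D_{n+1}}(d)=(d,0)$ embeds $D_{n+1}$ in the first factor of $D_{n+1}\oplus D$, $\Xi_{D_{n+1}}(d)=(0,d)$ embeds $D$ in the second factor, and $i_{1\to D_{n+1}}$, $i_{1\to D}$ are the basepoint inclusions. A quasi-colimit diagram in this context means that applying the functor $\mathcal{W}_{(-)}$ turns the square into a limit (pullback) diagram of Weil algebras, equivalently that for any microlinear Fr\"{o}licher space $X$ the induced square of mapping spaces $X^{D_{n+1}\oplus D}\to X^{D_{n+1}}\times_{X^1}X^D$ is a bijection. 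So the real content is the algebraic identity $\mathcal{W}_{D_{n+1}\oplus D}\cong\mathcal{W}_{D_{n+1}}\otimes_{\mathcal{W}_1}\mathcal{W}_D=\mathcal{W}_{D_{n+1}}\otimes_{\mathbb{R}}\mathcal{W}_D$.

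First I would write down the Weil algebras explicitly: $\mathcal{W}_{D_{n+1}}=\mathbb{R}[x]/(x^{n+2})$, $\mathcal{W}_D=\mathbb{R}[y]/(y^2)$, $\mathcal{W}_1=\mathbb{R}$, and $\mathcal{W}_{D_{n+1}\oplus D}=\mathbb{R}[x,y]/(x^{n+2},y^2)$ — here $D_{n+1}\oplus D$ is by definition the infinitesimal object whose Weil algebra is this tensor product, so this is really a matter of identifying the coordinate projections with the quotient maps. The homomorphisms induced by $\Phi_{D_{n+1}}$ and $\Xi_{D_{n+1}}$ send, respectively, $y\mapsto 0$ and $x\mapsto 0$ (reading off from the formulas $\Phi(d)=(d,0)$, $\Xi(d)=(0,d)$), and $i_{1\to D}$, $i_{1\to D_{n+1}}$ induce the augmentations killing $y$ and $x$ respectively. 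The pushout square of infinitesimal objects corresponds to the pullback square of Weil algebras
\[
\begin{array}{ccc}
\mathbb{R}[x,y]/(x^{n+2},y^2) & \rightarrow & \mathbb{R}[x]/(x^{n+2})\\
\downarrow & & \downarrow\\
\mathbb{R}[y]/(y^2) & \rightarrow & \mathbb{R}
\end{array}
\]
and one checks this is a pullback by the elementary observation that a polynomial in $x,y$ modulo $(x^{n+2},y^2)$ is uniquely determined by its image modulo $y$ (the part with no $y$) together with its image modulo $x$ (the part with no $x$), since the overlap is the constant term. That is precisely the statement that the square is cartesian.

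The remaining step is to transfer this algebraic fact to the geometric/quasi-colimit formulation, which is where one invokes the general machinery: for microlinear $X$, $X^W$ depends functorially on the Weil algebra $W$ and sends finite limits of Weil algebras arising from quasi-colimit diagrams of infinitesimal objects to limits of spaces — this is exactly the device used to deduce Proposition~\ref{t8.2.1.5} from Lemma~\ref{t8.2.1.4} and Proposition~\ref{t8.3.1.2} from Lemma~\ref{t8.3.1.1}, so I would simply cite that same reasoning (``we can proceed as in \cite{la}''). I expect no genuine obstacle here: the only mild subtlety is bookkeeping the $(n+1)$-indexing versus the exponent $n+2$ in $\mathbb{R}[x]/(x^{n+2})$ and making sure the maps $\Phi_{D_{n+1}}$, $\Xi_{D_{n+1}}$ are paired with the correct quotient maps, but these are routine once the Weil algebras are written out. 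In fact the cleanest exposition is to say the lemma is the special case $n\rightsquigarrow$ ``one variable'' of Lemma~\ref{t8.2.1.4} under the identification $D_{n+1}\oplus D$ with the appropriate quotient, and leave the verification — a one-line pullback check — to the reader, just as the paper does for its other quasi-colimit lemmas.
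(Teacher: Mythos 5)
There is a genuine error at the computational core of your argument: you identify $\mathcal{W}_{D_{n+1}\oplus D}$ with $\mathcal{W}_{D_{n+1}}\otimes_{\mathbb{R}}\mathcal{W}_{D}=\mathbb{R}[x,y]/(x^{n+2},y^{2})$, i.e.\ you take $D_{n+1}\oplus D$ to be the product $D_{n+1}\times D$. With that reading the lemma is false, and your ``elementary observation'' that an element of $\mathbb{R}[x,y]/(x^{n+2},y^{2})$ is determined by its images modulo $y$ and modulo $x$ is not true: the mixed monomials $xy,x^{2}y,\dots,x^{n+1}y$ are nonzero but die under both quotient maps, so the square you write down is not cartesian (the map to the fibre product has an $(n+1)$-dimensional kernel). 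The tensor product of the Weil algebras corresponds to the \emph{product} of the infinitesimal objects; the $\oplus$ used here is not that. The object intended is $D_{n+1}\oplus D=\{(d,e)\in D_{n+1}\times D: de=0\}$, whose Weil algebra is $\mathbb{R}[x,y]/(x^{n+2},y^{2},xy)$, of dimension $n+3$. You can see this is forced by the companion Lemma \ref{t8.3.1.1}: the fibre product $\mathcal{W}_{D_{n+1}}\times_{\mathcal{W}_{D_{n}}}\mathcal{W}_{D_{n+1}}$ has dimension $n+3$, not $2n+4$, so only the algebra with $xy=0$ can make both quasi-colimit squares of this subsection valid (there $\Psi_{D_{n+1}}^{*}$ sends $y\mapsto x^{n+1}$, $\Phi_{D_{n+1}}^{*}$ sends $y\mapsto0$).

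Once the object is identified correctly the rest of your plan is sound and is indeed the routine check: an element $a_{0}+a_{1}x+\cdots+a_{n+1}x^{n+1}+by$ of $\mathbb{R}[x,y]/(x^{n+2},y^{2},xy)$ is sent by $\Phi_{D_{n+1}}^{*}$ ($y\mapsto0$) to $a_{0}+\cdots+a_{n+1}x^{n+1}$ and by $\Xi_{D_{n+1}}^{*}$ ($x\mapsto0$) to $a_{0}+by$, and this assignment is a bijection onto the pairs agreeing under the augmentations to $\mathbb{R}$; microlinearity then transfers the limit of Weil algebras to the asserted quasi-colimit property, exactly as in the passage from Lemma \ref{t8.2.1.4} to Proposition \ref{t8.2.1.5} (the paper itself states the lemma without proof, so your strategy of reducing to a one-line pullback check is appropriate — but only after the correction above). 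Also note that the present lemma is an analogue of Lemma \ref{t8.2.1.4}, not a special case of it, since the latter concerns $D^{n}\oplus D$ with Weil algebra $\mathbb{R}[x_{1},\dots,x_{n},y]/(x_{i}^{2},y^{2},x_{i}y)$.
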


This implies at once that

\begin{proposition}
\label{t8.3.1.6}Given $t\in M\otimes\mathcal{W}_{D}\ $and $\gamma\in
M\otimes\mathcal{W}_{D_{n+1}}\ $with
\[
\left(  \mathrm{id}_{M}\otimes\mathcal{W}_{i_{1\rightarrow D_{n+1}}}\right)
\left(  \gamma\right)  =\left(  \mathrm{id}_{M}\otimes\mathcal{W}%
_{i_{1\rightarrow D}}\right)  \left(  t\right)  \text{,}%
\]
there exists a unique function $\widetilde{\gamma}:D_{n+1}\oplus D\rightarrow
M\ $with
\[
\left(  \mathrm{id}_{M}\otimes\mathcal{W}_{\Phi_{D_{n+1}}}\right)  \left(
\widetilde{\gamma}\right)  =\gamma
\]
and
\[
\left(  \mathrm{id}_{M}\otimes\mathcal{W}_{\Xi_{D_{n+1}}}\right)  \left(
\widetilde{\gamma}\right)  =t\text{.}%
\]

\end{proposition}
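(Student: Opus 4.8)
The plan is to reduce Proposition \ref{t8.3.1.6} to Lemma \ref{t8.3.1.5} by way of the general machinery relating quasi-colimit diagrams in the category of Weil algebras (or rather in the category of infinitesimal objects) to limit diagrams after applying $M\otimes\mathcal{W}_{(-)}$. Since $M=\mathbb{R}^{p}$ is microlinear (indeed a Fr\"{o}licher space of the required kind), the functor $\gamma\mapsto M\otimes\mathcal{W}_{\gamma}$ carries the quasi-colimit square of Lemma \ref{t8.3.1.5} to a pullback square of sets. Concretely, the square
\[
\begin{array}
[c]{ccc}
M\otimes\mathcal{W}_{D_{n+1}\oplus D} & \rightarrow & M\otimes\mathcal{W}_{D}\\
\downarrow &  & \downarrow\\
M\otimes\mathcal{W}_{D_{n+1}} & \rightarrow & M\otimes\mathcal{W}_{1}=M
\end{array}
\]
is a limit diagram, where the two maps out of $M\otimes\mathcal{W}_{D_{n+1}\oplus D}$ are $\mathrm{id}_{M}\otimes\mathcal{W}_{\Phi_{D_{n+1}}}$ and $\mathrm{id}_{M}\otimes\mathcal{W}_{\Xi_{D_{n+1}}}$, and the two maps into $M$ are the restrictions along $i_{1\rightarrow D_{n+1}}$ and $i_{1\rightarrow D}$.

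First I would spell out that the compatibility hypothesis
\[
\left(  \mathrm{id}_{M}\otimes\mathcal{W}_{i_{1\rightarrow D_{n+1}}}\right)\left(\gamma\right)=\left(\mathrm{id}_{M}\otimes\mathcal{W}_{i_{1\rightarrow D}}\right)\left(t\right)
\]
is exactly the statement that the pair $(\gamma,t)$ lies in the pullback $M\otimes\mathcal{W}_{D_{n+1}}\times_{M}(M\otimes\mathcal{W}_{D})$; here one uses that $1$ is the terminal infinitesimal object, so $\mathcal{W}_{1}=\mathbb{R}$ and $M\otimes\mathcal{W}_{1}=M$, and that $\Phi_{D_{n+1}}\circ i_{1\rightarrow D_{n+1}}$ and $\Xi_{D_{n+1}}\circ i_{1\rightarrow D}$ both equal the base-point inclusion $1\rightarrow D_{n+1}\oplus D$. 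Then the universal property of the limit diagram yields a unique $\widetilde{\gamma}\in M\otimes\mathcal{W}_{D_{n+1}\oplus D}$ — which, because $M$ is a Fr\"{o}licher space, may be identified with an actual function $\widetilde{\gamma}:D_{n+1}\oplus D\rightarrow M$ — satisfying $\left(\mathrm{id}_{M}\otimes\mathcal{W}_{\Phi_{D_{n+1}}}\right)\left(\widetilde{\gamma}\right)=\gamma$ and $\left(\mathrm{id}_{M}\otimes\mathcal{W}_{\Xi_{D_{n+1}}}\right)\left(\widetilde{\gamma}\right)=t$. This is word-for-word the conclusion of the proposition, so the argument is complete.

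The only genuine content — and thus the step I expect to be the main obstacle in a fully rigorous write-up — is the passage from "quasi-colimit diagram of infinitesimal objects'' (Lemma \ref{t8.3.1.5}) to "limit diagram after applying $M\otimes\mathcal{W}_{(-)}$''. This is precisely the defining role of quasi-colimits together with the microlinearity (more precisely, the Fr\"{o}licher-space microlinearity) of $M=\mathbb{R}^{p}$, and it is the analogue of the reasoning used to deduce Propositions \ref{t8.3.1.2} and \ref{t8.2.1.5} from Lemmas \ref{t8.3.1.1} and \ref{t8.2.1.4}. Since that pattern has already been invoked several times above without further comment, I would simply record the present proposition as "This implies at once that \dots'', exactly as the surrounding text does, the phrase "this'' referring to Lemma \ref{t8.3.1.5}. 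No separate calculation is needed; uniqueness is automatic from the uniqueness clause in the universal property of the pullback.
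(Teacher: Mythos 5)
Your argument matches the paper exactly: the paper offers no separate proof, recording Proposition \ref{t8.3.1.6} as following ``at once'' from the quasi-colimit diagram of Lemma \ref{t8.3.1.5} via microlinearity of $M$, which is precisely your pullback argument. One minor remark: in \S\ref{s8} the base $M$ is a general microlinear Fr\"{o}licher space rather than $\mathbb{R}^{p}$ (coordinates enter only from \S\ref{s3} onward), but since microlinearity is all your argument actually uses, nothing is affected.
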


\begin{notation}
Under the same notation as in the above proposition, we denote
\[
\left(  \mathrm{id}_{M}\otimes\mathcal{W}_{\Psi_{D_{n+1}}}\right)  \left(
\widetilde{\gamma}\right)
\]
by $t\dot{+}\gamma$.
\end{notation}

From the very definition of $\dot{+}\ $we have

\begin{proposition}
\label{t8.3.1.7}Let $\varphi$ be a mapping of $M$\ into $M^{\prime}$. Given
$t\in M\otimes\mathcal{W}_{D}\ $and $\gamma\in M\otimes\mathcal{W}_{D_{n+1}%
}\ $with
\[
\left(  \mathrm{id}_{M}\otimes\mathcal{W}_{i_{1\rightarrow D_{n+1}}}\right)
\left(  \gamma\right)  =\left(  \mathrm{id}_{M}\otimes\mathcal{W}%
_{i_{1\rightarrow D}}\right)  \left(  t\right)  \text{,}%
\]
we have
\[
\left(  \mathrm{id}_{M^{\prime}}\otimes\mathcal{W}_{i_{1\rightarrow D}%
}\right)  \left(  \left(  \varphi\otimes\mathrm{id}_{\mathcal{W}_{D}}\right)
\left(  t\right)  \right)  =\left(  \mathrm{id}_{M^{\prime}}\otimes
\mathcal{W}_{i_{1\rightarrow D_{n+1}}}\right)  \left(  \left(  \varphi
\otimes\mathrm{id}_{\mathcal{W}_{D_{n+1}}}\right)  \left(  \gamma\right)
\right)
\]
and
\[
\left(  \varphi\otimes\mathrm{id}_{\mathcal{W}_{D_{n+1}}}\right)  \left(
t\dot{+}\gamma\right)  =\left(  \varphi\otimes\mathrm{id}_{\mathcal{W}_{D}%
}\right)  \left(  t\right)  \dot{+}\left(  \varphi\otimes\mathrm{id}%
_{\mathcal{W}_{D_{n+1}}}\right)  \left(  \gamma\right)
\]

\end{proposition}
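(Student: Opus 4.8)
The proof plan is to derive both displayed identities directly from the universal property in Lemma \ref{t8.3.1.5} together with the naturality of the Weil functor $\mathcal{W}_{(-)}$, exactly in the spirit of Propositions \ref{t8.2.1.3} and \ref{t8.2.1.6} in the second approach. The point is that the notation $t\dot{+}\gamma$ was defined via the unique lift $\widetilde{\gamma}\in M\otimes\mathcal{W}_{D_{n+1}\oplus D}$ satisfying $(\mathrm{id}_{M}\otimes\mathcal{W}_{\Phi_{D_{n+1}}})(\widetilde{\gamma})=\gamma$ and $(\mathrm{id}_{M}\otimes\mathcal{W}_{\Xi_{D_{n+1}}})(\widetilde{\gamma})=t$, and then $t\dot{+}\gamma=(\mathrm{id}_{M}\otimes\mathcal{W}_{\Psi_{D_{n+1}}})(\widetilde{\gamma})$.

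First I would verify the compatibility assertion, namely that $(\varphi\otimes\mathrm{id}_{\mathcal{W}_{D}})(t)$ and $(\varphi\otimes\mathrm{id}_{\mathcal{W}_{D_{n+1}}})(\gamma)$ restrict to the same element over $1$. This is immediate: applying $\varphi\otimes\mathrm{id}$ to the hypothesis and using that $\mathrm{id}_{M^{\prime}}\otimes\mathcal{W}_{i_{1\to D}}$ and $\mathrm{id}_{M^{\prime}}\otimes\mathcal{W}_{i_{1\to D_{n+1}}}$ commute with $\varphi\otimes\mathrm{id}$ (functoriality of $\otimes$ in the first slot), one gets the claimed equality. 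Consequently $(\varphi\otimes\mathrm{id}_{\mathcal{W}_{D}})(t)\dot{+}(\varphi\otimes\mathrm{id}_{\mathcal{W}_{D_{n+1}}})(\gamma)$ is well-defined, so the second identity has content.

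For the main identity, set $\widetilde{\gamma}'=(\varphi\otimes\mathrm{id}_{\mathcal{W}_{D_{n+1}\oplus D}})(\widetilde{\gamma})\in M^{\prime}\otimes\mathcal{W}_{D_{n+1}\oplus D}$. I would then check that $\widetilde{\gamma}'$ is the element that computes $(\varphi\otimes\mathrm{id}_{\mathcal{W}_{D}})(t)\dot{+}(\varphi\otimes\mathrm{id}_{\mathcal{W}_{D_{n+1}}})(\gamma)$: applying $\mathrm{id}_{M^{\prime}}\otimes\mathcal{W}_{\Phi_{D_{n+1}}}$ to $\widetilde{\gamma}'$ and using naturality of $\mathcal{W}_{(-)}$ in the map $\Phi_{D_{n+1}}$ (so that $\mathcal{W}_{\Phi_{D_{n+1}}}$ commutes past $\varphi\otimes\mathrm{id}$) yields $(\varphi\otimes\mathrm{id}_{\mathcal{W}_{D_{n+1}}})(\gamma)$, and similarly $\mathrm{id}_{M^{\prime}}\otimes\mathcal{W}_{\Xi_{D_{n+1}}}$ applied to $\widetilde{\gamma}'$ yields $(\varphi\otimes\mathrm{id}_{\mathcal{W}_{D}})(t)$. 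By the uniqueness clause of Proposition \ref{t8.3.1.6} (applied over $M^{\prime}$), $\widetilde{\gamma}'$ is the unique lift, hence $(\varphi\otimes\mathrm{id}_{\mathcal{W}_{D}})(t)\dot{+}(\varphi\otimes\mathrm{id}_{\mathcal{W}_{D_{n+1}}})(\gamma)=(\mathrm{id}_{M^{\prime}}\otimes\mathcal{W}_{\Psi_{D_{n+1}}})(\widetilde{\gamma}')$. Finally, naturality in $\Psi_{D_{n+1}}$ gives $(\mathrm{id}_{M^{\prime}}\otimes\mathcal{W}_{\Psi_{D_{n+1}}})(\widetilde{\gamma}')=(\varphi\otimes\mathrm{id}_{\mathcal{W}_{D}})\bigl((\mathrm{id}_{M}\otimes\mathcal{W}_{\Psi_{D_{n+1}}})(\widetilde{\gamma})\bigr)=(\varphi\otimes\mathrm{id}_{\mathcal{W}_{D}})(t\dot{+}\gamma)$, which is the desired formula.

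The only genuinely delicate point is bookkeeping the naturality squares correctly: one must keep straight that $\varphi\otimes\mathrm{id}_{\mathcal{W}_{W}}$ is functorial in $W$ (for fixed $\varphi$) while $\mathrm{id}\otimes\mathcal{W}_{f}$ is functorial in $f$, and that these two families of maps commute because $\otimes$ is a bifunctor. No quasi-colimit manipulation beyond invoking Proposition \ref{t8.3.1.6} is needed, so there is no real obstacle—this is the third-approach analogue of Proposition \ref{t8.2.1.6} and the argument is essentially formal.
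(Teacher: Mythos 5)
Your argument is correct and is exactly what the paper intends: Proposition \ref{t8.3.1.7} is stated with no written proof, being asserted to hold ``from the very definition of $\dot{+}$'', and your proposal simply spells out that definition-chase (apply $\varphi\otimes\mathrm{id}$ to the unique lift $\widetilde{\gamma}$, use bifunctoriality to check its $\Phi_{D_{n+1}}$- and $\Xi_{D_{n+1}}$-components, invoke the uniqueness in Proposition \ref{t8.3.1.6} over $M^{\prime}$, and read off the $\Psi_{D_{n+1}}$-component). No discrepancy with the paper's (omitted) proof; this is the same route, just made explicit.
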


Now we have the following affine bundle theorem.

\begin{theorem}
\label{t8.3.1.8}The canonical projection $\mathrm{id}_{M}\otimes
\mathcal{W}_{i_{D_{n}\rightarrow D_{n+1}}}:M\otimes\mathcal{W}_{D_{n+1}%
}\mathcal{\rightarrow}M\otimes\mathcal{W}_{D_{n}}\ $is an affine bundle over
the vector bundle $\left(  M\otimes\mathcal{W}_{D}\right)  \underset{M}%
{\times}\left(  M\otimes\mathcal{W}_{D_{n}}\right)  \rightarrow M\otimes
\mathcal{W}_{D_{n}}$.
\end{theorem}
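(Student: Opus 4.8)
\textbf{Proof proposal for Theorem \ref{t8.3.1.8}.} The plan is to mimic, step for step, the development in Subsubsection \ref{s8.2.1} that culminated in Theorem \ref{t8.2.1.7}, which in turn followed \S\S 3.4 of \cite{la}. All the ingredients are already in place: Lemma \ref{t8.3.1.1} and Proposition \ref{t8.3.1.2} provide the subtraction $\gamma_{+}\dot{-}\gamma_{-}\in M\otimes\mathcal{W}_{D}$ of two elements of $M\otimes\mathcal{W}_{D_{n+1}}$ lying over the same point of $M\otimes\mathcal{W}_{D_{n}}$, while Lemma \ref{t8.3.1.5} and Proposition \ref{t8.3.1.6} provide the action $t\dot{+}\gamma\in M\otimes\mathcal{W}_{D_{n+1}}$ of $t\in M\otimes\mathcal{W}_{D}$ on $\gamma\in M\otimes\mathcal{W}_{D_{n+1}}$. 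So the first step is to record that the map $\mathrm{id}_{M}\otimes\mathcal{W}_{i_{D_{n}\rightarrow D_{n+1}}}$ is a well-defined bundle projection over $M\otimes\mathcal{W}_{D_{n}}$, and that the fibre over a point $\gamma_{0}\in M\otimes\mathcal{W}_{D_{n}}$ is acted upon by the fibre of $\left(M\otimes\mathcal{W}_{D}\right)\underset{M}{\times}\left(M\otimes\mathcal{W}_{D_{n}}\right)$, namely by $\left(M\otimes\mathcal{W}_{D}\right)_{\pi(\gamma_{0}(0))}$.

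The second step is to verify the affine-bundle axioms fibrewise. One must check: (i) freeness and transitivity of the action $t\dot{+}(-)$ on each fibre --- both follow from the uniqueness clauses in Propositions \ref{t8.3.1.2} and \ref{t8.3.1.6} together with the quasi-colimit property, exactly as the compatibility of $\dot{+}$ and $\dot{-}$ in the second approach flowed from Lemmas \ref{t8.2.1.1} and \ref{t8.2.1.4}; (ii) that $\gamma_{+}\dot{-}\gamma_{-}$ is the unique $t$ with $t\dot{+}\gamma_{-}=\gamma_{+}$, which again is a formal consequence of the two quasi-colimit diagrams glued along the common leg $\Phi_{D_{n+1}}$; and (iii) the ``cocycle'' identity $(\gamma_{1}\dot{-}\gamma_{2})\dot{+}\gamma_{2}=\gamma_{1}$ and additivity $(\gamma_{1}\dot{-}\gamma_{2})+(\gamma_{2}\dot{-}\gamma_{3})=\gamma_{1}\dot{-}\gamma_{3}$. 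For additivity one considers the map $D_{n+1}\oplus D\oplus D\rightarrow M$ assembled from the three pieces and reads off the two ways of collapsing; this is the same argument as for Proposition 5 of \S\S 3.4 of \cite{la}, whose $D_{n+1}$-analogue is what Proposition \ref{t8.3.1.4} has already set up the scaling behaviour for.

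The third step is functoriality/well-definedness across fibres: one needs the operations $\dot{+}$ and $\dot{-}$ to vary smoothly (i.e.\ to be morphisms of Fr\"olicher spaces) over the base $M\otimes\mathcal{W}_{D_{n}}$, so that the fibrewise affine structure assembles into a genuine affine bundle. This is guaranteed because $\dot{+}$ and $\dot{-}$ were defined by composing with fixed Weil-algebra homomorphisms ($\mathcal{W}_{\Psi_{D_{n+1}}}$, $\mathcal{W}_{\Phi_{D_{n+1}}}$, $\mathcal{W}_{\Xi_{D_{n+1}}}$) and the canonical isomorphisms, all of which are smooth; Proposition \ref{t8.3.1.3} (the $\varphi$-naturality statement, applied with $\varphi=\mathrm{id}_{M}$ but read as a naturality-in-the-diagram assertion) encapsulates precisely the compatibility one needs.

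The main obstacle I anticipate is not any single computation but the bookkeeping of the pullback identifications $\left(M\otimes\mathcal{W}_{D_{n+1\times D_{m}}}\right)\underset{M\otimes\mathcal{W}_{D_{n\times D_{m}}}}{\times}\left(M\otimes\mathcal{W}_{D_{n+1\times D_{m}}}\right)=\bigl(\left(M\otimes\mathcal{W}_{D_{n+1}}\right)\underset{M\otimes\mathcal{W}_{D_{n}}}{\times}\left(M\otimes\mathcal{W}_{D_{n+1}}\right)\bigr)\otimes\mathcal{W}_{D_{m}}$ and the verification that the quasi-colimit diagrams of Lemmas \ref{t8.3.1.1} and \ref{t8.3.1.5} remain quasi-colimits after $-\times D_{m}$; but since every Weil algebra $\mathcal{W}_{D_{m}}$ is finite-dimensional and the $M\otimes\mathcal{W}_{(-)}$ functor preserves the relevant limits and the microlinearity of $M$ handles the colimits, this is exactly parallel to the corresponding point in Subsubsection \ref{s8.2.1} and goes through verbatim. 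Hence the proof reduces to the single sentence: ``We can proceed as in \S\S 3.4 of \cite{la}, using Lemmas \ref{t8.3.1.1} and \ref{t8.3.1.5} and Propositions \ref{t8.3.1.2}, \ref{t8.3.1.3}, \ref{t8.3.1.4}, \ref{t8.3.1.6} and \ref{t8.3.1.7} in place of their counterparts there.''
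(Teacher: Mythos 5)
Your proposal follows exactly the route the paper itself takes: the paper offers no separate proof of Theorem \ref{t8.3.1.8}, merely the remark that one "can proceed in the same way as in Subsubsection \ref{s8.2.1}" (which in turn invokes \S\S 3.4 of \cite{la}), using Lemmas \ref{t8.3.1.1} and \ref{t8.3.1.5} and Propositions \ref{t8.3.1.2}--\ref{t8.3.1.7} as the $D_{n+1}$-analogues of the earlier ingredients, and your step-by-step verification plan is a faithful (indeed more detailed) rendering of that argument. The only quibble is the stray $\pi$ in your description of the fibre of the vector bundle over $\gamma_{0}$, which should simply be $\left(M\otimes\mathcal{W}_{D}\right)_{\gamma_{0}(0)}$; this is a notational slip, not a gap.
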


\subsubsection{\label{s8.3.2}Symmetric Forms}

\begin{definition}
\label{d8.3.2.1}A \textit{symmetric} $D_{n}$-\textit{form}$\mathit{\ }%
$\textit{at} $x\in E$ is a mapping $\omega_{x}:\left(  M\otimes\mathcal{W}%
_{D_{n}}\right)  _{\pi(x)}\rightarrow\left(  E\otimes\mathcal{W}_{D}\right)
_{x}^{\perp}\ $subject to the following conditions:

\begin{enumerate}
\item For any $\gamma\in\left(  M\otimes\mathcal{W}_{D_{n}}\right)  _{\pi(x)}$
and any $\alpha\in\mathbb{R}$,$\ $we have
\[
\omega_{x}(\alpha\gamma)=\alpha^{n}\omega_{x}(\gamma)
\]

\item The diagram
\[%
\begin{array}
[c]{ccc}%
\left(  M\otimes\mathcal{W}_{D_{n}}\right)  _{\pi\left(  x\right)  } &
\underrightarrow{\mathrm{id}_{M}\otimes\mathcal{W}_{\times_{D_{n}\times
D_{m}\rightarrow D_{n}}}} & \left(  M\otimes\mathcal{W}_{D_{n}}\right)
_{\pi\left(  x\right)  }\otimes\mathcal{W}_{D_{m}}\\
\omega_{x}\downarrow &  & \downarrow\omega_{x}\otimes\mathrm{id}%
_{\mathcal{W}_{D_{m}}}\\
\left(  E\otimes\mathcal{W}_{D}\right)  _{x} & \overrightarrow{\mathrm{id}%
_{E}\otimes\mathcal{W}_{\left(  d,e\right)  \in D\times D_{m}\mapsto de^{n}\in
D}} & \left(  E\otimes\mathcal{W}_{D}\right)  _{x}\otimes\mathcal{W}_{D_{m}}%
\end{array}
\]
is commutative.

\item For any simple polynomial $\rho$ of $d\in D_{n}$ and any $\gamma
\in\left(  M\otimes\mathcal{W}_{D_{l}}\right)  _{\pi(x)}$ with $\mathrm{\dim
}_{n}\rho=l<n$, we have
\[
\omega(\left(  \mathrm{id}_{M}\otimes\mathcal{W}_{\rho}\right)  \left(
\gamma\right)  )=0
\]

\end{enumerate}
\end{definition}

\begin{notation}
We denote by $\mathbb{S}_{x}^{D_{n}}(\pi)\ $the totality of symmetric $D_{n}
$-\textit{forms}$\ $at $x\in E$. We denote by $\mathbb{S}^{D_{n}}(\pi
)\ $the\ set-theoretic union of $\mathbb{S}_{x}^{D_{n}}(\pi)$'s$\ $for all
$x\in E$. The canonical projection $\mathbb{S}^{^{D_{n}}}(\pi)\rightarrow E$
is obviously a vector bundle.
\end{notation}

\subsubsection{\label{s8.3.3}The Theorem}

Now we turn to a variant of Theorem \ref{t8.2.3.4}, for which we can proceed
as in \ref{s8.2.3}, so that proofs of the following results are omitted or
merely indicated.

\begin{proposition}
\label{t8.3.3.1}Let $\nabla^{+}$, $\nabla^{-}\in\mathbb{J}_{x}^{D_{n+1}}(\pi)$
with $\pi_{n+1,n}(\nabla^{+})=\ \pi_{n+1,n}(\nabla^{-})$. Then the assignment
$\gamma\in\left(  M\otimes\mathcal{W}_{D_{n+1}}\right)  _{\pi\left(  x\right)
}\longmapsto\nabla^{+}(\gamma)\dot{-}\nabla^{-}(\gamma)\in\left(
E\otimes\mathcal{W}_{D}\right)  _{x}$ belongs to $\mathbb{S}_{x}^{D_{n+1}}%
(\pi)$.
\end{proposition}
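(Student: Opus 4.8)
The strategy is to mimic the proof of Proposition \ref{t8.2.3.2} almost verbatim, checking that the assignment $\gamma\longmapsto\nabla^{+}(\gamma)\dot{-}\nabla^{-}(\gamma)$ satisfies the three conditions in Definition \ref{d8.3.2.1}, using the third-approach versions of the affine machinery (Propositions \ref{t8.3.1.3}, \ref{t8.3.1.4}, \ref{t8.3.1.7}) in place of their second-approach counterparts. The preliminary observation is that the assumption $\pi_{n+1,n}(\nabla^{+})=\pi_{n+1,n}(\nabla^{-})$ ensures that the hypothesis of Proposition \ref{t8.3.1.2} is met for $\gamma_{+}=\nabla^{+}(\gamma)$ and $\gamma_{-}=\nabla^{-}(\gamma)$, so that $\nabla^{+}(\gamma)\dot{-}\nabla^{-}(\gamma)$ is defined; and applying the naturality of $\dot{-}$ (Proposition \ref{t8.3.1.3}) to $F=\pi$ together with condition 1 of Definition \ref{d4.1} shows that $\left(\pi\otimes\mathrm{id}_{\mathcal{W}_{D}}\right)\left(\nabla^{+}(\gamma)\dot{-}\nabla^{-}(\gamma)\right)=\gamma\dot{-}\gamma=0$, so the value lands in $\left(E\otimes\mathcal{W}_{D}\right)_{x}^{\perp}$ as required.

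\textbf{Key steps.} First I would verify the homogeneity condition 1 of Definition \ref{d8.3.2.1}: using condition 2 of Definition \ref{d4.1} (which in the third approach reads $\nabla_{x}(\alpha\gamma)=\alpha\nabla_{x}(\gamma)$ appropriately) one gets $\nabla^{+}(\alpha\gamma)\dot{-}\nabla^{-}(\alpha\gamma)=\alpha\nabla^{+}(\gamma)\dot{-}\alpha\nabla^{-}(\gamma)$, and then part 1 of Proposition \ref{t8.3.1.4} rewrites this as $\alpha^{n+1}\left(\nabla^{+}(\gamma)\dot{-}\nabla^{-}(\gamma)\right)$, matching the exponent $n+1$ demanded for a $D_{n+1}$-form. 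Second, for the commuting-diagram condition 2, I would assemble the same kind of stacked diagram as in the proof of Proposition \ref{t8.2.3.2}: the top square commutes by condition 3 of Definition \ref{d4.1} applied to both $\nabla^{+}$ and $\nabla^{-}$, the bottom square commutes by part 2 of Proposition \ref{t8.3.1.4} (the statement about $\dot{-}$ and the $d\mapsto d^{n}$ reindexing), and the outer rectangle is exactly condition 2 of Definition \ref{d8.3.2.1}. Third, for condition 3 (vanishing on degenerate arguments): given a simple polynomial $\rho$ of dimension $l<n+1$ and $\gamma\in(M\otimes\mathcal{W}_{D_{l}})_{\pi(x)}$, the defining identities of $\mathbb{J}_{x}^{D_{n+1}}(\pi)$ (the third-approach analogue of condition 2 of Definition \ref{d4.2}, with Proposition \ref{t4.1.4}/\ref{t7.1.4} governing how $\nabla$ interacts with polynomial reindexings) force $\nabla^{+}\left((\mathrm{id}_{M}\otimes\mathcal{W}_{\rho})(\gamma)\right)$ and $\nabla^{-}\left((\mathrm{id}_{M}\otimes\mathcal{W}_{\rho})(\gamma)\right)$ to be determined by $\pi_{n+1,l}(\nabla^{\pm})$, which agree by hypothesis; hence their $\dot{-}$-difference vanishes.

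\textbf{Main obstacle.} The routine part is conditions 1 and 2, which transcribe directly. The delicate point is condition 3, because the third-approach degeneracy structure is governed by ``simple polynomials'' $\rho$ and the dimension function $\dim_{n}$ rather than by the clean product maps $(d_{1},\dots,d_{n})\mapsto(d_{1},\dots,d_{n-1}d_{n})$ of the second approach; I would need to check that the relevant Weil-algebra morphism $\mathcal{W}_{\rho}$ factors through $\mathcal{W}_{D_{l}}$ in a way compatible with the projection $\pi_{n+1,l}$, so that $\nabla^{+}$ and $\nabla^{-}$ genuinely take equal values on such degenerate elements. Once that factorization is in hand, the equality $\pi_{n+1,n}(\nabla^{+})=\pi_{n+1,n}(\nabla^{-})$ propagates down to $\pi_{n+1,l}(\nabla^{+})=\pi_{n+1,l}(\nabla^{-})$ and the difference is zero, completing the verification that the assignment lies in $\mathbb{S}_{x}^{D_{n+1}}(\pi)$.
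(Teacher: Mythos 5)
Your proposal is correct and is exactly what the paper intends: for Proposition \ref{t8.3.3.1} the paper omits the proof, saying only that one proceeds as in Subsubsection \ref{s8.2.3}, i.e.\ by transcribing the verification of Proposition \ref{t8.2.3.2} with the third-approach machinery (Propositions \ref{t8.3.1.3}, \ref{t8.3.1.4}, \ref{t8.3.1.7}) in place of the second-approach one. Your handling of the three conditions of Definition \ref{d8.3.2.1}, including the reduction of the degeneracy condition to the agreement of $\pi_{n+1,l}(\nabla^{+})$ and $\pi_{n+1,l}(\nabla^{-})$, matches that indicated argument.
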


\begin{proposition}
\label{t8.3.3.2}Let $\nabla\in\mathbb{J}_{x}^{D_{n+1}}(\pi)$ and $\omega
\in\mathbb{S}_{x}^{D_{n+1}}(\pi)$. Then the assignment $\gamma\in\left(
M\otimes\mathcal{W}_{D_{n+1}}\right)  _{\pi\left(  x\right)  }\longmapsto
\omega(\gamma)\dot{+}\nabla(\gamma)\in\left(  E\otimes\mathcal{W}_{D_{n+1}%
}\right)  _{x}$ belongs to $\mathbb{J}_{x}^{D_{n+1}}(\pi)$.
\end{proposition}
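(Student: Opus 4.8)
The plan is to imitate, step by step, the proof of Proposition \ref{t8.2.3.3} in the second approach, replacing the Weil algebras $\mathcal{W}_{D^{n+1}}$ by $\mathcal{W}_{D_{n+1}}$ and invoking the corresponding machinery of Subsubsection \ref{s8.3.1} and Definition \ref{d8.3.2.1} in place of Subsubsection \ref{s8.2.1} and Definition \ref{d8.2.2.1}. That is, I would show in turn that the assignment $\gamma\longmapsto\omega(\gamma)\dot{+}\nabla(\gamma)$ satisfies each of the defining conditions of a $D^{n+1}$-tangential, which here amounts to the conditions defining a $D_{n+1}$-pseudotangential (the analogue of Definition \ref{d4.1} for the third approach) together with the inductive tangentiality clause (the analogue of Definition \ref{d4.2}).

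First I would check the projection condition: applying $\pi\otimes\mathrm{id}_{\mathcal{W}_{D_{n+1}}}$ to $\omega(\gamma)\dot{+}\nabla(\gamma)$ and using the naturality of $\dot{+}$ with respect to maps $M\to M'$, namely Proposition \ref{t8.3.1.7} applied to $F=\pi$, together with $\omega(\gamma)\in(E\otimes\mathcal{W}_D)_x^{\perp}$ and the fact that $\nabla$ is a pseudotangential, one gets back $\gamma$. Next, the homogeneity/scaling condition: for $\alpha\in\mathbb{R}$ one computes $\omega(\alpha\gamma)\dot{+}\nabla(\alpha\gamma)=\alpha^{n+1}\omega(\gamma)\dot{+}\alpha\,{\cdot}\,\nabla(\gamma)$ using the first clause of Definition \ref{d8.3.2.1} and the scaling behaviour of $\dot{+}$ (the third-approach analogue of Proposition \ref{t8.2.1.8}); this should equal $\alpha\,{\cdot}\,(\omega(\gamma)\dot{+}\nabla(\gamma))$ by the compatibility of scalar multiplication with $\dot{+}$. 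The $\mathcal{W}_{D_m}$-commutation square is the main bookkeeping step: I would assemble the same kind of three-tier commutative diagram as in the proof of Proposition \ref{t8.2.3.3}, whose upper square commutes by the third condition of the pseudotangential definition together with the second condition of Definition \ref{d8.3.2.1}, and whose lower square commutes by Proposition \ref{t8.3.1.4}(2); composing gives the outer square, which is exactly the required condition.

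Finally, for the tangentiality clauses I would argue that $\widehat{\pi}_{n+1,n}$ applied to $\gamma\mapsto\omega(\gamma)\dot{+}\nabla(\gamma)$ recovers $\widehat{\pi}_{n+1,n}(\nabla)$ — this uses that $\omega$ kills all degenerate elements, i.e. the third condition of Definition \ref{d8.3.2.1} (the analogue of Proposition \ref{t8.2.2.1}), so the symmetric-form summand drops out under the projection — whence the first tangentiality condition holds because $\nabla$ is tangential; and the remaining inductive condition follows from the same third condition of Definition \ref{d8.3.2.1} combined with the corresponding clause for $\nabla$, precisely as in step (6) of the proof of Proposition \ref{t8.2.3.3}.

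The main obstacle, as in the second approach, is the $\mathcal{W}_{D_m}$-commutation diagram: one has to identify the various base-change isomorphisms $(M\otimes\mathcal{W}_{D_{n+1}\times D_m})\cong(M\otimes\mathcal{W}_{D_{n+1}})\otimes\mathcal{W}_{D_m}$ correctly and make sure the horizontal maps induced by $\times_{D_{n+1}\times D_m\to D_{n+1}}$ on the source match, on the target side, the composite of $\times_{D\times D_m\to D}$ on the $\omega$-factor with the appropriate multiplication on the $\nabla$-factor — which is where the $d^{n+1}$ appearing in $\Psi_{D_{n+1}}$ and the power $d^n$ in Proposition \ref{t8.3.1.4}(2) and in Definition \ref{d8.3.2.1} must be tracked with care. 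Since the excerpt explicitly says proofs in this subsection may be omitted or merely indicated, I would present this as a diagram chase parallel to Proposition \ref{t8.2.3.3}, citing Propositions \ref{t8.3.1.6}, \ref{t8.3.1.7}, \ref{t8.3.1.4} and Definition \ref{d8.3.2.1} at the appropriate points, and leaving the detailed verification to the reader.
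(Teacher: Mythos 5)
Your proposal is correct and follows exactly the route the paper intends: the paper omits the proof of this proposition, stating only that one proceeds as in Subsubsection \ref{s8.2.3}, and your argument is precisely that transposition of the proof of Proposition \ref{t8.2.3.3}, with Propositions \ref{t8.3.1.6}, \ref{t8.3.1.7}, \ref{t8.3.1.4} and Definition \ref{d8.3.2.1} playing the roles of their second-approach counterparts. Your attention to the $\alpha^{n+1}$ scaling and the $d^{n+1}$ component in $\Psi_{D_{n+1}}$ is exactly the bookkeeping the paper leaves to the reader.
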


\begin{notation}
\begin{enumerate}
\item For any $\nabla^{+},\nabla^{-}\in\mathbb{J}^{D_{n+1}}(\pi)$ with
$\hat{\pi}_{n+1,n}(\nabla^{+})=\hat{\pi}_{n+1,n}(\nabla^{-})$, we define
$\nabla^{+}\dot{-}\nabla^{-}\in\mathbb{S}^{D_{n+1}}(\pi)$ to be
\[
(\nabla^{+}\dot{-}\nabla^{-})(\gamma)=\nabla^{+}(\gamma)\dot{-}\nabla
^{-}(\gamma)
\]
for any $\gamma\in\left(  M\otimes\mathcal{W}_{D_{n+1}}\right)  _{\pi\left(
x\right)  }$.

\item For any $\nabla\in\mathbb{J}_{x}^{D_{n+1}}(\pi)\ $and any $\omega
\in\mathbb{S}_{x}^{D_{n+1}}(\pi)$ we define $\omega\dot{+}\nabla\in
\mathbb{J}_{x}^{D_{n+1}}(\pi)$ to be
\[
(\omega\dot{+}\nabla)(\gamma)=\omega(\gamma)\dot{+}\nabla(\gamma)
\]
for any $\gamma\in\left(  M\otimes\mathcal{W}_{D_{n+1}}\right)  _{\pi\left(
x\right)  }$.
\end{enumerate}
\end{notation}

With these two operations, we have

\begin{theorem}
\label{t8.3.3.3}The bundle $\pi_{n+1,n}:\mathbb{J}^{D_{n+1}}(\pi
)\rightarrow\mathbb{J}^{D_{n}}(\pi)$ is an affine bundle over the vector
bundle $\mathbb{S}^{D_{n+1}}(\pi)\underset{E}{\times}\mathbb{J}^{D_{n}}%
(\pi)\rightarrow\mathbb{J}^{D_{n}}(\pi)$.
\end{theorem}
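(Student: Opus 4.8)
The plan is to mirror exactly the strategy employed for Theorem~\ref{t8.2.3.4}, only now in the third-approach setting where the relevant Weil algebras are the $\mathcal{W}_{D_{n}}$'s rather than the $\mathcal{W}_{D^{n}}$'s. The pointwise ``local triviality'' package has already been assembled: Theorem~\ref{t8.3.1.8} exhibits $\mathrm{id}_{M}\otimes\mathcal{W}_{i_{D_{n}\rightarrow D_{n+1}}}$ as an affine bundle modelled on $(M\otimes\mathcal{W}_{D})\underset{M}{\times}(M\otimes\mathcal{W}_{D_{n}})$, and Propositions~\ref{t8.3.3.1} and \ref{t8.3.3.2} guarantee that the fibrewise operations $\dot{-}$ and $\dot{+}$, when applied to $D^{n+1}$-tangentials, land back inside $\mathbb{S}^{D_{n+1}}(\pi)$ and $\mathbb{J}^{D_{n+1}}(\pi)$ respectively. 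The Notation block just above the statement then packages these into the bundle maps $\nabla^{+}\dot{-}\nabla^{-}$ and $\omega\dot{+}\nabla$ over $E$. So the content of Theorem~\ref{t8.3.3.3} is almost entirely bookkeeping: check the affine-bundle axioms (freeness and transitivity of the $\mathbb{S}^{D_{n+1}}(\pi)$-action on the fibres of $\pi_{n+1,n}$, plus compatibility with the bundle projections to $\mathbb{J}^{D_{n}}(\pi)$).

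First I would recall that a point of the fibre of $\pi_{n+1,n}:\mathbb{J}^{D_{n+1}}(\pi)\rightarrow\mathbb{J}^{D_{n}}(\pi)$ over a fixed $\nabla^{D_{n}}_{x}$ is a $D^{n+1}$-tangential $\nabla_{x}$ with $\hat{\pi}_{n+1,n}(\nabla_{x})=\nabla^{D_{n}}_{x}$; I would then verify that for two such $\nabla^{+}_{x},\nabla^{-}_{x}$ the hypothesis of Proposition~\ref{t8.3.3.1} is met (their images under $\hat{\pi}_{n+1,n}$ agree), so $\nabla^{+}_{x}\dot{-}\nabla^{-}_{x}\in\mathbb{S}^{D_{n+1}}_{x}(\pi)$ is well-defined, and conversely that $\omega\dot{+}\nabla_{x}$ again sits over the same $\nabla^{D_{n}}_{x}$. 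Freeness amounts to: if $\omega\dot{+}\nabla_{x}=\nabla_{x}$ then $\omega=0$; this follows fibrewise from Theorem~\ref{t8.3.1.8}, since evaluating at any $\gamma\in(M\otimes\mathcal{W}_{D_{n+1}})_{\pi(x)}$ reduces it to the freeness of the affine structure on $M\otimes\mathcal{W}_{D_{n+1}}\rightarrow M\otimes\mathcal{W}_{D_{n}}$. Transitivity is the statement that $(\nabla^{+}_{x}\dot{-}\nabla^{-}_{x})\dot{+}\nabla^{-}_{x}=\nabla^{+}_{x}$, which again descends, $\gamma$ by $\gamma$, from the corresponding identity in Theorem~\ref{t8.3.1.8}. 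The vector bundle structure on $\mathbb{S}^{D_{n+1}}(\pi)\underset{E}{\times}\mathbb{J}^{D_{n}}(\pi)\rightarrow\mathbb{J}^{D_{n}}(\pi)$ is the pullback one, already noted to be a vector bundle in the Notation block defining $\mathbb{S}^{D_{n}}(\pi)$, and the affine action is by $(\omega,\nabla^{D_{n}}_{x})\cdot\nabla_{x}=\omega\dot{+}\nabla_{x}$.

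The main obstacle — though it is a mild one, which is presumably why the author writes ``with these two operations, we have'' and omits the proof — is smoothness/morphism-in-$\mathbf{FS}$ considerations: one must know that $\dot{-}$ and $\dot{+}$ are morphisms of Fr\"olicher spaces, not merely fibrewise set maps, so that the affine bundle is an affine bundle in the microlinear-Fr\"olicher category and not just fibrewise. This is handled exactly as in \S\ref{s8.2.3}: the naturality Propositions~\ref{t8.3.1.3} and \ref{t8.3.1.7} (the $\varphi$-equivariance of $\dot{-}$ and $\dot{+}$) show these operations are compatible with the structural maps, and the local-triviality charts transported from Theorem~\ref{t8.3.1.8} give the required trivializations; one checks the transition functions are affine with linear part given by the vector bundle, precisely as in the second approach. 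Accordingly I would simply write:

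\begin{proof}
This follows simply from Theorem \ref{t8.3.1.8}, arguing exactly as in the proof of Theorem \ref{t8.2.3.4}, with Propositions \ref{t8.3.3.1} and \ref{t8.3.3.2} in place of Propositions \ref{t8.2.3.2} and \ref{t8.2.3.3}, and with Propositions \ref{t8.3.1.3} and \ref{t8.3.1.7} ensuring compatibility of $\dot{-}$ and $\dot{+}$ with the structural maps.
\end{proof}
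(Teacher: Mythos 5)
Your proposal is correct and follows essentially the same route as the paper, whose entire proof reads ``This follows simply from Theorem \ref{t8.3.1.8}'', relying implicitly on Propositions \ref{t8.3.3.1} and \ref{t8.3.3.2} exactly as you do; your write-up merely makes the fibrewise bookkeeping explicit (only note that the fibre elements are points of $\mathbb{J}_{x}^{D_{n+1}}(\pi)$, not $D^{n+1}$-tangentials of the second approach).
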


\begin{proof}
This follows simply from Theorem \ref{t8.3.1.8}.
\end{proof}

\subsection{\label{s8.4}The Comparison between the Second and Third
Approaches}

Now we are in a position to investigate the relationship between the affine
bundles discussed in Theorems \ref{t8.2.3.4} and \ref{t8.3.3.3}. Let us begin with

\begin{lemma}
\label{t8.4.1}Let $\gamma^{\pm}\in\left(  E\otimes\mathcal{W}_{D_{n+1}%
}\right)  _{x}$ with
\[
\left(  \mathrm{id}_{E}\otimes\mathcal{W}_{i_{D_{n}\rightarrow D_{n+1}}%
}\right)  \left(  \gamma_{+}\right)  =\left(  \mathrm{id}_{E}\otimes
\mathcal{W}_{i_{D_{n}\rightarrow D_{n+1}}}\right)  \left(  \gamma_{-}\right)
\text{.}%
\]
Then
\begin{align*}
&  \left(  \mathrm{id}_{E}\otimes\mathcal{W}_{i_{D\left\{  n+1\right\}
_{n}\rightarrow D^{n+1}}}\right)  \left(  \left(  \mathrm{id}_{E}%
\otimes\mathcal{W}_{+_{D^{n+1}\rightarrow D_{n+1}}}\right)  \left(  \gamma
_{+}\right)  \right) \\
&  =\left(  \mathrm{id}_{E}\otimes\mathcal{W}_{i_{D\left\{  n+1\right\}
_{n}\rightarrow D^{n+1}}}\right)  \left(  \left(  \mathrm{id}_{E}%
\otimes\mathcal{W}_{+_{D^{n+1}\rightarrow D_{n+1}}}\right)  \left(  \gamma
_{-}\right)  \right)
\end{align*}
obtains, and we have
\begin{align*}
&  \gamma^{+}\dot{-}\gamma^{-}\\
&  =\left(  \mathrm{id}_{E}\otimes\mathcal{W}_{+_{D^{n+1}\rightarrow D_{n+1}}%
}\right)  \left(  \gamma^{+}\right)  \dot{-}\left(  \mathrm{id}_{E}%
\otimes\mathcal{W}_{+_{D^{n+1}\rightarrow D_{n+1}}}\right)  \left(  \gamma
^{-}\right)  \text{.}%
\end{align*}

\end{lemma}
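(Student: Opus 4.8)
The plan is to exploit the interplay between the two "addition" maps $+_{D^{n+1}\to D_{n+1}}:D^{n+1}\to D_{n+1}$ and the various structural maps appearing in Lemmas~\ref{t8.2.1.1} and \ref{t8.3.1.1}, reducing the statement to the uniqueness clauses of Propositions~\ref{t8.2.1.2} and \ref{t8.3.1.2}. Write $F=\mathrm{id}_E\otimes\mathcal{W}_{+_{D^{n+1}\to D_{n+1}}}$ for the transition operator $E\otimes\mathcal{W}_{D_{n+1}}\to E\otimes\mathcal{W}_{D^{n+1}}$. The first assertion (the compatibility of $F(\gamma_+)$ and $F(\gamma_-)$ over $D\{n+1\}_n$) should follow purely diagrammatically: the composite $i_{D\{n+1\}_n\to D^{n+1}}$ followed by $+_{D^{n+1}\to D_{n+1}}$ factors through $i_{D_n\to D_{n+1}}$ (both simply record "set all but one coordinate to be a nilpotent of the appropriate order"), so applying $\mathcal{W}$ and then the hypothesis on $\gamma_\pm$ gives the claim. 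I would verify this factorization of maps of Weil algebras explicitly and then just transport the hypothesis through it.

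For the main identity, I would first recall, via the notation following Proposition~\ref{t8.3.1.2}, that $\gamma^+\dot{-}\gamma^-=\left(\mathrm{id}_E\otimes\mathcal{W}_{\Xi_{D_{n+1}}}\right)(\delta)$, where $\delta\in E\otimes\mathcal{W}_{D_{n+1}\oplus D}$ is the unique element with $\left(\mathrm{id}_E\otimes\mathcal{W}_{\Psi_{D_{n+1}}}\right)(\delta)=\gamma^+$ and $\left(\mathrm{id}_E\otimes\mathcal{W}_{\Phi_{D_{n+1}}}\right)(\delta)=\gamma^-$; and similarly on the $D^{n+1}$ side there is a unique $\varepsilon\in E\otimes\mathcal{W}_{D^{n+1}\oplus D}$ computing $F(\gamma^+)\dot{-}F(\gamma^-)$ from $\Psi_{D^{n+1}},\Phi_{D^{n+1}},\Xi_{D^{n+1}}$. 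The key is to produce a single map of pointed objects (equivalently, a homomorphism of Weil algebras) $g:D^{n+1}\oplus D\to D_{n+1}\oplus D$ — intuitively $(d_1,\dots,d_{n+1},e)\mapsto(d_1+\cdots+d_{n+1},e)$ — and check three commuting squares: $g\circ\Psi_{D^{n+1}}=\Psi_{D_{n+1}}\circ(+_{D^{n+1}\to D_{n+1}})$, $g\circ\Phi_{D^{n+1}}=\Phi_{D_{n+1}}\circ(+_{D^{n+1}\to D_{n+1}})$, and $g\circ\Xi_{D^{n+1}}=\Xi_{D_{n+1}}$. The middle and last of these are essentially immediate; the first is where the polynomial identity $\Psi_{D^{n+1}}(d)=(d,d_1\cdots d_{n+1})$ versus $\Psi_{D_{n+1}}(d)=(d,d^{n+1})$ must be reconciled after passing through $+_{D^{n+1}\to D_{n+1}}$, using that $(d_1+\cdots+d_{n+1})^{n+1}=(n+1)!\,d_1\cdots d_{n+1}$ modulo the relevant Weil-algebra relations — so some care with the combinatorial constant is needed (or, more likely, the maps $\Psi$ are normalized precisely so that this works on the nose; I would check the conventions in \cite{la}).

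Granting the three squares, applying $\mathrm{id}_E\otimes\mathcal{W}_{(-)}$ and using functoriality, $\left(\mathrm{id}_E\otimes\mathcal{W}_g\right)(\delta)$ satisfies the two defining equations characterizing $\varepsilon$ (its images under $\Psi_{D^{n+1}},\Phi_{D^{n+1}}$ are $F(\gamma^+),F(\gamma^-)$ respectively, by the first two squares together with the defining properties of $\delta$); by the uniqueness in Proposition~\ref{t8.2.1.2} we get $\left(\mathrm{id}_E\otimes\mathcal{W}_g\right)(\delta)=\varepsilon$. Then applying $\mathrm{id}_E\otimes\mathcal{W}_{\Xi_{D^{n+1}}}$ and invoking the third square yields
\[
F(\gamma^+)\dot{-}F(\gamma^-)=\left(\mathrm{id}_E\otimes\mathcal{W}_{\Xi_{D^{n+1}}}\right)(\varepsilon)=\left(\mathrm{id}_E\otimes\mathcal{W}_{\Xi_{D_{n+1}}}\right)(\delta)=\gamma^+\dot{-}\gamma^-,
\]
as desired. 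I expect the main obstacle to be pinning down the map $g$ and verifying it is a genuine morphism of the quasi-colimit cocones (in particular the $\Psi$-square with its factorials), rather than anything in the subsequent formal manipulation; everything after the existence of $g$ is a one-line application of uniqueness.
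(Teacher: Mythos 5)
Your proposal is, in substance, the paper's own proof. The first assertion is obtained there from exactly your factorization, namely the commutative square $+_{D^{n+1}\rightarrow D_{n+1}}\circ i_{D\{n+1\}_{n}\rightarrow D^{n+1}}=i_{D_{n}\rightarrow D_{n+1}}\circ +_{D\{n+1\}_{n}\rightarrow D_{n}}$ (the sum of the coordinates lands in $D_{n}$ on $D\{n+1\}_{n}$ because $(d_{1}+\cdots+d_{n+1})^{n+1}=(n+1)!\,d_{1}\cdots d_{n+1}=0$ there), and the second assertion is obtained from a commutative cube whose vertical edges are the addition maps and whose faces are precisely your three squares for $g=+_{D^{n+1}\rightarrow D_{n+1}}\oplus\mathrm{id}_{D}$, followed by the same appeal to the uniqueness clauses of Propositions \ref{t8.2.1.2} and \ref{t8.3.1.2}. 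So the route is identical, and the formal part of your argument is exactly the paper's.

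The one square you decline to verify, $g\circ\Psi_{D^{n+1}}=\Psi_{D_{n+1}}\circ +_{D^{n+1}\rightarrow D_{n+1}}$, is exactly the face the paper asserts without comment (the right-hand square of its display (\ref{8.4.1.3})), and your suspicion about the combinatorial constant is warranted: with the definitions actually adopted in Lemmas \ref{t8.2.1.1} and \ref{t8.3.1.1}, i.e. $\Psi_{D^{n+1}}(d_{1},\dots,d_{n+1})=(d_{1},\dots,d_{n+1},d_{1}\cdots d_{n+1})$ and $\Psi_{D_{n+1}}(d)=(d,d^{n+1})$ with no factorial anywhere, the two composites are $(d_{1}+\cdots+d_{n+1},\,d_{1}\cdots d_{n+1})$ and $(d_{1}+\cdots+d_{n+1},\,(n+1)!\,d_{1}\cdots d_{n+1})$, so this face commutes only up to the factor $(n+1)!$; the paper's conventions do not make it hold on the nose, and \cite{la} will not rescue it either. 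The discrepancy is visible in coordinates: for $n+1=2$ and Taylor representations $\gamma^{\pm}:\delta\mapsto x+a\delta+b^{\pm}\delta^{2}$, one finds $\gamma^{+}\dot{-}\gamma^{-}=[e\mapsto x+(b^{+}-b^{-})e]$, while the pullbacks along $+_{D^{2}\rightarrow D_{2}}$ are $(d_{1},d_{2})\mapsto x+ad_{1}+ad_{2}+2b^{\pm}d_{1}d_{2}$, whose $\dot{-}$ is $[e\mapsto x+2(b^{+}-b^{-})e]$; this is also what the paper's own coordinate formulas (the $1/n!$ in $\theta_{\mathbb{S}^{D_{n}}(\pi)}^{\mathcal{S}^{n}(\pi)}$ versus its absence in $\theta_{\mathbb{S}^{D^{n}}(\pi)}^{\mathcal{S}^{n}(\pi)}$) reflect. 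Thus the identity as displayed (and with it the use made of it in Theorem \ref{t8.4.10}) holds only after a renormalization — e.g. taking $\Psi_{D_{n+1}}(d)=(d,d^{n+1}/(n+1)!)$, or equivalently rescaling the third-approach $\dot{-}$ — after which your argument, and the paper's, go through verbatim. In short: the step you flagged is the genuine crux, but the unresolved point lies in the source's normalization rather than in your strategy, which otherwise coincides with the paper's proof.
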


\begin{proof}
Since the diagram
\begin{equation}%
\begin{array}
[c]{ccc}%
D\left\{  n+1\right\}  _{n} & \underrightarrow{i_{D\left\{  n+1\right\}
_{n}\rightarrow D^{n+1}}} & D^{n+1}\\%
\begin{array}
[c]{cc}%
+_{D\left\{  n+1\right\}  _{n}\rightarrow D_{n}} & \downarrow
\end{array}
&  &
\begin{array}
[c]{cc}%
\downarrow & +_{D^{n+1}\rightarrow D_{n+1}}%
\end{array}
\\
D_{n} & \overrightarrow{i_{D_{n}\rightarrow D_{n+1}}} & D_{n+1}%
\end{array}
\label{8.4.1.1}%
\end{equation}
is commutative, we have
\begin{align*}
&  \left(  \mathrm{id}_{E}\otimes\mathcal{W}_{i_{D\left\{  n+1\right\}
_{n}\rightarrow D^{n+1}}}\right)  \left(  \left(  \mathrm{id}_{E}%
\otimes\mathcal{W}_{+_{D^{n+1}\rightarrow D_{n+1}}}\right)  \left(  \gamma
^{+}\right)  \right) \\
&  =\left(  \mathrm{id}_{E}\otimes\mathcal{W}_{+_{D\left\{  n+1\right\}
_{n}\rightarrow D_{n}}}\right)  \left(  \left(  \mathrm{id}_{E}\otimes
\mathcal{W}_{i_{D_{n}\rightarrow D_{n+1}}}\right)  \left(  \gamma^{+}\right)
\right) \\
&  =\left(  \mathrm{id}_{E}\otimes\mathcal{W}_{+_{D\left\{  n+1\right\}
_{n}\rightarrow D_{n}}}\right)  \left(  \left(  \mathrm{id}_{E}\otimes
\mathcal{W}_{i_{D_{n}\rightarrow D_{n+1}}}\right)  \left(  \gamma^{-}\right)
\right) \\
&  =\left(  \mathrm{id}_{E}\otimes\mathcal{W}_{i_{D\left\{  n+1\right\}
_{n}\rightarrow D^{n+1}}}\right)  \left(  \left(  \mathrm{id}_{E}%
\otimes\mathcal{W}_{+_{D^{n+1}\rightarrow D_{n+1}}}\right)  \left(  \gamma
^{-}\right)  \right)  \text{,}%
\end{align*}
which establishes the coveted first statement. The second statement follows
simply from a commutative cubical diagram, which is depicted here separately
as the upper square (\ref{8.4.1.1}), the lower square and the rounding four
side squares:
\begin{equation}%
\begin{array}
[c]{ccc}%
D^{n+1} & \underrightarrow{\Phi_{D^{n+1}}} & D^{n+1}\oplus D\\%
\begin{array}
[c]{cc}%
+_{D^{n+1}\rightarrow D_{n+1}} & \downarrow
\end{array}
&  &
\begin{array}
[c]{cc}%
\downarrow & +_{D^{n+1}\rightarrow D_{n+1}}\oplus\mathrm{id}_{D}%
\end{array}
\\
D_{n+1} & \overrightarrow{\Phi_{D_{n+1}}} & D_{n+1}\oplus D
\end{array}
\label{8.4.1.2}%
\end{equation}
\begin{equation}%
\begin{array}
[c]{ccccc}%
D\left\{  n+1\right\}  _{n} & \underrightarrow{i_{D\left\{  n+1\right\}
_{n}\rightarrow D^{n+1}}} & D^{n+1} & \underrightarrow{+_{D^{n+1}\rightarrow
D_{n+1}}} & D_{n+1}\\%
\begin{array}
[c]{cc}%
i_{D\left\{  n+1\right\}  _{n}\rightarrow D^{n+1}} & \downarrow
\end{array}
&  &
\begin{array}
[c]{cc}%
\Psi_{D^{n+1}} & \downarrow
\end{array}
&  &
\begin{array}
[c]{cc}%
\Psi_{D_{n+1}} & \downarrow
\end{array}
\\
D^{n+1} & \overrightarrow{\Phi_{D^{n+1}}} & D^{n+1}\oplus D & \overrightarrow
{+_{D^{n+1}\rightarrow D_{n+1}}\oplus\mathrm{id}_{D}} & D_{n+1}\oplus D
\end{array}
\label{8.4.1.3}%
\end{equation}
\begin{equation}%
\begin{array}
[c]{ccccc}%
D\left\{  n+1\right\}  _{n} & \underrightarrow{+_{D\left\{  n+1\right\}
_{n}\rightarrow D_{n}}} & D_{n} & \underrightarrow{i_{D_{n}\rightarrow
D_{n+1}}} & D_{n+1}\\%
\begin{array}
[c]{cc}%
i_{D\left\{  n+1\right\}  _{n}\rightarrow D^{n+1}} & \downarrow
\end{array}
&  &
\begin{array}
[c]{cc}%
i_{D_{n}\rightarrow D_{n+1}} & \downarrow
\end{array}
&  &
\begin{array}
[c]{cc}%
\Psi_{D_{n+1}} & \downarrow
\end{array}
\\
D^{n+1} & \overrightarrow{+_{D^{n+1}\rightarrow D_{n+1}}} & D_{n+1} &
\overrightarrow{\Phi_{D_{n+1}}} & D_{n+1}\oplus D
\end{array}
\label{8.4.1.4}%
\end{equation}

\end{proof}

\begin{lemma}
\label{t8.4.2}Let $t\in E\otimes\mathcal{W}_{D}$ and $\gamma\in E\otimes
\mathcal{W}_{D_{n+1}}$ with
\begin{align*}
&  \left(  \mathrm{id}_{E}\otimes\mathcal{W}_{i_{1\rightarrow D}}\right)
\left(  t\right) \\
&  =\left(  \mathrm{id}_{E}\otimes\mathcal{W}_{i_{1\rightarrow D_{n+1}}%
}\right)  \left(  \gamma\right)  \text{.}%
\end{align*}
Then we have
\[
\left(  \mathrm{id}_{E}\otimes\mathcal{W}_{+_{D^{n+1}\rightarrow D_{n+1}}%
}\right)  (t\dot{+}\gamma)=t\dot{+}\left(  \mathrm{id}_{E}\otimes
\mathcal{W}_{+_{D^{n+1}\rightarrow D_{n+1}}}\right)  \left(  \gamma\right)
\]

\end{lemma}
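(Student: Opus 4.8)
The plan is to imitate the proof of Lemma~\ref{t8.4.1} and reduce everything to the naturality of the quasi-colimit constructions underlying $\dot{+}$. Recall that $t\dot{+}\gamma$ for $\gamma\in E\otimes\mathcal{W}_{D_{n+1}}$ was defined by lifting $(t,\gamma)$ through the quasi-colimit diagram of Lemma~\ref{t8.3.1.5} to obtain a unique $\widetilde{\gamma}\in E\otimes\mathcal{W}_{D_{n+1}\oplus D}$ with $(\mathrm{id}_E\otimes\mathcal{W}_{\Phi_{D_{n+1}}})(\widetilde\gamma)=\gamma$ and $(\mathrm{id}_E\otimes\mathcal{W}_{\Xi_{D_{n+1}}})(\widetilde\gamma)=t$, and then setting $t\dot{+}\gamma=(\mathrm{id}_E\otimes\mathcal{W}_{\Psi_{D_{n+1}}})(\widetilde\gamma)$. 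On the $D^{n+1}$ side the analogous construction (Proposition~\ref{t8.2.1.5}) uses the quasi-colimit diagram of Lemma~\ref{t8.2.1.4} and the maps $\Phi_{D^{n+1}},\Xi_{D^{n+1}},\Psi_{D^{n+1}}$.

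First I would check that $(\mathrm{id}_E\otimes\mathcal{W}_{+_{D^{n+1}\rightarrow D_{n+1}}})(\gamma)$ and $t$ satisfy the compatibility hypothesis of Proposition~\ref{t8.2.1.5}; this is immediate because $+_{D^{n+1}\rightarrow D_{n+1}}$ restricts correctly on the basepoint $1$, i.e. the triangle $1\rightarrow D^{n+1}\xrightarrow{+_{D^{n+1}\rightarrow D_{n+1}}}D_{n+1}$ agrees with $1\rightarrow D_{n+1}$. So $t\dot{+}(\mathrm{id}_E\otimes\mathcal{W}_{+_{D^{n+1}\rightarrow D_{n+1}}})(\gamma)$ is defined via a unique lift $\widetilde{\gamma}'\in E\otimes\mathcal{W}_{D^{n+1}\oplus D}$. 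Next I would transport $\widetilde\gamma$ along the map $+_{D^{n+1}\rightarrow D_{n+1}}\oplus\mathrm{id}_D:D^{n+1}\oplus D\rightarrow D_{n+1}\oplus D$ — wait, that goes the wrong way; rather I would observe that pulling $\widetilde\gamma$ back, i.e. forming $(\mathrm{id}_E\otimes\mathcal{W}_{+_{D^{n+1}\rightarrow D_{n+1}}\oplus\mathrm{id}_D})$ applied to $\widetilde\gamma\in E\otimes\mathcal{W}_{D_{n+1}\oplus D}$, produces an element $\widetilde\gamma''\in E\otimes\mathcal{W}_{D^{n+1}\oplus D}$. The key verification is that $\widetilde\gamma''$ satisfies the two defining properties of $\widetilde\gamma'$: that $(\mathrm{id}_E\otimes\mathcal{W}_{\Phi_{D^{n+1}}})(\widetilde\gamma'')=(\mathrm{id}_E\otimes\mathcal{W}_{+_{D^{n+1}\rightarrow D_{n+1}}})(\gamma)$ and $(\mathrm{id}_E\otimes\mathcal{W}_{\Xi_{D^{n+1}}})(\widetilde\gamma'')=t$. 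These follow from the commutativity of the square $(\ref{8.4.1.2})$ (relating $\Phi_{D^{n+1}}$, $\Phi_{D_{n+1}}$ and the two copies of $+$) together with the obvious compatibility of $\Xi_{D^{n+1}}$, $\Xi_{D_{n+1}}$ with $+_{D^{n+1}\rightarrow D_{n+1}}\oplus\mathrm{id}_D$, since $\Xi$ lands in the summand $D$ on which the map is the identity.

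By the uniqueness clause in Proposition~\ref{t8.2.1.5}, $\widetilde\gamma''=\widetilde\gamma'$, and therefore
\[
t\dot{+}(\mathrm{id}_E\otimes\mathcal{W}_{+_{D^{n+1}\rightarrow D_{n+1}}})(\gamma)
=(\mathrm{id}_E\otimes\mathcal{W}_{\Psi_{D^{n+1}}})(\widetilde\gamma').
\]
It then remains to compare $(\mathrm{id}_E\otimes\mathcal{W}_{\Psi_{D^{n+1}}})(\widetilde\gamma'')$ with $(\mathrm{id}_E\otimes\mathcal{W}_{+_{D^{n+1}\rightarrow D_{n+1}}})(t\dot{+}\gamma)=(\mathrm{id}_E\otimes\mathcal{W}_{+_{D^{n+1}\rightarrow D_{n+1}}})(\mathrm{id}_E\otimes\mathcal{W}_{\Psi_{D_{n+1}}})(\widetilde\gamma)$. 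Functoriality of $\mathcal{W}_{(-)}$ reduces this to the identity of maps $\Psi_{D^{n+1}}$ followed by $+_{D^{n+1}\rightarrow D_{n+1}}$ versus $(+_{D^{n+1}\rightarrow D_{n+1}}\oplus\mathrm{id}_D)$ followed by $\Psi_{D_{n+1}}$ — and this is exactly the commutativity of square~$(\ref{8.4.1.2})$ specialized appropriately, or can be checked directly on $(d_1,\dots,d_{n+1})\mapsto(d_1+\cdots+d_{n+1},\,d_1\cdots d_{n+1})$ against $(d)\mapsto(d,d^{n+1})$ using that $d_1\cdots d_{n+1}=(d_1+\cdots+d_{n+1})^{n+1}/(n+1)!$ is false in general but the two composites nonetheless agree as morphisms of Weil algebras. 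The main obstacle, as in Lemma~\ref{t8.4.1}, is purely bookkeeping: assembling the correct commutative cube of infinitesimal objects (the analogue of~$(\ref{8.4.1.2})$--$(\ref{8.4.1.4})$ but with the basepoint $1$ replacing $D\{n+1\}_n$) and checking each face; once that cube is in hand, the result drops out from functoriality and the uniqueness in the quasi-colimit, exactly as in the preceding lemma.
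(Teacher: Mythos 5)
Your strategy is the paper's own: the proof given there is precisely the commutative cube obtained by gluing the quasi-colimit square of Lemma \ref{t8.2.1.4} (taken for $D^{n+1}$) to that of Lemma \ref{t8.3.1.5} along $\mathrm{id}_{1}$, $\mathrm{id}_{D}$, $+_{D^{n+1}\rightarrow D_{n+1}}$ and $+_{D^{n+1}\rightarrow D_{n+1}}\oplus\mathrm{id}_{D}$, and your steps --- checking the compatibility of the pair $\left(  t,\left(  \mathrm{id}_{E}\otimes\mathcal{W}_{+_{D^{n+1}\rightarrow D_{n+1}}}\right)  \left(  \gamma\right)  \right)  $ via $+_{D^{n+1}\rightarrow D_{n+1}}\circ i_{1\rightarrow D^{n+1}}=i_{1\rightarrow D_{n+1}}$, transporting $\widetilde{\gamma}$ along $+_{D^{n+1}\rightarrow D_{n+1}}\oplus\mathrm{id}_{D}$, verifying the $\Phi$- and $\Xi$-properties by (\ref{8.4.1.2}) and the $\Xi$-square, and invoking the uniqueness clause of Proposition \ref{t8.2.1.5} to get $\widetilde{\gamma}''=\widetilde{\gamma}'$ --- are exactly what the displayed cube encodes, spelled out in more detail than the paper bothers to give.

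The gap is in your final step. After $\widetilde{\gamma}''=\widetilde{\gamma}'$, the desired equality reduces to $\left(  +_{D^{n+1}\rightarrow D_{n+1}}\oplus\mathrm{id}_{D}\right)  \circ\Psi_{D^{n+1}}=\Psi_{D_{n+1}}\circ+_{D^{n+1}\rightarrow D_{n+1}}$, and this is \emph{not} ``(\ref{8.4.1.2}) specialized appropriately'': (\ref{8.4.1.2}) is the $\Phi$-face, whereas what you need is the $\Psi$-face, i.e.\ the right-hand square of (\ref{8.4.1.3}) in the proof of Lemma \ref{t8.4.1} (note that the cube drawn for the present lemma contains no $\Psi$ at all, so it alone cannot yield the displayed formula). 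Your parenthetical direct check is also garbled: on $D^{n+1}$ one has $\left(  d_{1}+\cdots+d_{n+1}\right)  ^{n+1}=(n+1)!\,d_{1}\cdots d_{n+1}$, so the identity $d_{1}\cdots d_{n+1}=\left(  d_{1}+\cdots+d_{n+1}\right)  ^{n+1}/(n+1)!$ is true rather than false, and consequently the two composites you must compare, namely $d\mapsto\left(  d_{1}+\cdots+d_{n+1},d_{1}\cdots d_{n+1}\right)  $ and $d\mapsto\left(  d_{1}+\cdots+d_{n+1},\left(  d_{1}+\cdots+d_{n+1}\right)  ^{n+1}\right)  $, differ by the factor $(n+1)!$ in the $D$-component; declaring that ``the two composites nonetheless agree as morphisms of Weil algebras'' is precisely the assertion that needs proof and cannot be waved through. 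So as written your argument stops at the identification of the two canonical lifts and does not establish $\left(  \mathrm{id}_{E}\otimes\mathcal{W}_{\Psi_{D_{n+1}}\circ+_{D^{n+1}\rightarrow D_{n+1}}}\right)  \left(  \widetilde{\gamma}\right)  =\left(  \mathrm{id}_{E}\otimes\mathcal{W}_{\left(  +_{D^{n+1}\rightarrow D_{n+1}}\oplus\mathrm{id}_{D}\right)  \circ\Psi_{D^{n+1}}}\right)  \left(  \widetilde{\gamma}\right)  $; you must either prove the $\Psi$-face (with whatever normalization of $\Psi_{D_{n+1}}$ or of the comparison map makes it commute --- this factorial is exactly the delicate point, which the paper itself passes over by simply asserting the commutativity of its cube in Lemma \ref{t8.4.1}) or find a route that avoids it.
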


\begin{proof}
This follows simply from a commutative cubical diagram, which is depicted here
separately as the upper square, the lower square (\ref{8.4.1.2}) and the
rounding four side squares:
\[%
\begin{array}
[c]{ccc}%
1 & \underrightarrow{i_{1\rightarrow D}} & D\\%
\begin{array}
[c]{cc}%
\mathrm{id}_{1} & \downarrow
\end{array}
&  &
\begin{array}
[c]{cc}%
\downarrow & \mathrm{id}_{D}%
\end{array}
\\
1 & \overrightarrow{i_{1\rightarrow D}} & D
\end{array}
\]
\[%
\begin{array}
[c]{ccccc}%
1 & \underrightarrow{i_{1\rightarrow D}} & D & \underrightarrow{\mathrm{id}%
_{D}} & D\\%
\begin{array}
[c]{cc}%
i_{1\rightarrow D^{n+1}} & \downarrow
\end{array}
&  &
\begin{array}
[c]{cc}%
\Xi_{D^{n+1}} & \downarrow
\end{array}
&  &
\begin{array}
[c]{cc}%
\Xi_{D_{n+1}} & \downarrow
\end{array}
\\
D^{n+1} & \overrightarrow{\Phi_{D^{n+1}}} & D^{n+1}\oplus D & \overrightarrow
{+_{D^{n+1}\rightarrow D_{n+1}}\oplus\mathrm{id}_{D}} & D_{n+1}\oplus D
\end{array}
\]
\[%
\begin{array}
[c]{ccccc}%
1 & \underrightarrow{\mathrm{id}_{1}} & 1 & \underrightarrow{i_{1\rightarrow
D}} & D\\%
\begin{array}
[c]{cc}%
i_{1\rightarrow D^{n+1}} & \downarrow
\end{array}
&  &
\begin{array}
[c]{cc}%
i_{1\rightarrow D_{n+1}} & \downarrow
\end{array}
&  &
\begin{array}
[c]{cc}%
\Xi_{D_{n+1}} & \downarrow
\end{array}
\\
D^{n+1} & \overrightarrow{+_{D^{n+1}\rightarrow D_{n+1}}} & D_{n+1} &
\overrightarrow{\Phi_{D_{n+1}}} & D_{n+1}\oplus D
\end{array}
\]

\end{proof}

Now we are ready to state the main result of this subsection.

\begin{theorem}
\label{t8.4.10}We have the following:

\begin{enumerate}
\item For any $\nabla^{+},\nabla^{-}\in\mathbb{J}_{x}^{D^{n+1}}\left(
\pi\right)  $ and any $\gamma\in\left(  M\otimes\mathcal{W}_{D_{n+1}}\right)
_{\pi(x)}$ with
\[
\pi_{n+1,n}\left(  \nabla^{+}\right)  =\pi_{n+1,n}\left(  \nabla^{-}\right)
\text{,}%
\]
we have
\begin{align*}
&  \psi_{n+1}(\nabla^{+})(\gamma)\dot{-}\psi_{n+1}(\nabla^{-})(\gamma)\\
&  =\nabla^{+}(\left(  \mathrm{id}_{E}\otimes\mathcal{W}_{+_{D^{n+1}%
\rightarrow D_{n+1}}}\right)  \left(  \gamma\right)  )\dot{-}\nabla
^{-}(\left(  \mathrm{id}_{E}\otimes\mathcal{W}_{+_{D^{n+1}\rightarrow D_{n+1}%
}}\right)  \left(  \gamma\right)  )
\end{align*}

\item For any $\nabla\in\mathbb{J}_{x}^{D^{n+1}}\left(  \pi\right)  $, any
$t\in\left(  M\otimes\mathcal{W}_{D}\right)  _{\pi(x)}$ and any $\gamma
\in\left(  M\otimes\mathcal{W}_{D_{n+1}}\right)  _{\pi(x)}$, we have
\begin{align*}
&  \left(  \mathrm{id}_{E}\otimes\mathcal{W}_{+_{D^{n+1}\rightarrow D_{n+1}}%
}\right)  \left(  \left(  \pi_{n+1,1}\left(  \psi_{n+1}(\nabla)\right)
\right)  \left(  t\right)  \dot{+}\psi_{n+1}(\nabla)(\gamma)\right) \\
&  =\left(  \pi_{n+1,1}\left(  \nabla\right)  \right)  \left(  t\right)
\dot{+}\nabla(\left(  \mathrm{id}_{M}\otimes\mathcal{W}_{+_{D^{n+1}\rightarrow
D_{n+1}}}\right)  \left(  \gamma\right)  )
\end{align*}

\end{enumerate}
\end{theorem}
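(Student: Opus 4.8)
The plan is to derive both identities from a single imported fact, namely the defining property of the comparison map $\psi_{n+1}$ from \cite{nishimura}: writing $+$ for the addition map $+_{D^{n+1}\rightarrow D_{n+1}}:D^{n+1}\rightarrow D_{n+1}$, one has, for every $\nabla\in\mathbb{J}_{x}^{D^{n+1}}(\pi)$ and every $\gamma\in\left(M\otimes\mathcal{W}_{D_{n+1}}\right)_{\pi(x)}$,
\[
\left(\mathrm{id}_{E}\otimes\mathcal{W}_{+}\right)\left(\psi_{n+1}(\nabla)(\gamma)\right)=\nabla\left(\left(\mathrm{id}_{M}\otimes\mathcal{W}_{+}\right)(\gamma)\right),
\]
i.e.\ $\psi_{n+1}$ intertwines evaluation on the third-approach side with evaluation on the second-approach side along $\mathcal{W}_{+}$. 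Granting this, Lemmas \ref{t8.4.1} and \ref{t8.4.2} are precisely what is needed to carry the affine operations $\dot{-}$ and $\dot{+}$ of the third approach (formed over $D_{n+1}$) over to those of the second approach (formed over $D^{n+1}$) along $\mathrm{id}_{E}\otimes\mathcal{W}_{+}$, and the theorem then drops out by a short substitution. Throughout one must keep track of which infinitesimal object a given occurrence of $\dot{-}$ or $\dot{+}$ lives over.

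For assertion (1): from $\pi_{n+1,n}(\nabla^{+})=\pi_{n+1,n}(\nabla^{-})$ and Proposition \ref{t7.1.4} (in the version for tangentials) we get $\pi_{n+1,n}(\psi_{n+1}(\nabla^{+}))=\pi_{n+1,n}(\psi_{n+1}(\nabla^{-}))$, which --- exactly as in Proposition \ref{t8.3.3.1} --- makes $\psi_{n+1}(\nabla^{+})(\gamma)$ and $\psi_{n+1}(\nabla^{-})(\gamma)$ agree after restriction along $i_{D_{n}\rightarrow D_{n+1}}$. Hence Lemma \ref{t8.4.1} applies to the pair $\gamma^{\pm}:=\psi_{n+1}(\nabla^{\pm})(\gamma)$ and gives
\begin{align*}
&\psi_{n+1}(\nabla^{+})(\gamma)\dot{-}\psi_{n+1}(\nabla^{-})(\gamma)\\
&=\left(\mathrm{id}_{E}\otimes\mathcal{W}_{+}\right)\left(\psi_{n+1}(\nabla^{+})(\gamma)\right)\dot{-}\left(\mathrm{id}_{E}\otimes\mathcal{W}_{+}\right)\left(\psi_{n+1}(\nabla^{-})(\gamma)\right),
\end{align*}
the left-hand $\dot{-}$ being the $D_{n+1}$-difference and the right-hand one the $D^{n+1}$-difference (well formed by the first conclusion of Lemma \ref{t8.4.1}). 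Substituting the defining property of $\psi_{n+1}$ into each of the two terms on the right turns the right-hand side into $\nabla^{+}\left(\left(\mathrm{id}_{M}\otimes\mathcal{W}_{+}\right)(\gamma)\right)\dot{-}\nabla^{-}\left(\left(\mathrm{id}_{M}\otimes\mathcal{W}_{+}\right)(\gamma)\right)$, which is the asserted equality.

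Assertion (2) is obtained in the same way, with Lemma \ref{t8.4.2} replacing Lemma \ref{t8.4.1}. One applies Lemma \ref{t8.4.2} with its ``$t$'' taken to be $\left(\pi_{n+1,1}(\psi_{n+1}(\nabla))\right)(t)\in E\otimes\mathcal{W}_{D}$ and its ``$\gamma$'' taken to be $\psi_{n+1}(\nabla)(\gamma)\in E\otimes\mathcal{W}_{D_{n+1}}$ (the required compatibility at the basepoint holds because both are based at $x$); this rewrites $\mathrm{id}_{E}\otimes\mathcal{W}_{+}$ applied to the left-hand $D_{n+1}$-sum as the $D^{n+1}$-sum $\left(\pi_{n+1,1}(\psi_{n+1}(\nabla))\right)(t)\dot{+}\left(\mathrm{id}_{E}\otimes\mathcal{W}_{+}\right)\left(\psi_{n+1}(\nabla)(\gamma)\right)$. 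The defining property of $\psi_{n+1}$ replaces the second summand by $\nabla\left(\left(\mathrm{id}_{M}\otimes\mathcal{W}_{+}\right)(\gamma)\right)$, and iterating the commutative square of Proposition \ref{t7.1.4} down to level $1$ --- where $\psi_{1}$ is the identity since $D^{1}=D_{1}$ --- yields $\pi_{n+1,1}(\psi_{n+1}(\nabla))=\pi_{n+1,1}(\nabla)$, turning the first summand into $\left(\pi_{n+1,1}(\nabla)\right)(t)$; this is exactly the right-hand side of (2). Alternatively one may note, as in the proof of Proposition \ref{t8.2.3.1}, that (1) and (2) are equivalent by an easy argument of affine geometry and prove only one of them.

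The computations are short, so the only real work is bookkeeping: checking at each step that the domain/codomain conditions for $\dot{-}$ and $\dot{+}$ and the hypotheses of Lemmas \ref{t8.4.1} and \ref{t8.4.2} are met, and not confusing a $D^{n+1}$-operation with a $D_{n+1}$-operation. The single non-formal input is the characterizing relation for $\psi_{n+1}$ above together with Proposition \ref{t7.1.4}; once those are granted, no genuine obstacle remains.
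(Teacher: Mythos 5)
Your proposal is correct and follows essentially the same route as the paper: both parts rest on the defining intertwining relation $\left(\mathrm{id}_{E}\otimes\mathcal{W}_{+_{D^{n+1}\rightarrow D_{n+1}}}\right)\left(\psi_{n+1}(\nabla)(\gamma)\right)=\nabla\left(\left(\mathrm{id}_{M}\otimes\mathcal{W}_{+_{D^{n+1}\rightarrow D_{n+1}}}\right)(\gamma)\right)$ together with Lemma \ref{t8.4.1} for the difference, Lemma \ref{t8.4.2} for the sum, and Proposition \ref{t7.1.4} to identify $\pi_{n+1,1}(\psi_{n+1}(\nabla))$ with $\pi_{n+1,1}(\nabla)$, exactly as in the paper's proof. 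The only differences are cosmetic (you run the chain of equalities in the opposite direction and spell out the hypothesis checks for the two lemmas), so nothing further is needed.
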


\begin{proof}
We deal with the two statements separately.

\begin{enumerate}
\item Since
\begin{align*}
&  \nabla^{\pm}(\left(  \mathrm{id}_{M}\otimes\mathcal{W}_{+_{D^{n+1}%
\rightarrow D_{n+1}}}\right)  \left(  \gamma\right)  )\\
&  =\left(  \mathrm{id}_{E}\otimes\mathcal{W}_{+_{D^{n+1}\rightarrow D_{n+1}}%
}\right)  \left(  \left(  \psi_{n+1}(\nabla^{\pm})\right)  (\gamma)\right)
\end{align*}
by the very definition of $\psi_{n+1}(\nabla^{\pm})$, we have
\begin{align*}
&  \nabla^{+}(\left(  \mathrm{id}_{M}\otimes\mathcal{W}_{+_{D^{n+1}\rightarrow
D_{n+1}}}\right)  \left(  \gamma\right)  )\dot{-}\nabla^{-}(\left(
\mathrm{id}_{M}\otimes\mathcal{W}_{+_{D^{n+1}\rightarrow D_{n+1}}}\right)
\left(  \gamma\right)  )\\
&  =\left(  \mathrm{id}_{E}\otimes\mathcal{W}_{+_{D^{n+1}\rightarrow D_{n+1}}%
}\right)  \left(  \left(  \psi_{n+1}(\nabla^{+})\right)  (\gamma)\right)
\dot{-}\left(  \mathrm{id}_{E}\otimes\mathcal{W}_{+_{D^{n+1}\rightarrow
D_{n+1}}}\right)  \left(  \left(  \psi_{n+1}(\nabla^{-})\right)
(\gamma)\right) \\
&  =\psi_{n+1}(\nabla^{+})(\gamma)\dot{-}\psi_{n+1}(\nabla^{-})(\gamma)\text{
\ \ [by Lemma \ref{t8.4.1}]}%
\end{align*}

\item Since
\begin{align*}
&  \nabla(\left(  \mathrm{id}_{M}\otimes\mathcal{W}_{+_{D^{n+1}\rightarrow
D_{n+1}}}\right)  \left(  \gamma\right)  )\\
&  =\left(  \mathrm{id}_{E}\otimes\mathcal{W}_{+_{D^{n+1}\rightarrow D_{n+1}}%
}\right)  \left(  \left(  \psi_{n+1}(\nabla)\right)  (\gamma)\right)
\end{align*}
by the very definition of $\psi_{n+1}(\nabla)$ and
\[
\left(  \pi_{n+1,1}\left(  \nabla\right)  \right)  \left(  t\right)  =\left(
\pi_{n+1,1}\left(  \psi_{n+1}(\nabla)\right)  \right)  \left(  t\right)
\]
by dint of Proposition \ref{t7.1.4}, we have
\begin{align*}
&  \left(  \pi_{n+1,1}\left(  \nabla\right)  \right)  \left(  t\right)
\dot{+}\nabla(\left(  \mathrm{id}_{M}\otimes\mathcal{W}_{+_{D^{n+1}\rightarrow
D_{n+1}}}\right)  \left(  \gamma\right)  )\\
&  =\left(  \pi_{n+1,1}\left(  \nabla\right)  \right)  \left(  t\right)
\dot{+}\left(  \mathrm{id}_{E}\otimes\mathcal{W}_{+_{D^{n+1}\rightarrow
D_{n+1}}}\right)  \left(  \left(  \psi_{n+1}(\nabla)\right)  (\gamma)\right)
\\
&  =\left(  \mathrm{id}_{E}\otimes\mathcal{W}_{+_{D^{n+1}\rightarrow D_{n+1}}%
}\right)  \left(  \left(  \pi_{n+1,1}\left(  \psi_{n+1}(\nabla)\right)
\right)  \left(  t\right)  \dot{+}\psi_{n+1}(\nabla)(\gamma)\right) \\
&  \text{[by Lemma \ref{t8.4.2}]}%
\end{align*}

\end{enumerate}
\end{proof}

Now we would like to discuss the relationship between $\mathbb{S}^{D^{n+1}%
}(\pi)$ and $\mathbb{S}^{D_{n+1}}(\pi)$.

\begin{proposition}
\label{t8.4.4}For any $\omega\in\mathbb{S}_{x}^{D^{n+1}}(\pi)$, the mapping
\[
\gamma\in\left(  M\otimes\mathcal{W}_{D_{n+1}}\right)  _{\pi(x)}\mapsto
\omega\left(  \left(  \mathrm{id}_{M}\otimes\mathcal{W}_{+_{D^{n+1}\rightarrow
D_{n+1}}}\right)  \left(  \gamma\right)  \right)  \text{,}%
\]
denoted by $\phi_{n+1}(\omega)$, belongs to $\mathbb{S}_{x}^{D_{n+1}}(\pi) $,
thereby giving rise to a function $\phi_{n+1}:\mathbb{S}^{D^{n+1}}%
(\pi)\rightarrow\mathbb{S}^{D_{n+1}}(\pi)$.
\end{proposition}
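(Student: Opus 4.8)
The goal is to show that $\phi_{n+1}(\omega)$, obtained from $\omega \in \mathbb{S}_x^{D^{n+1}}(\pi)$ by precomposing with $\mathrm{id}_M \otimes \mathcal{W}_{+_{D^{n+1}\rightarrow D_{n+1}}}$, satisfies the three defining conditions of a symmetric $D_{n+1}$-form in Definition \ref{d8.3.2.1}. The strategy is to verify each condition in turn, transporting the corresponding property of $\omega$ across the surjection $+_{D^{n+1}\rightarrow D_{n+1}}$. The key general fact used throughout is that the quotient map $+_{D^{n+1}\rightarrow D_{n+1}}:D^{n+1}\rightarrow D_{n+1}$ intertwines the relevant structural maps (scalar actions, the Weil-algebra maps $\times_{D_{n+1}\times D_m \rightarrow D_{n+1}}$, and the simple-polynomial substitutions), so that each diagram expressing a condition for $\phi_{n+1}(\omega)$ factors through the corresponding diagram for $\omega$.

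\textbf{Condition 1 (homogeneity).} I would compute $\phi_{n+1}(\omega)(\alpha\gamma)$ by noting that $(\mathrm{id}_M\otimes\mathcal{W}_{+_{D^{n+1}\rightarrow D_{n+1}}})(\alpha\gamma) = \alpha\underset{1}{\cdot}\cdots\underset{n+1}{\cdot}$-type scalar action is compatible: precisely, scalar multiplication by $\alpha$ on $M\otimes\mathcal{W}_{D_{n+1}}$ pulls back along $+_{D^{n+1}\rightarrow D_{n+1}}$ to the simultaneous scaling $\alpha\underset{1}{\cdot}(\alpha\underset{2}{\cdot}(\cdots))$ on $M\otimes\mathcal{W}_{D^{n+1}}$, because $d\mapsto \alpha d$ on $D_{n+1}$ corresponds to $(d_1,\dots,d_{n+1})\mapsto(\alpha d_1,\dots,\alpha d_{n+1})$ on $D^{n+1}$ under the additive quotient. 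Then applying condition 1 of Definition \ref{d8.2.2.1} to $\omega$ once for each index $i=1,\dots,n+1$ yields a factor $\alpha$ each time, giving $\alpha^{n+1}\omega(\cdots)$, and one more bookkeeping step with the target-side scaling on $(E\otimes\mathcal{W}_D)_x^\perp$ (which under $+_{D^{n+1}\rightarrow D_{n+1}}$ on the codomain $D$ contributes the remaining power matching $n$ versus $n+1$) delivers $\alpha^{n+1}\phi_{n+1}(\omega)(\gamma)$, as required by the condition $\omega_x(\alpha\gamma)=\alpha^{n+1}\omega_x(\gamma)$ in Definition \ref{d8.3.2.1}.

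\textbf{Condition 2 (the $D_m$-diagram) and Condition 3 (vanishing on degenerates).} For condition 2, I would exhibit the required square for $\phi_{n+1}(\omega)$ as the outer rectangle of a composite in which the inner square is condition 2 of Definition \ref{d8.2.2.1} for $\omega$; this works because $+_{D^{n+1}\rightarrow D_{n+1}}$ commutes with the "multiply the $i$-th slot by $e\in D_m$" maps in the sense that $\times_{D_{n+1}\times D_m\rightarrow D_{n+1}}$ lifts, along $+_{D^{n+1}\rightarrow D_{n+1}}\times\mathrm{id}$, to the product of the single-slot multiplications $(d_1,\dots,d_{n+1},e)\mapsto(ed_1,\dots,ed_{n+1})$ — and applying condition 2 of \ref{d8.2.2.1} successively in each slot $i=1,\dots,n+1$ accumulates the maps whose composite is exactly $(d,e)\mapsto de^{n+1}$ on the codomain $D$. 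For condition 3, given a simple polynomial $\rho$ of $d\in D_{n+1}$ with $\dim_n\rho = l < n+1$ and $\gamma\in(M\otimes\mathcal{W}_{D_l})_{\pi(x)}$, the composite $+_{D^{n+1}\rightarrow D_{n+1}}\circ\rho$ is again a polynomial map into $D^{n+1}$ that is degenerate in the sense of condition 4 of Definition \ref{d8.2.2.1} (its image misses the top multidegree, i.e.\ it factors through some $\mathrm{id}_M\otimes\mathcal{W}_{(d_1,\dots,d_{n+1})\mapsto(d_1,\dots,d_nd_{n+1})}$-type collapse), so condition 4 of \ref{d8.2.2.1} forces $\omega$ to vanish there, giving $\phi_{n+1}(\omega)(\cdots)=0$.

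\textbf{Main obstacle.} The routine bookkeeping (homogeneity powers, the $D_m$-square) is straightforward once the compatibility of $+_{D^{n+1}\rightarrow D_{n+1}}$ with the structural maps is set up. The genuinely delicate point is \emph{condition 3}: one must check that \emph{every} simple polynomial $\rho$ on $D_{n+1}$ of dimension $l<n+1$, when composed with the quotient $+_{D^{n+1}\rightarrow D_{n+1}}$, yields a map on $D^{n+1}$ that is "degenerate in the last variable" in the precise sense demanded by condition 4 of Definition \ref{d8.2.2.1} — i.e.\ that it factors through a slot-merging map $D^{n+1}\to D^{n}$ (possibly after a permutation, invoking condition 3 of \ref{d8.2.2.1} and Proposition \ref{t4.1.4}). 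This is a combinatorial claim about how the notion of $\dim_n$ for simple polynomials interacts with the additive structure, and it is where I would expect to spend the real effort; the rest is a diagram chase. That $\phi_{n+1}$ is then well-defined as a map $\mathbb{S}^{D^{n+1}}(\pi)\rightarrow\mathbb{S}^{D_{n+1}}(\pi)$ of bundles over $E$ is immediate since the construction is fibrewise and depends functorially on $\omega$.
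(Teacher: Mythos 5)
Your verification of conditions 1 and 2 is essentially sound in outline (for condition 1, note that $n+1$ successive applications of condition 1 of Definition \ref{d8.2.2.1} already give exactly the factor $\alpha^{n+1}$; there is no ``remaining power'' to collect on the target side, so that extra bookkeeping remark is spurious). The genuine gap is in condition 3. Your argument rests on the claim that for every simple polynomial $\rho$ of $d\in D_{n+1}$ with $\dim_{n+1}\rho=l<n+1$, the composite $\rho\circ{+_{D^{n+1}\rightarrow D_{n+1}}}:D^{n+1}\rightarrow D_{l}$ factors (up to a permutation of slots) through a slot-merging map $(d_{1},...,d_{n+1})\mapsto(d_{1},...,d_{n-1},d_{n}d_{n+1})$, so that condition 4 of Definition \ref{d8.2.2.1} applies. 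That claim is false. Take $n+1=3$ and $\rho(d)=d^{2}$, so $l=1$: then $\rho(d_{1}+d_{2}+d_{3})=2(d_{1}d_{2}+d_{1}d_{3}+d_{2}d_{3})$, whereas any map factoring through $(d_{1},d_{2}d_{3})$ (or a permuted version) can only produce functions of the form $a_{0}+a_{1}d_{\sigma(1)}+a_{2}d_{\sigma(2)}d_{\sigma(3)}+a_{3}d_{1}d_{2}d_{3}$, which never contain the monomial $d_{\sigma(1)}d_{\sigma(2)}$. ``Missing the top multidegree'' is not the same as ``factoring through a collapse,'' and conditions 3 and 4 of Definition \ref{d8.2.2.1} (together with Proposition \ref{t8.2.2.1}) only give vanishing on the latter class; since a symmetric $D^{n+1}$-form has no additivity in $\gamma$, you also cannot split $\rho\circ{+}$ into a sum of collapsed pieces. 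So the step you yourself flag as the delicate one is exactly where the argument breaks down.

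The paper avoids this obstruction by a different mechanism, which you may want to adopt: by Theorem \ref{t8.2.3.4} one writes $\omega=\nabla^{+}\dot{-}\nabla^{-}$ with $\nabla^{\pm}\in\mathbb{J}_{x}^{D^{n+1}}(\pi)$ and $\pi_{n+1,n}(\nabla^{+})=\pi_{n+1,n}(\nabla^{-})$, and then Theorem \ref{t8.4.10} converts $\phi_{n+1}(\omega)(\gamma)$ into $\psi_{n+1}(\nabla^{+})(\gamma)\dot{-}\psi_{n+1}(\nabla^{-})(\gamma)$. Condition 1 then follows from the scalar-action property of $D_{n+1}$-tangentials together with Proposition \ref{t8.3.1.4}, and condition 3 follows because a $D_{n+1}$-tangential sends $\left(\mathrm{id}_{M}\otimes\mathcal{W}_{\rho}\right)(\gamma)$ to $\left(\mathrm{id}_{E}\otimes\mathcal{W}_{\rho}\right)$ applied to its $\pi_{n+1,l}$-projection evaluated at $\gamma$; by Proposition \ref{t7.1.4} these projections are $\psi_{l}(\pi_{n+1,l}(\nabla^{\pm}))$, which coincide since $\pi_{n+1,l}(\nabla^{+})=\pi_{n+1,l}(\nabla^{-})$ for $l\leq n$, so the difference vanishes with no combinatorial factorization needed. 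If you wish to keep a direct verification, you would have to prove an independent lemma saying that symmetric $D^{n+1}$-forms vanish on every $\left(\mathrm{id}_{M}\otimes\mathcal{W}_{\rho\circ{+}}\right)(\gamma)$ of the above kind, and that lemma is not a formal consequence of Definition \ref{d8.2.2.1} by the counterexample above.
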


\begin{proof}
For $n=0$, the statement is trivial. For any $\omega\in\mathbb{S}_{x}%
^{D^{n+1}}(\pi)$, there exist $\nabla^{+},\nabla^{-}\in\mathbb{J}_{x}%
^{D^{n+1}}\left(  \pi\right)  $, by dint of Theorem \ref{t8.2.3.4}, such that
\[
\pi_{n+1,n}(\nabla^{+})=\pi_{n+1,n}(\nabla^{-})
\]
and
\[
\omega=\nabla^{+}\dot{-}\nabla^{-}\text{.}%
\]
Then we have the following:

\begin{enumerate}
\item Let $\alpha\in\mathbb{R}$ and $\gamma\in\left(  M\otimes\mathcal{W}%
_{D_{n+1}}\right)  _{\pi(x)}$. Then we have
\begin{align*}
&  \omega\left(  \left(  \mathrm{id}_{M}\otimes\mathcal{W}_{+_{D^{n+1}%
\rightarrow D_{n+1}}}\right)  (\alpha\gamma)\right) \\
&  =\nabla^{+}\left(  \left(  \mathrm{id}_{M}\otimes\mathcal{W}_{+_{D^{n+1}%
\rightarrow D_{n+1}}}\right)  (\alpha\gamma)\right)  \dot{-}\nabla^{-}\left(
\left(  \mathrm{id}_{M}\otimes\mathcal{W}_{+_{D^{n+1}\rightarrow D_{n+1}}%
}\right)  (\alpha\gamma)\right) \\
&  =\psi_{n+1}(\nabla^{+})(\alpha\gamma)\dot{-}\psi_{n+1}(\nabla^{-}%
)(\alpha\gamma)\text{ \ \ \ [by Theorem \ref{t8.4.10}]}\\
&  =\alpha(\psi_{n+1}(\nabla^{+})(\gamma))\dot{-}\alpha(\psi_{n+1}(\nabla
^{-})(\gamma))\\
&  =\alpha^{n+1}(\psi_{n+1}(\nabla^{+})(\gamma)\dot{-}\psi_{n+1}(\nabla
^{-})(\gamma))\\
&  =\alpha^{n+1}(\nabla^{+}(\gamma_{D^{n+1}})\dot{-}\nabla^{-}(\gamma
_{D^{n+1}}))\text{ \ \ \ [by Theorem \ref{t8.4.10}]}\\
&  =\alpha^{n+1}\omega\left(  \left(  \mathrm{id}_{M}\otimes\mathcal{W}%
_{+_{D^{n+1}\rightarrow D_{n+1}}}\right)  \left(  \gamma\right)  \right)
\end{align*}
so that $\phi_{n+1}(\omega)$ abides by the first condition in Definition
\ref{d8.3.2.1}.

\item The proof that the mapping $\phi_{n+1}(\omega)$ abides by the second
condition in Definition \ref{d8.3.2.1}, which is similar to the above, is
safely left to the reader.

\item Let $\rho$ be a simple polynomial of $d\in D_{n+1}$ and $\gamma
\in\left(  M\otimes\mathcal{W}_{D_{l}}\right)  _{\pi(x)}$ with $\mathrm{\dim
}_{n+1}\rho=l<n+1$, we have
\begin{align*}
&  \omega\left(  \left(  \mathrm{id}_{M}\otimes\mathcal{W}_{+_{D^{n+1}%
\rightarrow D_{n+1}}}\right)  \left(  \left(  \mathrm{id}_{M}\otimes
\mathcal{W}_{\rho}\right)  \left(  \gamma\right)  \right)  \right) \\
&  =\nabla^{+}\left(  \left(  \mathrm{id}_{M}\otimes\mathcal{W}_{+_{D^{n+1}%
\rightarrow D_{n+1}}}\right)  \left(  \left(  \mathrm{id}_{M}\otimes
\mathcal{W}_{\rho}\right)  \left(  \gamma\right)  \right)  \right)  \dot{-}\\
&  \nabla^{-}\left(  \left(  \mathrm{id}_{M}\otimes\mathcal{W}_{+_{D^{n+1}%
\rightarrow D_{n+1}}}\right)  \left(  \left(  \mathrm{id}_{M}\otimes
\mathcal{W}_{\rho}\right)  \left(  \gamma\right)  \right)  \right) \\
&  =\left(  \psi_{n+1}(\nabla^{+})\right)  \left(  \left(  \mathrm{id}%
_{M}\otimes\mathcal{W}_{\rho}\right)  \left(  \gamma\right)  \right)  \dot
{-}\left(  \psi_{n+1}(\nabla^{-})\right)  \left(  \left(  \mathrm{id}%
_{M}\otimes\mathcal{W}_{\rho}\right)  \left(  \gamma\right)  \right) \\
&  \text{[by Theorem \ref{t8.4.10}]}\\
&  =\left(  \mathrm{id}_{E}\otimes\mathcal{W}_{\rho}\right)  (\left(
\pi_{n+1,l}(\psi_{n+1}(\nabla^{+}))\right)  (\gamma))\dot{-}\left(
\mathrm{id}_{E}\otimes\mathcal{W}_{\rho}\right)  (\left(  \pi_{n+1,l}%
(\psi_{n+1}(\nabla^{-}))\right)  (\gamma))\\
&  =\left(  \mathrm{id}_{E}\otimes\mathcal{W}_{\rho}\right)  (\left(  \psi
_{l}(\pi_{n+1,l}(\nabla^{+}))\right)  (\gamma))\dot{-}\left(  \mathrm{id}%
_{E}\otimes\mathcal{W}_{\rho}\right)  (\left(  \psi_{l}(\pi_{n+1,l}(\nabla
^{-}))\right)  (\gamma))\\
&  \text{[by Proposition \ref{t7.1.4}]}\\
&  =0\text{,}%
\end{align*}
so that $\phi_{n+1}(\omega)$ abides by the third condition in Definition
\ref{d8.3.2.1}.
\end{enumerate}
\end{proof}

Let us fix our terminology.

\begin{definition}
Given an affine bundle $\pi_{1}:E_{1}\rightarrow M_{1}$ over a vector bundle
$\xi_{1}:P_{1}\rightarrow M_{1}$ and another affine bundle $\pi_{2}%
:E_{2}\rightarrow M_{2}$ over another vector bundle $\xi_{2}:P_{2}\rightarrow
M_{2}$, a triple $(f,g,h)$ of mappings $f:M_{1}\rightarrow M_{2}$,
$g:E_{1}\rightarrow E_{2}$ and $h:P_{1}\rightarrow P_{2}$ is called a
\textit{morphism of affine bundles} from the affine bundle $\pi_{1}%
:E_{1}\rightarrow M_{1}$ over the vector bundle $\xi_{1}:P_{1}\rightarrow
M_{1}$ to the affine bundle $\pi_{2}:E_{2}\rightarrow E_{2}$ over the vector
bundle $\xi_{2}:P_{2}\rightarrow M_{2}$ provided that they abide by the
following three conditions:

\begin{enumerate}
\item $(f,g)$ is a morphism of bundles from $\pi_{1}$ to $\pi_{2}$. In other
words, the following diagram is commutative:
\[%
\begin{array}
[c]{ccc}%
E_{1} & \underrightarrow{g} & E_{2}\\
\pi_{1}\downarrow &  & \downarrow\pi_{2}\\
M_{1} & \overrightarrow{f} & E_{2}%
\end{array}
\]

\item $(f,h)$ is a morphism of bundles from $\xi_{1}$ to $\xi_{2}$. In other
words, the following diagram is commutative:
\[%
\begin{array}
[c]{ccc}%
P_{1} & \underrightarrow{h} & P_{2}\\
\xi_{1}\downarrow &  & \downarrow\xi_{2}\\
M_{1} & \overrightarrow{f} & E_{2}%
\end{array}
\]

\item For any $x\in M_{1}$, $(g\mid_{E_{1,x}},h\mid_{P_{1,x}})$ is a morphism
of affine spaces from $(E_{1,x},P_{1,x})$ to $(E_{2,x},P_{2,x})$.
\end{enumerate}
\end{definition}

Using this terminology, we can summarize Theorem \ref{t8.4.10} succinctly as follows:

\begin{theorem}
\label{t8.4.5}The triple $(\psi_{n},\psi_{n+1},\phi_{n+1}\times\psi_{n}) $ of
mappings is a morphism of affine bundles from the affine bundle $\pi
_{n+1,n}:\mathbb{J}^{D^{n+1}}(\pi)\rightarrow\mathbb{J}^{D^{n}}(\pi)$ over the
vector bundle $\mathbb{S}^{D^{n+1}}(\pi)\underset{E}{\times}\mathbb{J}^{D^{n}%
}(\pi)\rightarrow\mathbb{J}^{D^{n}}(\pi)$ in Theorem \ref{t8.2.3.4} to the
affine bundle $\pi_{n+1,n}:\mathbb{J}^{D_{n+1}}(\pi)\rightarrow\mathbb{J}%
^{D_{n}}(\pi)$ over the vector bundle $\mathbb{S}^{D_{n+1}}(\pi)\underset
{E}{\times}\mathbb{J}^{D_{n}}(\pi)\rightarrow\mathbb{J}^{D_{n}}(\pi)$ in
Theorem \ref{t8.3.3.3}.
\end{theorem}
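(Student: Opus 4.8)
The plan is to verify the three conditions in the definition of a morphism of affine bundles directly, drawing almost all of the substance from the results already established. For the first condition I would check that $(\psi_{n},\psi_{n+1})$ is a morphism of bundles from $\pi_{n+1,n}:\mathbb{J}^{D^{n+1}}(\pi)\rightarrow\mathbb{J}^{D^{n}}(\pi)$ to $\pi_{n+1,n}:\mathbb{J}^{D_{n+1}}(\pi)\rightarrow\mathbb{J}^{D_{n}}(\pi)$; this is precisely the commutativity of the square in Proposition \ref{t7.1.4} (applied fibrewise over $E$), together with the fact that $\psi_{n+1}$ and $\psi_{n}$ are defined by precomposition with $\mathrm{id}\otimes\mathcal{W}_{+_{D^{n+1}\rightarrow D_{n+1}}}$ and hence cover $\mathrm{id}_{E}$. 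For the second condition I would check that $(\psi_{n},\phi_{n+1}\times\psi_{n})$ is a morphism of the vector bundles $\mathbb{S}^{D^{n+1}}(\pi)\underset{E}{\times}\mathbb{J}^{D^{n}}(\pi)\rightarrow\mathbb{J}^{D^{n}}(\pi)$ and $\mathbb{S}^{D_{n+1}}(\pi)\underset{E}{\times}\mathbb{J}^{D_{n}}(\pi)\rightarrow\mathbb{J}^{D_{n}}(\pi)$: that $\phi_{n+1}$ lands in $\mathbb{S}^{D_{n+1}}(\pi)$ is Proposition \ref{t8.4.4}, the $\psi_{n}$-component is handled as in condition one, and fibrewise linearity of $\phi_{n+1}$ over $E$ is immediate from its definition as precomposition together with the fact (again Proposition \ref{t8.4.4}, first and second conditions) that it respects the vector space structure.

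The heart of the proof is the third condition: for each fibre one must check that the pair $(\psi_{n+1}\!\mid,\ (\phi_{n+1}\times\psi_{n})\!\mid)$ is a morphism of affine spaces, i.e. that it is compatible with the two structure maps $\dot{-}$ and $\dot{+}$. This is exactly what Theorem \ref{t8.4.10} says, once unwound. Concretely, given $\nabla^{+},\nabla^{-}\in\mathbb{J}_{x}^{D^{n+1}}(\pi)$ with $\pi_{n+1,n}(\nabla^{+})=\pi_{n+1,n}(\nabla^{-})$, their difference in the second-approach affine bundle is the symmetric form $\nabla^{+}\dot{-}\nabla^{-}\in\mathbb{S}_{x}^{D^{n+1}}(\pi)$, whose image under $\phi_{n+1}$ is $\gamma\mapsto(\nabla^{+}\dot{-}\nabla^{-})\big((\mathrm{id}_{M}\otimes\mathcal{W}_{+_{D^{n+1}\rightarrow D_{n+1}}})(\gamma)\big)$; by part 1 of Theorem \ref{t8.4.10} this equals $\gamma\mapsto\psi_{n+1}(\nabla^{+})(\gamma)\dot{-}\psi_{n+1}(\nabla^{-})(\gamma)$, which is the difference $\psi_{n+1}(\nabla^{+})\dot{-}\psi_{n+1}(\nabla^{-})$ in the third-approach affine bundle. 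Dually, for $\omega\in\mathbb{S}_{x}^{D^{n+1}}(\pi)$ and $\nabla\in\mathbb{J}_{x}^{D^{n+1}}(\pi)$ one must check $\psi_{n+1}(\omega\dot{+}\nabla)=\phi_{n+1}(\omega)\dot{+}\psi_{n+1}(\nabla)$; writing $\omega=\nabla^{+}\dot{-}\nabla^{-}$ via Theorem \ref{t8.2.3.4} and using part 2 of Theorem \ref{t8.4.10} together with part 1 again, this reduces to an identity among the $\dot{+}$ and $\dot{-}$ operations in the third approach that follows from the affine-space axioms verified in Subsubsection \ref{s8.3.1}.

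The only genuine obstacle I anticipate is bookkeeping: one has to be careful that the vector space $\mathbb{S}^{D^{n+1}}(\pi)$ acting on the second-approach affine bundle really is carried by $\phi_{n+1}$ to the vector space $\mathbb{S}^{D_{n+1}}(\pi)$ acting on the third-approach one in a way that intertwines the two actions, rather than merely sending forms to forms. This is why one first re-expresses an arbitrary symmetric form as a difference $\nabla^{+}\dot{-}\nabla^{-}$ (legitimate by the affine bundle Theorem \ref{t8.2.3.4}, whose model fibre is a torsor) and then invokes Theorem \ref{t8.4.10}; no new computation is needed beyond what is already in the excerpt. Accordingly the proof is short: assemble conditions 1 and 2 from Propositions \ref{t7.1.4} and \ref{t8.4.4}, and obtain condition 3 as a direct restatement of Theorem \ref{t8.4.10}.
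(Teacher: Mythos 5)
Your proposal is correct and takes essentially the same route as the paper, which in fact gives no separate proof: Theorem \ref{t8.4.5} is presented there merely as a succinct restatement of Theorem \ref{t8.4.10} in the affine-bundle-morphism terminology, with Proposition \ref{t7.1.4} and Proposition \ref{t8.4.4} supplying the bundle-level ingredients exactly as you invoke them. Your additional step of recovering the $\dot{+}$-compatibility by writing $\omega=\nabla^{+}\dot{-}\nabla^{-}$ (legitimate by Theorem \ref{t8.2.3.4}, and the same device the paper uses in proving Proposition \ref{t8.4.4}) and appealing to the torsor axioms is a sound filling-in of what the paper leaves implicit.
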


\section{\label{s2}Taylor Representations}

The following results are merely special cases of the general Taylor-type
theorem such as seen in \cite{kock2} (Part III, Proposition 5.2).

\begin{proposition}
\label{t2.1}Any $f\in\mathbb{R}^{p}\otimes\mathcal{W}_{D_{n}}$ is of a unique
Taylor representation of the form
\[
\delta\in\mathbb{R}\longmapsto(x^{i})+\delta(y_{1}^{i})+\delta^{2}(y_{2}%
^{i})+...+\delta^{n}(y_{n}^{i})\in\mathbb{R}^{p}%
\]
with $(x^{i}),(y_{1}^{i}),(y_{2}^{i}),...,(y_{n}^{i})\in\mathbb{R}^{p}$.
\end{proposition}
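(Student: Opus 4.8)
The plan is to reduce the statement to the one–dimensional infinitesimal Taylor expansion applied coordinatewise. First I would recall that $D_{n}$ is the infinitesimal object $[d\in\mathbb{R}\mid d^{n+1}=0]$, so that its Weil algebra is $\mathcal{W}_{D_{n}}=\mathbb{R}[T]/(T^{n+1})$, a free $\mathbb{R}$-module with basis $1,T,\dots,T^{n}$. Since the Weil prolongation $(-)\otimes\mathcal{W}_{D_{n}}$ preserves the finite product $\mathbb{R}^{p}=\mathbb{R}\times\dots\times\mathbb{R}$, an element $f\in\mathbb{R}^{p}\otimes\mathcal{W}_{D_{n}}$ is precisely a $p$-tuple $(f^{i})$ of elements of $\mathbb{R}\otimes\mathcal{W}_{D_{n}}=\mathbb{R}^{D_{n}}$, i.e.\ of smooth maps $D_{n}\to\mathbb{R}$. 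Hence it suffices to establish the claim for $p=1$, namely that every $g\in\mathbb{R}^{D_{n}}$ is uniquely of the form $\delta\mapsto a_{0}+\delta a_{1}+\dots+\delta^{n}a_{n}$ with $a_{0},\dots,a_{n}\in\mathbb{R}$.

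This last assertion is exactly the specialization of the general Taylor-type theorem (\cite{kock2}, Part III, Proposition 5.2) to the Weil algebra $W=\mathbb{R}[T]/(T^{n+1})$, equivalently the generalized Kock--Lawvere property of the line $\mathbb{R}$: the $\mathbb{R}$-algebra homomorphism $\mathbb{R}[T]/(T^{n+1})\to\mathbb{R}^{D_{n}}$ carrying the class of $\sum_{k}a_{k}T^{k}$ to the function $\delta\mapsto\sum_{k}a_{k}\delta^{k}$ is a bijection. Its surjectivity yields, for each coordinate $f^{i}$, scalars $x^{i},y_{1}^{i},\dots,y_{n}^{i}$ with $f^{i}(\delta)=x^{i}+\delta y_{1}^{i}+\dots+\delta^{n}y_{n}^{i}$, and reassembling the coordinates gives the asserted Taylor representation of $f$.

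Uniqueness is the injectivity of the same map: two representations $\delta\mapsto\sum_{k=0}^{n}\delta^{k}a_{k}$ and $\delta\mapsto\sum_{k=0}^{n}\delta^{k}a_{k}'$ that agree in $\mathbb{R}^{D_{n}}$ pull back to the equality $\sum_{k=0}^{n}(a_{k}-a_{k}')T^{k}=0$ in $\mathbb{R}[T]/(T^{n+1})$, and since $1,T,\dots,T^{n}$ are linearly independent there this forces $a_{k}=a_{k}'$ for all $k$. Applying this to each coordinate $f^{i}$ gives uniqueness of $(x^{i}),(y_{1}^{i}),\dots,(y_{n}^{i})$.

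I do not anticipate a genuine obstacle here: the only steps needing care are the identification $\mathbb{R}\otimes\mathcal{W}_{D_{n}}=\mathbb{R}^{D_{n}}$ and the compatibility of the Weil prolongation with finite products of copies of $\mathbb{R}$, both of which belong to the standing apparatus of \cite{nishimura}; the remainder is the cited Taylor theorem together with elementary linear algebra in $\mathcal{W}_{D_{n}}$.
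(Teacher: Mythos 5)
Your argument is correct and coincides with the paper's treatment: the paper offers no separate proof but simply notes that Proposition \ref{t2.1} is a special case of the general Taylor-type theorem of \cite{kock2} (Part III, Proposition 5.2), which is exactly the result you invoke after the routine coordinatewise reduction. Your added details (the identification $\mathcal{W}_{D_{n}}=\mathbb{R}[T]/(T^{n+1})$ and compatibility of $(-)\otimes\mathcal{W}_{D_{n}}$ with finite products) are fine and do not constitute a different route.
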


\begin{proposition}
\label{t2.2}Any $f\in\mathbb{R}^{p}\otimes\mathcal{W}_{D^{n}}$ is of a unique
Taylor representation of the form
\[
(\delta_{1},...,\delta_{n})\in\mathbb{R}^{n}\longmapsto(x^{i})+\sum_{r=1}%
^{n}\sum_{1\leq k_{1}<...<k_{r}\leq n}\delta_{k_{1}}...\delta_{k_{r}}%
(y_{k_{1},...,k_{r}}^{i})\in\mathbb{R}^{p}%
\]
with $(x^{i}),(y_{k_{1},...,k_{r}}^{i})\in\mathbb{R}^{p}$.
\end{proposition}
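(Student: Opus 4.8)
The plan is to derive this directly from the general Taylor-type theorem cited in the remark preceding the statement. The first step is to identify the Weil algebra $\mathcal{W}_{D^{n}}$ explicitly. Since $D=\{d\in\mathbb{R}\mid d^{2}=0\}$ has Weil algebra $\mathcal{W}_{D}=\mathbb{R}[X]/(X^{2})$ and $D^{n}$ is the $n$-fold product, we have $\mathcal{W}_{D^{n}}=\mathcal{W}_{D}^{\otimes n}=\mathbb{R}[X_{1},\dots,X_{n}]/(X_{1}^{2},\dots,X_{n}^{2})$; as an $\mathbb{R}$-vector space this has dimension $2^{n}$, with basis the squarefree monomials $X_{k_{1}}\cdots X_{k_{r}}$ for $0\le r\le n$ and $1\le k_{1}<\dots<k_{r}\le n$, the empty product being $1$. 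Invoking the general Taylor-type theorem (\cite{kock2}, Part III, Proposition 5.2) for the Weil algebra $\mathcal{W}_{D^{n}}$, every element of $\mathbb{R}^{p}\otimes\mathcal{W}_{D^{n}}$ is represented uniquely by the $\mathbb{R}^{p}$-valued polynomial function obtained by expanding it in this basis and substituting $X_{j}\mapsto\delta_{j}$. Writing the coefficient of $X_{k_{1}}\cdots X_{k_{r}}$ as $(y_{k_{1},\dots,k_{r}}^{i})$ and the constant term as $(x^{i})$ then produces exactly the displayed representation, and uniqueness of the coefficients is the linear independence of the basis.

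An alternative route, preferable for a self-contained account, is induction on $n$. The base case $n=1$ is essentially the Kock--Lawvere axiom, that is, the $\mathbb{R}^{p}$-valued form of Proposition \ref{t2.1} specialised to $n=1$. For the inductive step one uses $D^{n+1}\cong D^{n}\times D$ to write an arbitrary $f\in\mathbb{R}^{p}\otimes\mathcal{W}_{D^{n+1}}$, regarded as a function of the last coordinate $\delta_{n+1}\in D$ with parameters ranging over $D^{n}$, uniquely in the form $f=g+\delta_{n+1}\,h$ with $g,h\in\mathbb{R}^{p}\otimes\mathcal{W}_{D^{n}}$. Applying the inductive hypothesis to $g$ and to $h$ and grouping the subsets of $\{1,\dots,n+1\}$ into those not containing $n+1$, which contribute the terms from $g$, and those containing $n+1$, which contribute the terms from $\delta_{n+1}h$, yields the required expansion, uniqueness propagating through each step.

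There is no genuine obstacle here, the content being entirely the bookkeeping that matches the abstract basis of $\mathcal{W}_{D^{n}}$ with the combinatorics of increasing index sequences. The one point requiring mild care is the role of the relations $X_{i}^{2}=0$: these are precisely what forces the basis to consist only of squarefree monomials, hence why the sum runs over distinct indices and no repeated-index term such as $\delta_{1}^{2}$ occurs; in the inductive version the corresponding check is that the subsets of $\{1,\dots,n+1\}$ with and without $n+1$ partition the power set without overlap, so that both existence and uniqueness are preserved.
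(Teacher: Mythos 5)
Your first route is exactly what the paper does: Proposition \ref{t2.2} is stated without a separate proof, being treated as a special case of the general Taylor-type theorem of \cite{kock2} (Part III, Proposition 5.2) applied to the Weil algebra $\mathcal{W}_{D^{n}}$, whose basis of squarefree monomials yields precisely the displayed expansion. Your argument is correct, and the additional inductive variant via $D^{n+1}\cong D^{n}\times D$ is a sound, self-contained alternative but not needed beyond the citation.
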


\begin{proposition}
\label{t2.3}Any $f\in\mathbb{R}^{p}\otimes\mathcal{W}_{D\left(  n\right)
_{m}}$ is of a unique Taylor representation of the form
\[
(\delta_{1},...,\delta_{n})\in\mathbb{R}^{n}\longmapsto(x^{i})+\sum_{r=1}%
^{m}\sum_{1\leq k_{1}\leq...\leq k_{r}\leq n}\delta_{k_{1}}...\delta_{k_{r}%
}(y_{k_{1},...,k_{r}}^{i})\in\mathbb{R}^{p}%
\]
with $(x^{i}),(y_{k_{1},...,k_{r}}^{i})\in\mathbb{R}^{p}$.
\end{proposition}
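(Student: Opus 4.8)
The plan is to deduce this directly from the general Kock--Lawvere-type Taylor theorem for infinitesimal objects, exactly as the preamble to Propositions \ref{t2.1}--\ref{t2.3} advertises. The only genuine content is the identification of the Weil algebra $\mathcal{W}_{D\left(n\right)_{m}}$ together with an explicit monomial basis; once that is in hand, both the displayed formula and its uniqueness are immediate.

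First I would recall that $D\left(n\right)_{m}$ is the infinitesimal object whose Weil algebra is the quotient
\[
\mathcal{W}_{D\left(n\right)_{m}}=\mathbb{R}[X_{1},\dots,X_{n}]/\mathfrak{a}\text{,}
\]
where $\mathfrak{a}$ is the ideal generated by all monomials $X_{i_{0}}X_{i_{1}}\cdots X_{i_{m}}$ with $1\le i_{0}\le i_{1}\le\cdots\le i_{m}\le n$; equivalently, $\mathfrak{a}$ consists of the polynomials vanishing to order $\ge m+1$ at the origin. A monomial $X_{1}^{\alpha_{1}}\cdots X_{n}^{\alpha_{n}}$ lies in $\mathfrak{a}$ if and only if $\alpha_{1}+\cdots+\alpha_{n}\ge m+1$, so the residues of the monomials of total degree $\le m$ constitute an $\mathbb{R}$-basis of $\mathcal{W}_{D\left(n\right)_{m}}$. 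These surviving monomials are indexed precisely by the weakly increasing sequences $1\le k_{1}\le\cdots\le k_{r}\le n$ with $0\le r\le m$, the empty sequence $(r=0)$ corresponding to the unit $1$.

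Next I would invoke the fact that, $\mathbb{R}^{p}$ being a Euclidean Fr\"{o}licher space, the functor $-\otimes\mathcal{W}_{D\left(n\right)_{m}}$ evaluated at $\mathbb{R}^{p}$ produces the ordinary tensor product $\mathbb{R}^{p}\otimes_{\mathbb{R}}\mathcal{W}_{D\left(n\right)_{m}}$ --- this is the Taylor theorem of \cite{kock2} (Part III, Proposition 5.2). Hence every $f\in\mathbb{R}^{p}\otimes\mathcal{W}_{D\left(n\right)_{m}}$ is uniquely of the form $\sum_{\mu}v_{\mu}\otimes\mu$, the sum ranging over the basis monomials $\mu$ and $v_{\mu}\in\mathbb{R}^{p}$. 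Substituting $X_{j}\mapsto\delta_{j}$ turns this sum into the map displayed in the statement once one renames $v_{1}=(x^{i})$ and $v_{X_{k_{1}}\cdots X_{k_{r}}}=(y_{k_{1},\dots,k_{r}}^{i})$; uniqueness of the Taylor coefficients is exactly the linear independence of the basis monomials in $\mathcal{W}_{D\left(n\right)_{m}}$.

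There is no serious obstacle here: the whole argument rests on the bookkeeping observation that the chosen generators of $\mathfrak{a}$ annihilate precisely the monomials of degree $\ge m+1$, and that the remaining monomials are enumerated by the weakly increasing tuples of length $\le m$. Propositions \ref{t2.1} and \ref{t2.2} are the analogous specializations, with $\mathcal{W}_{D_{n}}=\mathbb{R}[X]/(X^{n+1})$ and $\mathcal{W}_{D^{n}}=\mathbb{R}[X_{1},\dots,X_{n}]/(X_{1}^{2},\dots,X_{n}^{2})$, whose monomial bases are respectively $1,X,\dots,X^{n}$ and the square-free monomials. Should one wish to avoid quoting \cite{kock2} outright, the same conclusion follows by induction on $n$, peeling off the dependence on $\delta_{n}$ at each step by the ordinary Kock--Lawvere axiom for $D_{m}$, with base case $n=1$ being Proposition \ref{t2.1} truncated at degree $m$.
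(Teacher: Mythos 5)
Your proposal is correct and takes essentially the same route as the paper: the paper offers no argument beyond noting that Propositions \ref{t2.1}--\ref{t2.3} are special cases of the Taylor-type theorem of \cite{kock2} (Part III, Proposition 5.2), which is precisely what you invoke. Your identification of $\mathcal{W}_{D\left(n\right)_{m}}$ as the polynomial algebra truncated at total degree $m$, with the weakly increasing tuples $1\leq k_{1}\leq\cdots\leq k_{r}\leq n$, $r\leq m$, enumerating its monomial basis, is exactly the routine specialization the paper leaves implicit.
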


\section{\label{s3}The Basic Framework with Coordinates}

This section is inspired much by \cite{kock1}.

\subsection{\label{s3.1}The Basic Framework}

\begin{notation}
We denote by $\mathcal{J}^{n}(\pi)$\ the totality of
\[
\gamma\in E\otimes\mathcal{W}_{D(p)_{n}}%
\]
such that
\[
\left(  \pi\otimes\mathrm{id}_{\mathcal{W}_{D(p)_{n}}}\right)  \left(
\gamma\right)  \in M\otimes\mathcal{W}_{D(p)_{n}}%
\]
is degenerate in the sense that
\begin{align*}
&  \left(  \pi\otimes\mathrm{id}_{\mathcal{W}_{D(p)_{n}}}\right)  \left(
\gamma\right) \\
&  =\left(  \mathrm{id}_{M}\otimes\mathcal{W}_{D(p)_{n}\rightarrow1}\right)
\left(  \gamma^{\prime}\right)
\end{align*}
for some $\gamma^{\prime}\in M\otimes\mathcal{W}_{1}=M$.
\end{notation}

\begin{notation}
We denote by $\mathcal{S}^{n}(\pi)$\ the totality of
\[
t\in E\otimes\mathcal{W}_{D\left(  _{p+n}C_{n+1}\right)  }%
\]
such that
\[
\left(  \pi\otimes\mathrm{id}_{\mathcal{W}_{D\left(  _{p+n}C_{n+1}\right)  }%
}\right)  \left(  t\right)  \in M\otimes\mathcal{W}_{D\left(  _{p+n}%
C_{n+1}\right)  }%
\]
is degenerate in the sense that
\begin{align*}
&  \left(  \pi\otimes\mathrm{id}_{\mathcal{W}_{D\left(  _{p+n}C_{n+1}\right)
}}\right)  \left(  t\right) \\
&  =\left(  \mathrm{id}_{M}\otimes\mathcal{W}_{D\left(  _{p+n}C_{n+1}\right)
\rightarrow1}\right)  \left(  t^{\prime}\right)
\end{align*}
for some $t^{\prime}\in M\otimes\mathcal{W}_{1}=M$.
\end{notation}

\begin{remark}
\begin{enumerate}
\item Each $\gamma\in E\otimes\mathcal{W}_{D(p)_{n}}$ can be identified
unquely with a sequence
\[
\left(  x^{1},...,x^{p},u^{1},...,u^{q},x_{i_{1}}^{i},x_{i_{1},i_{2}}%
^{i},...,x_{i_{1},i_{2},...,i_{n}}^{i},u_{i_{1}}^{j},u_{i_{1},i_{2}}%
^{j},...,u_{i_{1},i_{2},...,i_{n}}^{j}\right)  _{1\leq i_{1}\leq i_{2}%
\leq...\leq i_{n}\leq p}%
\]
of real numbers at length
\[
p+q+\left(  p+q\right)  p+...+\left(  p+q\right)  _{p+n-1}C_{n}%
\]
in the sense that the Taylor representation of $\gamma$ is
\begin{align*}
\left(  \delta_{1},...,\delta_{p}\right)   &  \in\mathbb{R}^{p}\mapsto\left(
x^{1},...,x^{p},u^{1},...,u^{q}\right)  +\sum_{1\leq i_{1}\leq p}\left(
x_{i_{1}}^{1},...,x_{i_{1}}^{p},u_{i_{1}}^{1},...,u_{i_{1}}^{q}\right)
\delta_{i_{1}}\\
&  +\sum_{1\leq i_{1}\leq i_{2}\leq p}\left(  x_{i_{1},i_{2}}^{1}%
,...,x_{i_{1},i_{2}}^{p},u_{i_{1},i_{2}}^{1},...,u_{i_{1},i_{2}}^{q}\right)
\delta_{i_{1}}\delta_{i_{2}}+...\\
&  +\sum_{1\leq i_{1}\leq i_{2}\leq...\leq i_{n}\leq p}\left(  x_{i_{1}%
,i_{2},...,i_{n}}^{1},...,x_{i_{1},i_{2},...,i_{n}}^{p},u_{i_{1}%
,i_{2},...,i_{n}}^{1},...,u_{i_{1},i_{2},...,i_{n}}^{q}\right)  \delta_{i_{1}%
}\delta_{i_{2}}...\delta_{i_{n}}\\
&  \in\mathbb{R}^{p+q}%
\end{align*}

\item Each $\nabla\in\mathcal{J}^{n}(\pi)$ can be identified uniquely with a
sequence
\[
\left(  x^{1},...,x^{p},u^{1},...,u^{q},u_{i_{1}}^{j},u_{i_{1},i_{2}}%
^{j},...,u_{i_{1},i_{2},...,i_{n}}^{j}\right)  _{1\leq i_{1}\leq i_{2}%
\leq...\leq i_{n}\leq p}%
\]
of real numbers at length
\[
p+q+qp+...+q_{p+n-1}C_{n}%
\]
\ in the sense that the Taylor representation of $\nabla$ is
\begin{align*}
\left(  \delta_{1},...,\delta_{p}\right)   &  \in\mathbb{R}^{p}\mapsto\left(
x^{1},...,x^{p},u^{1},...,u^{q}\right)  +\sum_{1\leq i_{1}\leq p}\left(
0,...0,u_{i_{1}}^{1},...,u_{i_{1}}^{q}\right)  \delta_{i_{1}}\\
&  +\sum_{1\leq i_{1}\leq i_{2}\leq p}\left(  0,...0,u_{i_{1},i_{2}}%
^{1},...,u_{i_{1},i_{2}}^{q}\right)  \delta_{i_{1}}\delta_{i_{2}}+...\\
&  +\sum_{1\leq i_{1}\leq i_{2}\leq...\leq i_{n}\leq p}\left(  0,...0,u_{i_{1}%
,i_{2},...,i_{n}}^{1},...,u_{i_{1},i_{2},...,i_{n}}^{q}\right)  \delta_{i_{1}%
}\delta_{i_{2}}...\delta_{i_{n}}\\
&  \in\mathbb{R}^{p+q}%
\end{align*}

\item Each $t\in E\otimes\mathcal{W}_{D\left(  _{p+n}C_{n+1}\right)  }$ can be
identified unquely with a sequence
\[
\left(  x^{1},...,x^{p},u^{1},...,u^{q},x_{i_{1},i_{2},...,i_{n+1}}%
^{1},...,x_{i_{1},i_{2},...,i_{n+1}}^{p},u_{i_{1},i_{2},...,i_{n+1}}%
^{1},...,u_{i_{1},i_{2},...,i_{n+1}}^{q}\right)  _{1\leq i_{1}\leq i_{2}%
\leq...\leq i_{n+1}\leq p}%
\]
of real numbers at length
\[
p+q+\left(  p+q\right)  _{p+n}C_{n+1}%
\]
in the sense that the Taylor representation of $t$ is
\begin{align*}
&  \left(  \delta_{i_{1},i_{2},...,i_{n+1}}\right)  _{1\leq i_{1}\leq
i_{2}\leq...\leq i_{n+1}\leq p}\\
&  \in\mathbb{R}^{\left(  _{p+n}C_{n+1}\right)  }\mapsto\left(  x^{1}%
,...,x^{p},u^{1},...,u^{q}\right) \\
&  +\sum_{1\leq i_{1}\leq i_{2}\leq...\leq i_{n+1}\leq p}\left(
x_{i_{1},i_{2},...,i_{n+1}}^{1},...,x_{i_{1},i_{2},...,i_{n+1}}^{p}%
,u_{i_{1},i_{2},...,i_{n+1}}^{1},...,u_{i_{1},i_{2},...,i_{n+1}}^{q}\right)
\delta_{i_{1},i_{2},...,i_{n+1}}\\
&  \in\mathbb{R}^{p+q}%
\end{align*}

\item Each $\omega\in\mathcal{S}^{n+1}(\pi)$ can be identified unquely with a
sequence
\[
\left(  x^{1},...,x^{p},u^{1},...,u^{q},u_{i_{1},i_{2},...,i_{n+1}}%
^{1},...,u_{i_{1},i_{2},...,i_{n+1}}^{q}\right)  _{1\leq i_{1}\leq i_{2}%
\leq...\leq i_{n+1}\leq p}%
\]
of real numbers at length
\[
p+q+q_{p+n}C_{n+1}%
\]
in the sense that the Taylor representation of $\omega$ is
\begin{align*}
&  \left(  \delta_{i_{1},i_{2},...,i_{n+1}}\right)  _{1\leq i_{1}\leq
i_{2}\leq...\leq i_{n+1}\leq p}\\
&  \in\mathbb{R}^{\left(  _{p+n}C_{n+1}\right)  }\mapsto\left(  x^{1}%
,...,x^{p},u^{1},...,u^{q}\right)  +\\
&  \sum_{1\leq i_{1}\leq i_{2}\leq...\leq i_{n+1}\leq p}\left(
0,...0,u_{i_{1},i_{2},...,i_{n+1}}^{1},...,u_{i_{1},i_{2},...,i_{n+1}}%
^{q}\right)  \delta_{i_{1},i_{2},...,i_{n+1}}\in\mathbb{R}^{p+q}%
\end{align*}

\end{enumerate}
\end{remark}

\begin{notation}
Since $D(p)_{n}\subset D(p)_{n+1}$, there is a canonical projection
$E\otimes\mathcal{W}_{D(p)_{n+1}}\rightarrow E\otimes\mathcal{W}_{D(p)_{n}}$,
which restricts itself naturally to $\mathcal{J}^{n+1}(\pi)\rightarrow
\mathcal{J}^{n}(\pi)$. Both of them are denoted by $\pi_{n+1,n}$. We have
\begin{align*}
&  \pi_{n+1,n}\left(  x^{1},...,x^{p},u^{1},...,u^{q},x_{i_{1}}^{i}%
,...,x_{i_{1},i_{2},...,i_{n}}^{j},x_{i_{1},i_{2},...,i_{n+1}}^{i},u_{i_{1}%
}^{j},...,u_{i_{1},i_{2},...,i_{n}}^{j},u_{i_{1},i_{2},...,i_{n+1}}^{j}\right)
\\
&  =\left(  x^{1},...,x^{p},u^{1},...,u^{q},x_{i_{1}}^{i},x_{i_{1},i_{2}}%
^{i},...,x_{i_{1},i_{2},...,i_{n}}^{i},u_{i_{1}}^{j},u_{i_{1},i_{2}}%
^{j},...,u_{i_{1},i_{2},...,i_{n}}^{j}\right)
\end{align*}
and
\begin{align*}
&  \pi_{n+1,n}\left(  x^{1},...,x^{p},u^{1},...,u^{q},u_{i_{1}}^{j}%
,...,u_{i_{1},i_{2},...,i_{n}}^{j},u_{i_{1},i_{2},...,i_{n+1}}^{j}\right) \\
&  =\left(  x^{1},...,x^{p},u^{1},...,u^{q},u_{i_{1}}^{j},...,u_{i_{1}%
,i_{2},...,i_{n}}^{j}\right)
\end{align*}

\end{notation}

\subsection{\label{s3.2}The Affine Bundle Theorem within the Basic Framework}

It is easy to see that

\begin{lemma}
\label{t3.2.1}The diagram
\[%
\begin{array}
[c]{ccc}%
D(p)_{n} & \underrightarrow{i_{D(p)_{n}\rightarrow D(p)_{n+1}}} & D(p)_{n+1}\\
i_{D(p)_{n}\rightarrow D(p)_{n+1}}\downarrow &  & \downarrow\Psi_{D(p)_{n+1}%
}\\
D(p)_{n+1} & \overrightarrow{\Phi_{D(p)_{n+1}}} & D(p)_{n+1}\oplus D\left(
_{p+n}C_{n+1}\right)
\end{array}
\]
is a quasi-colimit diagram, where $\Phi_{D(p)_{n+1}}$ is the canonical
injection of $D(p)_{n+1}$ into $D(p)_{n+1}\oplus D\left(  _{p+n}%
C_{n+1}\right)  $, and $\Psi_{D(p)_{n+1}}$ is the mapping
\[
\left(  d_{1},...,d_{p}\right)  \in D(p)_{n+1}\mapsto\left(  d_{1}%
,...,d_{p},d_{1}^{n+1},d_{1}^{n}d_{2},...\right)  \in D(p)_{n+1}\oplus
D\left(  _{p+n}C_{n+1}\right)
\]
with the sequence $d_{1}^{n+1},d_{1}^{n}d_{2},...$ being that of $d_{k_{1}%
}d_{k_{2}}...d_{k_{n+1}}$'s $\left(  1\leq k_{1}\leq k_{2}\leq...\leq
k_{n+1}\leq n+1\right)  $ in lexicographic order.
\end{lemma}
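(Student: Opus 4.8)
The plan is to verify the quasi-colimit (pushout) property directly on Weil algebras, exactly as one does for the analogous Lemmas \ref{t8.2.1.1}, \ref{t8.3.1.1} and their predecessors. Recall that a commutative square of infinitesimal objects is a quasi-colimit diagram precisely when, after applying the contravariant functor $\mathcal{W}_{(-)}$, the resulting square of Weil algebras is a pullback (equivalently, a fibered product) in the category of Weil algebras. So the first step is to rewrite the square in terms of $\mathcal{W}_{D(p)_{n}}$, $\mathcal{W}_{D(p)_{n+1}}$, and $\mathcal{W}_{D(p)_{n+1}\oplus D(_{p+n}C_{n+1})} \cong \mathcal{W}_{D(p)_{n+1}} \otimes \mathcal{W}_{D(_{p+n}C_{n+1})}$, and to check that $\mathcal{W}_{D(p)_{n+1}\oplus D(_{p+n}C_{n+1})}$ is the fibered product of $\mathcal{W}_{D(p)_{n+1}}$ with itself over $\mathcal{W}_{D(p)_{n}}$ along the two maps induced by $i_{D(p)_{n}\to D(p)_{n+1}}$.

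The concrete route I would take uses the Taylor representations of \S\ref{s2} (Propositions \ref{t2.1}–\ref{t2.3}), which make these Weil algebras completely explicit. An element of $\mathbb{R}\otimes\mathcal{W}_{D(p)_{n+1}}$ is a polynomial in $\delta_1,\dots,\delta_p$ truncated at total degree $n+1$; the map induced by $i_{D(p)_{n}\to D(p)_{n+1}}$ is truncation at degree $n$. Two such degree-$(n+1)$ polynomials $\gamma_+,\gamma_-$ agreeing after truncation to degree $n$ differ only in their top-degree homogeneous parts, i.e. in the coefficients indexed by $1\le k_1\le\cdots\le k_{n+1}\le p$, of which there are exactly $_{p+n}C_{n+1}$. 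That is precisely the data recorded by an element of $\mathbb{R}\otimes\mathcal{W}_{D(_{p+n}C_{n+1})}$ under $\Xi$, and the pair $(\gamma_+,\gamma_-)$ is reconstructed from $\gamma_-$ together with this difference via $\Phi_{D(p)_{n+1}}$ (giving $\gamma_-$) and $\Psi_{D(p)_{n+1}}$ (substituting $d_{k_1}\cdots d_{k_{n+1}}$ for the new independent infinitesimals, so that the top part is read off as $\gamma_+ - \gamma_-$, which is legitimate because all degree-$(n+2)$-and-higher products of the $d$'s vanish in $\mathcal{W}_{D(p)_{n+1}}$). The key step is therefore: show the assignment $\gamma \mapsto \bigl((\mathrm{id}\otimes\mathcal{W}_{\Psi_{D(p)_{n+1}}})(\gamma),\,(\mathrm{id}\otimes\mathcal{W}_{\Phi_{D(p)_{n+1}}})(\gamma)\bigr)$ is a bijection from $\mathbb{R}\otimes\mathcal{W}_{D(p)_{n+1}\oplus D(_{p+n}C_{n+1})}$ onto the fibered product, and that the square commutes (commutativity is immediate since both composites $D(p)_n \to D(p)_{n+1}\oplus D(_{p+n}C_{n+1})$ send $(d_1,\dots,d_p)$ to $(d_1,\dots,d_p,0,\dots,0)$, the new coordinates vanishing because they are degree-$(n+1)$ monomials evaluated on a degree-$n$ object).

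I expect the only real obstacle to be bookkeeping: one must check that the lexicographically-ordered list of monomials $d_{k_1}\cdots d_{k_{n+1}}$ appearing in $\Psi_{D(p)_{n+1}}$ is in bijective correspondence with the homogeneous degree-$(n+1)$ basis of $\mathcal{W}_{D(p)_{n+1}}$ and hence with the generators of $\mathcal{W}_{D(_{p+n}C_{n+1})}$, and that no unwanted relations are introduced — i.e. that $\Psi_{D(p)_{n+1}}$ really does, after applying $\mathcal{W}_{(-)}$, invert the passage to the top-degree part. This is the $p$-dimensional analogue of the single-variable count $_{p+n}C_{n+1}$ that already appears implicitly in Lemma \ref{t8.3.1.1} (the case $p=1$, where there is one monomial $d^{n+1}$), and it is handled by the same degree-counting argument used throughout \S\ref{s8}; once the indexing is pinned down, the verification of the universal property is routine and may be safely left to the reader, in the style of the surrounding results.
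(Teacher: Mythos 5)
Your verification is correct: the paper offers no proof of this lemma (it is introduced with ``It is easy to see that''), and the intended argument is exactly the Taylor-coefficient, degree-counting check you outline — the $p$-variable analogue of Lemma \ref{t8.3.1.1}, with the square becoming a pullback of truncated polynomial algebras because two degree-$(n+1)$ truncated polynomials agreeing up to degree $n$ differ precisely in their $_{p+n}C_{n+1}$ top-degree coefficients. Note only that the index range in the statement should read $1\leq k_{1}\leq\cdots\leq k_{n+1}\leq p$ rather than $\leq n+1$, which your count already presupposes.
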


This implies at once that

\begin{proposition}
\label{t3.2.2}Given $\gamma_{+},\gamma_{-}\in$ $E\otimes\mathcal{W}%
_{D(p)_{n+1}}$ with
\[
\left(  \mathrm{id}_{E}\otimes\mathcal{W}_{i_{D(p)_{n}\rightarrow D(p)_{n+1}}%
}\right)  \left(  \gamma_{+}\right)  =\left(  \mathrm{id}_{E}\otimes
\mathcal{W}_{i_{D(p)_{n}\rightarrow D(p)_{n+1}}}\right)  \left(  \gamma
_{-}\right)  \text{,}%
\]
there exists unique $\gamma\in E\otimes\mathcal{W}_{D(p)_{n+1}\oplus D\left(
_{p+n}C_{n+1}\right)  }$ with
\begin{align*}
\left(  \mathrm{id}_{E}\otimes\mathcal{W}_{\Psi_{D(p)_{n+1}}}\right)  \left(
\gamma\right)   &  =\gamma_{+}\text{ and}\\
\left(  \mathrm{id}_{E}\otimes\mathcal{W}_{\Phi_{D(p)_{n+1}}}\right)  \left(
\gamma\right)   &  =\gamma_{-}%
\end{align*}

\end{proposition}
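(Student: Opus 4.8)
The plan is to deduce Proposition~\ref{t3.2.2} directly from Lemma~\ref{t3.2.1} together with the basic property of quasi-colimit diagrams. Recall that a quasi-colimit diagram becomes a genuine limit diagram after applying the contravariant functor $\mathcal{W}_{(-)}$ (i.e. the Weil-algebra-valued functor), and then an honest limit (fibered product) diagram after applying $E\otimes(-)$, because $E\otimes(-)$ preserves finite limits of Weil prolongations in a microlinear Fr\"olicher space. Thus, applying $E\otimes\mathcal{W}_{(-)}$ to the square in Lemma~\ref{t3.2.1} yields a pullback square
\[
\begin{array}
[c]{ccc}
E\otimes\mathcal{W}_{D(p)_{n+1}\oplus D\left(  _{p+n}C_{n+1}\right)  } &
\rightarrow & E\otimes\mathcal{W}_{D(p)_{n+1}}\\
\downarrow &  & \downarrow\\
E\otimes\mathcal{W}_{D(p)_{n+1}} & \rightarrow & E\otimes\mathcal{W}
_{D(p)_{n}}
\end{array}
\]
in which the top and left arrows are $\mathrm{id}_{E}\otimes\mathcal{W}_{\Psi_{D(p)_{n+1}}}$ and $\mathrm{id}_{E}\otimes\mathcal{W}_{\Phi_{D(p)_{n+1}}}$ respectively, and both the right and bottom arrows are $\mathrm{id}_{E}\otimes\mathcal{W}_{i_{D(p)_{n}\rightarrow D(p)_{n+1}}}$ — here using that $\Psi_{D(p)_{n+1}}$ and $\Phi_{D(p)_{n+1}}$ both post-compose with $\mathrm{pr}:D(p)_{n+1}\oplus D(_{p+n}C_{n+1})\to D(p)_{n+1}$ followed by the inclusion $i_{D(p)_n\to D(p)_{n+1}}$ to give the same map, which is exactly the commutativity of the original square.

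First I would spell out that the universal property of this pullback is precisely the assertion of the proposition: a pair $(\gamma_{+},\gamma_{-})$ of elements of $E\otimes\mathcal{W}_{D(p)_{n+1}}$ that agree after applying $\mathrm{id}_{E}\otimes\mathcal{W}_{i_{D(p)_{n}\rightarrow D(p)_{n+1}}}$ is exactly an element of the fibered product $\big(E\otimes\mathcal{W}_{D(p)_{n+1}}\big)\times_{E\otimes\mathcal{W}_{D(p)_{n}}}\big(E\otimes\mathcal{W}_{D(p)_{n+1}}\big)$, and the pullback property furnishes a unique $\gamma\in E\otimes\mathcal{W}_{D(p)_{n+1}\oplus D(_{p+n}C_{n+1})}$ whose images under $\mathrm{id}_{E}\otimes\mathcal{W}_{\Psi_{D(p)_{n+1}}}$ and $\mathrm{id}_{E}\otimes\mathcal{W}_{\Phi_{D(p)_{n+1}}}$ are $\gamma_{+}$ and $\gamma_{-}$. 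This is the same formal passage from Lemma to Proposition that the paper has already performed twice — from Lemma~\ref{t8.2.1.1} to Proposition~\ref{t8.2.1.2} and from Lemma~\ref{t8.3.1.1} to Proposition~\ref{t8.3.1.2} — so the intended proof is essentially the one-line remark ``This implies at once.''

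The only point that needs care — and the step I expect to be the main (mild) obstacle — is verifying that one may indeed transport the quasi-colimit property through $E\otimes(-)$, i.e. that $E\otimes\mathcal{W}_{(-)}$ carries quasi-colimits of the relevant infinitesimal objects to fibered products. This is a standard fact for microlinear Fr\"olicher spaces (it is what makes the notion of quasi-colimit useful), and it is exactly what was invoked implicitly in the passages cited above; I would simply remark that the argument is identical to those cases with $D\{n\}_{n-1}$, $D^{n}$, $D^{n}\oplus D$ replaced by $D(p)_{n}$, $D(p)_{n+1}$, $D(p)_{n+1}\oplus D(_{p+n}C_{n+1})$. Since the statement in the excerpt already reads ``This implies at once that,'' the written proof can be omitted entirely, exactly as for Propositions~\ref{t8.2.1.2} and \ref{t8.3.1.2}.
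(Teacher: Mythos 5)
Your argument is exactly the paper's: Proposition \ref{t3.2.2} is stated there with no separate proof, being introduced by ``This implies at once that'' after Lemma \ref{t3.2.1}, i.e.\ it is obtained by applying $E\otimes\mathcal{W}_{(-)}$ (using microlinearity of $E$) to the quasi-colimit square, which is precisely your pullback/universal-property reading. So the proposal is correct and coincides with the intended one-line deduction, matching the earlier passages from Lemma \ref{t8.2.1.1} to Proposition \ref{t8.2.1.2} and from Lemma \ref{t8.3.1.1} to Proposition \ref{t8.3.1.2}.
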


\begin{notation}
Under the same notation as in the above proposition, we denote
\[
\left(  \mathrm{id}_{E}\otimes\mathcal{W}_{\Xi_{D(p)_{n+1}}}\right)  \left(
\gamma\right)
\]
by
\[
\gamma_{+}\dot{-}\gamma_{-}\text{,}%
\]
where $\Xi_{D^{n}}:D\left(  _{p+n}C_{n+1}\right)  \rightarrow D(p)_{n+1}\oplus
D\left(  _{p+n}C_{n+1}\right)  $ is the canonical injection.
\end{notation}

\begin{remark}
Given
\begin{align*}
\gamma_{+}  &  =\left(  x^{1},...,x^{p},u^{1},...,u^{q},x_{i_{1}}^{i}%
,x_{i_{1},i_{2}}^{i},...,x_{i_{1},i_{2},...,i_{n+1}}^{i},u_{i_{1}}%
^{j},u_{i_{1},i_{2}}^{j},...,u_{i_{1},i_{2},...,i_{n+1}}^{j}\right)  ,\\
\gamma_{-}  &  =\left(  y^{1},...,y^{p},v^{1},...,v^{q},y_{i_{1}}^{i}%
,y_{i_{1},i_{2}}^{i},...,y_{i_{1},i_{2},...,i_{n+1}}^{i},v_{i_{1}}%
^{j},v_{i_{1},i_{2}}^{j},...,v_{i_{1},i_{2},...,i_{n+1}}^{j}\right) \\
&  \in E\otimes\mathcal{W}_{D(p)_{n+1}}\text{,}%
\end{align*}
we have
\[
\left(  \mathrm{id}_{E}\otimes\mathcal{W}_{i_{D(p)_{n}\rightarrow D(p)_{n+1}}%
}\right)  \left(  \gamma_{+}\right)  =\left(  \mathrm{id}_{E}\otimes
\mathcal{W}_{i_{D(p)_{n}\rightarrow D(p)_{n+1}}}\right)  \left(  \gamma
_{-}\right)
\]
iff
\begin{align*}
&  \left(  x^{1},...,x^{p},u^{1},...,u^{q},x_{i_{1}}^{i},x_{i_{1},i_{2}}%
^{i},...,x_{i_{1},i_{2},...,i_{n}}^{i},u_{i_{1}}^{j},u_{i_{1},i_{2}}%
^{j},...,u_{i_{1},i_{2},...,i_{n}}^{j}\right) \\
&  =\left(  y^{1},...,y^{p},v^{1},...,v^{q},y_{i_{1}}^{i},y_{i_{1},i_{2}}%
^{i},...,y_{i_{1},i_{2},...,i_{n}}^{i},v_{i_{1}}^{j},v_{i_{1},i_{2}}%
^{j},...,v_{i_{1},i_{2},...,i_{n}}^{j}\right)  \text{,}%
\end{align*}
in which we get
\begin{align*}
&  \gamma_{+}\dot{-}\gamma_{-}\\
&  =\left(  x^{1},...,x^{p},u^{1},...,u^{q},u_{i_{1},i_{2},...,i_{n+1}}%
^{1}-v_{i_{1},i_{2},...,i_{n+1}}^{1},...,u_{i_{1},i_{2},...,i_{n+1}}%
^{q}-v_{i_{1},i_{2},...,i_{n+1}}^{q}\right)
\end{align*}

\end{remark}

From the very definition of $\dot{-}$, we have

\begin{proposition}
\label{t3.2.3}Let $F$ be a mapping of $E$\ into $E^{\prime}$. Given
$\gamma_{+},\gamma_{-}\in E\otimes\mathcal{W}_{D(p)_{n+1}}$ with
\[
\left(  \mathrm{id}_{E}\otimes\mathcal{W}_{i_{D(p)_{n}\rightarrow D(p)_{n+1}}%
}\right)  \left(  \gamma_{+}\right)  =\left(  \mathrm{id}_{E}\otimes
\mathcal{W}_{i_{D(p)_{n}\rightarrow D(p)_{n+1}}}\right)  \left(  \gamma
_{-}\right)  \text{,}%
\]
we have
\begin{align*}
&  \left(  \mathrm{id}_{E^{\prime}}\otimes\mathcal{W}_{i_{D(p)_{n}\rightarrow
D(p)_{n+1}}}\right)  \left(  \left(  F\otimes\mathrm{id}_{\mathcal{W}%
_{D(p)_{n+1}}}\right)  \left(  \gamma_{+}\right)  \right) \\
&  =\left(  \mathrm{id}_{E^{\prime}}\otimes\mathcal{W}_{i_{D(p)_{n}\rightarrow
D(p)_{n+1}}}\right)  \left(  \left(  F\otimes\mathrm{id}_{\mathcal{W}%
_{D(p)_{n+1}}}\right)  \left(  \gamma_{-}\right)  \right)
\end{align*}
and
\begin{align*}
&  \left(  F\otimes\mathrm{id}_{\mathcal{W}_{D\left(  _{p+n}C_{n+1}\right)  }%
}\right)  \left(  \gamma_{+}\dot{-}\gamma_{-}\right) \\
&  =\left(  F\otimes\mathrm{id}_{\mathcal{W}_{D(p)_{n+1}}}\right)  \left(
\gamma_{+}\right)  \dot{-}\left(  F\otimes\mathrm{id}_{\mathcal{W}%
_{D(p)_{n+1}}}\right)  \left(  \gamma_{-}\right)
\end{align*}

\end{proposition}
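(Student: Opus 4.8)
The plan is to deduce both assertions directly from the quasi-colimit diagram of Lemma \ref{t3.2.1} together with the uniqueness clause of Proposition \ref{t3.2.2}, in exact parallel with the way Proposition \ref{t8.2.1.3} was obtained in the second approach. The one structural fact that makes everything go through is that $F\otimes\mathrm{id}$ acts on the space factor of a tensor product $E\otimes\mathcal{W}_{-}$, whereas every map of the form $\mathrm{id}_{E}\otimes\mathcal{W}_{\phi}$ acts on the Weil-algebra factor; being applied to different tensor factors, these two kinds of morphism commute, so that for any $\phi$ inducing $\mathcal{W}_{\phi}$ one has
\[
\left(\mathrm{id}_{E'}\otimes\mathcal{W}_{\phi}\right)\circ\left(F\otimes\mathrm{id}\right)=\left(F\otimes\mathrm{id}\right)\circ\left(\mathrm{id}_{E}\otimes\mathcal{W}_{\phi}\right).
\]

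First I would establish the preliminary restriction identity: applying the commutation above with $\phi=i_{D(p)_{n}\rightarrow D(p)_{n+1}}$ to each of $\gamma_{+}$ and $\gamma_{-}$ and invoking the hypothesis that their $D(p)_{n}$-restrictions agree, one obtains at once that the $D(p)_{n}$-restrictions of $\left(F\otimes\mathrm{id}\right)(\gamma_{+})$ and $\left(F\otimes\mathrm{id}\right)(\gamma_{-})$ agree, so that $\left(F\otimes\mathrm{id}\right)(\gamma_{+})\dot{-}\left(F\otimes\mathrm{id}\right)(\gamma_{-})$ is well defined.

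For the main identity, let $\gamma\in E\otimes\mathcal{W}_{D(p)_{n+1}\oplus D({}_{p+n}C_{n+1})}$ be the unique element supplied by Proposition \ref{t3.2.2}, so that $\left(\mathrm{id}_{E}\otimes\mathcal{W}_{\Psi_{D(p)_{n+1}}}\right)(\gamma)=\gamma_{+}$, $\left(\mathrm{id}_{E}\otimes\mathcal{W}_{\Phi_{D(p)_{n+1}}}\right)(\gamma)=\gamma_{-}$, and, by the definition of $\dot{-}$, $\gamma_{+}\dot{-}\gamma_{-}=\left(\mathrm{id}_{E}\otimes\mathcal{W}_{\Xi_{D(p)_{n+1}}}\right)(\gamma)$. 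Put $\gamma':=\left(F\otimes\mathrm{id}\right)(\gamma)$. Using the commutation identity with $\phi=\Psi_{D(p)_{n+1}}$ and with $\phi=\Phi_{D(p)_{n+1}}$ shows that $\left(\mathrm{id}_{E'}\otimes\mathcal{W}_{\Psi_{D(p)_{n+1}}}\right)(\gamma')=\left(F\otimes\mathrm{id}\right)(\gamma_{+})$ and $\left(\mathrm{id}_{E'}\otimes\mathcal{W}_{\Phi_{D(p)_{n+1}}}\right)(\gamma')=\left(F\otimes\mathrm{id}\right)(\gamma_{-})$; so, by the uniqueness part of Proposition \ref{t3.2.2} now applied to the pair $\left(F\otimes\mathrm{id}\right)(\gamma_{+}),\left(F\otimes\mathrm{id}\right)(\gamma_{-})$, the element $\gamma'$ is precisely the one computing their difference. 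Hence $\left(F\otimes\mathrm{id}\right)(\gamma_{+})\dot{-}\left(F\otimes\mathrm{id}\right)(\gamma_{-})=\left(\mathrm{id}_{E'}\otimes\mathcal{W}_{\Xi_{D(p)_{n+1}}}\right)(\gamma')$, and a last application of the commutation identity with $\phi=\Xi_{D(p)_{n+1}}$ rewrites this as $\left(F\otimes\mathrm{id}\right)\!\left(\left(\mathrm{id}_{E}\otimes\mathcal{W}_{\Xi_{D(p)_{n+1}}}\right)(\gamma)\right)=\left(F\otimes\mathrm{id}\right)(\gamma_{+}\dot{-}\gamma_{-})$, which is the claim.

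I do not expect any genuine obstacle here: the argument is purely formal, being the $D(p)_{n+1}$-analogue of Proposition \ref{t8.2.1.3}. The only things requiring care are the bookkeeping of which tensor slot each morphism occupies and keeping straight the direction of the assignment $X\mapsto\mathcal{W}_{X}$; once the commutation identity above is recorded, both formulas of the proposition fall out of the uniqueness assertion in Proposition \ref{t3.2.2}.
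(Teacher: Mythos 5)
Your argument is correct and is exactly the one the paper intends: the paper offers no written proof, asserting the proposition ``from the very definition of $\dot{-}$'' (just as for Proposition \ref{t8.2.1.3}), and your unpacking via the commutation of $F\otimes\mathrm{id}$ with the maps $\mathrm{id}\otimes\mathcal{W}_{\phi}$ for $\phi=i_{D(p)_{n}\rightarrow D(p)_{n+1}},\Psi_{D(p)_{n+1}},\Phi_{D(p)_{n+1}},\Xi_{D(p)_{n+1}}$, followed by the uniqueness clause of Proposition \ref{t3.2.2} applied to $(F\otimes\mathrm{id})(\gamma)$, is precisely that intended argument.
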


\begin{lemma}
\label{t3.2.4}The diagram
\[%
\begin{array}
[c]{ccc}%
1 & \underrightarrow{i_{1\rightarrow D\left(  _{p+n}C_{n+1}\right)  }} &
D\left(  _{p+n}C_{n+1}\right) \\
i_{1\rightarrow D(p)_{n+1}}\downarrow &  & \downarrow\Xi_{D(p)_{n+1}}\\
D(p)_{n+1} & \overrightarrow{\Phi_{D(p)_{n+1}}} & D(p)_{n+1}\oplus D\left(
_{p+n}C_{n+1}\right)
\end{array}
\]
is a quasi-colimit diagram.
\end{lemma}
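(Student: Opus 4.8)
The plan is to unwind the definition of \emph{quasi-colimit diagram}: it suffices to check that the functor $Z\otimes\mathcal{W}_{(-)}$ carries the square of the lemma to a pullback for every microlinear Fr\"{o}licher space $Z$ (in the coordinatized setting of this section only $Z=\mathbb{R}^{p}$ and $Z=\mathbb{R}^{p+q}$ are ever needed). Concretely this says: given $t\in Z\otimes\mathcal{W}_{D\left(  {}_{p+n}C_{n+1}\right)  }$ and $\gamma\in Z\otimes\mathcal{W}_{D(p)_{n+1}}$ whose images in $Z\otimes\mathcal{W}_{1}=Z$ agree, there is a unique $\gamma^{\prime}\in Z\otimes\mathcal{W}_{D(p)_{n+1}\oplus D\left(  {}_{p+n}C_{n+1}\right)  }$ with $\left(  \mathrm{id}_{Z}\otimes\mathcal{W}_{\Xi_{D(p)_{n+1}}}\right)  \left(  \gamma^{\prime}\right)  =t$ and $\left(  \mathrm{id}_{Z}\otimes\mathcal{W}_{\Phi_{D(p)_{n+1}}}\right)  \left(  \gamma^{\prime}\right)  =\gamma$. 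Dually, it is the assertion that the square of Weil algebras
\[
\begin{array}
[c]{ccc}
\mathcal{W}_{D(p)_{n+1}\oplus D\left(  {}_{p+n}C_{n+1}\right)  } & \rightarrow & \mathcal{W}_{D\left(  {}_{p+n}C_{n+1}\right)  }\\
\downarrow &  & \downarrow\\
\mathcal{W}_{D(p)_{n+1}} & \rightarrow & \mathbb{R}
\end{array}
\]
---whose map to $\mathcal{W}_{D\left(  {}_{p+n}C_{n+1}\right)  }$ is $\mathcal{W}_{\Xi_{D(p)_{n+1}}}$, whose map to $\mathcal{W}_{D(p)_{n+1}}$ is $\mathcal{W}_{\Phi_{D(p)_{n+1}}}$, and whose two maps to $\mathbb{R}$ are the augmentations induced by $i_{1\rightarrow D\left(  {}_{p+n}C_{n+1}\right)  }$ and $i_{1\rightarrow D(p)_{n+1}}$---is a pullback in the category of Weil algebras.

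To verify this I would first record the algebras explicitly: $\mathcal{W}_{D(p)_{n+1}}=\mathbb{R}[\epsilon_{1},\dots,\epsilon_{p}]/J^{n+2}$ with $J=(\epsilon_{1},\dots,\epsilon_{p})$, and $\mathcal{W}_{D\left(  {}_{p+n}C_{n+1}\right)  }=\mathbb{R}\oplus\mathbb{R}^{k}$, the square-zero extension on $k={}_{p+n}C_{n+1}$ generators $\eta_{1},\dots,\eta_{k}$. By the very definition of the operation $\oplus$ on infinitesimal spaces, $\mathcal{W}_{D(p)_{n+1}\oplus D\left(  {}_{p+n}C_{n+1}\right)  }$ is the fibre product $\mathcal{W}_{D(p)_{n+1}}\times_{\mathbb{R}}\mathcal{W}_{D\left(  {}_{p+n}C_{n+1}\right)  }$ over the augmentations---equivalently, $\mathbb{R}$ adjoined with the ideal $\left(  J/J^{n+2}\right)  \oplus\mathbb{R}^{k}$ in which every product of an element of the first summand with one of the second vanishes---and under this identification $\mathcal{W}_{\Phi_{D(p)_{n+1}}}$ and $\mathcal{W}_{\Xi_{D(p)_{n+1}}}$ are precisely the two projections, while the point-inclusions induce the augmentations. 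Hence the displayed square literally is the defining pullback of that fibre product; so it is a pullback of algebras, and microlinearity of $\mathbb{R}^{p}$ and $\mathbb{R}^{p+q}$ then forces the original diagram to be perceived as a limit.

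Since Section~\ref{s2} has just been installed, I would in fact present the step concretely through Taylor representations, exactly in the spirit of Lemma~\ref{t8.2.1.4} and of \S\S 3.4 of \cite{la}. By Propositions~\ref{t2.1} and~\ref{t2.3}, and by the same Taylor-type theorem of \cite{kock2} applied to $D(p)_{n+1}\oplus D\left(  {}_{p+n}C_{n+1}\right)  $, each corner of the square has unique Taylor representatives, and for $Z\otimes\mathcal{W}_{D(p)_{n+1}\oplus D\left(  {}_{p+n}C_{n+1}\right)  }$ the surviving monomials split into the constant term, the products $\epsilon_{i_{1}}\cdots\epsilon_{i_{r}}$ with $1\leq r\leq n+1$, and the linear terms $\eta_{a}$, with no mixed monomials. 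Consequently the map sending $\gamma^{\prime}$ to the pair $\left(  \left(  \mathrm{id}_{Z}\otimes\mathcal{W}_{\Xi_{D(p)_{n+1}}}\right)  (\gamma^{\prime}),\left(  \mathrm{id}_{Z}\otimes\mathcal{W}_{\Phi_{D(p)_{n+1}}}\right)  (\gamma^{\prime})\right)  $ is visibly a bijection from $Z\otimes\mathcal{W}_{D(p)_{n+1}\oplus D\left(  {}_{p+n}C_{n+1}\right)  }$ onto the set of pairs agreeing in $Z$, which is the desired universal property.

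The one point that needs care---and, I expect, the only genuine obstacle---is the bookkeeping that the ideal of $\mathcal{W}_{D(p)_{n+1}\oplus D\left(  {}_{p+n}C_{n+1}\right)  }$ really is the claimed internal direct sum with all cross products zero, so that the $\epsilon$-block of a Taylor representative is returned intact by $\mathcal{W}_{\Phi_{D(p)_{n+1}}}$ and the $\eta$-block intact by $\mathcal{W}_{\Xi_{D(p)_{n+1}}}$, with no interference between them. This is nothing more than the definition of $\oplus$ together with the explicit descriptions of $\Phi_{D(p)_{n+1}}$, $\Xi_{D(p)_{n+1}}$, $i_{1\rightarrow D(p)_{n+1}}$ and $i_{1\rightarrow D\left(  {}_{p+n}C_{n+1}\right)  }$; once it is recorded cleanly the universal property drops out, and the remainder of the verification is routine, just as for the analogous Lemmas~\ref{t8.2.1.4} and~\ref{t8.3.1.5}.
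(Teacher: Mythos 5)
Your proof is correct: the paper states Lemma \ref{t3.2.4} without any proof (just as with the analogous Lemmas \ref{t8.2.1.4} and \ref{t8.3.1.5}), and your argument---identifying $\mathcal{W}_{D(p)_{n+1}\oplus D\left({}_{p+n}C_{n+1}\right)}$ with the fibred product $\mathcal{W}_{D(p)_{n+1}}\times_{\mathbb{R}}\mathcal{W}_{D\left({}_{p+n}C_{n+1}\right)}$ given by the very definition of $\oplus$, so that the dual square of Weil algebras is tautologically a pullback, and then letting microlinearity (or, concretely, the Taylor representations of Propositions \ref{t2.1} and \ref{t2.3} and the splitting of the ideal into the $\epsilon$-block and the $\eta$-block with vanishing cross products) yield the universal property---is exactly the standard verification the paper leaves implicit. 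One small remark: being a quasi-colimit is a property of the diagram of small objects (equivalently of the Weil-algebra square) independent of the perceiving space, so the restriction of attention to $Z=\mathbb{R}^{p}$ and $Z=\mathbb{R}^{p+q}$ is unnecessary, though harmless, and your dual formulation already covers the general case.
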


This implies at once that

\begin{proposition}
\label{t3.2.5}Given $t\in E\otimes\mathcal{W}_{D\left(  _{p+n}C_{n+1}\right)
}\ $and $\gamma\in E\otimes\mathcal{W}_{D(p)_{n+1}}\ $with
\[
\left(  \mathrm{id}_{E}\otimes\mathcal{W}_{i_{1\rightarrow D\left(
_{p+n}C_{n+1}\right)  }}\right)  \left(  t\right)  =\left(  \mathrm{id}%
_{E}\otimes\mathcal{W}_{i_{1\rightarrow D(p)_{n+1}}}\right)  \left(
\gamma\right)  \text{,}%
\]
there exists unique $\gamma^{\prime}\in E\otimes\mathcal{W}_{D(p)_{n+1}\oplus
D\left(  _{p+n}C_{n+1}\right)  }\ $with
\begin{align*}
\left(  \mathrm{id}_{E}\otimes\mathcal{W}_{\Xi_{D(p)_{n+1}}}\right)  \left(
\gamma^{\prime}\right)   &  =t\text{ and}\\
\left(  \mathrm{id}_{E}\otimes\mathcal{W}_{\Phi_{D(p)_{n+1}}}\right)  \left(
\gamma^{\prime}\right)   &  =\gamma\text{.}%
\end{align*}

\end{proposition}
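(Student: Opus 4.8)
The plan is to deduce this proposition directly from Lemma \ref{t3.2.4}, in exactly the manner in which Proposition \ref{t3.2.2} is deduced from Lemma \ref{t3.2.1}. The operative principle is the standard one in this setting: since $E=\mathbb{R}^{p+q}$ is microlinear, the functor $E\otimes\mathcal{W}_{(-)}$ carries quasi-colimit diagrams of infinitesimal objects to limit diagrams, and because this functor is contravariant in its infinitesimal-object argument, the quasi-colimit square of Lemma \ref{t3.2.4} is turned into a pullback square: its apex is $E\otimes\mathcal{W}_{D(p)_{n+1}\oplus D\left(  _{p+n}C_{n+1}\right)  }$, its two flanking corners are $E\otimes\mathcal{W}_{D\left(  _{p+n}C_{n+1}\right)  }$ and $E\otimes\mathcal{W}_{D(p)_{n+1}}$, its bottom corner is $E\otimes\mathcal{W}_{1}=E$, the two maps out of the apex are $\mathrm{id}_{E}\otimes\mathcal{W}_{\Xi_{D(p)_{n+1}}}$ and $\mathrm{id}_{E}\otimes\mathcal{W}_{\Phi_{D(p)_{n+1}}}$, and the two maps into the bottom corner are the canonical restrictions $\mathrm{id}_{E}\otimes\mathcal{W}_{i_{1\rightarrow D\left(  _{p+n}C_{n+1}\right)  }}$ and $\mathrm{id}_{E}\otimes\mathcal{W}_{i_{1\rightarrow D(p)_{n+1}}}$.

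Granting this, the proof is three short steps. First, apply $E\otimes\mathcal{W}_{(-)}$ to the diagram of Lemma \ref{t3.2.4} to obtain the pullback square just described. Second, note that the hypothesis $\left(  \mathrm{id}_{E}\otimes\mathcal{W}_{i_{1\rightarrow D\left(  _{p+n}C_{n+1}\right)  }}\right)  (t)=\left(  \mathrm{id}_{E}\otimes\mathcal{W}_{i_{1\rightarrow D(p)_{n+1}}}\right)  (\gamma)$ is precisely the compatibility condition making $(t,\gamma)$ a cone over the lower-right cospan of that square. Third, invoke the universal property of the pullback to obtain a unique $\gamma^{\prime}\in E\otimes\mathcal{W}_{D(p)_{n+1}\oplus D\left(  _{p+n}C_{n+1}\right)  }$ with $\left(  \mathrm{id}_{E}\otimes\mathcal{W}_{\Xi_{D(p)_{n+1}}}\right)  (\gamma^{\prime})=t$ and $\left(  \mathrm{id}_{E}\otimes\mathcal{W}_{\Phi_{D(p)_{n+1}}}\right)  (\gamma^{\prime})=\gamma$, which is exactly the assertion.

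I do not anticipate any genuine obstacle: the mathematical content sits entirely inside Lemma \ref{t3.2.4}, and the passage through $E\otimes\mathcal{W}_{(-)}$ is purely formal. The only matters to verify are bookkeeping ones — that the bottom corner of the transformed square really is $E$ (because $\mathcal{W}_{1}=\mathbb{R}$), and that its two legs are the maps named in the hypothesis — and neither needs any computation. If one wanted a fully coordinate-explicit argument in the spirit of the rest of Section \ref{s3}, one could instead use the Taylor representations of Propositions \ref{t2.1}--\ref{t2.3} together with the Remark following Proposition \ref{t3.2.2}: writing $t$ and $\gamma$ in coordinates, the hypothesis forces them to share the same constant term, and $\gamma^{\prime}$ is then built by recording that constant term once, the linear coefficients of $t$ in the $D\left(  _{p+n}C_{n+1}\right)  $-slots, and the remaining coefficients of $\gamma$ in the $D(p)_{n+1}$-slots, with uniqueness read off from this description. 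This route, however, merely reproves Lemma \ref{t3.2.4} in coordinates and gains nothing.
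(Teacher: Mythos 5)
Your proposal is correct and matches the paper exactly: the paper offers no separate proof, stating only that Lemma \ref{t3.2.4} ``implies at once'' the proposition, which is precisely your argument that applying $E\otimes\mathcal{W}_{(-)}$ (microlinearity) turns the quasi-colimit square into a pullback whose universal property yields the unique $\gamma^{\prime}$. Nothing further is needed.
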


\begin{notation}
Under the same notation as in the above proposition, we denote
\[
\left(  \mathrm{id}_{E}\otimes\mathcal{W}_{\Psi_{D(p)_{n+1}}}\right)  \left(
\gamma^{\prime}\right)
\]
by $t\dot{+}\gamma$.
\end{notation}

\begin{remark}
Given
\begin{align*}
\gamma &  =\left(  x^{1},...,x^{p},u^{1},...,u^{q},x_{i_{1}}^{i}%
,...,x_{i_{1},i_{2},...,i_{n+1}}^{i},u_{i_{1}}^{j},...,u_{i_{1},i_{2}%
,...,i_{n+1}}^{j}\right) \\
&  \in E\otimes\mathcal{W}_{D(p)_{n+1}}%
\end{align*}
and
\begin{align*}
t  &  =\left(  y^{1},...,y^{p},v^{1},...,v^{q},y_{i_{1},i_{2},...,i_{n+1}}%
^{1},...,y_{i_{1},i_{2},...,i_{n+1}}^{p},v_{i_{1},i_{2},...,i_{n+1}}%
^{1},...,v_{i_{1},i_{2},...,i_{n+1}}^{q}\right) \\
&  \in E\otimes\mathcal{W}_{D\left(  _{p+n}C_{n+1}\right)  }\text{,}%
\end{align*}
we have
\[
\left(  \mathrm{id}_{E}\otimes\mathcal{W}_{i_{1\rightarrow D\left(
_{p+n}C_{n+1}\right)  }}\right)  \left(  t\right)  =\left(  \mathrm{id}%
_{E}\otimes\mathcal{W}_{i_{1\rightarrow D(p)_{n+1}}}\right)  \left(
\gamma\right)
\]
iff
\[
\left(  x^{1},...,x^{p},u^{1},...,u^{q}\right)  =\left(  y^{1},...,y^{p}%
,v^{1},...,v^{q}\right)  \text{,}%
\]
in which we get
\begin{align*}
&  t\dot{+}\gamma\\
&  =\left(  x^{i},u^{j},x_{i_{1}}^{i},...,x_{i_{1},i_{2},...,i_{n}}%
^{i},x_{i_{1},i_{2},...,i_{n+1}}^{i}+y_{i_{1},i_{2},...,i_{n+1}}^{i},u_{i_{1}%
}^{j},...,u_{i_{1},i_{2},...,i_{n}}^{j},u_{i_{1},i_{2},...,i_{n+1}}%
^{j}+v_{i_{1},i_{2},...,i_{n+1}}^{j}\right)
\end{align*}

\end{remark}

From the very definition of $\dot{+}$, we have

\begin{proposition}
\label{t3.2.6}Let $F$ be a mapping of $E$\ into $E^{\prime}$. Given $t\in
E\otimes\mathcal{W}_{D\left(  _{p+n}C_{n+1}\right)  }\ $and $\gamma\in
E\otimes\mathcal{W}_{D(p)_{n+1}}\ $with
\[
\left(  \mathrm{id}_{E}\otimes\mathcal{W}_{i_{1\rightarrow D\left(
_{p+n}C_{n+1}\right)  }}\right)  \left(  t\right)  =\left(  \mathrm{id}%
_{E}\otimes\mathcal{W}_{i_{1\rightarrow D(p)_{n+1}}}\right)  \left(
\gamma\right)  \text{,}%
\]
we have
\begin{align*}
&  \left(  \mathrm{id}_{E^{\prime}}\otimes\mathcal{W}_{i_{1\rightarrow
D\left(  _{p+n}C_{n+1}\right)  }}\right)  \left(  \left(  F\otimes
\mathrm{id}_{\mathcal{W}_{D\left(  _{p+n}C_{n+1}\right)  }}\right)  \left(
t\right)  \right) \\
&  =\left(  \mathrm{id}_{E^{\prime}}\otimes\mathcal{W}_{i_{1\rightarrow
D(p)_{n+1}}}\right)  \left(  \left(  F\otimes\mathrm{id}_{\mathcal{W}%
_{D(p)_{n+1}}}\right)  \left(  \gamma\right)  \right)
\end{align*}
and
\[
\left(  F\otimes\mathrm{id}_{\mathcal{W}_{D(p)_{n+1}}}\right)  \left(
t\dot{+}\gamma\right)  =\left(  F\otimes\mathrm{id}_{\mathcal{W}_{D\left(
_{p+n}C_{n+1}\right)  }}\right)  \left(  t\right)  \dot{+}\left(
F\otimes\mathrm{id}_{\mathcal{W}_{D(p)_{n+1}}}\right)  \left(  \gamma\right)
\text{,}%
\]

\end{proposition}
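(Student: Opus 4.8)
The plan is to obtain both assertions directly from the defining quasi-colimit diagram of Lemma~\ref{t3.2.4} together with the evident bifunctoriality of the external product $(-)\otimes\mathcal{W}_{(-)}$, exactly in the spirit of Proposition~\ref{t3.2.3} for $\dot{-}$; no genuinely new computation is involved, which is why the statement is prefaced with ``from the very definition of $\dot{+}$''.

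The structural fact to isolate first is the naturality square: for any morphism $\iota\colon W'\to W$ of infinitesimal objects and any $F\colon E\to E'$ one has
\[
\left(F\otimes\mathrm{id}_{\mathcal{W}_{W'}}\right)\circ\left(\mathrm{id}_{E}\otimes\mathcal{W}_{\iota}\right)=\left(\mathrm{id}_{E'}\otimes\mathcal{W}_{\iota}\right)\circ\left(F\otimes\mathrm{id}_{\mathcal{W}_{W}}\right),
\]
since the left side is ``precompose with $\iota$, then postcompose with $F$'' and the right side is the same composition in the other order. Instantiating $\iota$ at $i_{1\rightarrow D(p)_{n+1}}$ and at $i_{1\rightarrow D\left(_{p+n}C_{n+1}\right)}$ and feeding in the compatibility hypothesis on $t$ and $\gamma$ yields the first displayed equality at once, so that the right-hand side $\left(F\otimes\mathrm{id}\right)(t)\dot{+}\left(F\otimes\mathrm{id}\right)(\gamma)$ is meaningful.

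For the substantive equality I would run Proposition~\ref{t3.2.5} twice. Let $\gamma'\in E\otimes\mathcal{W}_{D(p)_{n+1}\oplus D\left(_{p+n}C_{n+1}\right)}$ be the unique element with $\left(\mathrm{id}_{E}\otimes\mathcal{W}_{\Xi_{D(p)_{n+1}}}\right)(\gamma')=t$ and $\left(\mathrm{id}_{E}\otimes\mathcal{W}_{\Phi_{D(p)_{n+1}}}\right)(\gamma')=\gamma$, so that $t\dot{+}\gamma=\left(\mathrm{id}_{E}\otimes\mathcal{W}_{\Psi_{D(p)_{n+1}}}\right)(\gamma')$ by definition of $\dot{+}$. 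Pushing $\gamma'$ forward by $F\otimes\mathrm{id}$ and applying the naturality square with $\iota=\Xi_{D(p)_{n+1}}$ and $\iota=\Phi_{D(p)_{n+1}}$ shows that $\left(F\otimes\mathrm{id}\right)(\gamma')$ is carried to $\left(F\otimes\mathrm{id}\right)(t)$ and to $\left(F\otimes\mathrm{id}\right)(\gamma)$; by the uniqueness clause of Proposition~\ref{t3.2.5} for $E'$ (applicable because the compatibility of $\left(F\otimes\mathrm{id}\right)(t)$ and $\left(F\otimes\mathrm{id}\right)(\gamma)$ was just checked), $\left(F\otimes\mathrm{id}\right)(\gamma')$ is precisely the element defining $\left(F\otimes\mathrm{id}\right)(t)\dot{+}\left(F\otimes\mathrm{id}\right)(\gamma)$. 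Applying $\mathcal{W}_{\Psi_{D(p)_{n+1}}}$ and using naturality once more gives
\[
\left(F\otimes\mathrm{id}\right)(t\dot{+}\gamma)=\left(\mathrm{id}_{E'}\otimes\mathcal{W}_{\Psi_{D(p)_{n+1}}}\right)\!\left(\left(F\otimes\mathrm{id}\right)(\gamma')\right)=\left(F\otimes\mathrm{id}\right)(t)\dot{+}\left(F\otimes\mathrm{id}\right)(\gamma).
\]

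I do not anticipate any serious obstacle; the only point requiring a word of care is that invoking Proposition~\ref{t3.2.5} with $E'$ in place of $E$ presupposes $E'$ is microlinear enough that the square of Lemma~\ref{t3.2.4} is still perceived as a quasi-colimit, which is part of the standing assumptions on the spaces in play. The same technique, word for word, also yields Propositions~\ref{t8.2.1.6} and \ref{t8.3.1.7}, the $\dot{+}$-statements in the other two approaches.
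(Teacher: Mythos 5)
Your argument is correct and is exactly what the paper intends: the paper offers no written proof beyond the phrase ``from the very definition of $\dot{+}$'', and your spelling-out — naturality of $F\otimes\mathrm{id}$ against $\mathrm{id}\otimes\mathcal{W}_{\iota}$, then the uniqueness clause of Proposition~\ref{t3.2.5} (valid for $E'$ by microlinearity, as you note) applied to the pushed-forward witness $\left(F\otimes\mathrm{id}\right)\left(\gamma'\right)$ — is precisely that definition unwound. The same remark applies to your observation that the argument covers Propositions~\ref{t8.2.1.6} and \ref{t8.3.1.7} verbatim.
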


Now we have

\begin{theorem}
\label{t3.2.7}The canonical projection $\mathrm{id}_{E}\otimes\mathcal{W}%
_{i_{D(p)_{n}\rightarrow D(p)_{n+1}}}:E\otimes\mathcal{W}_{D(p)_{n+1}%
}\mathcal{\rightarrow}E\otimes\mathcal{W}_{D(p)_{n}}\ $is an affine bundle
over the vector bundle $\left(  E\otimes\mathcal{W}_{D\left(  _{p+n}%
C_{n+1}\right)  }\right)  ^{\perp}\underset{E}{\times}\left(  E\otimes
\mathcal{W}_{D(p)_{n}}\right)  \rightarrow E\otimes\mathcal{W}_{D(p)_{n}}$.
\end{theorem}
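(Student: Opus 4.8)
The plan is to imitate \S\S 3.4 of \cite{la}, exactly as was done in the abstract setting for Theorems \ref{t8.2.1.7} and \ref{t8.3.1.8}; the one feature that makes the coordinate version easier is that over $E=\mathbb{R}^{p+q}$ each of the relevant objects $E\otimes\mathcal{W}_{W}$ is a finite-dimensional real vector space whose elements are pinned down by their Taylor representations (\S\ref{s2} and the Remarks of \S\ref{s3.1}), and the relevant projections are surjective linear maps of such spaces. Consequently the underlying bundles are trivial bundles of vector spaces, ``local triviality'' is vacuous, and the whole matter reduces to exhibiting a fibrewise simply transitive action of the named vector bundle on $\mathrm{id}_{E}\otimes\mathcal{W}_{i_{D(p)_{n}\rightarrow D(p)_{n+1}}}$.

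Concretely, I would assemble the two operations and then run three checks. Lemma \ref{t3.2.1} with Proposition \ref{t3.2.2} produces, for $\gamma_{+},\gamma_{-}\in E\otimes\mathcal{W}_{D(p)_{n+1}}$ lying over a common element of $E\otimes\mathcal{W}_{D(p)_{n}}$, a well-defined difference $\gamma_{+}\dot{-}\gamma_{-}$ in $(E\otimes\mathcal{W}_{D\left(_{p+n}C_{n+1}\right)})^{\perp}$ over the same base point of $E$; dually, Lemma \ref{t3.2.4} with Proposition \ref{t3.2.5} produces $t\dot{+}\gamma$. First I would check that $t\dot{+}\gamma$ again lies over the same element of $E\otimes\mathcal{W}_{D(p)_{n}}$ as $\gamma$, so that $\dot{+}$ is a bona fide action of $(E\otimes\mathcal{W}_{D\left(_{p+n}C_{n+1}\right)})^{\perp}\times_{E}(E\otimes\mathcal{W}_{D(p)_{n}})$ along the base; this is immediate from the explicit forms of $\Phi_{D(p)_{n+1}}$ and $\Xi_{D(p)_{n+1}}$ together with the Taylor-coordinate description of $t\dot{+}\gamma$ in the Remark (the operation merely adds the top-degree coefficients of $t$ to those of $\gamma$). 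Second, that the action is free and transitive on each fibre with $\gamma_{+}\dot{-}\gamma_{-}$ the unique translation carrying $\gamma_{-}$ to $\gamma_{+}$: this is nothing but the uniqueness clauses of Propositions \ref{t3.2.2} and \ref{t3.2.5}, and the vector-space-action identities $(t+t')\dot{+}\gamma=t\dot{+}(t'\dot{+}\gamma)$, $0\dot{+}\gamma=\gamma$ and $(\gamma_{+}\dot{-}\gamma_{-})\dot{+}\gamma_{-}=\gamma_{+}$ all read off directly from the same coordinate formulas. Third, the compatibility along the base needed for an affine bundle follows from Propositions \ref{t3.2.3} and \ref{t3.2.6}: specialising those naturality statements to the (linear, hence smooth) self-maps of $E=\mathbb{R}^{p+q}$ that rescale or translate coordinates shows $\dot{+}$ and $\dot{-}$ vary affinely in the base variable.

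I expect the only step requiring genuine care to be the bookkeeping that identifies which sub-bundle of $E\otimes\mathcal{W}_{D\left(_{p+n}C_{n+1}\right)}$ the difference operation lands in and confirms that it is exactly the one in the statement, i.e. keeping straight the degeneracy in the base direction $M=\mathbb{R}^{p}$ while extracting $\gamma_{+}\dot{-}\gamma_{-}$ from the quasi-colimit square. Once the map $\Psi_{D(p)_{n+1}}\colon(d_{1},\dots,d_{p})\mapsto(d_{1},\dots,d_{p},d_{1}^{n+1},d_{1}^{n}d_{2},\dots)$ and its companion $\Xi_{D(p)_{n+1}}$ are used to unwind the Taylor series as in the Remark, everything else is the routine coefficient matching already carried out, mutatis mutandis, in \cite{la} and in Subsubsections \ref{s8.2.1} and \ref{s8.3.1}; so the proof itself can be stated very briefly, as a transcription of those arguments into coordinates.
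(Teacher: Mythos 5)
Your proposal is correct and follows essentially the same route as the paper, which offers no separate argument for Theorem \ref{t3.2.7}: it is taken to follow at once, after the manner of \S\S 3.4 of \cite{la} (exactly as for Theorem \ref{t8.2.1.7}), from the quasi-colimit Lemmas \ref{t3.2.1} and \ref{t3.2.4}, the resulting operations $\dot{-}$ and $\dot{+}$ of Propositions \ref{t3.2.2} and \ref{t3.2.5} with their uniqueness clauses, the naturality Propositions \ref{t3.2.3} and \ref{t3.2.6}, and the explicit Taylor-coefficient formulas in the accompanying Remarks. Your fibrewise simply-transitive-action checks are precisely the content the paper leaves implicit, so the proposal is a faithful transcription of the intended proof.
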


\begin{theorem}
\label{t3.2.8}The mapping $\pi_{n+1,n}:\mathcal{J}^{n+1}(\pi)\rightarrow
\mathcal{J}^{n}(\pi)$ is an affine bundle over the vector bundle
$\mathcal{S}^{n+1}(\pi)\underset{E}{\times}\mathcal{J}^{n}(\pi)\rightarrow
\mathcal{J}^{n}(\pi)$.
\end{theorem}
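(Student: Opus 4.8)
The plan is to deduce Theorem \ref{t3.2.8} from Theorem \ref{t3.2.7} by restriction, exactly in the spirit of how Theorem \ref{t8.2.3.4} was obtained from Theorem \ref{t8.2.1.7}. The key observation is that $\mathcal{J}^{n}(\pi)$ and $\mathcal{J}^{n+1}(\pi)$ are precisely the subloci of $E\otimes\mathcal{W}_{D(p)_{n}}$ and $E\otimes\mathcal{W}_{D(p)_{n+1}}$ singled out by the requirement that the $\pi$-projection be degenerate, while the model vector bundle $\left(E\otimes\mathcal{W}_{D\left(_{p+n}C_{n+1}\right)}\right)^{\perp}$ of Theorem \ref{t3.2.7} is, once one unwinds the meaning of the superscript $\perp$ (namely that the $\pi$-projection vanishes in the tangent-vector sense, i.e.\ is the constant at the base point), nothing other than $\mathcal{S}^{n+1}(\pi)$. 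Since $\mathcal{S}^{n+1}(\pi)\rightarrow E$ and $\mathcal{J}^{n}(\pi)\rightarrow E$ are (trivial) bundles by their Taylor representations recorded in \S\ref{s3.1}, the fibered product $\mathcal{S}^{n+1}(\pi)\underset{E}{\times}\mathcal{J}^{n}(\pi)\rightarrow\mathcal{J}^{n}(\pi)$ is a genuine vector bundle.

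The steps, in order, would be the following. First, observe that $\pi_{n+1,n}$ carries $\mathcal{J}^{n+1}(\pi)$ into $\mathcal{J}^{n}(\pi)$, since $\pi_{n+1,n}$ commutes with $\pi\otimes\mathrm{id}$ and the restriction of a degenerate function remains degenerate. Second, for $\gamma_{+},\gamma_{-}\in\mathcal{J}^{n+1}(\pi)$ with $\pi_{n+1,n}(\gamma_{+})=\pi_{n+1,n}(\gamma_{-})$, apply Proposition \ref{t3.2.3} with $F=\pi$ to obtain $\left(\pi\otimes\mathrm{id}\right)\left(\gamma_{+}\dot{-}\gamma_{-}\right)=\left(\pi\otimes\mathrm{id}\right)\left(\gamma_{+}\right)\dot{-}\left(\pi\otimes\mathrm{id}\right)\left(\gamma_{-}\right)$; the two right-hand factors are degenerate and agree on $D(p)_{n}$, hence are equal, and the explicit coordinate formula in the Remark following Proposition \ref{t3.2.2} shows at once that $\gamma_{+}\dot{-}\gamma_{-}$ has no higher $x$-coefficients, so $\gamma_{+}\dot{-}\gamma_{-}\in\mathcal{S}^{n+1}(\pi)$. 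Third, dually, for $t\in\mathcal{S}^{n+1}(\pi)$ and $\gamma\in\mathcal{J}^{n+1}(\pi)$ with matching base points, use Proposition \ref{t3.2.6} with $F=\pi$ (together with the fact, inherited from Theorem \ref{t3.2.7}, that $t\dot{+}\gamma$ lies over $\gamma$), or equivalently the coordinate formula in the Remark following Proposition \ref{t3.2.5}, to conclude that $t\dot{+}\gamma\in\mathcal{J}^{n+1}(\pi)$ with $\pi_{n+1,n}(t\dot{+}\gamma)=\pi_{n+1,n}(\gamma)$. Finally, transitivity of the fiberwise action of $\dot{+}$ and the fact that $\gamma_{+}\dot{-}\gamma_{-}$ is the unique element carrying $\gamma_{-}$ to $\gamma_{+}$ are inherited verbatim from Theorem \ref{t3.2.7}. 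Assembling these gives the asserted affine-bundle structure.

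I expect no genuine obstacle here: all the substantive content resides in Theorem \ref{t3.2.7}, and what remains is the bookkeeping verification that the operations $\dot{-}$ and $\dot{+}$ preserve the degeneracy condition, which is transparent from the explicit Taylor-coordinate descriptions supplied in the Remarks of \S\ref{s3.2}. The only two points deserving a moment's care are the identification $\left(E\otimes\mathcal{W}_{D\left(_{p+n}C_{n+1}\right)}\right)^{\perp}=\mathcal{S}^{n+1}(\pi)$, and the remark that the compatibility hypotheses already built into $\dot{-}$ and $\dot{+}$ (agreement on $D(p)_{n}$, respectively on $1$) force the base-point compatibility that keeps "degenerate $\dot{-}$ degenerate" and "degenerate $\dot{+}$ degenerate" inside the degenerate locus.
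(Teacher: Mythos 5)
Your proposal is correct and is exactly the route the paper intends: Theorem \ref{t3.2.8} is left without explicit proof precisely because, in parallel with the deduction of Theorem \ref{t8.2.3.4} from Theorem \ref{t8.2.1.7}, it ``follows simply from Theorem \ref{t3.2.7}'' by restricting to the degenerate (jet) subloci, with $\mathcal{S}^{n+1}(\pi)=\left(E\otimes\mathcal{W}_{D\left(_{p+n}C_{n+1}\right)}\right)^{\perp}$ holding by definition and the closure of $\dot{-}$ and $\dot{+}$ under the degeneracy condition being immediate from Propositions \ref{t3.2.3} and \ref{t3.2.6} (with $F=\pi$) or from the coordinate Remarks. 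Your bookkeeping of these points is sound, so nothing further is needed.
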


\section{\label{s4}The First Approach with Coordinates}

\begin{definition}
We define $\theta_{\mathbf{J}^{1}(\pi)}^{\mathcal{J}^{1}(\pi)}:\mathcal{J}%
^{1}(\pi)\rightarrow\mathbf{J}^{1}(\pi)$ to be
\begin{align*}
&  \theta_{\mathbf{J}^{1}(\pi)}^{\mathcal{J}^{1}(\pi)}\left(  x^{1}%
,...,x^{p},u^{1},...,u^{q},u_{i}^{j}\right) \\
&  =\left[  \delta\in\mathbb{R}\mapsto\left(  x^{1},...,x^{p}\right)  +\left(
y^{1},...,y^{p}\right)  \delta\in\mathbb{R}^{p}\right]  \in\left(
M\otimes\mathcal{W}_{D}\right)  _{\left(  x^{1},...,x^{p}\right)  }\mapsto\\
&  \left[  \delta\in\mathbb{R}\mapsto\left(  x^{1},...,x^{p},u^{1}%
,...,u^{q}\right)  +\left(  y^{1},...,y^{p},\sum_{i=1}^{p}u_{i}^{j}%
y^{i}\right)  \delta\in\mathbb{R}^{p+q}\right] \\
&  \in\left(  E\otimes\mathcal{W}_{D}\right)  _{\left(  x^{1},...,x^{p}%
,u^{1},...,u^{q}\right)  }%
\end{align*}

\end{definition}

\begin{remark}
It is easy to see that the right-hand side of the above formula belongs to
$\mathbf{J}^{1}(\pi)$.
\end{remark}

\begin{theorem}
\label{t4.1}The mapping $\theta_{\mathbf{J}^{1}(\pi)}^{\mathcal{J}^{1}(\pi
)}:\mathcal{J}^{1}(\pi)\rightarrow\mathbf{J}^{1}(\pi)\ $is bijective.
\end{theorem}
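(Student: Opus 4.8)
The plan is to prove bijectivity fibrewise. Both $\mathcal{J}^{1}(\pi)$ and $\mathbf{J}^{1}(\pi)$ are bundles over $E$ (the base point being, respectively, the pair $(x^{i},u^{j})$ and the point at which the tangential is taken), and $\theta_{\mathbf{J}^{1}(\pi)}^{\mathcal{J}^{1}(\pi)}$ is by construction a morphism over $E$; so it suffices to show that for each $x=(x^{1},\dots,x^{p},u^{1},\dots,u^{q})\in E$ the induced map $\theta_{x}:\mathcal{J}_{x}^{1}(\pi)\rightarrow\mathbf{J}_{x}^{1}(\pi)$ is bijective. First I would coordinatise. By Proposition \ref{t2.1} with $n=1$ (so $D_{1}=D$), every element of $(M\otimes\mathcal{W}_{D})_{\pi(x)}$ is $[\delta\mapsto(x^{i})+\delta(y^{i})]$ for a unique $(y^{i})\in\mathbb{R}^{p}$, and every element of $(E\otimes\mathcal{W}_{D})_{x}$ is $[\delta\mapsto(x^{i},u^{j})+\delta(a^{i},b^{j})]$ for a unique $(a^{i},b^{j})\in\mathbb{R}^{p+q}$. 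By condition 1 of Definition \ref{d4.1} one must have $a^{i}=y^{i}$, and (recalling from Definition \ref{d4.2} that for $n=1$ a $D$-tangential is nothing but a $D$-pseudotangential) every $\nabla_{x}\in\mathbf{J}_{x}^{1}(\pi)$ is therefore of the form
\[
\nabla_{x}\big([\delta\mapsto(x^{i})+\delta(y^{i})]\big)=[\delta\mapsto(x^{i},u^{j})+\delta(y^{i},\beta(y))]
\]
for a unique function $\beta=\beta_{\nabla_{x}}:\mathbb{R}^{p}\rightarrow\mathbb{R}^{q}$, which is smooth because $\nabla_{x}$ is a morphism. In this dictionary, $\theta_{x}$ sends $(x^{i},u^{j},u_{i}^{j})$ to the $\nabla_{x}$ whose $\beta$ is the linear map $y\mapsto\big(\sum_{i=1}^{p}u_{i}^{j}y^{i}\big)_{j}$ with matrix $(u_{i}^{j})$.

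Injectivity of $\theta_{x}$ is then immediate: if two jets over $x$ have the same image, the associated linear maps coincide, and evaluating on the standard basis of $\mathbb{R}^{p}$ recovers each column, so the jets agree. For surjectivity---the substantive step---I would start from an arbitrary $\nabla_{x}\in\mathbf{J}_{x}^{1}(\pi)$ and show that its $\beta$ is linear. For $n=1$ and $i=1$ the operation $\alpha\underset{1}{\cdot}\gamma$ corresponds under the dictionary to rescaling the direction, $(y^{i})\mapsto(\alpha y^{i})$; hence condition 2 of Definition \ref{d4.1} reads $\beta(\alpha y)=\alpha\beta(y)$ for all $\alpha\in\mathbb{R}$ and all $y\in\mathbb{R}^{p}$. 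Differentiating this identity in $\alpha$ at $\alpha=0$---which is legitimate since $\beta$ is $C^{\infty}$---and applying the chain rule on the left gives $\beta^{j}(y)=\sum_{i=1}^{p}u_{i}^{j}y^{i}$ with $u_{i}^{j}:=(\partial\beta^{j}/\partial y^{i})(0)$, so $\beta$ is linear and $\nabla_{x}=\theta_{x}(x^{i},u^{j},u_{i}^{j})$. (Condition 4 of Definition \ref{d4.1} is vacuous for $n=1$, and condition 3 is not needed for this direction, being automatically satisfied once $\beta$ is homogeneous of degree one; alternatively the linearity of $\beta$ can be extracted from the microlinearity arguments of \cite{la}.) Thus each $\theta_{x}$ is bijective, and hence so is $\theta_{\mathbf{J}^{1}(\pi)}^{\mathcal{J}^{1}(\pi)}$.

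The main obstacle is the surjectivity half, and within it the passage from homogeneity of degree one to genuine linearity of $\beta$: degree-one homogeneity alone is strictly weaker than linearity (a map homogeneous of degree one need not be additive), and the gap is closed only by using that $\nabla_{x}$ is a morphism---hence $\beta$ is $C^{\infty}$---and then differentiating at the origin. A secondary point requiring care is the bookkeeping of the base point $(x^{i},u^{j})$: this is handled automatically because a pseudotangential \emph{at} $x$ takes values in $(E\otimes\mathcal{W}_{D})_{x}$, so one should apply the Taylor representation of Proposition \ref{t2.1} to that fibre rather than to all of $E\otimes\mathcal{W}_{D}$.
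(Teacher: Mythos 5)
Your argument is correct, but note that the paper offers no proof of Theorem \ref{t4.1} at all: it is stated bare, with only the preceding remark that the right-hand side of the defining formula ``is easy to see'' to lie in $\mathbf{J}^{1}(\pi)$, so there is no proof of record to compare yours with. Your fibrewise reduction is the natural way to fill the gap and is in the same computational spirit as the arguments the paper does spell out nearby (Propositions \ref{t4.2} and \ref{t4.3}, which likewise work through Taylor representations via Proposition \ref{t2.1}). The one point on which your proof genuinely leans, and which you rightly flag, is that $\beta$ is smooth: in this Fr\"{o}licher--Weil setting degree-one homogeneity alone does not force linearity (unlike internal SDG, where it follows from the Kock--Lawvere axiom), and the smoothness of $\beta$ is licensed by the fact that a pseudotangential is a morphism of Fr\"{o}licher spaces --- something implicit in Definition \ref{d4.1}, e.g.\ in condition 3, where the Weil functor $-\otimes\mathcal{W}_{D_{m}}$ is applied to $\nabla_{x}$, which only makes sense for a morphism; it would be worth saying this explicitly rather than only in passing. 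Two cosmetic points: your parenthetical that condition 3 is ``automatically satisfied once $\beta$ is homogeneous of degree one'' should read ``once $\beta$ is linear'' (homogeneity alone would not give it, though in your argument homogeneity plus smoothness does yield linearity, so nothing breaks); and injectivity also uses that $\theta$ preserves the base point $(x^{i},u^{j})$, which you do note. With those glosses the proof is complete.
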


\begin{remark}
This gives a coordinate description of $\mathbf{J}^{1}(\pi)$.
\end{remark}

Now we are going to consider $\widetilde{\mathbf{J}}^{2}(\pi)=\mathbf{J}%
^{1}(\pi_{1})$, which has a coordinate description as follows:%

\begin{align*}
&  \theta_{\mathbf{J}^{1}(\pi_{1})}^{\mathcal{J}^{1}(\pi_{1})}\left(
x^{i},u^{j},u_{i_{1}}^{j},u_{;i_{2}}^{j},u_{i_{1};i_{2}}^{j}\right) \\
&  =\left[  \delta\in\mathbb{R}\mapsto\left(  x^{i}\right)  +\left(
y^{i}\right)  \delta\in\mathbb{R}^{p}\right]  \in\left(  M\otimes
\mathcal{W}_{D}\right)  _{\left(  x^{i}\right)  }\mapsto\\
&  \left[  \delta\in\mathbb{R}\mapsto\left(  x^{i},u^{j},u_{i_{1}}^{j}\right)
+\left(  y^{i},\sum_{i_{2}=1}^{p}u_{;i_{2}}^{j}y^{i_{2}},\sum_{i_{2}=1}%
^{p}u_{i_{1};i_{2}}^{j}y^{i_{2}}\right)  \delta\in\mathbb{R}^{p+q+pq}\right]
\\
&  \in\left(  E\otimes\mathcal{W}_{D}\right)  _{\left(  x^{i},u^{j},u_{i_{1}%
}^{j}\right)  }\text{,}%
\end{align*}
for which we get

\begin{proposition}
\label{t4.2}We have
\[
(x^{i},u^{j};u_{i_{1}}^{j};u_{;i_{2}}^{j},u_{i_{1};i_{2}}^{j})\in
\hat{\mathbf{J}}^{2}(\pi)
\]
iff $u_{i}^{j}=u_{;i}^{j}$ for all $1\leq i\leq p$ and all $1\leq j\leq q$.
\end{proposition}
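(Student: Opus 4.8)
The plan is to unwind the definition of $\hat{\mathbf{J}}^{2}(\pi)$ inside the iterated jet space $\widetilde{\mathbf{J}}^{2}(\pi)=\mathbf{J}^{1}(\pi_{1})$ and then push everything through the Taylor coordinates furnished by Propositions \ref{t2.1}--\ref{t2.3}. Recall that $\hat{\mathbf{J}}^{2}(\pi)$ is the subset of $\widetilde{\mathbf{J}}^{2}(\pi)$ on which the two canonical mappings into $\mathbf{J}^{1}(\pi)$ coincide: on one side the source projection $\widetilde{\mathbf{J}}^{2}(\pi)\rightarrow\mathbf{J}^{1}(\pi)$ that simply forgets the first-order data of the outer jet, and on the other side the prolongation $\mathbf{J}^{1}(\pi_{1,0})\colon\mathbf{J}^{1}(\pi_{1})\rightarrow\mathbf{J}^{1}(\pi)$ of the bundle morphism $\pi_{1,0}\colon\mathbf{J}^{1}(\pi)\rightarrow E$ over $M$. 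Thus the whole matter reduces to evaluating these two maps on the generic element $(x^{i},u^{j},u_{i_{1}}^{j},u_{;i_{2}}^{j},u_{i_{1};i_{2}}^{j})$, whose Taylor representation under $\theta_{\mathbf{J}^{1}(\pi_{1})}^{\mathcal{J}^{1}(\pi_{1})}$ is the display written just above the statement.

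First I would read off the source projection: suppressing the $\delta$-linear term of that Taylor representation leaves the base point $(x^{i},u^{j},u_{i_{1}}^{j})\in\mathbf{J}^{1}(\pi)$, i.e.\ $\theta_{\mathbf{J}^{1}(\pi)}^{\mathcal{J}^{1}(\pi)}(x^{i},u^{j},u_{i_{1}}^{j})$ by Theorem \ref{t4.1}. Next I would compute $\mathbf{J}^{1}(\pi_{1,0})$: since in coordinates $\pi_{1,0}$ is $(x^{i},u^{j},u_{i_{1}}^{j})\mapsto(x^{i},u^{j})$, applying $\pi_{1,0}\otimes\mathrm{id}_{\mathcal{W}_{D}}$ pointwise to the displayed $D$-curve $\delta\mapsto(x^{i},u^{j},u_{i_{1}}^{j})+(y^{i},\sum_{i_{2}}u_{;i_{2}}^{j}y^{i_{2}},\sum_{i_{2}}u_{i_{1};i_{2}}^{j}y^{i_{2}})\delta$ produces $\delta\mapsto(x^{i},u^{j})+(y^{i},\sum_{i_{2}}u_{;i_{2}}^{j}y^{i_{2}})\delta$, which is exactly $\theta_{\mathbf{J}^{1}(\pi)}^{\mathcal{J}^{1}(\pi)}(x^{i},u^{j},u_{;i}^{j})$. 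Comparing the two images via the bijectivity of $\theta_{\mathbf{J}^{1}(\pi)}^{\mathcal{J}^{1}(\pi)}$ (Theorem \ref{t4.1}), they agree precisely when $u_{i}^{j}=u_{;i}^{j}$ for all $1\leq i\leq p$ and all $1\leq j\leq q$, which is the asserted equivalence.

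The routine part is the bookkeeping with the Weil-functors $\mathcal{W}_{(-)}$: one must check that $\mathbf{J}^{1}(\pi_{1,0})$ is genuinely computed by applying $\pi_{1,0}\otimes\mathrm{id}_{\mathcal{W}_{D}}$ to the underlying $D$-curve and that this operation is compatible with passing to Taylor representations, but this is immediate from the naturality in Propositions \ref{t2.1}--\ref{t2.3} and from the very definition of $\theta_{\mathbf{J}^{1}(\pi)}^{\mathcal{J}^{1}(\pi)}$. The only point that genuinely needs care is to recall precisely which of the two coincidence conditions serves as the definition of $\hat{\mathbf{J}}^{2}(\pi)$ in \cite{nishimura}; once that is fixed, the coordinate computation above is forced, and the proof collapses to the one-line comparison of the two displayed Taylor representations. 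I therefore expect no real obstacle beyond stating the definition of $\hat{\mathbf{J}}^{2}(\pi)$ explicitly before carrying out the calculation.
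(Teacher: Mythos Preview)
Your proposal is correct and follows essentially the same route as the paper. The paper phrases the defining condition of $\hat{\mathbf{J}}^{2}(\pi)$ as ``$\pi_{1,0}$-related to $(x^{i},u^{j},u_{i_{1}}^{j})$'' and then compares the two $D$-curves $u^{j}+\delta\sum_{i_{2}}y^{i_{2}}u_{;i_{2}}^{j}$ and $u^{j}+\delta\sum_{i_{1}}y^{i_{1}}u_{i_{1}}^{j}$ for all $y$ and $\delta$, which is exactly your comparison of the source projection with $\mathbf{J}^{1}(\pi_{1,0})$ read off in Taylor coordinates.
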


\begin{proof}
It is easy to see that $\left(  x^{i},u^{j},u_{i_{1}}^{j},u_{;i_{2}}%
^{j},u_{i_{1};i_{2}}^{j}\right)  \in\mathbf{J}^{1}(\pi_{1})$ is $\pi_{1,0}%
$-related to $\left(  x^{i},u^{j},u_{i_{1}}^{j}\right)  \in\mathbf{J}^{1}%
(\pi)$ iff
\[
u^{j}+\delta\Sigma_{i_{2}=1}^{n}y^{i_{2}}u_{;i_{2}}^{j}=u^{j}+\delta
\Sigma_{i_{1}=1}^{n}y^{i_{1}}u_{i_{1}}^{j}\qquad(1\leq j\leq q)
\]
for all $(y^{1},...,y^{p})\in\mathbb{R}^{p}$ and all $\delta\in\mathbb{R}$,
which is tantamount to saying that
\[
u_{;i}^{j}=u_{i}^{j}%
\]
for all $1\leq i\leq p$ and all $1\leq j\leq q$. This completes the proof.
\end{proof}

\begin{notation}
Thus the coordinate $(x^{i},u^{j};u_{i_{1}}^{j};u_{;i_{2}}^{j},u_{i_{1};i_{2}%
}^{j})\in\hat{\mathbf{J}}^{2}(\pi)$ can be simplified to $(x^{i}%
,u^{j},u_{i_{1}}^{j},u_{i_{1};i_{2}}^{j})$.
\end{notation}

Now we take a step forward.

\begin{proposition}
\label{t4.3}Let $(x^{i},u^{j},u_{i_{1}}^{j},u_{i_{1};i_{2}}^{j})\in
\hat{\mathbf{J}}^{2}(\pi)$. Then $(x^{i},u^{j},u_{i_{1}}^{j},u_{i_{1};i_{2}%
}^{j})\in\mathbf{J}^{2}(\pi)$ iff
\[
u_{i_{1};i_{2}}^{j}=u_{i_{2};i_{1}}^{j}%
\]
for all $1\leq i_{1},i_{2}\leq p$ and all $1\leq j\leq q$.
\end{proposition}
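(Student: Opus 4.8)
The plan is to make the identification of $\hat{\mathbf{J}}^{2}(\pi)$ with a space of mappings $\left(  M\otimes\mathcal{W}_{D^{2}}\right)  _{\pi(x)}\rightarrow\left(  E\otimes\mathcal{W}_{D^{2}}\right)  _{x}$ completely explicit in Taylor representations, and then to check, one by one, which of the conditions defining a $D^{2}$-tangential (Definitions \ref{d4.1} and \ref{d4.2}) are automatic for such a mapping and which one forces the announced symmetry.

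First I would write $(x^{i},u^{j},u_{i_{1}}^{j},u_{i_{1};i_{2}}^{j})\in\hat{\mathbf{J}}^{2}(\pi)$ in its unabbreviated form $(x^{i},u^{j};u_{i_{1}}^{j};u_{;i_{2}}^{j},u_{i_{1};i_{2}}^{j})\in\mathbf{J}^{1}(\pi_{1})$ with $u_{;i}^{j}=u_{i}^{j}$ (Proposition \ref{t4.2}), and compose the coordinate form of $\theta_{\mathbf{J}^{1}(\pi_{1})}^{\mathcal{J}^{1}(\pi_{1})}$ displayed before Proposition \ref{t4.2} with that of $\theta_{\mathbf{J}^{1}(\pi)}^{\mathcal{J}^{1}(\pi)}$ defined above to obtain the associated $D^{2}$-pseudotangential $\nabla_{x}$. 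Using Propositions \ref{t2.1} and \ref{t2.2}, if $\gamma\in\left(  M\otimes\mathcal{W}_{D^{2}}\right)  _{\pi(x)}$ has Taylor representation $(\delta_{1},\delta_{2})\mapsto(x^{i})+\delta_{1}(a^{i})+\delta_{2}(b^{i})+\delta_{1}\delta_{2}(c^{i})$, then $\nabla_{x}(\gamma)$ has Taylor representation
\[
(\delta_{1},\delta_{2})\mapsto(x^{i},u^{j})+\delta_{1}(a^{i},\textstyle\sum_{i_{1}}u_{i_{1}}^{j}a^{i_{1}})+\delta_{2}(b^{i},\textstyle\sum_{i_{2}}u_{i_{2}}^{j}b^{i_{2}})+\delta_{1}\delta_{2}(c^{i},\textstyle\sum_{i_{1}}u_{i_{1}}^{j}c^{i_{1}}+\textstyle\sum_{i_{1},i_{2}}u_{i_{1};i_{2}}^{j}a^{i_{1}}b^{i_{2}}).
\]
A routine check on this formula shows that $\nabla_{x}$ satisfies conditions 1--3 of Definition \ref{d4.1}: these follow from $\nabla_{x}$ lying in $\mathbf{J}^{1}(\pi_{1})$ together with the semiholonomy relation of Proposition \ref{t4.2}. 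It also satisfies condition 1 of Definition \ref{d4.2}, since $\widehat{\pi}_{2,1}(\nabla_{x})=(x^{i},u^{j},u_{i_{1}}^{j})$ is a $D$-pseudotangential and hence a $D$-tangential ($\mathbf{S}_{1}$ being trivial); and condition 2 of Definition \ref{d4.2}, spelled out for $n=1$ with the map $(d_{1},d_{2})\in D^{2}\mapsto d_{1}d_{2}\in D$, collapses to a trivial identity once $u_{;i}^{j}$ has been identified with $u_{i}^{j}$: for $\gamma=[\delta\mapsto(x^{i})+\delta(y^{i})]$ both sides have Taylor representation $(\delta_{1},\delta_{2})\mapsto(x^{i},u^{j})+\delta_{1}\delta_{2}(y^{i},\textstyle\sum_{i_{1}}u_{i_{1}}^{j}y^{i_{1}})$.

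Consequently $(x^{i},u^{j},u_{i_{1}}^{j},u_{i_{1};i_{2}}^{j})\in\mathbf{J}^{2}(\pi)$ if and only if $\nabla_{x}$ additionally satisfies condition 4 of Definition \ref{d4.1}, i.e. $\nabla_{x}(\gamma^{\sigma})=\left(  \nabla_{x}(\gamma)\right)  ^{\sigma}$ for the transposition $\sigma\in\mathbf{S}_{2}$. Since $\gamma^{\sigma}$ is obtained from $\gamma$ by interchanging the coefficients $(a^{i})$ and $(b^{i})$, the displayed formula shows that all Taylor coefficients of the two sides agree except the $\delta_{1}\delta_{2}$-coefficient of the $u$-part, which is $\textstyle\sum_{i_{1},i_{2}}u_{i_{1};i_{2}}^{j}b^{i_{1}}a^{i_{2}}$ on the left and $\textstyle\sum_{i_{1},i_{2}}u_{i_{1};i_{2}}^{j}a^{i_{1}}b^{i_{2}}$ on the right. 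As $(a^{i}),(b^{i})\in\mathbb{R}^{p}$ are arbitrary, these coincide for every $\gamma$ exactly when $u_{i_{1};i_{2}}^{j}=u_{i_{2};i_{1}}^{j}$ for all $1\leq i_{1},i_{2}\leq p$ and all $1\leq j\leq q$, which is the assertion (this being the microlinear counterpart of the classical symmetry of the second jet, cf. \cite{sa}). I expect the main obstacle to be the first step: transcribing the iterated-jet identification into the single closed Taylor formula for $\nabla_{x}$ on $\mathcal{W}_{D^{2}}$, and confirming that none of the conditions of Definitions \ref{d4.1}--\ref{d4.2} other than $\mathbf{S}_{2}$-equivariance carries content. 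Once that is settled, the remaining computation is immediate.
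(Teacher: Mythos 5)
Your computational core is exactly the paper's: the closed Taylor formula you write for the composite prolongation is the display the paper obtains in (\ref{4.3.2}) (with your $a^{i},b^{i},c^{i}$ playing the roles of $y_{1}^{i},y_{2}^{i},y_{12}^{i}$), and reading off the $\delta_{1}\delta_{2}$-coefficient of the $u$-part to force $u_{i_{1};i_{2}}^{j}=u_{i_{2};i_{1}}^{j}$ is the same final step. The gap is in the sentence beginning ``Consequently'': you take membership of $\mathbf{J}^{2}(\pi)$ to mean that the associated mapping $\left(M\otimes\mathcal{W}_{D^{2}}\right)_{\pi(x)}\rightarrow\left(E\otimes\mathcal{W}_{D^{2}}\right)_{x}$ satisfies Definitions \ref{d4.1} and \ref{d4.2}, with only the $\mathbf{S}_{2}$-equivariance in doubt. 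But those definitions are the \emph{second} approach's notion (they define $\mathbb{J}^{D^{2}}(\pi)$), whereas $\mathbf{J}^{2}(\pi)$ belongs to the first approach: as the paper's own proof makes plain, membership of an element of $\hat{\mathbf{J}}^{2}(\pi)$ in $\mathbf{J}^{2}(\pi)$ is tested by the equality of the two iterated evaluations $\nabla_{\nabla_{(x^{i},u^{j})}(\gamma(\cdot,0))(\delta_{1})}(\gamma(\delta_{1},\cdot))(\delta_{2})$ and $\nabla_{\nabla_{(x^{i},u^{j})}(\gamma(0,\cdot))(\delta_{2})}(\gamma(\cdot,\delta_{2}))(\delta_{1})$ for all microsquares $\gamma$, which the paper computes as (\ref{4.3.1}) and (\ref{4.3.2}) and compares. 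Identifying ``lies in $\mathbf{J}^{2}(\pi)$'' with ``the associated $D^{2}$-map is a $D^{2}$-tangential'' is essentially the statement that $\varphi_{2}$ is a bijection onto $\mathbb{J}^{D^{2}}(\pi)$, which in this paper is a corollary of Theorems \ref{t4.4} and \ref{t5.6} and hence sits downstream of the present proposition; invoking it here would be circular.

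The repair is cheap and preserves your computation: note that the second iterated evaluation is obtained from the first by replacing $\gamma$ with $\gamma^{\sigma}$ and then transposing the output variables, so the defining condition ``the two iterated evaluations agree for every $\gamma$'' is equivalent to the $\mathbf{S}_{2}$-equivariance of the single composite you computed; your coefficient comparison then closes the argument exactly as the paper's does. Once that bridge is stated, the excursion through conditions 1--3 of Definition \ref{d4.1} and 1--2 of Definition \ref{d4.2} becomes unnecessary --- and as written those checks are only asserted (``routine''), not carried out, so they could not be left to carry the logical weight your ``iff'' currently places on them.
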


\begin{proof}
Let $\gamma\in\left(  M\otimes\mathcal{W}_{D^{2}}\right)  _{(x^{i})}$. Then
$\gamma$ is of the Taylor representation
\begin{align*}
(\delta_{1},\delta_{2})  &  \in\mathbb{R}^{2}\longmapsto(x^{1},...,x^{p}%
)+\delta_{1}(y_{1}^{1},...,y_{1}^{p})+\delta_{2}(y_{2}^{1},...,y_{2}%
^{p})+\delta_{1}\delta_{2}(y_{12}^{1},...,y_{12}^{p})\\
&  =(x^{1}+\delta_{1}y_{1}^{1},...,x^{p}+\delta_{1}y_{1}^{p})+\delta_{2}%
(y_{2}^{1}+\delta_{1}y_{12}^{1},...,y_{2}^{p}+\delta_{1}y_{12}^{p})\\
&  =(x^{1}+\delta_{2}y_{2}^{1},...,x^{p}+\delta_{2}y_{2}^{p})+\delta_{1}%
(y_{1}^{1}+\delta_{2}y_{12}^{1},...,y_{1}^{p}+\delta_{2}y_{12}^{p})\\
&  \in\mathbb{R}^{p}%
\end{align*}
Let $\nabla_{(x^{i},u^{j},u_{i_{1}}^{j})}=\left(  x^{i},u^{j},u_{i_{1}}%
^{j},u_{i_{1};i_{2}}^{j}\right)  $ and $\nabla_{(x^{i},u^{j})}=(x^{i}%
,u^{j},u_{i_{1}}^{j})$. Then we have
\begin{align*}
&  \delta_{1}\in\mathbb{R\mapsto}\nabla_{(x^{i},u^{j})}(\gamma\left(
\cdot,0\right)  )\left(  \delta_{1}\right) \\
&  =\delta_{1}\in\mathbb{R\mapsto}(x^{i}+\delta_{1}y_{1}^{i},u^{j}+\delta
_{1}\Sigma_{i_{1}=1}^{p}y_{1}^{i_{1}}u_{i_{1}}^{j})\in\mathbb{R}^{p+q}\\
&  \delta_{1}\in\mathbb{R\mapsto}\nabla_{(x^{i},u^{j},u_{i_{1}}^{j})}%
(\gamma\left(  \cdot,0\right)  )\left(  \delta_{1}\right)  \in\mathbb{R}%
^{p+q+pq}\\
&  =\delta_{1}\in\mathbb{R\mapsto}(x^{i}+\delta_{1}y_{1}^{i},u^{j}+\delta
_{1}\sum_{i_{1}=1}^{p}y_{1}^{i_{1}}u_{i_{1}}^{j},u_{i_{1}}^{j}+\delta_{1}%
\sum_{i_{2}=1}^{p}y_{1}^{i_{2}}u_{i_{1};i_{2}}^{j})\in\mathbb{R}^{p+q+pq}%
\end{align*}
while we have
\begin{align}
&  (\delta_{1},\delta_{2})\in\mathbb{R}^{2}\longmapsto\nabla_{\nabla
_{(x^{i},u^{j})}(\gamma(\cdot,0))(\delta_{1})}(\gamma(\delta_{1}%
,\cdot))(\delta_{2})\in\mathbb{R}^{p+q}\nonumber\\
&  =(\delta_{1},\delta_{2})\in\mathbb{R}^{2}\longmapsto\left(
\begin{array}
[c]{c}%
x^{i}+\delta_{1}y_{1}^{i}+\delta_{2}y_{2}^{i}+\delta_{1}\delta_{2}y_{12}%
^{i},u^{j}+\delta_{1}\sum_{i_{1}=1}^{p}y_{1}^{i_{1}}u_{i_{1}}^{j}+\\
\delta_{2}\sum_{i_{1}=1}^{p}\left(  y_{2}^{i_{1}}+\delta_{1}y_{12}^{i_{1}%
}\right)  \left(  u_{i_{1}}^{p}+\delta_{1}\sum_{i_{2}=1}^{p}y_{1}^{i_{2}%
}u_{i_{1};i_{2}}^{j}\right)
\end{array}
\right)  \in\mathbb{R}^{p+q}\nonumber\\
&  =(\delta_{1},\delta_{2})\in\mathbb{R}^{2}\longmapsto\left(
\begin{array}
[c]{c}%
x^{i}+\delta_{1}y_{1}^{i}+\delta_{2}y_{2}^{i}+\delta_{1}\delta_{2}y_{12}%
^{i},u^{j}+\\
\delta_{1}\sum_{i_{1}=1}^{p}y_{1}^{i_{1}}u_{i_{1}}^{j}+\delta_{2}\sum
_{i_{1}=1}^{p}y_{2}^{i_{1}}u_{i_{1}}^{j}+\\
\delta_{1}\delta_{2}\sum_{i_{1}=1}^{p}y_{12}^{i_{1}}u_{i_{1}}^{j}+\delta
_{1}\delta_{2}\sum_{i_{2}=1}^{p}\sum_{i_{1}=1}^{p}y_{1}^{i_{2}}y_{2}^{i_{1}%
}u_{i_{1};i_{2}}^{j}%
\end{array}
\right)  \in\mathbb{R}^{p+q} \label{4.3.1}%
\end{align}
On the other hand, we have
\begin{align*}
\delta_{2}  &  \in\mathbb{R\mapsto}\nabla_{(x^{i},u^{j})}(\gamma
(0,\cdot))(\delta_{2})\in\mathbb{R}^{p+q}\\
&  =\delta_{2}\in\mathbb{R\mapsto}\left(  x^{i}+\delta_{2}y_{2}^{i}%
,u^{j}+\delta_{2}\sum_{i_{1}=1}^{p}y_{2}^{i_{1}}u_{i_{1}}^{j}\right)
\in\mathbb{R}^{p+q}\\
\delta_{2}  &  \in\mathbb{R\mapsto}\nabla_{(x^{i},u^{j},u_{i_{1}}^{j})}%
(\gamma(0,\cdot))(\delta_{2})\in\mathbb{R}^{p+q+pq}\\
&  =\delta_{2}\in\mathbb{R\mapsto}\left(  x^{i}+\delta_{2}y_{2}^{i}%
,u^{j}+\delta_{2}\sum_{i_{1}=1}^{p}y_{2}^{i_{1}}u_{i_{1}}^{j},u_{i_{1}}%
^{j}+\delta_{2}\sum_{i_{2}=1}^{p}y_{2}^{i_{2}}u_{i_{1};i_{2}}^{j}\right)
\in\mathbb{R}^{p+q+pq}%
\end{align*}
while we have
\begin{align}
&  (\delta_{1},\delta_{2})\in\mathbb{R}^{2}\longmapsto\nabla_{\nabla
_{(x^{i},u^{j})}(\gamma(0,\cdot))(\delta_{2})}(\gamma(\cdot,\delta
_{2}))(\delta_{1})\in\mathbb{R}^{p+q}\nonumber\\
&  =(\delta_{1},\delta_{2})\in\mathbb{R}^{2}\longmapsto\left(
\begin{array}
[c]{c}%
x^{i}+\delta_{1}y_{1}^{i}+\delta_{2}y_{2}^{i}+\delta_{1}\delta_{2}y_{3}%
^{i},u^{j}+\delta_{2}\sum_{i_{1}=1}^{p}y_{2}^{i_{1}}u_{i_{1}}^{j}+\\
\delta_{1}\sum_{i_{1}=1}^{p}\left(  y_{1}^{i_{1}}+\delta_{2}y_{3}^{i_{1}%
}\right)  \left(  u_{i_{1}}^{j}+\delta_{2}\sum_{i_{2}=1}^{p}y_{2}^{i_{2}%
}u_{i_{1};i_{2}}^{j}\right)
\end{array}
\right)  \in\mathbb{R}^{p+q}\nonumber\\
&  =(\delta_{1},\delta_{2})\in\mathbb{R}^{2}\longmapsto\left(
\begin{array}
[c]{c}%
x^{i}+\delta_{1}y_{1}^{i}+\delta_{2}y_{2}^{i}+\delta_{1}\delta_{2}y_{3}%
^{i},u^{j}+\\
\delta_{1}\sum_{i_{1}=1}^{p}y_{1}^{i_{1}}u_{i_{1}}^{j}+\delta_{2}\sum
_{i_{1}=1}^{p}y_{2}^{i_{1}}u_{i_{1}}^{j}+\\
\delta_{1}\delta_{2}\sum_{i_{1}=1}^{p}y_{3}^{i_{1}}u_{i_{1}}^{j}+\delta
_{1}\delta_{2}\sum_{i_{2}=1}^{p}\sum_{i_{1}=1}^{p}y_{1}^{i_{1}}y_{2}^{i_{2}%
}u_{i_{1};i_{2}}^{j}%
\end{array}
\right)  \in\mathbb{R}^{p+q}\nonumber\\
&  =(\delta_{1},\delta_{2})\in\mathbb{R}^{2}\longmapsto\left(
\begin{array}
[c]{c}%
x^{i}+\delta_{1}y_{1}^{i}+\delta_{2}y_{2}^{i}+\delta_{1}\delta_{2}y_{3}%
^{i},u^{j}+\\
\delta_{1}\sum_{i_{1}=1}^{p}y_{1}^{i_{1}}u_{i_{1}}^{j}+\delta_{2}\sum
_{i_{1}=1}^{p}y_{2}^{i_{1}}u_{i_{1}}^{j}+\\
\delta_{1}\delta_{2}\sum_{i_{2}=1}^{p}y_{3}^{i_{2}}u_{i_{2}}^{j}+\delta
_{1}\delta_{2}\sum_{i_{1}=1}^{p}\sum_{i_{2}=1}^{p}y_{1}^{i_{1}}y_{2}^{i_{2}%
}u_{i_{1};i_{2}}^{j}%
\end{array}
\right)  \in\mathbb{R}^{p+q} \label{4.3.2}%
\end{align}
Therefore it follows from (\ref{4.3.1}) and (\ref{4.3.2}) that
\begin{align*}
(\delta_{1},\delta_{2})  &  \in\mathbb{R}^{2}\longmapsto\nabla_{\nabla
_{(x^{i},u^{j})}(\gamma(\cdot,0))(\delta_{1})}(\gamma(\delta_{1}%
,\cdot))(\delta_{2})\in\mathbb{R}^{p+q}\\
&  =(\delta_{1},\delta_{2})\in\mathbb{R}^{2}\longmapsto\nabla_{\nabla
_{(x^{i},u^{j})}(\gamma(0,\cdot))(\delta_{2})}(\gamma(\cdot,\delta
_{2}))(\delta_{1})\in\mathbb{R}^{p+q}%
\end{align*}
for all $\gamma\in\left(  M\otimes\mathcal{W}_{D^{2}}\right)  _{(x^{i})}$ iff
\[
u_{i_{1};i_{2}}^{j}=u_{i_{2};i_{1}}^{j}%
\]
for all $1\leq i_{1},i_{2}\leq p$ and all $1\leq j\leq q$. This completes the proof.
\end{proof}

\begin{definition}
Thus we have defined a bijection $\theta_{\mathbf{J}^{2}(\pi)}^{\mathcal{J}%
^{2}(\pi)}:\mathcal{J}^{2}(\pi)\rightarrow\mathbf{J}^{2}(\pi)$, which goes
formally as follows:
\begin{align*}
&  \theta_{\mathbf{J}^{2}(\pi)}^{\mathcal{J}^{2}(\pi)}\left(  x^{i}%
,u^{j},u_{i_{1}}^{j},u_{i_{1},i_{2}}^{j}\right) \\
&  =\left[  \delta\in\mathbb{R}\mapsto\left(  x^{i}\right)  +\left(
y^{i}\right)  \delta\in\mathbb{R}^{p}\right]  \in\left(  M\otimes
\mathcal{W}_{D}\right)  _{\left(  x^{i}\right)  }\mapsto\\
&  \left[  \delta\in\mathbb{R}\mapsto\theta_{\mathbf{J}^{1}(\pi)}%
^{\mathcal{J}^{1}(\pi)}\left(  \left(  x^{i},u^{j},u_{i_{1}}^{j}\right)
+\left(  y^{i},\sum_{i_{1}=1}^{p}u_{i_{1}}^{j}y^{i_{1}},\sum_{i_{2}=1}%
^{p}u_{i_{1},i_{2}}^{j}y^{i_{2}}\right)  \delta\right)  \in\mathbf{J}^{1}%
(\pi)\right] \\
&  \in\left(  \mathbf{J}^{1}(\pi)\otimes\mathcal{W}_{D}\right)  _{\theta
_{\mathbf{J}^{1}(\pi)}^{\mathcal{J}^{1}(\pi)}\left(  x^{i},u^{j},u_{i_{1}}%
^{j}\right)  }%
\end{align*}

\end{definition}

We can go on by induction on $n$.

\begin{theorem}
\label{t4.4}The mapping $\theta_{\mathbf{J}^{n+1}(\pi)}^{\mathcal{J}^{n+1}%
(\pi)}:\mathcal{J}^{n+1}(\pi)\rightarrow\mathbf{J}^{n+1}(\pi)$, which is
defined to be
\begin{align*}
&  \theta_{\mathbf{J}^{n+1}(\pi)}^{\mathcal{J}^{n+1}(\pi)}\left(  x^{i}%
,u^{j},u_{i_{1}}^{j},u_{i_{1},i_{2}}^{j},...,u_{i_{1},i_{2},...,i_{n+1}}%
^{j}\right) \\
&  =\left[  \delta\in\mathbb{R}\mapsto\left(  x^{i}\right)  +\left(
y^{i}\right)  \delta\in\mathbb{R}^{p}\right]  \in\left(  M\otimes
\mathcal{W}_{D}\right)  _{\left(  x^{i}\right)  }\mapsto\\
&  \left[
\begin{array}
[c]{c}%
\delta\in\mathbb{R}\mapsto\\
\theta_{\mathbf{J}^{n}(\pi)}^{\mathcal{J}^{n}(\pi)}\left(
\begin{array}
[c]{c}%
\left(  x^{i},u^{j},u_{i_{1}}^{j},...,u_{i_{1},i_{2},...,i_{n}}^{j}\right)
+\\
\left(  y^{i},\sum_{i_{1}=1}^{p}u_{i_{1}}^{j}y^{i_{1}},...,\sum_{i_{n+1}%
=1}^{p}u_{i_{1},i_{2},...,i_{n+1}}^{j}y^{i_{n+1}}\right)  \delta
\end{array}
\right)  \in\mathbf{J}^{n}(\pi)
\end{array}
\right] \\
&  \in\left(  \mathbf{J}^{n}(\pi)\otimes\mathcal{W}_{D}\right)  _{\theta
_{\mathbf{J}^{n}(\pi)}^{\mathcal{J}^{n}(\pi)}\left(  x^{i},u^{j},u_{i_{1}}%
^{j},...,u_{i_{1},i_{2},...,i_{n}}^{j}\right)  }%
\end{align*}
by induction on $n$, is bijective.
\end{theorem}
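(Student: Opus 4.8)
The plan is to argue by induction on $n$. The base case $n=0$ is Theorem \ref{t4.1}, and the case $n=1$ has already been dispatched by Propositions \ref{t4.2} and \ref{t4.3} together with the Definition preceding the statement; so I would assume $\theta_{\mathbf{J}^{n}(\pi)}^{\mathcal{J}^{n}(\pi)}$ is a bijection and derive the same for $\theta_{\mathbf{J}^{n+1}(\pi)}^{\mathcal{J}^{n+1}(\pi)}$. First I would check well-definedness: the formula in the statement manifestly produces an element of $\mathbf{J}^{n}(\pi)\otimes\mathcal{W}_{D}$ based at $\theta_{\mathbf{J}^{n}(\pi)}^{\mathcal{J}^{n}(\pi)}$ of the truncated tuple and lying over the tangent vector $\delta\mapsto(x^{i})+(y^{i})\delta$ of $M$, hence an element of $\widetilde{\mathbf{J}}^{n+1}(\pi)=\mathbf{J}^{1}(\pi_{n})$, where $\pi_{n}$ denotes the source projection $\mathbf{J}^{n}(\pi)\rightarrow M$; what remains is to see that this element actually lies in the holonomic part $\mathbf{J}^{n+1}(\pi)$.

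That verification is the $(n+1)$-level analogue of the work done at level $2$ in Propositions \ref{t4.2} and \ref{t4.3}, and I would carry it out in two stages. For the semi-holonomicity (i.e.\ membership in $\hat{\mathbf{J}}^{n+1}(\pi)$), I would, exactly as in Proposition \ref{t4.2}, compute the two projections of the constructed jet down to the level-$(n-1)$ jet bundle and note that their equality amounts to requiring the velocity in each $u_{i_{1},\dots,i_{k}}^{j}$-direction to be $\sum_{i_{k+1}}u_{i_{1},\dots,i_{k+1}}^{j}y^{i_{k+1}}$ --- which is precisely the shape of the corresponding block of the increment appearing in the formula, so the condition holds by construction. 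For the holonomicity, I would run the iterated Taylor computation of Proposition \ref{t4.3}, now expanding along $n+1$ infinitesimal parameters, and read off that the two orders of substitution agree iff the top array $(u_{i_{1},\dots,i_{n+1}}^{j})$ is symmetric under permutations of its lower indices; this holds because $\mathcal{J}^{n+1}(\pi)$ is by definition indexed by the ordered tuples $1\le i_{1}\le\dots\le i_{n+1}\le p$, i.e.\ by a symmetric array. Hence the codomain is correct.

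Injectivity and surjectivity I would then get cheaply. Composing with $\pi_{n+1,n}$ and using that $\pi_{n+1,n}\circ\theta_{\mathbf{J}^{n+1}(\pi)}^{\mathcal{J}^{n+1}(\pi)}=\theta_{\mathbf{J}^{n}(\pi)}^{\mathcal{J}^{n}(\pi)}\circ\pi_{n+1,n}$ (immediate from the formula), the induction hypothesis forces all coordinates except the top ones to agree whenever two images agree; and the innermost slot of the $\delta$-linear part carries $\sum_{i_{n+1}}u_{i_{1},\dots,i_{n+1}}^{j}y^{i_{n+1}}$, so letting $(y^{i})$ vary over $\mathbb{R}^{p}$ recovers each $u_{i_{1},\dots,i_{n+1}}^{j}$, which yields injectivity. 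For surjectivity, given $\Theta\in\mathbf{J}^{n+1}(\pi)\subseteq\mathbf{J}^{1}(\pi_{n})$, its projection to $\mathbf{J}^{n}(\pi)$ is $\theta_{\mathbf{J}^{n}(\pi)}^{\mathcal{J}^{n}(\pi)}$ of a unique tuple by the induction hypothesis, while the semi-holonomicity and holonomicity of $\Theta$ pin its velocity down to a single symmetric array $(u_{i_{1},\dots,i_{n+1}}^{j})$; concatenating these data and comparing with the formula shows $\Theta$ is the image of the resulting point of $\mathcal{J}^{n+1}(\pi)$.

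The main obstacle is the holonomicity check in the second paragraph: it is the general-$n$ incarnation of the twin iterated-Taylor computations (\ref{4.3.1})--(\ref{4.3.2}), and although no single step is hard, the bookkeeping --- tracking all the intermediate jet levels, recording which coordinates get identified by the successive semi-holonomicity conditions, and following the ``diagonal'' substitutions that produce the symmetrization --- is where the genuine effort lies. Everything else is either the induction hypothesis or a one-line extraction of the top coordinate from a linear term.
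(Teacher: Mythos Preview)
Your proposal is correct and follows exactly the route the paper intends: the paper gives no explicit proof of Theorem~\ref{t4.4} beyond the sentence ``We can go on by induction on $n$'' preceding its statement, relying on the reader to lift the pattern of Theorem~\ref{t4.1} and Propositions~\ref{t4.2}--\ref{t4.3} to general $n$. Your outline does precisely that, and your identification of the holonomicity bookkeeping as the only laborious step is accurate.
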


\section{\label{s5}The Second Approach with Coordinates}

\begin{definition}
We define mappings $\theta_{\mathbb{J}^{D^{n}}(\pi)}^{\mathcal{J}^{n}(\pi
)}:\mathcal{J}^{n}(\pi)\rightarrow\mathbb{J}^{D^{n}}(\pi)$ as $\varphi
_{n}\circ\theta_{\mathbf{J}^{n}(\pi)}^{\mathcal{J}^{n}(\pi)}$.
\end{definition}

\begin{remark}
Since $\mathbb{J}^{D}(\pi)=\mathbf{J}^{1}(\pi)$ and $\varphi_{1}$ is the
identity transformation, we have
\begin{align*}
&  \theta_{\mathbb{J}^{D^{1}}(\pi)}^{\mathcal{J}^{1}(\pi)}\left(  x^{i}%
,u^{j},u_{i_{1}}^{j}\right) \\
&  =\left[  \delta\in\mathbb{R}\longmapsto(x^{i})+\delta(y^{i})\in
\mathbb{R}^{p}\right]  \in\left(  M\otimes\mathcal{W}_{D}\right)  _{\left(
x^{i}\right)  }\mapsto\\
&  \left[  \delta\in\mathbb{R}\longmapsto(x^{i},u^{j})+\delta(y^{i}%
,\sum_{i_{1}=1}^{p}y^{i_{1}}u_{i_{1}}^{j})\in\mathbb{R}^{p+q}\right] \\
&  \in\left(  E\otimes\mathcal{W}_{D}\right)  _{\left(  x^{i},u^{j}\right)  }%
\end{align*}

\end{remark}

With due regard to Theorem \ref{t4.4}, it is easy to see that

\begin{lemma}
\label{t5.1}Given $\left(  x^{i},u^{j},u_{i_{1}}^{j},u_{i_{1},i_{2}}%
^{j},...,u_{i_{1},i_{2},...,i_{n+1}}^{j}\right)  \in\mathcal{J}^{n+1}(\pi)$,
we have
\begin{align*}
&  \theta_{\mathbb{J}^{D^{n+1}}(\pi)}^{\mathcal{J}^{n+1}(\pi)}\left(
x^{i},u^{j},u_{i_{1}}^{j},u_{i_{1},i_{2}}^{j},...,u_{i_{1},i_{2},...,i_{n+1}%
}^{j}\right) \\
&  =\left[  (\delta_{1},...,\delta_{n+1})\in\mathbb{R}^{n+1}\longmapsto
(x^{i})+\sum_{r=1}^{n+1}\sum_{1\leq k_{1}<...<k_{r}\leq n+1}\delta_{k_{1}%
}...\delta_{k_{r}}(y_{k_{1},...,k_{r}}^{i})\in\mathbb{R}^{p}\right] \\
&  \in\left(  M\otimes\mathcal{W}_{D^{n+1}}\right)  _{\left(  x^{i}\right)
}\mapsto\\
&  \left[
\begin{array}
[c]{c}%
(\delta_{1},...,\delta_{n+1})\in\mathbb{R}^{n+1}\longmapsto\\
\theta_{\mathbb{J}^{D^{n}}(\pi)}^{\mathcal{J}^{n}(\pi)}\left(
\begin{array}
[c]{c}%
\left(  x^{i},u^{j},u_{i_{1}}^{j},...,u_{i_{1},i_{2},...,i_{n}}^{j}\right)
+\\
\left(  y_{n+1}^{i},\sum_{i_{1}=1}^{p}u_{i_{1}}^{j}y_{n+1}^{i_{1}}%
,...,\sum_{i_{n+1}=1}^{p}u_{i_{1},i_{2},...,i_{n+1}}^{j}y_{n+1}^{i_{n+1}%
}\right)  \delta_{n+1}%
\end{array}
\right) \\
\left(
\begin{array}
[c]{c}%
(\delta_{1},...,\delta_{n})\in\mathbb{R}^{n}\longmapsto(x^{i})+\\
\sum_{r=1}^{n}\sum_{1\leq k_{1}<...<k_{r}\leq n}\delta_{k_{1}}...\delta
_{k_{r}}(y_{k_{1},...,k_{r}}^{i}+y_{k_{1},...,k_{r},n+1}^{i}\delta_{n+1}%
)\in\mathbb{R}^{p}%
\end{array}
\right)
\end{array}
\right] \\
&  \in\left(  E\otimes\mathcal{W}_{D^{n+1}}\right)  _{\left(  x^{i}%
,u^{j}\right)  }%
\end{align*}

\end{lemma}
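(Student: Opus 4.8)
The plan is to argue by induction on $n$, peeling off the last copy of $D$ from $D^{n+1}$ at each step, exactly in parallel with the inductive definition of $\theta_{\mathbf{J}^{n+1}(\pi)}^{\mathcal{J}^{n+1}(\pi)}$ in Theorem \ref{t4.4} and with the defining equation $\theta_{\mathbb{J}^{D^{n+1}}(\pi)}^{\mathcal{J}^{n+1}(\pi)}=\varphi_{n+1}\circ\theta_{\mathbf{J}^{n+1}(\pi)}^{\mathcal{J}^{n+1}(\pi)}$. The base of the induction is supplied by the Remark recording $\theta_{\mathbb{J}^{D^{1}}(\pi)}^{\mathcal{J}^{1}(\pi)}$ explicitly (using $\varphi_{1}=\mathrm{id}$ and $\mathbb{J}^{D}(\pi)=\mathbf{J}^{1}(\pi)$), combined with Proposition \ref{t2.2}, which certifies that the Taylor representations written in the statement are the legitimate coordinatisations of the elements of $M\otimes\mathcal{W}_{D^{n+1}}$ and of the family in $\mathcal{J}^{n}(\pi)$ that occur in it.

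For the inductive step I would start from $\theta_{\mathbb{J}^{D^{n+1}}(\pi)}^{\mathcal{J}^{n+1}(\pi)}=\varphi_{n+1}\circ\theta_{\mathbf{J}^{n+1}(\pi)}^{\mathcal{J}^{n+1}(\pi)}$ and expand the second factor by the explicit formula of Theorem \ref{t4.4}. This exhibits the image of $\left(x^{i},u^{j},u_{i_{1}}^{j},\ldots,u_{i_{1},\ldots,i_{n+1}}^{j}\right)$ in $\mathbf{J}^{n+1}(\pi)=\mathbf{J}^{1}(\pi_{n})$ as the $D$-tangent to $\mathbf{J}^{n}(\pi)$ given by $\delta\mapsto\theta_{\mathbf{J}^{n}(\pi)}^{\mathcal{J}^{n}(\pi)}\bigl(\left(x^{i},u^{j},u_{i_{1}}^{j},\ldots,u_{i_{1},\ldots,i_{n}}^{j}\right)+\left(y^{i},\textstyle\sum u_{i_{1}}^{j}y^{i_{1}},\ldots,\sum u_{i_{1},\ldots,i_{n+1}}^{j}y^{i_{n+1}}\right)\delta\bigr)$, the very shift term that appears inside the right-hand side of the asserted formula.

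Next I would invoke the description of $\varphi_{n+1}$ from \cite{nishimura}: under the identifications $D^{n+1}=D^{n}\times D$ (the last factor being the $1$-jet direction just introduced) and $\mathbf{J}^{n+1}(\pi)=\mathbf{J}^{1}(\pi_{n})$, the map $\varphi_{n+1}$ sends a $D$-indexed family $\delta\mapsto p(\delta)\in\mathbf{J}^{n}(\pi)$ to the $D^{n+1}$-tangential whose value at $\gamma\in\left(M\otimes\mathcal{W}_{D^{n+1}}\right)_{\pi(x)}$ is $\varphi_{n}\bigl(p(\delta_{n+1})\bigr)$ applied to the $D^{n}$-slice of $\gamma$ at the running value $\delta_{n+1}$ of the last parameter. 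Since $\varphi_{n}\circ\theta_{\mathbf{J}^{n}(\pi)}^{\mathcal{J}^{n}(\pi)}=\theta_{\mathbb{J}^{D^{n}}(\pi)}^{\mathcal{J}^{n}(\pi)}$ and, by the induction hypothesis, $\theta_{\mathbb{J}^{D^{n}}(\pi)}^{\mathcal{J}^{n}(\pi)}$ is given by the lower-level instance of the present formula, this rewrites the composite as the right-hand side of the displayed equation — provided one checks that the $D^{n}$-slice at fixed $\delta_{n+1}$ of $(\delta_{1},\ldots,\delta_{n+1})\mapsto(x^{i})+\sum_{1\le k_{1}<\cdots<k_{r}\le n+1}\delta_{k_{1}}\cdots\delta_{k_{r}}(y_{k_{1},\ldots,k_{r}}^{i})$ is exactly $(\delta_{1},\ldots,\delta_{n})\mapsto(x^{i})+\sum_{1\le k_{1}<\cdots<k_{r}\le n}\delta_{k_{1}}\cdots\delta_{k_{r}}(y_{k_{1},\ldots,k_{r}}^{i}+y_{k_{1},\ldots,k_{r},n+1}^{i}\delta_{n+1})$; this is merely the splitting of the sum over subsets of $\{1,\ldots,n+1\}$ according to whether the subset contains $n+1$.

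The only genuine obstacle is the third paragraph: pinning down, and matching against \cite{nishimura}, the precise sense in which $\varphi_{n+1}$ is ``$\varphi_{n}$ in families along the last $D$-factor'', together with the compatibility of the slicing of $\gamma\in M\otimes\mathcal{W}_{D^{n+1}}$ with the $\mathbf{J}^{1}(\pi_{n})$-structure used in Theorem \ref{t4.4}. Everything else — the bookkeeping of Taylor coefficients, the identification of basepoints, and the fibrewise nature of the entire construction — is routine substitution, which is why the statement is flagged as ``easy to see''.
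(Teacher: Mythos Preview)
Your outline is correct and matches the paper's intended argument: the paper offers no proof beyond ``With due regard to Theorem~\ref{t4.4}, it is easy to see that'', and what you have written is precisely the unwinding of Theorem~\ref{t4.4} together with the definition $\theta_{\mathbb{J}^{D^{n+1}}(\pi)}^{\mathcal{J}^{n+1}(\pi)}=\varphi_{n+1}\circ\theta_{\mathbf{J}^{n+1}(\pi)}^{\mathcal{J}^{n+1}(\pi)}$ and the recursive description of $\varphi_{n+1}$ in terms of $\varphi_{n}$ from \cite{nishimura}.

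One small simplification: the induction is superfluous. The right-hand side of the lemma already displays $\theta_{\mathbb{J}^{D^{n}}(\pi)}^{\mathcal{J}^{n}(\pi)}$ unexpanded, so once you have written $\varphi_{n+1}\bigl(\theta_{\mathbf{J}^{n+1}(\pi)}^{\mathcal{J}^{n+1}(\pi)}(\cdots)\bigr)$ as $\varphi_{n}$ applied fibrewise in $\delta_{n+1}$ to the $\theta_{\mathbf{J}^{n}(\pi)}^{\mathcal{J}^{n}(\pi)}$-family of Theorem~\ref{t4.4}, the identity $\varphi_{n}\circ\theta_{\mathbf{J}^{n}(\pi)}^{\mathcal{J}^{n}(\pi)}=\theta_{\mathbb{J}^{D^{n}}(\pi)}^{\mathcal{J}^{n}(\pi)}$ (which is the \emph{definition}, not the inductive hypothesis) finishes the job. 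Your invocation of the ``lower-level instance of the present formula'' is never actually used.
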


Now we are going to determine $\theta_{\mathbb{J}^{D^{2}}(\pi)}^{\mathcal{J}%
^{2}(\pi)}$.

\begin{theorem}
\label{t5.2}Given $\left(  x^{i},u^{j},u_{i_{1}}^{j},u_{i_{1,}i_{2}}%
^{j}\right)  \in\mathcal{J}^{2}(\pi)$, we have
\begin{align*}
&  \theta_{\mathbb{J}^{D^{2}}(\pi)}^{\mathcal{J}^{2}(\pi)}\left(  x^{i}%
,u^{j},u_{i_{1}}^{j},u_{i_{1,}i_{2}}^{j}\right) \\
&  =\left[  \left(  \delta_{1},\delta_{2}\right)  \in\mathbb{R}^{2}%
\mapsto\left(  x^{i}\right)  +\left(  y_{1}^{i}\right)  \delta_{1}+\left(
y_{2}^{i}\right)  \delta_{2}+\left(  y_{12}^{i}\right)  \delta_{1}\delta
_{2}\in\mathbb{R}^{p}\right] \\
&  \in\left(  M\otimes\mathcal{W}_{D^{2}}\right)  _{\left(  x^{i}\right)
}\mapsto\\
&  \left[
\begin{array}
[c]{c}%
\left(  \delta_{1},\delta_{2}\right)  \in\mathbb{R}^{2}\mapsto(x^{i}%
,u^{j})+(y_{1}^{i},\sum_{i_{1}=1}^{p}y_{1}^{i_{1}}u_{i_{1}}^{j})\delta
_{1}+(y_{2}^{i},\sum_{i_{1}=1}^{p}y_{2}^{i_{1}}u_{i_{1}}^{j})\delta_{2}+\\
(y_{12}^{i},\sum_{i_{1}=1}^{p}\sum_{i_{2}=1}^{p}y_{1}^{i_{1}}y_{2}^{i_{2}%
}u_{i_{1},i_{2}}^{j}+\sum_{i_{1}=1}^{p}y_{12}^{i_{1}}u_{i_{1}}^{j})\delta
_{1}\delta_{2}\in\mathbb{R}^{p+q}%
\end{array}
\right] \\
&  \in\left(  E\otimes\mathcal{W}_{D^{2}}\right)  _{\left(  x^{i}%
,u^{j}\right)  }%
\end{align*}

\end{theorem}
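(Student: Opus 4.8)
The plan is to specialize the inductive description of Lemma \ref{t5.1} to $n=1$ and then feed in the explicit degree-one formula for $\theta_{\mathbb{J}^{D^1}(\pi)}^{\mathcal{J}^1(\pi)}$ recorded in the Remark immediately following the definition of $\theta_{\mathbb{J}^{D^n}(\pi)}^{\mathcal{J}^n(\pi)}$. Taking $n=1$ in Lemma \ref{t5.1}, the value of $\theta_{\mathbb{J}^{D^2}(\pi)}^{\mathcal{J}^2(\pi)}\bigl(x^i,u^j,u_{i_1}^j,u_{i_1,i_2}^j\bigr)$ on the generic curve $(\delta_1,\delta_2)\mapsto(x^i)+(y_1^i)\delta_1+(y_2^i)\delta_2+(y_{12}^i)\delta_1\delta_2$ is computed by first forming, for each $\delta_2$, the element
\[
\bigl(x^i,u^j,u_{i_1}^j\bigr)+\bigl(y_2^i,\textstyle\sum_{i_1}u_{i_1}^jy_2^{i_1},\sum_{i_2}u_{i_1,i_2}^jy_2^{i_2}\bigr)\delta_2
\]
of $\mathcal{J}^1(\pi)$, then applying $\theta_{\mathbb{J}^{D^1}(\pi)}^{\mathcal{J}^1(\pi)}$ to it, and finally evaluating the resulting element of $\mathbb{J}^{D^1}(\pi)$ on the curve $\delta_1\mapsto(x^i)+(y_1^i+y_{12}^i\delta_2)\delta_1$.

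Writing $a^i=x^i+y_2^i\delta_2$, $b^j=u^j+\delta_2\sum_{i_1}u_{i_1}^jy_2^{i_1}$, $b_{i_1}^j=u_{i_1}^j+\delta_2\sum_{i_2}u_{i_1,i_2}^jy_2^{i_2}$ and $z^i=y_1^i+y_{12}^i\delta_2$, the Remark's formula turns the above into $\delta_1\mapsto(a^i,b^j)+\bigl(z^i,\sum_{i_1}z^{i_1}b_{i_1}^j\bigr)\delta_1$. I would then multiply this out as a polynomial in $\delta_1,\delta_2$ and discard every monomial divisible by $\delta_2^2$, noting that $\delta_1^2$ never occurs since $\delta_1$ enters linearly. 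The $\delta_1^0$-part yields $(x^i,u^j)+(y_2^i,\sum_{i_1}u_{i_1}^jy_2^{i_1})\delta_2$; the coefficient of $\delta_1$ yields $(y_1^i,\sum_{i_1}y_1^{i_1}u_{i_1}^j)$ together with the $\delta_1\delta_2$-term $(y_{12}^i,\sum_{i_1}y_{12}^{i_1}u_{i_1}^j+\sum_{i_1}\sum_{i_2}y_1^{i_1}y_2^{i_2}u_{i_1,i_2}^j)$, the double sum being exactly the surviving part of $(y_1^{i_1}+y_{12}^{i_1}\delta_2)(u_{i_1}^j+\delta_2\sum_{i_2}u_{i_1,i_2}^jy_2^{i_2})$. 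Collecting these Taylor coefficients gives the asserted formula, and uniqueness of the Taylor representation (Proposition \ref{t2.2}) ensures it determines the element of $E\otimes\mathcal{W}_{D^2}$ unambiguously.

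The argument is essentially mechanical, because Lemma \ref{t5.1} has already packaged the structural content --- Theorem \ref{t4.4} and the definition $\theta_{\mathbb{J}^{D^n}(\pi)}^{\mathcal{J}^n(\pi)}=\varphi_n\circ\theta_{\mathbf{J}^n(\pi)}^{\mathcal{J}^n(\pi)}$. The only place calling for care is the bookkeeping of the $\delta_2$-dependent base point $a^i=x^i+y_2^i\delta_2$ and the matching $\delta_2$-shift $z^i=y_1^i+y_{12}^i\delta_2$ of the velocity, and the recognition that the nilpotency $\delta_2^2=0$ in $\mathcal{W}_{D^2}$ is precisely what kills the otherwise unwanted cross term $\delta_2^2\sum_{i_1}\sum_{i_2}y_{12}^{i_1}u_{i_1,i_2}^jy_2^{i_2}$ in the coefficient of $\delta_1$. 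One could also avoid Lemma \ref{t5.1} and expand $\varphi_2\circ\theta_{\mathbf{J}^2(\pi)}^{\mathcal{J}^2(\pi)}$ directly, but that is strictly more laborious and I would not take that route.
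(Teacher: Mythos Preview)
Your proof is correct and follows essentially the same route as the paper's: specialize Lemma~\ref{t5.1} to $n=1$, apply the explicit degree-one formula for $\theta_{\mathbb{J}^{D}(\pi)}^{\mathcal{J}^{1}(\pi)}$ at the $\delta_2$-shifted jet, and expand in $\delta_1,\delta_2$ using $\delta_2^2=0$. The paper's computation is exactly your substitution $a^i,b^j,b_{i_1}^j,z^i$ followed by the same term-collection; your added remarks on nilpotency and on the alternative via $\varphi_2\circ\theta_{\mathbf{J}^2(\pi)}^{\mathcal{J}^2(\pi)}$ are sound but not needed.
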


\begin{proof}
The Taylor representation of
\begin{align*}
&  \theta_{\mathbb{J}^{D^{2}}(\pi)}^{\mathcal{J}^{2}(\pi)}\left(  x^{i}%
,u^{j},u_{i_{1}}^{j},u_{i_{1,}i_{2}}^{j}\right) \\
&  \left(  \left(  \delta_{1},\delta_{2}\right)  \in\mathbb{R}^{2}%
\mapsto\left(  x^{i}\right)  +\left(  y_{1}^{i}\right)  \delta_{1}+\left(
y_{2}^{i}\right)  \delta_{2}+\left(  y_{12}^{i}\right)  \delta_{1}\delta
_{2}\in\mathbb{R}^{p}\right) \\
&  =\left(  \delta_{1},\delta_{2}\right)  \in\mathbb{R}^{2}\mapsto\\
&  \theta_{\mathbb{J}^{D}(\pi)}^{\mathcal{J}^{1}(\pi)}\left(  \left(
x^{i},u^{j},u_{i_{1}}^{j}\right)  +\left(  y_{2}^{i},\sum_{i_{1}=1}^{p}%
y_{2}^{i_{1}}u_{i_{1}}^{j},\sum_{i_{2}=1}^{p}y_{2}^{i_{2}}u_{i_{1},i_{2}}%
^{j}\right)  \delta_{2}\right) \\
&  \left(  \delta_{1}\in\mathbb{R\mapsto}\left(  x^{i}+y_{2}^{i}\delta
_{2}\right)  +\left(  y_{1}^{i}+y_{12}^{i}\delta_{2}\right)  \delta_{1}\right)
\\
&  \in\mathbb{R}^{p+q}%
\end{align*}
goes as follows:
\begin{align*}
&  \left(  x^{i}+y_{2}^{i}\delta_{2},u^{j}+\left(  \sum_{i_{1}=1}^{p}%
y_{2}^{i_{1}}u_{i_{1}}^{j}\right)  \delta_{2}\right)  +\\
&  \left(  y_{1}^{i}+y_{12}^{i}\delta_{2},\sum_{i_{1}=1}^{p}\left(  u_{i_{1}%
}^{j}+\left(  \sum_{i_{2}=1}^{p}y_{2}^{i_{2}}u_{i_{1},i_{2}}^{j}\right)
\delta_{2}\right)  \left(  y_{1}^{i_{1}}+y_{12}^{i_{1}}\delta_{2}\right)
\right)  \delta_{1}\\
&  =(x^{i},u^{j})+(y_{1}^{i},\sum_{i_{1}=1}^{p}y_{1}^{i_{1}}u_{i_{1}}%
^{j})\delta_{1}+(y_{2}^{i},\sum_{i_{1}=1}^{p}y_{2}^{i_{1}}u_{i_{1}}^{j}%
)\delta_{2}+\\
&  (y_{12}^{i},\sum_{i_{1}=1}^{p}\sum_{i_{2}=1}^{p}y_{1}^{i_{1}}y_{2}^{i_{2}%
}u_{i_{1},i_{2}}^{j}+\sum_{i_{1}=1}^{p}y_{12}^{i_{1}}u_{i_{1}}^{j})\delta
_{1}\delta_{2}%
\end{align*}
so that we have the coveted result.
\end{proof}

Generally, by the same token, we have

\begin{theorem}
\label{t5.3}Given $(x^{i},u^{j},u_{i_{1}}^{j},u_{i_{1},i_{2}}^{j}%
,...,u_{i_{1},i_{2},...,i_{n}}^{j})\in\mathcal{J}^{n}(\pi)$, we have
\begin{align*}
&  \theta_{\mathbb{J}^{D^{n}}(\pi)}^{\mathcal{J}^{n}(\pi)}(x^{i}%
,u^{j},u_{i_{1}}^{j},u_{i_{1},i_{2}}^{j},...,u_{i_{1},i_{2},...,i_{n}}^{j})\\
&  =\left[  (\delta_{1},...,\delta_{n})\in\mathbb{R}^{n}\longmapsto
(x^{i})+\sum_{r=1}^{n}\sum_{1\leq k_{1}<...<k_{r}\leq n}\delta_{k_{1}%
}...\delta_{k_{r}}(y_{k_{1},...,k_{r}}^{i})\in\mathbb{R}^{p}\right] \\
&  \in\left(  M\otimes\mathcal{W}_{D^{n}}\right)  _{\left(  x^{i}\right)
}\mapsto\\
&  \left[
\begin{array}
[c]{c}%
(\delta_{1},...,\delta_{n})\in\mathbb{R}^{n}\longmapsto(x^{i},u^{j})+\\
\sum_{r=1}^{n}\sum_{1\leq k_{1}<...<k_{r}\leq n}\delta_{k_{1}}...\delta
_{k_{r}}(y_{k_{1},...,k_{r}}^{i},\sum\sum_{i_{1}=1}^{p}...\sum_{i_{s}=1}%
^{p}y_{\mathbf{J}_{1}}^{i_{1}}...y_{\mathbf{J}_{s}}^{i_{s}}u_{i_{1},...,i_{s}%
}^{j})\in\mathbb{R}^{p+q}%
\end{array}
\right] \\
&  \in\left(  E\otimes\mathcal{W}_{D^{n}}\right)  _{\left(  x^{i}%
,u^{j}\right)  }%
\end{align*}
where the completely undecorated $\sum\ $is taken over all partitions of the
set$\ \{k_{1},...,k_{r}\}\ $into nonempty subsets $\{\mathbf{J}_{1}%
,...,\mathbf{J}_{s}\}$, and if$\ \mathbf{J}=\{k_{1},...,k_{t}\}\ $is a set of
natural numbers with $k_{1}<...<k_{t}$, then $y_{\mathbf{J}}^{i}\ $denotes
$y_{k_{1},\cdots,k_{t}}^{i}$.
\end{theorem}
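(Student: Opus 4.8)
The plan is to prove Theorem \ref{t5.3} by induction on $n$, using Lemma \ref{t5.1} as the recursion step and Theorem \ref{t5.2} (the case $n=2$, together with the trivial case $n=1$ recorded in the Remark after the definition of $\theta_{\mathbb{J}^{D^{n}}(\pi)}^{\mathcal{J}^{n}(\pi)}$) as the base. First I would record the induction hypothesis in the precise form stated: for the datum $(x^{i},u^{j},u_{i_{1}}^{j},\dots,u_{i_{1},\dots,i_{n}}^{j})\in\mathcal{J}^{n}(\pi)$, the map $\theta_{\mathbb{J}^{D^{n}}(\pi)}^{\mathcal{J}^{n}(\pi)}$ sends the generic element of $\left(M\otimes\mathcal{W}_{D^{n}}\right)_{(x^{i})}$ with Taylor coefficients $(y_{k_{1},\dots,k_{r}}^{i})$ to the element of $\left(E\otimes\mathcal{W}_{D^{n}}\right)_{(x^{i},u^{j})}$ whose $(k_{1},\dots,k_{r})$-coefficient on the fibre side is $\sum\sum_{i_{1}}\dots\sum_{i_{s}} y_{\mathbf{J}_{1}}^{i_{1}}\dots y_{\mathbf{J}_{s}}^{i_{s}}u_{i_{1},\dots,i_{s}}^{j}$, the outer sum over set partitions $\{\mathbf{J}_{1},\dots,\mathbf{J}_{s}\}$ of $\{k_{1},\dots,k_{r}\}$.

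Next I would feed this into Lemma \ref{t5.1}. That lemma expresses $\theta_{\mathbb{J}^{D^{n+1}}(\pi)}^{\mathcal{J}^{n+1}(\pi)}$ applied to a datum in $\mathcal{J}^{n+1}(\pi)$ as $\theta_{\mathbb{J}^{D^{n}}(\pi)}^{\mathcal{J}^{n}(\pi)}$ applied to the shifted datum $\left((x^{i},u^{j},\dots,u_{i_{1},\dots,i_{n}}^{j})+(y_{n+1}^{i},\sum_{i_{1}}u_{i_{1}}^{j}y_{n+1}^{i_{1}},\dots,\sum_{i_{n+1}}u_{i_{1},\dots,i_{n+1}}^{j}y_{n+1}^{i_{n+1}})\delta_{n+1}\right)$, evaluated on the curve in $\left(M\otimes\mathcal{W}_{D^{n}}\right)$ whose coefficients are $y_{k_{1},\dots,k_{r}}^{i}+y_{k_{1},\dots,k_{r},n+1}^{i}\delta_{n+1}$. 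I would substitute the induction hypothesis into this, treating $\delta_{n+1}$ as a formal parameter (remembering $\delta_{n+1}^{2}=0$ at the very end, since the target space is governed by $\mathcal{W}_{D^{n+1}}$, but carrying it symbolically through the algebra and only truncating once). The $\delta_{n+1}^{0}$ part reproduces the claimed formula for the index set $\{k_{1},\dots,k_{r}\}\subseteq\{1,\dots,n\}$; the $\delta_{n+1}^{1}$ part must be shown to equal the claimed coefficient for the index set $\{k_{1},\dots,k_{r},n+1\}$.

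**The main obstacle** is precisely this last bookkeeping identity: proving that differentiating the partition sum for $\{k_1,\dots,k_r\}$ in the $\delta_{n+1}$-direction — i.e. the term linear in $\delta_{n+1}$ arising both from the $u$-coefficients acquiring a $u_{\bullet,n+1}^{j}y_{n+1}^{\bullet}\delta_{n+1}$ correction and from the $y$-arguments acquiring a $y_{\bullet,n+1}^{i}\delta_{n+1}$ correction — reassembles exactly into the partition sum over $\{k_1,\dots,k_r,n+1\}$. Combinatorially this is the statement that every set partition of $\{k_1,\dots,k_r,n+1\}$ is obtained uniquely either (i) by adjoining $n+1$ as a singleton block to a partition of $\{k_1,\dots,k_r\}$ — these are the terms where $n+1$ enters through a fresh $y_{n+1}^{i_{s+1}}$ factor paired with the index raised on $u$ — or (ii) by inserting $n+1$ into one existing block $\mathbf{J}_{a}$ of a partition of $\{k_1,\dots,k_r\}$, which corresponds to replacing $y_{\mathbf{J}_a}^{i_a}$ by its derivative $y_{\mathbf{J}_a\cup\{n+1\}}^{i_a}$. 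I would verify this by carefully matching the product-rule expansion (Leibniz over the $s$ factors $y_{\mathbf{J}_b}^{i_b}$, plus the extra additive term from the $u$-coefficient) against the bijection between the two descriptions of partitions of $\{k_1,\dots,k_r,n+1\}$; the case $n=1\to 2$ worked out explicitly in the proof of Theorem \ref{t5.2} is the template, and the general case is "the same token" once this bijection is made explicit. Once the coefficient identity is established, the inductive step closes and the theorem follows.
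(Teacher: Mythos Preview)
Your proposal is correct and follows exactly the approach the paper indicates: the paper's proof of Theorem~\ref{t5.3} is simply ``By using Lemma~\ref{t5.1}, we can proceed by induction on $n$. The details are safely left to the reader.'' You have supplied precisely those details, and your identification of the key combinatorial step---that set partitions of $\{k_{1},\dots,k_{r},n+1\}$ are in bijection with (i) partitions of $\{k_{1},\dots,k_{r}\}$ with $\{n+1\}$ adjoined as a singleton, matched to the $\delta_{n+1}$-linear term coming from the $u$-shift, and (ii) partitions of $\{k_{1},\dots,k_{r}\}$ with $n+1$ inserted into one block, matched to the Leibniz terms from the $y$-shift---is the right mechanism and is exactly what the paper's phrase ``by the same token'' (referring to the explicit $n=2$ calculation in Theorem~\ref{t5.2}) is pointing at.
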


\begin{proof}
By using Lemma \ref{t5.1}, we can proceed by induction on $n$. The details are
safely left to the reader.
\end{proof}

\begin{definition}
We define mappings $\theta_{\mathbb{S}^{D^{n}}(\pi)}^{\mathcal{S}^{n}(\pi
)}:\mathcal{S}^{n}(\pi)\rightarrow\mathbb{S}^{D^{n}}(\pi)$ to be
\begin{align*}
&  \theta_{\mathbb{S}^{D^{n}}(\pi)}^{\mathcal{S}^{n}(\pi)}\left(  x^{i}%
,u^{j},u_{i_{1},i_{2},...,i_{n}}^{j}\right) \\
&  =\left[  (\delta_{1},...,\delta_{n})\in\mathbb{R}^{n}\longmapsto
(x^{i})+\sum_{r=1}^{n}\sum_{1\leq k_{1}<...<k_{r}\leq n}\delta_{k_{1}%
}...\delta_{k_{r}}(y_{k_{1},...,k_{r}}^{i})\in\mathbb{R}^{p}\right] \\
&  \in\left(  M\otimes\mathcal{W}_{D^{n}}\right)  _{\left(  x^{i}\right)
}\mapsto\\
&  \left[  \delta\in\mathbb{R}\longmapsto\left(  x^{i},u^{j}+\delta\sum_{1\leq
i_{1}\leq...\leq i_{n}\leq p}y_{1}^{i_{1}}...y_{n}^{i_{n}}u_{i_{1}%
,i_{2},...,i_{n}}^{j}\right)  \in\mathbb{R}^{p+q}\right] \\
&  \in\left(  E\otimes\mathcal{W}_{D}\right)  _{\left(  x^{i},u^{j}\right)
}^{\perp}%
\end{align*}

\end{definition}

It is easy to see that

\begin{proposition}
\label{t5.4}The mappings $\theta_{\mathbb{S}^{D^{n}}(\pi)}^{\mathcal{S}%
^{n}(\pi)}:\mathcal{S}^{n}(\pi)\rightarrow\mathbb{S}^{D^{n}}(\pi)$\ are bijective.
\end{proposition}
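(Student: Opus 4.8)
The plan is to construct an explicit inverse to $\theta_{\mathbb{S}^{D^{n}}(\pi)}^{\mathcal{S}^{n}(\pi)}$ by reading off coordinates from a given symmetric $D^{n}$-form, exactly as we did for the jet bundles in Sections \ref{s4} and \ref{s5}. First I would fix $\omega_{x}\in\mathbb{S}_{x}^{D^{n}}(\pi)$ over a point $x=(x^{i},u^{j})\in E$. Feeding $\omega_{x}$ the element of $\left(M\otimes\mathcal{W}_{D^{n}}\right)_{\pi(x)}$ whose Taylor representation is $(\delta_{1},\dots,\delta_{n})\mapsto(x^{i})+\delta_{1}e_{i_{1}}+\dots+\delta_{n}e_{i_{n}}$ (the $r=n$ monomial $\delta_{1}\cdots\delta_{n}$ carrying the standard basis covectors, all lower monomials set to produce the indexing multi-index $i_{1}\le\cdots\le i_{n}$) yields an element of $\left(E\otimes\mathcal{W}_{D}\right)_{x}^{\perp}$, i.e.\ something of the form $\delta\mapsto(x^{i},u^{j}+\delta\,w_{i_{1},\dots,i_{n}}^{j})$. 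I would \emph{define} the candidate inverse by sending $\omega_{x}$ to $(x^{i},u^{j},w_{i_{1},\dots,i_{n}}^{j})$, after checking that the values $w_{i_{1},\dots,i_{n}}^{j}$ so obtained are symmetric in the lower indices (this is forced by condition 3 of Definition \ref{d8.2.2.1}, the $\mathbf{S}_{n}$-invariance) and thus constitute a legitimate point of $\mathcal{S}^{n}(\pi)$.

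The core of the argument is then to show this assignment and $\theta_{\mathbb{S}^{D^{n}}(\pi)}^{\mathcal{S}^{n}(\pi)}$ are mutually inverse. One direction is immediate: applying $\theta_{\mathbb{S}^{D^{n}}(\pi)}^{\mathcal{S}^{n}(\pi)}$ to $(x^{i},u^{j},w_{i_{1},\dots,i_{n}}^{j})$ and then evaluating on the special $\gamma$ above recovers $\delta\mapsto(x^{i},u^{j}+\delta\sum y_{1}^{i_{1}}\cdots y_{n}^{i_{n}}w_{i_{1},\dots,i_{n}}^{j})$, and for the special $\gamma$ the sum collapses to the single term $w_{i_{1},\dots,i_{n}}^{j}$, so we get back our coordinates. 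For the other direction I must argue that a symmetric $D^{n}$-form is \emph{determined} by its values on these special $\gamma$'s, i.e.\ that once we know the $w_{i_{1},\dots,i_{n}}^{j}$ we know $\omega_{x}(\gamma)$ for every $\gamma$. This is where conditions 1, 2 and 4 of Definition \ref{d8.2.2.1} do the work: condition 1 ($\alpha\underset{i}{\cdot}$-homogeneity) together with condition 2 (the $\mathcal{W}_{D_{m}}$-naturality square) force $\omega_{x}(\gamma)$ to depend multilinearly on the "first-order data" $y_{1}^{i},\dots,y_{n}^{i}$ of $\gamma$, and condition 4 kills any dependence on the higher Taylor coefficients $y_{k_{1},\dots,k_{r}}^{i}$ with $r<n$ because those factor through a multiplication map $D^{n}\to D^{n-1}$. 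Concretely, I would write the general $\gamma$ in its Taylor form (Proposition \ref{t2.2}), observe that the top coefficient $y_{1\cdots n}^{i}$ lies in the image of the degeneracy-type maps annihilated by condition 4, and that the remaining contribution is $n$-linear in the $y_{k}^{i}$, hence reconstructed from its values on basis vectors, which are precisely the $w$'s. This matches the formula in the displayed definition of $\theta_{\mathbb{S}^{D^{n}}(\pi)}^{\mathcal{S}^{n}(\pi)}$.

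The main obstacle I anticipate is the bookkeeping for condition 4 of Definition \ref{d8.2.2.1}: making precise that \emph{every} Taylor monomial of $\gamma$ of degree strictly less than $n$ (in the appropriate graded sense) is annihilated by $\omega_{x}$, not merely the ones literally of the form $(\mathrm{id}_{M}\otimes\mathcal{W}_{(d_{1},\dots,d_{n})\mapsto(d_{1},\dots,d_{n-2},d_{n-1}d_{n})})(\gamma')$. One must combine condition 4 with condition 3 ($\mathbf{S}_{n}$-invariance) to get the analogue for collapsing \emph{any} pair of slots, and then iterate; this is morally the same computation underlying Proposition \ref{t8.2.2.1}, so I would either cite that pattern or give a one-line induction. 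Since the section explicitly permits omitting or merely indicating proofs "as in Subsubsection \ref{s8.2.1}" and the companion statements for $\mathbf{J}$ were proved by exactly this coordinate-reading technique, I would present the argument tersely: define the inverse, verify well-definedness via conditions 1--3, verify surjectivity-of-reconstruction via conditions 1, 2, 4, and leave the combinatorial details of the degree count to the reader. The final remark would note that, under the identifications of Sections \ref{s3}--\ref{s4}, $\theta_{\mathbb{S}^{D^{n}}(\pi)}^{\mathcal{S}^{n}(\pi)}$ is compatible with the affine-bundle structures of Theorems \ref{t3.2.8} and \ref{t8.2.3.4}, though that compatibility is not asserted in the present proposition.
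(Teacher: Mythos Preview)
The paper offers no proof of this proposition beyond the prefatory phrase ``It is easy to see that''; your sketch is therefore considerably more detailed than anything in the text, and the overall strategy---extract the coefficients $w^{j}_{i_{1},\dots,i_{n}}$ by evaluating $\omega_{x}$ on the basic microcubes with $y_{k}^{i}=\delta^{i}_{i_{k}}$, then argue that conditions 1--4 of Definition~\ref{d8.2.2.1} force every symmetric $D^{n}$-form into the displayed shape---is correct and is exactly the natural argument one would supply.

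One small wobble worth tightening: your phrase ``higher Taylor coefficients $y^{i}_{k_{1},\dots,k_{r}}$ with $r<n$'' is garbled, since the top coefficient has $r=n$ and what you actually need to eliminate is dependence on every $y_{S}$ with $|S|\ge 2$. The clean formulation is that conditions 1 and 2 (homogeneity in each slot, plus smoothness via the $\mathcal{W}_{D_{m}}$-naturality) force $\omega_{x}(\gamma)^{j}$ to be a sum, over set partitions $P$ of $\{1,\dots,n\}$, of terms $\sum c^{j}_{P;i_{\bullet}}\prod_{S\in P}y_{S}^{i_{S}}$; then condition 4, transported by the $\mathbf{S}_{n}$-invariance of condition 3 to the collapse of \emph{any} pair of slots (exactly the pattern you cite from Proposition~\ref{t8.2.2.1}), kills every partition with a non-singleton block, leaving only the finest partition and hence the multilinear expression $\sum y_{1}^{i_{1}}\cdots y_{n}^{i_{n}}u^{j}_{i_{1},\dots,i_{n}}$. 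With that correction your outline goes through without further difficulty.
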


It is also easy to see that

\begin{proposition}
\label{t5.5}Given $\nabla=(x^{i},u^{j},u_{i_{1}}^{j},u_{i_{1},i_{2}}%
^{j},...,u_{i_{1},i_{2},...,i_{n+1}}^{j}),\nabla^{\prime}=(y^{i}%
,v^{j},v_{i_{1}}^{j},v_{i_{1},i_{2}}^{j},...,v_{i_{1},i_{2},...,i_{n+1}}%
^{j})\mathcal{J}^{n+1}(\pi)$, we have
\[
\pi_{n}^{n+1}\left(  \nabla\right)  =\pi_{n}^{n+1}\left(  \nabla^{\prime
}\right)
\]
iff
\[
\pi_{n}^{n+1}\left(  \theta_{\mathbb{J}^{D^{n+1}}(\pi)}^{\mathcal{J}^{n+1}%
(\pi)}\left(  \nabla\right)  \right)  =\pi_{n}^{n+1}\left(  \theta
_{\mathbb{J}^{D^{n+1}}(\pi)}^{\mathcal{J}^{n+1}(\pi)}\left(  \nabla^{\prime
}\right)  \right)  \text{,}%
\]
in which we get
\[
\theta_{\mathbb{S}^{D^{n+1}}(\pi)}^{\mathcal{S}^{n+1}(\pi)}\left(
\nabla\overset{\cdot}{-}\nabla^{\prime}\right)  =\theta_{\mathbb{J}^{D^{n+1}%
}(\pi)}^{\mathcal{J}^{n+1}(\pi)}\left(  \nabla\right)  \overset{\cdot}%
{-}\theta_{\mathbb{J}^{D^{n+1}}(\pi)}^{\mathcal{J}^{n+1}(\pi)}\left(
\nabla^{\prime}\right)
\]

\end{proposition}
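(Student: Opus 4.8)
The plan is to verify the three assertions of Proposition~\ref{t5.5} in sequence, reducing everything to the explicit coordinate descriptions already established. First I would unwind the definitions of $\pi_{n}^{n+1}$ (the map denoted $\pi_{n+1,n}$ in \S\ref{s3.1}, acting on $\mathcal{J}^{n+1}(\pi)$) and of $\theta_{\mathbb{J}^{D^{n+1}}(\pi)}^{\mathcal{J}^{n+1}(\pi)}$. By the formula in Theorem~\ref{t5.3}, the coordinates $(x^{i},u^{j},u_{i_{1}}^{j},\dots,u_{i_{1},\dots,i_{n}}^{j})$ of $\nabla$ are precisely the data appearing in the Taylor representation of $\theta_{\mathbb{J}^{D^{n+1}}(\pi)}^{\mathcal{J}^{n+1}(\pi)}(\nabla)$ truncated to degree $n$ in the $\delta$'s, and $\pi_{n}^{n+1}$ on the $\mathbb{J}^{D}$-side is exactly the restriction-of-Weil-algebra map $\mathrm{id}_{E}\otimes\mathcal{W}_{i_{D^{n}\rightarrow D^{n+1}}}$. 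Since $\theta_{\mathbf{J}^{n+1}(\pi)}^{\mathcal{J}^{n+1}(\pi)}$ is compatible with the projections $\pi_{n+1,n}$ by its very inductive construction in Theorem~\ref{t4.4}, and $\varphi_{n+1}$ commutes with the projections as well (from \cite{nishimura}, quoted via Proposition~\ref{t7.1.4} and the surrounding material), the equivalence $\pi_{n}^{n+1}(\nabla)=\pi_{n}^{n+1}(\nabla')\iff\pi_{n}^{n+1}(\theta(\nabla))=\pi_{n}^{n+1}(\theta(\nabla'))$ follows; I would note in particular that $\theta$ is a bijection that intertwines the two projections, so it suffices to observe that both sides say the same thing about the lower-order coordinates.

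Next, assuming $\pi_{n}^{n+1}(\nabla)=\pi_{n}^{n+1}(\nabla')$, I would compute $\nabla\dot{-}\nabla'$ directly. By the Remark following Proposition~\ref{t3.2.2} (the coordinate description of $\dot{-}$ in the basic framework), $\nabla\dot{-}\nabla'$ is the element of $\mathcal{S}^{n+1}(\pi)$ with coordinates $(x^{i},u^{j},u_{i_{1},\dots,i_{n+1}}^{j}-v_{i_{1},\dots,i_{n+1}}^{j})$, since the hypothesis forces all lower-order coordinates of $\nabla$ and $\nabla'$ to agree. Applying $\theta_{\mathbb{S}^{D^{n+1}}(\pi)}^{\mathcal{S}^{n+1}(\pi)}$ to this, using its defining formula, produces the symmetric $D^{n+1}$-form
\[
\delta\in\mathbb{R}\longmapsto\Bigl(x^{i},\,u^{j}+\delta\!\!\sum_{1\leq i_{1}\leq\dots\leq i_{n+1}\leq p}\!\!y_{1}^{i_{1}}\dots y_{n+1}^{i_{n+1}}\bigl(u_{i_{1},\dots,i_{n+1}}^{j}-v_{i_{1},\dots,i_{n+1}}^{j}\bigr)\Bigr)\in\mathbb{R}^{p+q}.
\]
On the other hand, by Theorem~\ref{t5.3} the degree-$(n+1)$ part of the Taylor representation of $\theta_{\mathbb{J}^{D^{n+1}}(\pi)}^{\mathcal{J}^{n+1}(\pi)}(\nabla)$ involves the top coordinates $u_{i_{1},\dots,i_{n+1}}^{j}$ (plus lower-order cross-terms built from $u_{i_{1}}^{j},\dots,u_{i_{1},\dots,i_{n}}^{j}$), and likewise for $\nabla'$; when I form $\theta(\nabla)\dot{-}\theta(\nabla')$ using the operation $\dot{-}$ for $M\otimes\mathcal{W}_{D^{n+1}}$ (Notation after Proposition~\ref{t8.2.1.2}, transplanted to $E$), the agreement of all lower-order coordinates makes every cross-term cancel, leaving exactly the same top-degree difference. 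Thus the two computed elements of $\mathbb{S}^{D^{n+1}}(\pi)$ coincide.

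The main obstacle I anticipate is bookkeeping rather than conceptual: showing that the "lower-order cross-terms" in the degree-$(n+1)$ component of $\theta(\nabla)$ depend only on the lower-order coordinates (those killed by $\pi_{n}^{n+1}$), so that they genuinely cancel in $\theta(\nabla)\dot{-}\theta(\nabla')$. This is visible in the $n=1$ case from Theorem~\ref{t5.2}, where the $\delta_{1}\delta_{2}$-coefficient is $\sum y_{1}^{i_{1}}y_{2}^{i_{2}}u_{i_{1},i_{2}}^{j}+\sum y_{12}^{i_{1}}u_{i_{1}}^{j}$: the first sum carries the top coordinate, the second is a function of $u_{i_{1}}^{j}$ alone. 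The general pattern, via the undecorated partition-sum notation of Theorem~\ref{t5.3}, is that the top coordinate $u_{i_{1},\dots,i_{n+1}}^{j}$ appears only through the trivial (single-block) partition of $\{k_{1},\dots,k_{n+1}\}$, while all other partitions involve only coordinates $u_{i_{1},\dots,i_{s}}^{j}$ with $s\le n$. I would state this observation explicitly, verify it by one more line of the inductive formula from Lemma~\ref{t5.1}, and then the cancellation and hence the identity $\theta_{\mathbb{S}^{D^{n+1}}(\pi)}^{\mathcal{S}^{n+1}(\pi)}(\nabla\dot{-}\nabla')=\theta(\nabla)\dot{-}\theta(\nabla')$ is immediate; the routine partition-combinatorics I would leave to the reader, in keeping with the style of the surrounding section.
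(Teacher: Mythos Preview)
Your proposal is correct and supplies exactly the coordinate verification that the paper omits: the paper offers no proof here, prefacing the proposition only with ``It is also easy to see that.'' Your plan---read off the lower-order coordinates from Theorem~\ref{t5.3}, use the coordinate description of $\dot{-}$ from the Remark after Proposition~\ref{t3.2.2}, and observe that in the partition sum of Theorem~\ref{t5.3} the top coefficient $u_{i_{1},\dots,i_{n+1}}^{j}$ appears only through the single-block partition while all other blocks involve coordinates of order $\le n$---is the intended argument and is sound.

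One caution on the first part: when you justify the ``iff'' by saying ``$\theta$ is a bijection that intertwines the two projections,'' be careful about which $\theta$ you mean. Bijectivity of $\theta_{\mathbb{J}^{D^{n}}(\pi)}^{\mathcal{J}^{n}(\pi)}$ is Theorem~\ref{t5.6}, whose inductive proof \emph{uses} Proposition~\ref{t5.5}, so invoking it here risks circularity; and while $\theta_{\mathbf{J}^{n+1}(\pi)}^{\mathcal{J}^{n+1}(\pi)}$ is already bijective by Theorem~\ref{t4.4}, passing through $\varphi_{n}$ for the $\Leftarrow$ direction would require injectivity of $\varphi_{n}$, which is likewise not yet available. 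The clean fix is to argue the $\Leftarrow$ direction directly from the explicit formula of Theorem~\ref{t5.3}: evaluate $\theta_{\mathbb{J}^{D^{n+1}}(\pi)}^{\mathcal{J}^{n+1}(\pi)}(\nabla)$ on test microcubes $\gamma$ with a single $y_{k}^{i}$ nonzero to recover each coordinate $u_{i_{1},\dots,i_{m}}^{j}$ ($m\le n$) from $\pi_{n+1,n}\bigl(\theta(\nabla)\bigr)$, which shows that equality of the projected images forces equality of the lower-order coordinate tuples. Alternatively, state the proposition and Theorem~\ref{t5.6} together as a joint induction on $n$, so that bijectivity at level $n$ is legitimately the induction hypothesis when you invoke it here at level $n+1$.
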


\begin{theorem}
\label{t5.6}The mappings $\theta_{\mathbb{J}^{D^{n}}(\pi)}^{\mathcal{J}%
^{n}(\pi)}:\mathcal{J}^{n}(\pi)\rightarrow\mathbb{J}^{D^{n}}(\pi)$\ are bijective.
\end{theorem}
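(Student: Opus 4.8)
The plan is to prove that $\theta_{\mathbb{J}^{D^{n}}(\pi)}^{\mathcal{J}^{n}(\pi)}=\varphi_{n}\circ\theta_{\mathbf{J}^{n}(\pi)}^{\mathcal{J}^{n}(\pi)}$ is bijective by exhibiting it as the composite of two bijections. The second factor $\theta_{\mathbf{J}^{n}(\pi)}^{\mathcal{J}^{n}(\pi)}:\mathcal{J}^{n}(\pi)\rightarrow\mathbf{J}^{n}(\pi)$ is already known to be bijective by Theorem \ref{t4.4}. Hence everything reduces to showing that the comparison map $\varphi_{n}:\mathbf{J}^{n}(\pi)\rightarrow\mathbb{J}^{D^{n}}(\pi)$ from the first to the second approach (defined in \cite{nishimura}) restricts to a bijection on the relevant fibres; since the paper's previous part establishes that the three approaches are related by the $\varphi$'s and $\psi$'s, the real content here is to verify, \emph{in the coordinate-laden context}, that $\varphi_n$ is a bijection, using the explicit Taylor-representation formula for $\theta_{\mathbb{J}^{D^{n}}(\pi)}^{\mathcal{J}^{n}(\pi)}$ recorded in Theorem \ref{t5.3}.

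First I would set up an induction on $n$. The base case $n=1$ is immediate: $\mathbb{J}^{D}(\pi)=\mathbf{J}^{1}(\pi)$ and $\varphi_{1}=\mathrm{id}$, so $\theta_{\mathbb{J}^{D^{1}}(\pi)}^{\mathcal{J}^{1}(\pi)}=\theta_{\mathbf{J}^{1}(\pi)}^{\mathcal{J}^{1}(\pi)}$, which is bijective by Theorem \ref{t4.1}. For the inductive step, I would use Lemma \ref{t5.1}, which expresses $\theta_{\mathbb{J}^{D^{n+1}}(\pi)}^{\mathcal{J}^{n+1}(\pi)}$ in terms of $\theta_{\mathbb{J}^{D^{n}}(\pi)}^{\mathcal{J}^{n}(\pi)}$ together with the ``$D$-direction'' data $(y^i_{n+1},\ldots)$ and the coefficients $u^{j}_{i_{1},\ldots,i_{n+1}}$. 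The formula in Lemma \ref{t5.1} displays $\theta_{\mathbb{J}^{D^{n+1}}(\pi)}^{\mathcal{J}^{n+1}(\pi)}$ as essentially $\varphi_{n+1}$ applied to a jet, and since $\mathbb{J}^{D^{n+1}}(\pi)$ is built from $\mathbb{J}^{D^{n}}(\pi)$ via the $D$-tangential construction, the inductive hypothesis that $\theta_{\mathbb{J}^{D^{n}}(\pi)}^{\mathcal{J}^{n}(\pi)}$ is bijective lets us identify the $n$-truncation; what remains is to invert the top-order symmetric part.

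The cleanest way to close the argument is via the affine bundle structure: by Theorem \ref{t3.2.8}, $\pi_{n+1,n}:\mathcal{J}^{n+1}(\pi)\rightarrow\mathcal{J}^{n}(\pi)$ is an affine bundle over $\mathcal{S}^{n+1}(\pi)\underset{E}{\times}\mathcal{J}^{n}(\pi)$, while by Theorem \ref{t8.2.3.4} $\pi_{n+1,n}:\mathbb{J}^{D^{n+1}}(\pi)\rightarrow\mathbb{J}^{D^{n}}(\pi)$ is an affine bundle over $\mathbb{J}^{D^{n}}(\pi)\underset{E}{\times}\mathbb{S}^{D^{n+1}}(\pi)$. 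Proposition \ref{t5.5} says precisely that $\theta^{\mathcal{J}^{n+1}}_{\mathbb{J}^{D^{n+1}}}$ intertwines the two $\dot{-}$ operations via the bijection $\theta^{\mathcal{S}^{n+1}}_{\mathbb{S}^{D^{n+1}}}$ of Proposition \ref{t5.4}, and it covers $\theta^{\mathcal{J}^{n}}_{\mathbb{J}^{D^{n}}}$ (bijective by the inductive hypothesis) on the base. A morphism of affine bundles that is bijective on the base and an isomorphism of the modelling vector bundles is necessarily bijective on total spaces; applying this principle with base map $\theta^{\mathcal{J}^{n}}_{\mathbb{J}^{D^{n}}}$ and vector-bundle map $\theta^{\mathcal{J}^{n}}_{\mathbb{J}^{D^{n}}}\times\theta^{\mathcal{S}^{n+1}}_{\mathbb{S}^{D^{n+1}}}$ yields that $\theta^{\mathcal{J}^{n+1}}_{\mathbb{J}^{D^{n+1}}}$ is bijective, completing the induction.

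The main obstacle I anticipate is bookkeeping rather than conceptual: one must check carefully that the map produced by the recursive formula in Lemma \ref{t5.1} genuinely agrees with $\varphi_{n+1}\circ\theta^{\mathcal{J}^{n+1}}_{\mathbf{J}^{n+1}}$ and that it is compatible with $\pi_{n+1,n}$ in the sense needed to invoke the affine-bundle-morphism principle — i.e.\ that the triple $(\theta^{\mathcal{J}^{n}}_{\mathbb{J}^{D^{n}}},\theta^{\mathcal{J}^{n+1}}_{\mathbb{J}^{D^{n+1}}},\theta^{\mathcal{J}^{n}}_{\mathbb{J}^{D^{n}}}\times\theta^{\mathcal{S}^{n+1}}_{\mathbb{S}^{D^{n+1}}})$ is in fact a morphism of affine bundles. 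Both facts are "easy to see" in the sense that they amount to matching the Taylor-coefficient formulas in Theorems \ref{t5.2}, \ref{t5.3} and the explicit fibrewise formulas in the Remarks of \S\ref{s3.2}, so I would state them and leave the coefficient-chasing to the reader, as the paper does elsewhere.
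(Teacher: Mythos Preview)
Your proposal is correct and follows essentially the same route as the paper: induction on $n$, with the inductive step handled by showing that the triple $(\theta_{\mathbb{J}^{D^{n}}(\pi)}^{\mathcal{J}^{n}(\pi)},\theta_{\mathbb{J}^{D^{n+1}}(\pi)}^{\mathcal{J}^{n+1}(\pi)},\theta_{\mathbb{S}^{D^{n+1}}(\pi)}^{\mathcal{S}^{n+1}(\pi)}\times_{E}\theta_{\mathbb{J}^{D^{n}}(\pi)}^{\mathcal{J}^{n}(\pi)})$ is an isomorphism of affine bundles (Proposition~\ref{t5.5}, Proposition~\ref{t5.4}, Theorems~\ref{t3.2.8} and~\ref{t8.2.3.4}). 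One cosmetic remark: your opening paragraph frames the goal as showing $\varphi_{n}$ bijective, but in the paper that is the \emph{corollary} drawn from the present theorem, not an input to it; your actual argument (paragraphs two onward) wisely proves $\theta_{\mathbb{J}^{D^{n}}(\pi)}^{\mathcal{J}^{n}(\pi)}$ bijective directly, so just drop that initial reframing to avoid the appearance of circularity.
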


\begin{proof}
We proceed by induction on $n$. The mapping $\theta_{\mathbb{J}^{D}(\pi
)}^{\mathcal{J}^{1}(\pi)}:\mathcal{J}^{1}(\pi)\rightarrow\mathbb{J}^{D}(\pi
)$\ is obviously bijective. By Proposition \ref{t5.5} and the induction
hypothesis, $\left(  \theta_{\mathbb{J}^{D^{n+1}}(\pi)}^{\mathcal{J}^{n+1}%
(\pi)},\theta_{\mathbb{S}^{D^{n+1}}(\pi)}^{\mathcal{S}^{n+1}(\pi)}\underset
{E}{\times}\theta_{\mathbb{J}^{D^{n}}(\pi)}^{\mathcal{J}^{n}(\pi)}%
,\theta_{\mathbb{J}^{D^{n}}(\pi)}^{\mathcal{J}^{n}(\pi)}\right)  $\ gives a
morphism of affine bundles from the affine bundle $\pi_{n+1,n}:\mathcal{J}%
^{n+1}(\pi)\rightarrow\mathcal{J}^{n}(\pi)$ over the vector bundle
$\mathcal{S}^{n+1}(\pi)\underset{E}{\times}\mathcal{J}^{n}(\pi)\rightarrow
\mathcal{J}^{n}(\pi)$\ to the affine bundle $\pi_{n+1,n}:\mathbb{J}^{D^{n+1}%
}(\pi)\rightarrow\mathbb{J}^{D^{n}}(\pi)$\ over the vector bundle
$\mathbb{S}^{D^{n+1}}(\pi)\underset{E}{\times}\mathbb{J}^{D^{n}}%
(\pi)\rightarrow\mathbb{J}^{D^{n}}(\pi)$\ is an isomorphism of affine bundles,
so that the mapping $\theta_{\mathbb{J}^{D^{n+1}}(\pi)}^{\mathcal{J}^{n+1}%
(\pi)}:\mathcal{J}^{n+1}(\pi)\rightarrow\mathbb{J}^{D^{n+1}}(\pi)$\ is bijective.
\end{proof}

\begin{corollary}
The mappings $\varphi_{n}:\mathbf{J}^{n}\left(  \pi\right)  \rightarrow
\mathbb{J}^{D^{n}}(\pi)$\ are bijective.
\end{corollary}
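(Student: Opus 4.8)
The plan is to deduce the corollary immediately from two bijectivity results already in hand. First I would recall the definition opening \S\ref{s5}: the map $\theta_{\mathbb{J}^{D^{n}}(\pi)}^{\mathcal{J}^{n}(\pi)}:\mathcal{J}^{n}(\pi)\rightarrow\mathbb{J}^{D^{n}}(\pi)$ is set equal, by definition, to the composite $\varphi_{n}\circ\theta_{\mathbf{J}^{n}(\pi)}^{\mathcal{J}^{n}(\pi)}$, so that one has the factorization
\[ \mathcal{J}^{n}(\pi)\longrightarrow\mathbf{J}^{n}(\pi)\longrightarrow\mathbb{J}^{D^{n}}(\pi), \]
through $\theta_{\mathbf{J}^{n}(\pi)}^{\mathcal{J}^{n}(\pi)}$ followed by $\varphi_{n}$, with matching sources and targets at each stage.

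Next I would invoke the two inputs. On the one hand, Theorem \ref{t4.1} disposes of the case $n=1$ and Theorem \ref{t4.4} carries out the inductive step, so $\theta_{\mathbf{J}^{n}(\pi)}^{\mathcal{J}^{n}(\pi)}:\mathcal{J}^{n}(\pi)\rightarrow\mathbf{J}^{n}(\pi)$ is bijective for every $n$. On the other hand, Theorem \ref{t5.6} asserts that $\theta_{\mathbb{J}^{D^{n}}(\pi)}^{\mathcal{J}^{n}(\pi)}:\mathcal{J}^{n}(\pi)\rightarrow\mathbb{J}^{D^{n}}(\pi)$ is bijective for every $n$. Solving the displayed composite identity for $\varphi_{n}$ gives $\varphi_{n}=\theta_{\mathbb{J}^{D^{n}}(\pi)}^{\mathcal{J}^{n}(\pi)}\circ\bigl(\theta_{\mathbf{J}^{n}(\pi)}^{\mathcal{J}^{n}(\pi)}\bigr)^{-1}$, a composite of two bijections, hence bijective; this is precisely the assertion of the corollary.

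I expect no genuine obstacle at this stage: the corollary is a purely formal consequence of what precedes it, and the only point deserving a moment's care is the bookkeeping that the symbol $\varphi_{n}$ appearing in the definition of $\theta_{\mathbb{J}^{D^{n}}(\pi)}^{\mathcal{J}^{n}(\pi)}$ is exactly the map $\varphi_{n}:\mathbf{J}^{n}(\pi)\rightarrow\mathbb{J}^{D^{n}}(\pi)$ of \cite{nishimura} whose bijectivity is claimed, with the stated domain and codomain (and, in the base case, that $\varphi_{1}$ is the identity, as noted in the remark following the definition in \S\ref{s5}). All the real work lies upstream — in Theorem \ref{t5.6}, which rests on the affine-bundle comparison of Theorem \ref{t8.4.5} via its coordinate incarnation in Proposition \ref{t5.5} together with the explicit Taylor-representation formulas of \S\ref{s5} — so the proof of the corollary itself is a single line.
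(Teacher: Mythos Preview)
Your proposal is correct and follows essentially the same approach as the paper: both use the factorization $\theta_{\mathbb{J}^{D^{n}}(\pi)}^{\mathcal{J}^{n}(\pi)}=\varphi_{n}\circ\theta_{\mathbf{J}^{n}(\pi)}^{\mathcal{J}^{n}(\pi)}$ (equivalently, the commutative triangle) together with Theorems \ref{t4.4} and \ref{t5.6} to conclude that $\varphi_{n}$ is a composite of bijections. One small quibble with your closing commentary: Theorem \ref{t5.6} does not actually rest on Theorem \ref{t8.4.5} (which concerns the comparison between the second and third approaches), but rather on Proposition \ref{t5.5} and the affine-bundle Theorems \ref{t3.2.8} and \ref{t8.2.3.4}; this does not affect your proof of the corollary itself.
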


\begin{proof}
This follows simply from Theorems \ref{t4.4}\ and \ref{t5.6}\ and the
commutativity of the following diagram:
\[%
\begin{array}
[c]{ccc}
& \mathcal{J}^{n}(\pi) & \\
\theta_{\mathbf{J}^{n}\left(  \pi\right)  }^{\mathcal{J}^{n}(\pi)}\swarrow &
& \searrow\theta_{\mathbb{J}^{D^{n}}(\pi)}^{\mathcal{J}^{n}(\pi)}\\
\mathbf{J}^{n}\left(  \pi\right)  &
\begin{array}
[c]{c}%
\rightarrow\\
\varphi_{n}%
\end{array}
& \mathbb{J}^{D^{n}}(\pi)
\end{array}
\]

\end{proof}

\section{\label{s6}The Third Approach with Coordinates}

\begin{definition}
We define mappings $\theta_{\mathbb{J}^{D_{n}}(\pi)}^{\mathcal{J}^{n}(\pi
)}:\mathcal{J}^{n}(\pi)\rightarrow\mathbb{J}_{D_{n}}(\pi)$ as $\psi_{n}%
\circ\theta_{\mathbb{J}^{D^{n}}(\pi)}^{\mathcal{J}^{n}(\pi)}$.
\end{definition}

Now we are going to determine $\theta_{\mathbb{J}^{D_{2}}(\pi)}^{\mathcal{J}%
^{2}(\pi)}$.

\begin{theorem}
\label{t6.2}Given $\left(  x^{i},u^{j},u_{i_{1}}^{j},u_{i_{1,}i_{2}}%
^{j}\right)  \in\mathcal{J}^{2}(\pi)$, we have
\begin{align*}
&  \theta_{\mathbb{J}^{D_{2}}(\pi)}^{\mathcal{J}^{2}(\pi)}\left(  x^{i}%
,u^{j},u_{i_{1}}^{j},u_{i_{1,}i_{2}}^{j}\right) \\
&  =\left[  \delta\in\mathbb{R}\longmapsto\left(  x^{i}\right)  +\left(
y_{1}^{i}\right)  \delta+\frac{1}{2}\left(  y_{2}^{i}\right)  \delta^{2}%
\in\mathbb{R}^{p}\right] \\
&  \in\left(  M\otimes\mathcal{W}_{D_{2}}\right)  _{\left(  x^{i}\right)
}\mapsto\\
&  \left[
\begin{array}
[c]{c}%
\delta\in\mathbb{R}\longmapsto(x^{i},u^{j})+(y_{1}^{i},\sum_{i_{1}=1}^{p}%
y_{1}^{i_{1}}u_{i_{1}}^{j})\delta+\\
\frac{1}{2}(y_{2}^{i},\sum_{i_{1}=1}^{p}\sum_{i_{2}=1}^{p}y_{1}^{i_{1}}%
y_{1}^{i_{2}}u_{i_{1},i_{2}}^{j}+\sum_{i_{1}=1}^{p}y_{2}^{i_{1}}u_{i_{1}}%
^{j})\delta^{2}\in\mathbb{R}^{p+q}%
\end{array}
\right] \\
&  \in\left(  E\otimes\mathcal{W}_{D_{2}}\right)  _{\left(  x^{i}%
,u^{j}\right)  }%
\end{align*}

\end{theorem}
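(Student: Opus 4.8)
The plan is to push the explicit formula of Theorem \ref{t5.2} through the definition $\theta_{\mathbb{J}^{D_{2}}(\pi)}^{\mathcal{J}^{2}(\pi)}=\psi_{2}\circ\theta_{\mathbb{J}^{D^{2}}(\pi)}^{\mathcal{J}^{2}(\pi)}$, using the characterising property of $\psi_{2}$. Write $\nabla:=\theta_{\mathbb{J}^{D^{2}}(\pi)}^{\mathcal{J}^{2}(\pi)}\left(x^{i},u^{j},u_{i_{1}}^{j},u_{i_{1},i_{2}}^{j}\right)\in\mathbb{J}^{D^{2}}(\pi)$, so that $\theta_{\mathbb{J}^{D_{2}}(\pi)}^{\mathcal{J}^{2}(\pi)}\left(x^{i},u^{j},u_{i_{1}}^{j},u_{i_{1},i_{2}}^{j}\right)=\psi_{2}(\nabla)$. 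Recall (cf.\ the proof of Theorem \ref{t8.4.10}) that $\psi_{2}(\nabla)$ is, by its very definition, the unique $D_{2}$-pseudotangential with
\[
\nabla\!\left(\left(\mathrm{id}_{M}\otimes\mathcal{W}_{+_{D^{2}\rightarrow D_{2}}}\right)(\gamma)\right)=\left(\mathrm{id}_{E}\otimes\mathcal{W}_{+_{D^{2}\rightarrow D_{2}}}\right)\!\left((\psi_{2}(\nabla))(\gamma)\right)
\]
for every $\gamma\in\left(M\otimes\mathcal{W}_{D_{2}}\right)_{(x^{i})}$. So it suffices to evaluate the left-hand side for the generic $\gamma$ with Taylor representation $\delta\mapsto\left(x^{i}\right)+\left(y_{1}^{i}\right)\delta+\frac{1}{2}\left(y_{2}^{i}\right)\delta^{2}$ and then read the desired $D_{2}$-element off the right-hand side.

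First I would compute $\gamma_{D^{2}}:=\left(\mathrm{id}_{M}\otimes\mathcal{W}_{+_{D^{2}\rightarrow D_{2}}}\right)(\gamma)$ by substituting $\delta=\delta_{1}+\delta_{2}$ into the Taylor representation of $\gamma$; since $\delta_{1}^{2}=\delta_{2}^{2}=0$ in $\mathbb{R}^{p}\otimes\mathcal{W}_{D^{2}}$ one has $(\delta_{1}+\delta_{2})^{2}=2\delta_{1}\delta_{2}$, so $\gamma_{D^{2}}$ is the $D^{2}$-element whose $\delta_{1}$- and $\delta_{2}$-coefficients are both $\left(y_{1}^{i}\right)$ and whose $\delta_{1}\delta_{2}$-coefficient is $\left(y_{2}^{i}\right)$. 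Feeding these three vectors into the formula of Theorem \ref{t5.2} (with $n=2$) gives $\nabla(\gamma_{D^{2}})$ explicitly: its $\delta_{1}$- and $\delta_{2}$-coefficients are $\bigl(y_{1}^{i},\sum_{i_{1}}y_{1}^{i_{1}}u_{i_{1}}^{j}\bigr)$ and its $\delta_{1}\delta_{2}$-coefficient is $\bigl(y_{2}^{i},\sum_{i_{1}}\sum_{i_{2}}y_{1}^{i_{1}}y_{1}^{i_{2}}u_{i_{1},i_{2}}^{j}+\sum_{i_{1}}y_{2}^{i_{1}}u_{i_{1}}^{j}\bigr)$. On the other hand, if $\psi_{2}(\nabla)(\gamma)$ has Taylor representation $\delta\mapsto\left(x^{i},u^{j}\right)+(a)\delta+\frac{1}{2}(b)\delta^{2}$, then the same substitution $\delta=\delta_{1}+\delta_{2}$ shows that $\left(\mathrm{id}_{E}\otimes\mathcal{W}_{+_{D^{2}\rightarrow D_{2}}}\right)\!\left(\psi_{2}(\nabla)(\gamma)\right)$ has $\delta_{1}$- and $\delta_{2}$-coefficients equal to $(a)$ and $\delta_{1}\delta_{2}$-coefficient equal to $(b)$.

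Comparing the two sides termwise then forces $(a)=\bigl(y_{1}^{i},\sum_{i_{1}}y_{1}^{i_{1}}u_{i_{1}}^{j}\bigr)$ and $(b)=\bigl(y_{2}^{i},\sum_{i_{1}}\sum_{i_{2}}y_{1}^{i_{1}}y_{1}^{i_{2}}u_{i_{1},i_{2}}^{j}+\sum_{i_{1}}y_{2}^{i_{1}}u_{i_{1}}^{j}\bigr)$, which is precisely the asserted formula. The only point needing more than clerical care is that this termwise comparison is legitimate, i.e.\ that $\mathrm{id}_{E}\otimes\mathcal{W}_{+_{D^{2}\rightarrow D_{2}}}$ is injective (equivalently, that a $D_{2}$-element is recovered from the $\delta_{1}$- and $\delta_{1}\delta_{2}$-coefficients of its pullback along $+_{D^{2}\rightarrow D_{2}}$); but this injectivity is exactly what makes $\psi_{2}$ well defined in the first place, so no new ingredient is required. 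Everything else is the bookkeeping of Taylor coefficients already exhibited in the proof of Theorem \ref{t5.2}, and the same argument is uniform in $n$, yielding the evident analogue for $\theta_{\mathbb{J}^{D_{n}}(\pi)}^{\mathcal{J}^{n}(\pi)}$.
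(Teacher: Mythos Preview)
Your proposal is correct and follows essentially the same approach as the paper's proof: both compute the pullback of the generic $D_{2}$-element $\gamma$ along $+_{D^{2}\to D_{2}}$, apply the explicit $D^{2}$-formula of Theorem~\ref{t5.2}, and then read off the $D_{2}$-coefficients. The paper does the last step by directly recognising the output as a polynomial in $\delta_{1}+\delta_{2}$, whereas you set up unknown coefficients $(a),(b)$ and match termwise, but the computational content is identical; your explicit remark on the injectivity of $\mathrm{id}_{E}\otimes\mathcal{W}_{+_{D^{2}\to D_{2}}}$ is a point of rigor the paper leaves implicit.
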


\begin{proof}
The Taylor representation of
\[
\left(  \mathrm{id}_{M}\otimes\mathcal{W}_{\left(  d_{1},d_{2}\right)  \in
D^{2}\mapsto d_{1}+d_{2}\in D_{2}}\right)  \left(  \delta\in\mathbb{R}%
\longmapsto\left(  x^{i}\right)  +\left(  y_{1}^{i}\right)  \delta+\frac{1}%
{2}\left(  y_{2}^{i}\right)  \delta^{2}\in\mathbb{R}^{p}\right)
\]
is
\begin{align*}
&  \left(  \delta_{1},\delta_{2}\right)  \in\mathbb{R}^{2}\longmapsto\left(
x^{i}\right)  +\left(  y_{1}^{i}\right)  \left(  \delta_{1}+\delta_{2}\right)
+\frac{1}{2}\left(  y_{2}^{i}\right)  \left(  \delta_{1}+\delta_{2}\right)
^{2}\in\mathbb{R}^{p}\\
&  =\left(  \delta_{1},\delta_{2}\right)  \in\mathbb{R}^{2}\longmapsto\left(
x^{i}\right)  +\left(  y_{1}^{i}\right)  \delta_{1}+\left(  y_{1}^{i}\right)
\delta_{2}+\left(  y_{2}^{i}\right)  \delta_{1}\delta_{2}\in\mathbb{R}^{p}%
\end{align*}
so that its transformation under the mapping $\theta_{\mathbb{J}^{D^{2}}(\pi
)}^{\mathcal{J}^{2}(\pi)}\left(  x^{i},u^{j},u_{i_{1}}^{j},u_{i_{1,}i_{2}}%
^{j}\right)  $ is
\begin{align*}
\left(  \delta_{1},\delta_{2}\right)   &  \in\mathbb{R}^{2}\mapsto(x^{i}%
,u^{j})+(y_{1}^{i},\sum_{i_{1}=1}^{p}y_{1}^{i_{1}}u_{i_{1}}^{j})\delta
_{1}+(y_{1}^{i},\sum_{i_{1}=1}^{p}y_{1}^{i_{1}}u_{i_{1}}^{j})\delta_{2}+\\
&  (y_{2}^{i},\sum_{i_{1}=1}^{p}\sum_{i_{2}=1}^{p}y_{1}^{i_{1}}y_{1}^{i_{2}%
}u_{i_{1},i_{2}}^{j}+\sum_{i_{1}=1}^{p}y_{2}^{i_{1}}u_{i_{1}}^{j})\delta
_{1}\delta_{2}\\
&  \in\mathbb{R}^{p+q}\\
&  =\left(  \delta_{1},\delta_{2}\right)  \in\mathbb{R}^{2}\mapsto(x^{i}%
,u^{j})+(y_{1}^{i},\sum_{i_{1}=1}^{p}y_{1}^{i_{1}}u_{i_{1}}^{j})\left(
\delta_{1}+\delta_{2}\right)  +\\
&  \frac{1}{2}(y_{2}^{i},\sum_{i_{1}=1}^{p}\sum_{i_{2}=1}^{p}y_{1}^{i_{1}%
}y_{1}^{i_{2}}u_{i_{1},i_{2}}^{j}+\sum_{i_{1}=1}^{p}y_{2}^{i_{1}}u_{i_{1}}%
^{j})\left(  \delta_{1}+\delta_{2}\right)  ^{2}\\
&  \in\mathbb{R}^{p+q}%
\end{align*}
Therefore we have the coveted result.
\end{proof}

Generally, by the same token, we have

\begin{theorem}
\label{t6.3}Given $(x^{i},u^{j},u_{i_{1}}^{j},u_{i_{1},i_{2}}^{j}%
,...,u_{i_{1},i_{2},...,i_{n}}^{j})\in\mathcal{J}^{n}(\pi)$, we have
\begin{align*}
&  \theta_{\mathbb{J}^{D_{n}}(\pi)}^{\mathcal{J}^{n}(\pi)}(x^{i}%
,u^{j},u_{i_{1}}^{j},u_{i_{1},i_{2}}^{j},...,u_{i_{1},i_{2},...,i_{n}}^{j})\\
&  =\left[  \delta\in\mathbb{R}\longmapsto(x^{i})+\sum_{k=1}^{n}\frac
{\delta^{k}}{k!}(y_{k}^{i})\in\mathbb{R}^{p}\right]  \in\left(  M\otimes
\mathcal{W}_{D_{n}}\right)  _{\left(  x^{i}\right)  }\mapsto\\
&  \left[  \delta\in\mathbb{R}\longmapsto(x^{i},u^{j})+\sum_{k=1}^{n}%
\frac{\delta^{k}}{k!}\sum\sum_{i_{1}=1}^{p}...\sum_{i_{r}=1}^{p}\left(
y_{k}^{i},u_{i_{1},...,i_{r}}^{j}y_{k_{1}}^{i_{1}}...y_{k_{r}}^{i_{r}}\right)
\in\mathbb{R}^{p+q}\right] \\
&  \in\left(  E\otimes\mathcal{W}_{D_{n}}\right)  _{\left(  x^{i}%
,u^{j}\right)  }%
\end{align*}
where the undecorated $\sum\ $is taken over all partitions of the positive
integer $k\ $into positive integers $k_{1},...,k_{r}$ (so that $k=k_{1}%
+...+k_{r}$) with $1\leq k_{1}\leq...\leq k_{r}\leq n$.
\end{theorem}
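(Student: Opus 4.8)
The plan is to mimic, for arbitrary $n$, the computation carried out in the proof of Theorem \ref{t6.2}. By the very definition $\theta_{\mathbb{J}^{D_{n}}(\pi)}^{\mathcal{J}^{n}(\pi)}=\psi_{n}\circ\theta_{\mathbb{J}^{D^{n}}(\pi)}^{\mathcal{J}^{n}(\pi)}$, so it is enough to start from the explicit formula for $\theta_{\mathbb{J}^{D^{n}}(\pi)}^{\mathcal{J}^{n}(\pi)}$ supplied by Theorem \ref{t5.3} and then transport it through $\psi_{n}$ via the defining property of $\psi_{n}$ (the one used in the proof of Theorem \ref{t8.4.10}): for every $\gamma$ one has $(\mathrm{id}_{E}\otimes\mathcal{W}_{+_{D^{n}\rightarrow D_{n}}})(\psi_{n}(\nabla)(\gamma))=\nabla((\mathrm{id}_{M}\otimes\mathcal{W}_{+_{D^{n}\rightarrow D_{n}}})(\gamma))$, where $+_{D^{n}\rightarrow D_{n}}\colon D^{n}\rightarrow D_{n}$ is $(d_{1},\dots,d_{n})\mapsto d_{1}+\cdots+d_{n}$.

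Concretely, I would fix a generic $\gamma\in\left(M\otimes\mathcal{W}_{D_{n}}\right)_{(x^{i})}$ of Taylor representation $\delta\mapsto(x^{i})+\sum_{k=1}^{n}\frac{\delta^{k}}{k!}(y_{k}^{i})$ and first compute the Taylor representation of $\bar{\gamma}:=(\mathrm{id}_{M}\otimes\mathcal{W}_{+_{D^{n}\rightarrow D_{n}}})(\gamma)\in M\otimes\mathcal{W}_{D^{n}}$ by substituting $\delta=\delta_{1}+\cdots+\delta_{n}$ and reducing modulo $\delta_{i}^{2}=0$; since $(\delta_{1}+\cdots+\delta_{n})^{k}$ collapses to $k!$ times the $k$-th elementary symmetric polynomial $\sum_{1\le j_{1}<\cdots<j_{k}\le n}\delta_{j_{1}}\cdots\delta_{j_{k}}$, the factors $k!$ cancel and $\bar{\gamma}$ acquires the standard $D^{n}$-coordinates $y_{j_{1},\dots,j_{k}}^{i}=y_{k}^{i}$, independently of the chosen index set. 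Feeding $\bar{\gamma}$ into the formula of Theorem \ref{t5.3} gives $\theta_{\mathbb{J}^{D^{n}}(\pi)}^{\mathcal{J}^{n}(\pi)}(x^{i},u^{j},u_{i_{1}}^{j},\dots,u_{i_{1},\dots,i_{n}}^{j})(\bar{\gamma})$ explicitly, and since $\mathrm{id}_{E}\otimes\mathcal{W}_{+_{D^{n}\rightarrow D_{n}}}$ is injective one recovers $\psi_{n}(\nabla)(\gamma)$ by reading off, for each $k$, the coefficient of $\delta_{1}\cdots\delta_{k}$ (which is precisely the $\delta^{k}/k!$-coefficient of the sought $D_{n}$-element). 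An equivalent route, genuinely "by the same token" as Theorem \ref{t6.2}, is induction on $n$ using the $D_{n}$-analogue of Lemma \ref{t5.1}, which replaces the single multinomial reduction above by a step-by-step expansion.

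The only step that is not pure bookkeeping is the final identification: Theorem \ref{t5.3} expresses the $u$-component of the coefficient of $\delta_{j_{1}}\cdots\delta_{j_{k}}$ as a sum over \emph{set} partitions of $\{j_{1},\dots,j_{k}\}$, whereas Theorem \ref{t6.3} organizes the answer as a sum indexed by partitions of the \emph{integer} $k$. Passing between the two amounts to grouping set partitions according to the multiset of their block sizes and then using the full symmetry of the coefficients $u_{i_{1},\dots,i_{s}}^{j}$ in their lower indices to combine the resulting terms; I expect this reorganization to be where essentially all the care is needed, and it is presumably what the author defers to the reader. Once the partition indexing is set up, the remaining arithmetic — tracking the $1/k!$ factors and the cancellations with the multinomial coefficients produced by $(\delta_{1}+\cdots+\delta_{n})^{k}$ — is routine.
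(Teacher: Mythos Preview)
Your proposal is correct and matches the paper's approach exactly: the paper gives no explicit proof of Theorem~\ref{t6.3} beyond the phrase ``Generally, by the same token, we have'', referring back to the computation in Theorem~\ref{t6.2}, which is precisely the method you outline (pull back along $+_{D^{n}\to D_{n}}$, apply Theorem~\ref{t5.3}, and read off the $D_{n}$-coefficients). Your identification of the set-partition versus integer-partition bookkeeping as the one nontrivial step is apt and is indeed what the author silently leaves to the reader.
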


\begin{definition}
We define mappings $\theta_{\mathbb{S}^{D_{n}}(\pi)}^{\mathcal{S}^{n}(\pi
)}:\mathcal{S}^{n}(\pi)\rightarrow\mathbb{S}^{D_{n}}(\pi)$ to be
\begin{align*}
&  \theta_{\mathbb{S}^{D_{n}}(\pi)}^{\mathcal{S}^{n}(\pi)}\left(  x^{i}%
,u^{j},u_{i_{1},i_{2},...,i_{n}}^{j}\right) \\
&  =\left[  \delta\in\mathbb{R}\longmapsto(x^{i})+\sum_{k=1}^{n}\frac
{\delta^{k}}{k!}(y_{k}^{i})\in\mathbb{R}^{p}\right]  \in\left(  M\otimes
\mathcal{W}_{D_{n}}\right)  _{\left(  x^{i}\right)  }\mapsto\\
&  \left[  \delta\in\mathbb{R}\longmapsto\left(  x^{i},u^{j}+\frac{\delta}%
{n!}\sum_{1\leq i_{1}\leq...\leq i_{n}\leq p}y_{1}^{i_{1}}...y_{1}^{i_{n}%
}u_{i_{1},i_{2},...,i_{n}}^{j}\right)  \in\mathbb{R}^{p+q}\right] \\
&  \in\left(  E\otimes\mathcal{W}_{D}\right)  _{\left(  x^{i},u^{j}\right)
}^{\perp}%
\end{align*}

\end{definition}

It is easy to see that

\begin{proposition}
\label{t6.4}The mappings $\theta_{\mathbb{S}^{D_{n}}(\pi)}^{\mathcal{S}%
^{n}(\pi)}:\mathcal{S}^{n}(\pi)\rightarrow\mathbb{S}^{D_{n}}(\pi)$\ are bijective.
\end{proposition}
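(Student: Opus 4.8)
The plan is to prove bijectivity fibrewise over $E$, in the same spirit as Proposition \ref{t5.4}. Fix $x=(x^{i},u^{j})\in E$. By the coordinate description of $\mathcal{S}^{n}(\pi)$, the fibre $\mathcal{S}_{x}^{n}(\pi)$ consists of the families $\left(u_{i_{1},\dots,i_{n}}^{j}\right)_{1\leq i_{1}\leq\dots\leq i_{n}\leq p}$, that is, of $q$ copies of the space of homogeneous polynomials of degree $n$ in $p$ variables; I would show that $\theta_{\mathbb{S}^{D_{n}}(\pi)}^{\mathcal{S}^{n}(\pi)}$ carries this fibre linearly and bijectively onto $\mathbb{S}_{x}^{D_{n}}(\pi)$, which yields the proposition.

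First I would dispose of the routine half: that the map is well defined and injective. Put $\omega_{x}:=\theta_{\mathbb{S}^{D_{n}}(\pi)}^{\mathcal{S}^{n}(\pi)}\left(x^{i},u^{j},u_{i_{1},\dots,i_{n}}^{j}\right)$. Reading off its explicit Taylor formula, one checks each clause of Definition \ref{d8.3.2.1}: clause 1 holds because replacing $\gamma$ by $\alpha\gamma$ multiplies the first-order coefficient $y_{1}\in\mathbb{R}^{p}$ of the Taylor representation of $\gamma$ by $\alpha$, while the value $w^{j}=\tfrac{1}{n!}\sum_{1\leq i_{1}\leq\dots\leq i_{n}\leq p}y_{1}^{i_{1}}\cdots y_{1}^{i_{n}}u_{i_{1},\dots,i_{n}}^{j}$ is homogeneous of degree $n$ in $y_{1}$; clause 2 holds because $w^{j}$ depends polynomially — hence $\mathcal{W}$-naturally — on $y_{1}$, and under thickening by $D_{m}$ the coefficient $y_{1}$ is rescaled via $\times_{D_{n}\times D_{m}\rightarrow D_{n}}$ exactly so as to match the factor $e^{n}$ in the lower horizontal arrow; clause 3 holds because any $\gamma$ obtained from a lower-order datum through a simple polynomial $\rho$ with $\dim_{n}\rho<n$ has no surviving first-order coefficient, so $w^{j}=0$ there. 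Injectivity is then clear: the fibre over $x$ already records $(x^{i},u^{j})$, and evaluating $\omega_{x}$ on the curves $\gamma:\delta\mapsto(x^{i})+\delta(y^{i})$ with $y$ ranging over $\mathbb{R}^{p}$ reproduces the polynomials $y\mapsto\sum_{1\leq i_{1}\leq\dots\leq i_{n}\leq p}y^{i_{1}}\cdots y^{i_{n}}u_{i_{1},\dots,i_{n}}^{j}$, whose coefficients determine the $u_{i_{1},\dots,i_{n}}^{j}$ up to fixed nonzero multinomial factors.

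For surjectivity I would take an arbitrary $\omega_{x}\in\mathbb{S}_{x}^{D_{n}}(\pi)$ and argue that the three clauses of Definition \ref{d8.3.2.1}, taken together, force it into the above shape. Clauses 1 and 2 make $\omega_{x}$, regarded as a map on Taylor representations, a weighted-homogeneous polynomial of weighted degree $n$ in the coefficients $y_{1},\dots,y_{n}$ of $\gamma$ (with $y_{k}$ carrying weight $k$); clause 3 then knocks out every monomial that involves some $y_{k}$ with $k\geq2$, so that $\omega_{x}(\gamma)$ is a homogeneous polynomial of degree $n$ in $y_{1}$ alone, with values in $\left(E\otimes\mathcal{W}_{D}\right)_{x}^{\perp}\cong\mathbb{R}^{q}$. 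Writing its $j$-th component as $\tfrac{1}{n!}\sum_{1\leq i_{1}\leq\dots\leq i_{n}\leq p}y_{1}^{i_{1}}\cdots y_{1}^{i_{n}}u_{i_{1},\dots,i_{n}}^{j}$ for suitable real numbers $u_{i_{1},\dots,i_{n}}^{j}$ identifies $\omega_{x}$ with $\theta_{\mathbb{S}^{D_{n}}(\pi)}^{\mathcal{S}^{n}(\pi)}\left(x^{i},u^{j},u_{i_{1},\dots,i_{n}}^{j}\right)$, completing the proof.

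The step I expect to be the main obstacle is the surjectivity argument — specifically, turning the informal claim that the higher Taylor coefficients $y_{2},\dots,y_{n}$ of $\gamma$ cannot enter $\omega_{x}(\gamma)$ into a bona fide application of Definition \ref{d8.3.2.1}(3): one must identify exactly which simple polynomials $\rho$ with $\dim_{n}\rho<n$ witness the degeneracy of the higher-order terms, and cope with the fact that $\omega_{x}$ is not additive, presumably by peeling off successive Taylor coefficients inductively. This is precisely the delicacy silently present in Proposition \ref{t5.4} as well. Everything else is routine Taylor-representation bookkeeping of the kind already carried out in the proofs of Theorems \ref{t6.2} and \ref{t5.3}. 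As a shortcut one may instead conclude by a dimension count: $\theta_{\mathbb{S}^{D_{n}}(\pi)}^{\mathcal{S}^{n}(\pi)}$ is fibrewise linear and, by the injectivity established above, a linear injection between the fibres of the vector bundles $\mathcal{S}^{n}(\pi)\to E$ and $\mathbb{S}^{D_{n}}(\pi)\to E$, so once one knows these two bundles have the same rank $q\cdot{}_{p+n-1}C_{n}$ the map is automatically bijective.
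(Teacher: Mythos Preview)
The paper gives no proof of this proposition at all: it is preceded only by the phrase ``It is easy to see that'', exactly as with Proposition~\ref{t5.4}. Your proposal is therefore far more detailed than anything the paper supplies, and the fibrewise strategy you outline (checking the clauses of Definition~\ref{d8.3.2.1}, then injectivity by evaluation on linear curves, then surjectivity by analysing the Taylor dependence) is a sound way to flesh out what the author left implicit.

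One caveat on your closing ``shortcut'': the dimension count assumes you already know that the fibre $\mathbb{S}_{x}^{D_{n}}(\pi)$ has rank $q\cdot{}_{p+n-1}C_{n}$, but nothing in the paper establishes this independently of the proposition itself. Computing that rank directly from Definition~\ref{d8.3.2.1} amounts to exactly the surjectivity argument you are trying to bypass, so the shortcut is circular as stated. Your primary argument via clauses~1--3 is the honest route; the delicacy you flag about clause~3 (identifying the simple polynomials $\rho$ that kill the dependence on $y_{2},\dots,y_{n}$) is real, and would indeed need to be made precise for a complete proof.
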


It is also easy to see that

\begin{proposition}
\label{t6.5}Given $\nabla=(x^{i},u^{j},u_{i_{1}}^{j},u_{i_{1},i_{2}}%
^{j},...,u_{i_{1},i_{2},...,i_{n+1}}^{j}),\nabla^{\prime}=(y^{i}%
,v^{j},v_{i_{1}}^{j},v_{i_{1},i_{2}}^{j},...,v_{i_{1},i_{2},...,i_{n+1}}%
^{j})\mathcal{J}^{n+1}(\pi)$, we have
\[
\pi_{n}^{n+1}\left(  \nabla\right)  =\pi_{n}^{n+1}\left(  \nabla^{\prime
}\right)
\]
iff
\[
\pi_{n}^{n+1}\left(  \theta_{\mathbb{J}^{D_{n+1}}(\pi)}^{\mathcal{J}^{n+1}%
(\pi)}\left(  \nabla\right)  \right)  =\pi_{n}^{n+1}\left(  \theta
_{\mathbb{J}^{D_{n+1}}(\pi)}^{\mathcal{J}^{n+1}(\pi)}\left(  \nabla^{\prime
}\right)  \right)  \text{,}%
\]
in which we get
\[
\theta_{\mathbb{S}^{D_{n+1}}(\pi)}^{\mathcal{S}^{n+1}(\pi)}\left(
\nabla\overset{\cdot}{-}\nabla^{\prime}\right)  =\theta_{\mathbb{J}^{D_{n+1}%
}(\pi)}^{\mathcal{J}^{n+1}(\pi)}\left(  \nabla\right)  \overset{\cdot}%
{-}\theta_{\mathbb{J}^{D_{n+1}}(\pi)}^{\mathcal{J}^{n+1}(\pi)}\left(
\nabla^{\prime}\right)
\]

\end{proposition}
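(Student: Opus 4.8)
The statement is the third-approach transcription of Proposition \ref{t5.5}, and the plan is to reduce it to that proposition through the comparison machinery of \S\ref{s8.4}. For brevity write $\Theta^{\mathbf{D}}_{n}=\theta^{\mathcal{J}^{n}(\pi)}_{\mathbb{J}^{D^{n}}(\pi)}$ and $\Theta^{D}_{n}=\theta^{\mathcal{J}^{n}(\pi)}_{\mathbb{J}^{D_{n}}(\pi)}$, and likewise $\Sigma^{\mathbf{D}}_{n}=\theta^{\mathcal{S}^{n}(\pi)}_{\mathbb{S}^{D^{n}}(\pi)}$, $\Sigma^{D}_{n}=\theta^{\mathcal{S}^{n}(\pi)}_{\mathbb{S}^{D_{n}}(\pi)}$. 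By definition $\Theta^{D}_{n}=\psi_{n}\circ\Theta^{\mathbf{D}}_{n}$. I would first establish, as a preliminary lemma, the companion factorization $\Sigma^{D}_{n}=\phi_{n}\circ\Sigma^{\mathbf{D}}_{n}$ (with $\phi_{n}$ as in Proposition \ref{t8.4.4}); this is a direct comparison of the Taylor representations in Theorems \ref{t5.3} and \ref{t6.3} with the defining formulas for $\theta_{\mathbb{S}^{D^{n}}}$ and $\theta_{\mathbb{S}^{D_{n}}}$, using that the dual of $+_{D^{n}\rightarrow D_{n}}$ carries $\delta^{k}$ to $k!$ times the $k$-th elementary symmetric polynomial in $\delta_{1},\dots,\delta_{n}$.

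Granting the two factorizations, the equivalence of the two $\pi^{n+1}_{n}$-equalities is immediate. Since $\psi$ commutes with $\pi_{n+1,n}$ by Proposition \ref{t7.1.4}, and, as one reads off from Theorem \ref{t5.3}, the maps $\Theta^{\mathbf{D}}_{n}$ intertwine the projections as well, the map $\Theta^{D}_{n}$ carries the canonical projection $\mathcal{J}^{n+1}(\pi)\rightarrow\mathcal{J}^{n}(\pi)$ to the projection $\pi_{n+1,n}$ on $\mathbb{J}^{D_{n+1}}(\pi)$; moreover $\Theta^{D}_{n}$ is injective, because the Taylor formula of Theorem \ref{t6.3} lets one read the coordinates $u^{j}_{i_{1},\dots,i_{r}}$ back off. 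Hence $\pi^{n+1}_{n}(\Theta^{D}_{n+1}(\nabla))=\pi^{n+1}_{n}(\Theta^{D}_{n+1}(\nabla'))$ holds if and only if $\pi^{n+1}_{n}(\nabla)=\pi^{n+1}_{n}(\nabla')$.

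For the displayed identity, suppose $\pi^{n+1}_{n}(\nabla)=\pi^{n+1}_{n}(\nabla')$, so that both differences below are defined. Theorem \ref{t8.4.5} asserts that $(\psi_{n},\psi_{n+1},\phi_{n+1}\times\psi_{n})$ is a morphism of affine bundles, so each fibrewise restriction of $\psi_{n+1}$ is a morphism of affine spaces; in particular $\psi_{n+1}$ intertwines the two $\dot{-}$ operations, i.e.\ $\psi_{n+1}(\nabla^{+})\dot{-}\psi_{n+1}(\nabla^{-})=\phi_{n+1}(\nabla^{+}\dot{-}\nabla^{-})$ whenever $\pi_{n+1,n}(\nabla^{+})=\pi_{n+1,n}(\nabla^{-})$. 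Applying this with $\nabla^{\pm}=\Theta^{\mathbf{D}}_{n+1}(\nabla),\Theta^{\mathbf{D}}_{n+1}(\nabla')$, then Proposition \ref{t5.5}, and finally the preliminary lemma, one computes
\begin{align*}
\Theta^{D}_{n+1}(\nabla)\dot{-}\Theta^{D}_{n+1}(\nabla')
&=\psi_{n+1}\bigl(\Theta^{\mathbf{D}}_{n+1}(\nabla)\bigr)\dot{-}\psi_{n+1}\bigl(\Theta^{\mathbf{D}}_{n+1}(\nabla')\bigr)\\
&=\phi_{n+1}\bigl(\Theta^{\mathbf{D}}_{n+1}(\nabla)\dot{-}\Theta^{\mathbf{D}}_{n+1}(\nabla')\bigr)\\
&=\phi_{n+1}\bigl(\Sigma^{\mathbf{D}}_{n+1}(\nabla\dot{-}\nabla')\bigr)\\
&=\Sigma^{D}_{n+1}(\nabla\dot{-}\nabla'),
\end{align*}
which is exactly the asserted formula.

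The step I expect to carry the weight is the preliminary lemma $\Sigma^{D}_{n}=\phi_{n}\circ\Sigma^{\mathbf{D}}_{n}$: one must check that the $1/n!$ in the definition of $\theta_{\mathbb{S}^{D_{n}}}$, the multinomial factors produced by dualizing $+_{D^{n}\rightarrow D_{n}}$, and the ordering conventions on the indices $i_{1}\le\dots\le i_{n}$ all cancel correctly. Should they fail to match literally rather than up to a harmless scalar, the fallback plan is to prove the proposition head-on: expand $\Theta^{D}_{n+1}(\nabla)(\gamma)\dot{-}\Theta^{D}_{n+1}(\nabla')(\gamma)$ by solving the linear relations imposed by $\Psi_{D_{n+1}}$, $\Phi_{D_{n+1}}$ and $\Xi_{D_{n+1}}$ of Lemma \ref{t8.3.1.1}, and match the resulting Taylor coefficients termwise with the defining formula for $\theta_{\mathbb{S}^{D_{n+1}}}$, in the manner of the proof of Theorem \ref{t6.2}. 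In either case the remainder is a routine unwinding of Taylor representations.
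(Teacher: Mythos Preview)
The paper gives no proof of this proposition: it is introduced with ``It is also easy to see that'' and left at that. The intended argument is evidently a direct check from the explicit Taylor formula of Theorem~\ref{t6.3} together with the definitions of $\dot{-}$ in \S\ref{s3.2} and \S\ref{s8.3.1} and of $\theta_{\mathbb{S}^{D_{n+1}}(\pi)}^{\mathcal{S}^{n+1}(\pi)}$---exactly your ``fallback plan''.

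Your main route is genuinely different: rather than computing, you transport the already-stated Proposition~\ref{t5.5} through the comparison machinery of \S\ref{s8.4}, using $\Theta^{D}_{n}=\psi_{n}\circ\Theta^{\mathbf{D}}_{n}$, Theorem~\ref{t8.4.5}, and a new auxiliary identity $\Sigma^{D}_{n}=\phi_{n}\circ\Sigma^{\mathbf{D}}_{n}$. This is a clean structural reduction and, once the auxiliary identity is in hand, the chain of equalities you wrote is correct. The trade-off is that you have replaced one routine Taylor computation by another (the auxiliary lemma), plus an appeal to injectivity of $\Theta^{D}_{n}$ that you must justify independently of Theorem~\ref{t6.6} to avoid circularity; by contrast, the paper's intended direct check needs nothing beyond the formulas already on the page. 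Your caution about the $1/n!$ and the ordering conventions in the auxiliary lemma is well placed: under $+_{D^{n}\rightarrow D_{n}}$ every linear coefficient becomes $y_{1}^{i}$, and matching $\sum_{i_{1}\le\cdots\le i_{n}} y_{1}^{i_{1}}\cdots y_{1}^{i_{n}}u^{j}_{i_{1},\dots,i_{n}}$ against the $\mathbb{S}^{D^{n}}$ formula (which uses $y_{1}^{i_{1}}\cdots y_{n}^{i_{n}}$) requires care with multiplicities. If that bookkeeping becomes awkward, your fallback---which is what the paper has in mind---is the shorter path.
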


Now we have

\begin{theorem}
\label{t6.6}The mappings $\theta_{\mathbb{J}^{D_{n}}(\pi)}^{\mathcal{J}%
^{n}(\pi)}:\mathcal{J}^{n}(\pi)\rightarrow\mathbb{J}^{D_{n}}(\pi)$\ are bijective.
\end{theorem}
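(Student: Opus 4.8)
The plan is to mimic the proof of Theorem \ref{t5.6} verbatim, replacing the second-approach objects by their third-approach counterparts throughout. We proceed by induction on $n$. For the base case $n=1$, the mapping $\theta_{\mathbb{J}^{D_{1}}(\pi)}^{\mathcal{J}^{1}(\pi)}:\mathcal{J}^{1}(\pi)\rightarrow\mathbb{J}^{D_{1}}(\pi)$ is essentially the same as $\theta_{\mathbb{J}^{D}(\pi)}^{\mathcal{J}^{1}(\pi)}$, since $D_{1}=D$ and $\psi_{1}$ is the identity; hence it is bijective by (the $n=1$ instance of) Theorem \ref{t5.6}, or directly from Theorem \ref{t4.1}.

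For the inductive step, I would assemble the three ingredients that make the affine-bundle machinery work. First, Theorem \ref{t3.2.8} tells us that $\pi_{n+1,n}:\mathcal{J}^{n+1}(\pi)\rightarrow\mathcal{J}^{n}(\pi)$ is an affine bundle over $\mathcal{S}^{n+1}(\pi)\underset{E}{\times}\mathcal{J}^{n}(\pi)$. Second, Theorem \ref{t8.3.3.3} tells us that $\pi_{n+1,n}:\mathbb{J}^{D_{n+1}}(\pi)\rightarrow\mathbb{J}^{D_{n}}(\pi)$ is an affine bundle over $\mathbb{S}^{D_{n+1}}(\pi)\underset{E}{\times}\mathbb{J}^{D_{n}}(\pi)$. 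Third, Proposition \ref{t6.5} (the exact analogue of Proposition \ref{t5.5}) shows that $\theta_{\mathbb{J}^{D_{n+1}}(\pi)}^{\mathcal{J}^{n+1}(\pi)}$ intertwines the fibered-product projections and the affine-difference operations $\dot{-}$. Combining these with Proposition \ref{t6.4} (bijectivity of $\theta_{\mathbb{S}^{D_{n+1}}(\pi)}^{\mathcal{S}^{n+1}(\pi)}$) and the induction hypothesis (bijectivity of $\theta_{\mathbb{J}^{D_{n}}(\pi)}^{\mathcal{J}^{n}(\pi)}$), the triple
\[
\left(\theta_{\mathbb{J}^{D_{n+1}}(\pi)}^{\mathcal{J}^{n+1}(\pi)},\ \theta_{\mathbb{S}^{D_{n+1}}(\pi)}^{\mathcal{S}^{n+1}(\pi)}\underset{E}{\times}\theta_{\mathbb{J}^{D_{n}}(\pi)}^{\mathcal{J}^{n}(\pi)},\ \theta_{\mathbb{J}^{D_{n}}(\pi)}^{\mathcal{J}^{n}(\pi)}\right)
\]
is a morphism of affine bundles whose underlying base map and whose associated vector-bundle map are bijective; a standard fact about affine bundles then forces the total-space map $\theta_{\mathbb{J}^{D_{n+1}}(\pi)}^{\mathcal{J}^{n+1}(\pi)}$ to be bijective as well.

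The only genuine content beyond bookkeeping is verifying that $\theta_{\mathbb{J}^{D_{n+1}}(\pi)}^{\mathcal{J}^{n+1}(\pi)}$ really does respect the affine structures in the sense just described — i.e. checking Proposition \ref{t6.5} — and confirming that it covers $\theta_{\mathbb{J}^{D_{n}}(\pi)}^{\mathcal{J}^{n}(\pi)}$ over $\pi_{n+1,n}$. Since $\theta_{\mathbb{J}^{D_{n}}(\pi)}^{\mathcal{J}^{n}(\pi)}=\psi_{n}\circ\theta_{\mathbb{J}^{D^{n}}(\pi)}^{\mathcal{J}^{n}(\pi)}$, the compatibility with $\pi_{n+1,n}$ reduces to Proposition \ref{t7.1.4} together with the already-established compatibility of $\theta_{\mathbb{J}^{D^{n}}(\pi)}^{\mathcal{J}^{n}(\pi)}$, and the compatibility of the $\psi$'s with $\dot{-}$ is exactly Theorem \ref{t8.4.10}. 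I expect the main obstacle to be purely notational: keeping the $D_{n}$ versus $D^{n}$ versions of $\dot{-}$ and $\dot{+}$ straight and making sure the coordinate formula of Theorem \ref{t6.3} is consistent with the affine operations of \S\ref{s8.3}. Once Proposition \ref{t6.5} is in hand, the argument is a line-for-line transcription of the proof of Theorem \ref{t5.6}, so I would simply state it as such and leave the routine verifications to the reader.
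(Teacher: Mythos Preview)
Your proposal is correct and follows essentially the same approach as the paper: induction on $n$, with the base case $n=1$ immediate and the inductive step using Proposition \ref{t6.5} together with the induction hypothesis to exhibit the triple $\left(\theta_{\mathbb{J}^{D_{n+1}}(\pi)}^{\mathcal{J}^{n+1}(\pi)},\ \theta_{\mathbb{S}^{D_{n+1}}(\pi)}^{\mathcal{S}^{n+1}(\pi)}\underset{E}{\times}\theta_{\mathbb{J}^{D_{n}}(\pi)}^{\mathcal{J}^{n}(\pi)},\ \theta_{\mathbb{J}^{D_{n}}(\pi)}^{\mathcal{J}^{n}(\pi)}\right)$ as an isomorphism of affine bundles. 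Your extra remarks tracing Proposition \ref{t6.5} back to Proposition \ref{t7.1.4} and Theorem \ref{t8.4.10} go slightly beyond what the paper spells out, but are consistent with it.
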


\begin{proof}
The mapping $\theta_{\mathbb{J}^{D}(\pi)}^{\mathcal{J}^{1}(\pi)}%
:\mathcal{J}^{1}(\pi)\rightarrow\mathbb{J}^{D}(\pi)$\ is obviously bijective.
We proceed by induction on $n$. By Proposition \ref{t6.5} and the induction
hypothesis, $\left(  \theta_{\mathbb{J}^{D_{n+1}}(\pi)}^{\mathcal{J}^{n+1}%
(\pi)},\theta_{\mathbb{S}^{D_{n+1}}(\pi)}^{\mathcal{S}^{n+1}(\pi)}\underset
{E}{\times}\theta_{\mathbb{J}^{D_{n}}(\pi)}^{\mathcal{J}^{n}(\pi)}%
,\theta_{\mathbb{J}^{D_{n}}(\pi)}^{\mathcal{J}^{n}(\pi)}\right)  $\ gives a
morphism of affine bundles from the affine bundle $\pi_{n+1,n}:\mathcal{J}%
^{n+1}(\pi)\rightarrow\mathcal{J}^{n}(\pi)$ over the vector bundle
$\mathcal{S}^{n+1}(\pi)\underset{E}{\times}\mathcal{J}^{n}(\pi)\rightarrow
\mathcal{J}^{n}(\pi)$\ to the affine bundle $\pi_{n+1,n}:\mathbb{J}^{D_{n+1}%
}(\pi)\rightarrow\mathbb{J}^{D_{n}}(\pi)$\ over the vector bundle
$\mathbb{S}^{D_{n+1}}(\pi)\underset{E}{\times}\mathbb{J}^{D_{n}}%
(\pi)\rightarrow\mathbb{J}^{D_{n}}(\pi)$\ is an isomorphism of affine bundles,
so that the mapping $\theta_{\mathbb{J}^{D_{n+1}}(\pi)}^{\mathcal{J}^{n+1}%
(\pi)}:\mathcal{J}^{n+1}(\pi)\rightarrow\mathbb{J}^{D_{n+1}}(\pi)$\ is bijective.
\end{proof}

\begin{corollary}
The mappings $\psi_{n}:\mathbb{J}^{D^{n}}(\pi)\rightarrow\mathbb{J}^{D_{n}%
}(\pi)$\ are bijective.
\end{corollary}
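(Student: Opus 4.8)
The plan is to deduce the bijectivity of $\psi_{n}$ from the two coordinate bijections already at our disposal, in exact parallel with the corollary on $\varphi_{n}$ that immediately precedes this one. Recall that in \S\ref{s6} the map $\theta_{\mathbb{J}^{D_{n}}(\pi)}^{\mathcal{J}^{n}(\pi)}:\mathcal{J}^{n}(\pi)\rightarrow\mathbb{J}^{D_{n}}(\pi)$ was \emph{defined} to be $\psi_{n}\circ\theta_{\mathbb{J}^{D^{n}}(\pi)}^{\mathcal{J}^{n}(\pi)}$, so that the triangle
\[
\begin{array}[c]{ccc}
& \mathcal{J}^{n}(\pi) & \\
\theta_{\mathbb{J}^{D^{n}}(\pi)}^{\mathcal{J}^{n}(\pi)}\swarrow & & \searrow\theta_{\mathbb{J}^{D_{n}}(\pi)}^{\mathcal{J}^{n}(\pi)}\\
\mathbb{J}^{D^{n}}(\pi) & \begin{array}[c]{c}\rightarrow\\ \psi_{n}\end{array} & \mathbb{J}^{D_{n}}(\pi)
\end{array}
\]
commutes by construction.

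First I would invoke Theorem \ref{t5.6}, which asserts that $\theta_{\mathbb{J}^{D^{n}}(\pi)}^{\mathcal{J}^{n}(\pi)}$ is a bijection, together with Theorem \ref{t6.6}, which asserts that $\theta_{\mathbb{J}^{D_{n}}(\pi)}^{\mathcal{J}^{n}(\pi)}$ is a bijection. From the factorization $\theta_{\mathbb{J}^{D_{n}}(\pi)}^{\mathcal{J}^{n}(\pi)}=\psi_{n}\circ\theta_{\mathbb{J}^{D^{n}}(\pi)}^{\mathcal{J}^{n}(\pi)}$ one then obtains $\psi_{n}=\theta_{\mathbb{J}^{D_{n}}(\pi)}^{\mathcal{J}^{n}(\pi)}\circ\bigl(\theta_{\mathbb{J}^{D^{n}}(\pi)}^{\mathcal{J}^{n}(\pi)}\bigr)^{-1}$, a composite of two bijections, hence itself bijective. (Equivalently, argue abstractly: if $g\circ f$ is bijective and $f$ is bijective, then $g$ is bijective.)

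There is essentially no obstacle to surmount at this stage. The only point that requires checking — that the displayed triangle really does commute — holds on the nose, being nothing but the definition of $\theta_{\mathbb{J}^{D_{n}}(\pi)}^{\mathcal{J}^{n}(\pi)}$. All the substantive work has already been spent in establishing Theorems \ref{t5.6} and \ref{t6.6} (and, further upstream, the affine-bundle comparison Theorem \ref{t8.4.5} and the Taylor-representation machinery of \S\ref{s2}). So the corollary is a two-line consequence: cite those two theorems and the commutativity of the triangle above, and conclude.
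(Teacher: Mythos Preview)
Your proof is correct and is essentially identical to the paper's own argument: the paper simply cites Theorems \ref{t5.6} and \ref{t6.6} together with the commutativity of exactly the triangle you drew, which holds by the very definition of $\theta_{\mathbb{J}^{D_{n}}(\pi)}^{\mathcal{J}^{n}(\pi)}$.
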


\begin{proof}
This follows simply from Theorems \ref{t5.6}\ and \ref{t6.6}\ and the
commutativity of the following diagram:
\[%
\begin{array}
[c]{ccc}
& \mathcal{J}^{n}(\pi) & \\
\theta_{\mathbb{J}^{D^{n}}(\pi)}^{\mathcal{J}^{n}(\pi)}\swarrow &  &
\searrow\theta_{\mathbb{J}^{D_{n}}(\pi)}^{\mathcal{J}^{n}(\pi)}\\
\mathbb{J}^{D^{n}}(\pi) &
\begin{array}
[c]{c}%
\rightarrow\\
\psi_{n}%
\end{array}
& \mathbb{J}^{D_{n}}(\pi)
\end{array}
\]

\end{proof}


\begin{thebibliography}{9}                                                                                                %


\bibitem {kock1}Kock, A.:Formal manifolds and synthetic theory of jet bundles,
Cahiers de Topologie et G\'{e}om\'{e}trie Diff\'{e}rentielle, \textbf{21
}(1980), 227-246.

\bibitem {kock2}Kock, A.:Synthetic Differential Geometry, $2$nd edition,
London Mathematical Society Lecture Note Series, \textbf{333}, Cambridge
University Press, Cambridge, 2006.

\bibitem {la}Lavendhomme, R.:Basic Concepts of Synthetic Differential
Geometry, Kluwer, Dordrecht, 1996.

\bibitem {nishimura}Nishimura, Hirokazu:Differential Geometry of Microlinear
Fr\"{o}licher spaces IV-1, to appear in Proceedings of AGMP 2011, Lecture
Notes in Mathematics, Springer.

\bibitem {sa}Saunders, D. J.:The Geometry of Jet Bundles, London Mathematical
Society Lecture Note Series, \textbf{142}, Cambridge University Press,
Cambridge, 1989.
\end{thebibliography}
\end{document}